\documentclass[11pt,letterpaper]{amsart}
\pdfoutput=1
\usepackage{eucal}
\usepackage{mathrsfs}
\usepackage{graphicx}
\usepackage{amsopn,amssymb}

\usepackage{tikz}
\usetikzlibrary{matrix,arrows}

\usepackage{color}
\definecolor{darkblue}{rgb}{0,0,0.7}
\usepackage{hyperref}
\hypersetup{pdfauthor={David Dumas},pdftitle={Skinning maps are finite-to-one},colorlinks=true,linkcolor=darkblue,citecolor=darkblue}

\hyphenation{geometrization}

\usepackage{caption}
\captionsetup{margin=0pt,textfont={small,it},labelfont=bf}

\newcommand{\boldpoint}[1]{\smallskip\par\noindent\textbf{#1}}
\newcommand{\emphpoint}[1]{\medskip\par\noindent\textit{#1}\par\medskip}

\newcommand{\ordinc}{\mathfrak{o}}
\newcommand{\into}{\hookrightarrow}
\newcommand{\tensor}{\otimes}
\newcommand{\alttwo}{{\textstyle\bigwedge}\!^2\,}
\newcommand{\symb}{\mathcal{S}}
\newcommand{\V}{\mathcal{V}}
\newcommand{\Z}{\mathbb{Z}}
\newcommand{\R}{\mathbb{R}}
\newcommand{\C}{\mathbb{C}}
\newcommand{\N}{\mathbb{N}}
\newcommand{\n}{\mathbf{n}}
\renewcommand{\L}{\mathcal{L}}
\newcommand{\Q}{\mathcal{Q}}
\newcommand{\QQ}{\mathbb{Q}}
\newcommand{\sR}{\mathcal{R}}
\newcommand{\sH}{\mathcal{H}}
\newcommand{\sC}{\mathcal{C}}
\DeclareMathOperator{\im}{Im}
\DeclareMathOperator{\re}{Re}
\renewcommand{\Re}{\re}
\renewcommand{\Im}{\im}
\newcommand{\sslash}{/\!\!/}
\newcommand{\half}{\frac{1}{2}}
\renewcommand{\bar}{\overline}

\renewcommand{\leq}{\leqslant}
\renewcommand{\geq}{\geqslant}

\newcommand{\rbdy}{\partial_\R}
\newcommand{\cbdy}{\partial_\C}
\newcommand{\bdy}{\partial}

\newcommand{\I}{\mathscr{I}}
\newcommand{\T}{\mathcal{T}}

\newcommand{\F}{\mathcal{F}}
\newcommand{\CP}{\mathbb{CP}}
\renewcommand{\H}{\mathbb{H}}
\newcommand{\ML}{\mathcal{ML}}
\newcommand{\MF}{\mathcal{MF}}
\renewcommand{\th}{\mathrm{Th}}
\renewcommand{\P}{\mathbb{P}}

\newcommand{\odd}{-}

\newcommand{\X}{\mathcal{X}}
\newcommand{\E}{\mathcal{E}}
\newcommand{\ddtzero}[1]{\left . \frac{\partial\;}{\partial t} {#1}
  \right |_{t=0}}

\newcommand{\verts}[1]{{#1}^{(0)}}
\newcommand{\edges}[1]{{#1}^{(1)}}
\newcommand{\tris}[1]{{#1}^{(2)}}
\newcommand{\tets}[1]{{#1}^{(3)}}
\newcommand{\identity}{\mathrm{Id}}
\newcommand{\noproof}{\hfill\qedsymbol}
\DeclareMathOperator{\Ext}{\mathrm{Ext}}
\DeclareMathOperator{\Mod}{\mathrm{Mod}}
\newcommand{\PSL}{\mathrm{PSL}}
\newcommand{\SL}{\mathrm{SL}}
\DeclareMathOperator{\hol}{hol}
\DeclareMathOperator{\per}{Per}
\DeclareMathOperator{\tr}{tr}
\DeclareMathOperator{\Hom}{Hom}
\DeclareMathOperator{\Cone}{Cone}
\DeclareMathOperator{\GF}{GF}
\DeclareMathOperator{\AH}{AH}
\DeclareMathOperator{\QF}{QF}

\newcommand{\spin}{\varepsilon}
\renewcommand{\hat}{\widehat}

\newcommand{\param}{{\mathchoice{\mkern1mu\mbox{\raise2.2pt\hbox{$\centerdot$}}\mkern1mu}{\mkern1mu\mbox{\raise2.2pt\hbox{$\centerdot$}}\mkern1mu}{\mkern1.5mu\centerdot\mkern1.5mu}{\mkern1.5mu\centerdot\mkern1.5mu}}}

\numberwithin{equation}{section}

\theoremstyle{plain}
\newtheorem{thm}{Theorem}[section]
\newtheorem{cor}[thm]{Corollary}

\newtheorem{lem}[thm]{Lemma}  % preparing for thm
\newtheorem{prop}[thm]{Proposition} % just collecting facts
\newtheorem*{nothm}{Theorem}
\newtheorem{bigthm}{Theorem}

\theoremstyle{definition}

\theoremstyle{definition}
\newtheorem*{remark}{Remark}
\newtheorem*{remarksenv}{Remarks}

\newenvironment{rmenumerate}{\begin{enumerate}}{\end{enumerate}}

\newenvironment{prooflist}{\begin{proof}\mbox{}\begin{list}{(\roman{enumi})}{\usecounter{enumi}\setlength{\labelwidth}{30pt}\setlength{\itemindent}{0pt}\setlength{\leftmargin}{20pt}\setlength{\itemsep}{5pt}}}{\end{list}\end{proof}}

\begin{document}

\title{Skinning maps are finite-to-one}
\author{David Dumas}
\thanks{Work partially supported by the National Science Foundation.}

\address{
Department of Mathematics, Statistics, and Computer Science\\
University of Illinois at Chicago\\
\tt{ddumas@math.uic.edu}
}

\date{June 25, 2015 (revised).  March 1, 2012 (original).}

\maketitle

%\dedicatory{For my daughter, Alice.}

\section{Introduction}

Skinning maps were introduced by William Thurston in the proof of the
Geometrization Theorem for Haken $3$-manifolds (see
\cite{otal:hyperbolization-haken}).
At a key step in the proof
one has a compact $3$-manifold $M$ with nonempty boundary whose
interior admits a hyperbolic structure.  The interplay between
deformations of the hyperbolic structure and the topology of $M$ and
$\bdy M$ determines a holomorphic map of Teichm\"uller spaces, the
\emph{skinning map}
$$ \sigma_M : \T(\bdy_0M) \to \T(\bar{\bdy_0M}),$$
where $\bdy_0M$ is the union of the non-torus boundary components and
$\bar{\bdy_0M}$ denotes the boundary with the opposite orientation.
The problem of finding a hyperbolic structure on a related
closed manifold is solved by showing that the composition of
$\sigma_M$ with a certain isometry $\tau : \T(\bar{\bdy_0M}) \to
\T(\bdy_0M)$ has a fixed point.

Thurston's original approach to the fixed point problem involved
extending the skinning map of an acylindrical manifold to a continuous
map $\Hat{\sigma}_M : \AH(M) \to \T(\bar{\bdy_0M})$ defined on the
compact space $\AH(M)$ of hyperbolic structures on $M$.  (A proof of
this Bounded Image Theorem can be found in
\cite[Sec.~9]{kent:skinning}.)  McMullen provided an alternate
approach based on an analytic study of the differential of the
skinning map \cite{mcmullen:theta} \cite{mcmullen:iteration}.  In each
case there are additional complications when $M$ has essential
cylinders.

More recently, Kent studied the diameter of the image of the skinning
map (in cases when it is finite), producing examples where this
diameter is arbitrarily large or small and relating the diameter to
hyperbolic volume and the depth of an embedded collar around the
boundary \cite{kent:skinning}.  However, beyond the contraction and
boundedness properties used to solve the fixed point problem---and the
result of Kent and the author that skinning maps are never constant
\cite{dumas-kent:slicing}---little is known about skinning maps in
general.

Our main theorem concerns the fibers of skinning maps:

\begin{bigthm}
\label{thm:main}
Skinning maps are finite-to-one.  That is, let $M$ be a compact
oriented $3$-manifold whose boundary is nonempty and not a union of
tori.  Suppose that the interior of $M$ admits a complete hyperbolic
structure without accidental parabolics, so that $M$ has an associated
skinning map $\sigma_M$.  Then for each $X \in \T(\bar{\bdy_0M})$, the preimage $\sigma_M^{-1}(X)$
is finite.
\end{bigthm}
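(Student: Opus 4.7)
The plan is to exploit that $\sigma_M$ is holomorphic between complex manifolds of equal dimension $n = -3\chi(\bdy_0 M)/2$, so each fiber $\sigma_M^{-1}(X)$ is a closed complex-analytic subvariety of $\T(\bdy_0 M)$. I would establish two properties: (i) the fiber contains no positive-dimensional irreducible component, and (ii) the fiber is compact in $\T(\bdy_0 M)$; together these force finiteness.

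For (ii), I would analyze sequences $\{X_n\} \subset \sigma_M^{-1}(X)$ via the Bers embedding $\beta\colon \T(\bdy_0 M) \to AH(M) \subset \X(\pi_1(M))$. If $X_n$ escapes every compact subset of $\T(\bdy_0 M)$, then $\beta(X_n)$ approaches the algebraic boundary of $AH(M)$, while the restrictions to $\pi_1(\bdy_0 M)$ take the form $(X_n, X)$ in the Bers parametrization of quasi-Fuchsian space and also diverge. The no-accidental-parabolics hypothesis combined with algebraic-convergence results for geometrically finite Kleinian groups should rule out such a divergent sequence, giving the required compactness.

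For (i), suppose for contradiction that $V \subseteq \sigma_M^{-1}(X)$ is an irreducible subvariety with $\dim V \geq 1$. The Bers embedding produces a positive-dimensional holomorphic family $\{\rho_v \colon \pi_1(M) \to \PSL_2(\C)\}_{v\in V}$ in $B(M)$; since the first Bers coordinate of $\rho_v|_{\pi_1(\bdy_0 M)}$ is always $v$ itself, these restrictions trace a positive-dimensional analytic subvariety of the single Bers slice $\T(\bdy_0 M) \times \{X\}$ inside $\X(\pi_1(\bdy_0 M))$.

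The principal obstacle is converting this setup into a contradiction. My intended approach is to track the $\CP^1$-structure on the skin surface $X$ determined by each $\rho_v$ (the quotient of the ``outer'' component of the domain of discontinuity of $\rho_v(\pi_1(\bdy_0 M))$) and its Schwarzian, a holomorphic function of $v$ taking values in a bounded subset of $Q(X)$; since the holonomy map on projective structures is a local biholomorphism, this Schwarzian records the varying holonomy faithfully. The crucial remaining step is to show that the additional constraint that each $\rho_v|_{\pi_1(\bdy_0 M)}$ extends across $\pi_1(M)$ cuts the allowed Schwarzians down to a discrete set. I would attempt this via a rank/dimension argument for the restriction map $\X(\pi_1(M)) \to \X(\pi_1(\bdy_0 M))$, observing that the image of $B(M)$ is a half-dimensional analytic subset of quasi-Fuchsian space and must meet the Bers slice $\T(\bdy_0 M) \times \{X\}$ in an analytic set of expected dimension zero. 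Upgrading this expected-dimension count into an actual transversality statement that holds in every case, rather than merely generically, looks to be the most delicate step of the argument.
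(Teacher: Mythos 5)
Your high-level decomposition is the right one, and matches the paper: a fiber of $\sigma_M$ is identified with a subset of the intersection of the holonomy variety $\sH_X$ with the extension variety $\E_M$ inside the character variety of $\bdy_0M$, which sits inside a Bers slice and is therefore precompact (by Bers' inequality); finiteness would then follow if one knew this intersection were discrete. Your compactness step (ii) is essentially the paper's Lemma~\ref{lem:bers-precompact} together with Lemma~\ref{lem:analytic-reduction}, and that part is sound.

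The genuine gap is in step (i), and you have correctly identified it yourself: the proposed dimension count / expected-transversality argument for the restriction map does not close. Both $\sH_X$ and $\E_M$ are half-dimensional in the character variety (indeed, as the paper notes, both are lagrangian for the Goldman complex-symplectic structure), so the \emph{expected} intersection dimension is zero, but two half-dimensional complex subvarieties can intersect in a positive-dimensional set --- there is no general transversality principle to rule this out, and nothing in the hypotheses makes the two subvarieties generic relative to each other. The paper explicitly remarks that a local or differential argument of the kind you sketch is not currently available, and instead establishes discreteness (Theorem~\ref{thm:main-intersection}) by an entirely different mechanism: the asymptotics of $\V_M = \hol^{-1}(\E_M)$ at infinity are controlled by passing to Morgan--Shalen limits (actions on $\Lambda$-trees), producing an isotropic piecewise-linear cone $\L_{M,X}$ in $\MF(S)$ that must contain the horizontal foliations of all projective limits of $\V_M$. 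Because the foliation map $\F : Q(X) \to \MF(S)$ is a stratified symplectomorphism for a Kähler structure on $Q(X)$, isotropic becomes totally real, and then extension theorems of Chirka--Alexander for analytic curves near totally real manifolds force $\V_M$ to be discrete. That asymptotic/totally-real argument is the core content of the paper and has no counterpart in your proposal; supplying it (or an equally strong substitute for discreteness) is what is missing.
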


As a holomorphic map with finite fibers, it follows from Theorem
\ref{thm:main} that skinning maps are open (answering a question in
\cite{dumas-kent:slicing}) and locally biholomorphic away from the
analytic hypersurface defined by the vanishing of the Jacobian
determinant.  Thus our results give strong nondegeneracy
properties for all skinning maps.

Our proof of Theorem \ref{thm:main} does not bound the size of the
finite set $\sigma_M^{-1}(X)$; instead, we show that each fiber of the
skinning map is both compact and discrete.  In particular it is not
clear if the number of preimages of a point is uniformly bounded over
$\T(\bar{\bdy_0M})$.

In case $M$ is acylindrical, we also show that Thurston's extension of
the skinning map to $\AH(M)$ is finite-to-one; this result appears as Theorem
\ref{thm:main-extended} below.

\subsection*{The intersection problem}
To study the fibers of the skinning map we translate the problem
to one of intersections of certain subvarieties of
the $\SL_2\C$-character variety of $\bdy_0M$.  The same reduction to
an intersection problem is used in \cite{dumas-kent:slicing}.  The
relevant subvarieties are:
\begin{itemize}
\item The \emph{extension variety} $\E_M$, which is the smallest closed algebraic
subvariety containing the characters of all homomorphisms $\pi_1(\bdy_0M) \to
\SL_2\C$ that can be extended to $\pi_1M
\to \SL_2\C$.
\item The \emph{holonomy variety} $\sH_X$, which is the analytic
subvariety consisting of characters of the $\SL_2\C$-lifts of holonomy
representations of $\CP^1$-structures on $\bar{\bdy_0M}$ compatible
with the complex structure $X \in \T(\bar{\bdy_0M})$.
\end{itemize}
Precise definitions of these objects are provided in Section
\ref{sec:3-manifolds}, with additional details of the disconnected
boundary case in Section \ref{sec:disconnected}.

The main theorem is derived from the following result about the
intersections of holonomy and extension varieties, the proof of which
occupies most of the paper.

\begin{bigthm}[Intersection Theorem]
\label{thm:main-intersection}
Let $M$ be an oriented $3$-manifold with nonempty boundary that is not
a union of tori.  Let $X$ be a marked Riemann surface structure on
$\bdy_0M$.  Then the intersection $\sH_X \cap \E_M$ is a discrete
subset of the character variety.
\end{bigthm}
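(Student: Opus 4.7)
Since $\sH_X$ and $\E_M$ are analytic subvarieties of the character variety $\X(\bdy_0 M)$ whose complex dimensions sum to that of $\X(\bdy_0 M)$, discreteness of their intersection is the expected dimension and amounts to excluding positive-dimensional analytic components at every intersection point. My plan is to pass through the Schwarzian parametrization $\hol_X : H^0(X, K^2) \to \sH_X \subset \X(\bdy_0 M)$ of $\CP^1$-structures on $X$ with underlying conformal structure $X$: any analytic arc in $\sH_X \cap \E_M$ through $\rho = \hol_X(\phi_0)$ lifts (after a branched cover of the parameter, if necessary) to an analytic arc $\phi_t$ in the preimage $\hol_X^{-1}(\E_M) \subset H^0(X, K^2)$, so the theorem is equivalent to showing that this preimage is a discrete subset of a complex vector space whose dimension matches the complex codimension of $\E_M$.

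The next step is a tangent space calculation at an intersection point. The image of $d\hol_X$ (the tangent space to $\sH_X$) is described in terms of cocycles built from holomorphic quadratic differentials via the classical Schwarzian deformation theory of $\CP^1$-structures; meanwhile, $T_\rho \E_M$ is the image of the restriction map $H^1(M; \mathrm{ad}\,\rho) \to H^1(\bdy_0 M; \mathrm{ad}\,\rho)$ in group cohomology, and by the half-lives-half-dies principle is Lagrangian for the Goldman pairing. For generic $\rho$ one expects these two subspaces to meet trivially, giving local transversality; this first-order statement amounts to the non-vanishing of a determinant pairing a basis of $H^0(X, K^2)$ against a dual basis of $M$-extending classes, and would already give discreteness near such points.

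The principal obstacle, and the hard part of the argument, is that transversality must fail at some points -- the corollary stated in the introduction implies that the Jacobian of $\sigma_M$ vanishes on an analytic hypersurface -- so the linear tangent space analysis alone cannot suffice. To handle the non-transverse case I would carry out a higher-order expansion of the condition $\hol_X(\phi_t) \in \E_M$ along an alleged analytic arc $\phi_t$: expanding both the holonomy and the defining equations of $\E_M$ in power series in $t$, and then using the rigidity of the hyperbolic structure on $M$ together with the hypothesis of no accidental parabolics to force $\phi_t$ to be constant. I expect the key input to be a Bochner- or harmonic-maps-type vanishing identifying the infinitesimal deformation of $\pi_1(M)$ associated to $\phi_t$ with a section of a holomorphic bundle on the hyperbolic $3$-manifold which is obstructed by the boundary data from $X$. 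Finally, the case of disconnected $\bdy_0 M$ should reduce to the connected case via the product structure described in Section \ref{sec:disconnected}.
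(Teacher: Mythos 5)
Your proposal takes a genuinely different route from the paper, and the paper's author in fact explicitly flags your route as an open direction rather than the one pursued: ``it would be interesting to understand the discreteness of the intersection $\sH_X \cap \E_M$ through local or differential properties rather than the compactifications and asymptotic arguments used here.'' The paper's proof is global and asymptotic. It constructs an isotropic piecewise-linear cone $\L_{M,X} \subset \MF(\bdy_0M)$ (via the four-point inequality in $\Lambda$-trees and a cancellation identity for tetrahedron forms), shows that projective limits of divergent sequences in $\V_M = \hol^{-1}(\E_M)$ have horizontal foliations in this cone (via Morgan--Shalen valuations and the holonomy-limit theorem from \cite{dumas:holonomy}), proves that the Hubbard--Masur foliation map is a stratified symplectomorphism for a singular K\"ahler metric on $Q(X)$, and then uses analytic continuation across totally real manifolds (Chirka, Alexander) to show that an analytic curve in $\V_M$ would force an open arc of a Hopf circle into $\rbdy\V_M$, which is excluded by isotropy. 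You are instead attempting a local argument at intersection points.

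The gap in your proposal is the handling of non-transverse intersection points, which you correctly identify as the hard case but then do not actually resolve. Saying you would ``carry out a higher-order expansion of the condition $\hol_X(\phi_t) \in \E_M$'' and ``expect the key input to be a Bochner- or harmonic-maps-type vanishing'' is a hope, not an argument: you give no mechanism by which the higher-order terms are constrained, no identification of the relevant holomorphic bundle or differential operator, and no reason the vanishing would hold for all orders simultaneously (which is what excluding an analytic arc requires). There is no known local argument of this kind, and the paper's structure strongly suggests the author could not find one either. Moreover, the specific inputs you invoke --- ``rigidity of the hyperbolic structure on $M$'' and ``no accidental parabolics'' --- are not hypotheses of the Intersection Theorem: that theorem requires only that $M$ be an oriented $3$-manifold with non-torus boundary, with no hyperbolicity, incompressibility, or Haken assumption (the paper points this out explicitly, noting that Theorem~\ref{thm:main-intersection} ``applies in a more general setting'' than the skinning-map situation). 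So even if the higher-order expansion could be made to work, it would be proving a weaker statement than the one in question. Your first-order observations (Lagrangianity of both $\sH_X$ and $\E_M$, the Schwarzian parametrization, the reduction to discreteness of $\hol^{-1}(\E_M)$) are correct and match the paper's setup, but as written the proposal does not close the central gap.
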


This theorem applies in a more general setting than the specific
intersection problem arising from skinning maps.  For example, while
skinning maps are defined for manifolds with incompressible boundary,
such a hypothesis is not needed in Theorem
\ref{thm:main-intersection}.

While the theorem above involves an oriented manifold $M$, the set
$\E_M$ is independent of the orientation.  Thus we also obtain
discreteness of intersections $\sH_{\bar{X}} \cap \E_M$ where
$\bar{X}$ is a Riemann surface structure on $\bdy_0M$ that induces
an orientation opposite that of the boundary orientation of $M$.

\subsection*{Steps to the intersection theorem}
Our study of $\sH_X \cap \E_M$ is based on the parameterization of the
irreducible components of $\sH_X$ by the vector space $Q(X)$ of
holomorphic quadratic differentials; this parameterization is the
\emph{holonomy map} of $\CP^1$-structures, denoted by ``$\hol$''.
The overall strategy is to show that the preimage of $\E_M$, i.e.~the
set
$$ \V_M = \hol^{-1}(\E_M),$$
is a complex analytic subvariety of $Q(X)$ that is subject to
certain constraints on its behavior at infinity, and ultimately to
show that only a discrete set can satisfy these.

We now sketch the main steps of the argument and state some
intermediate results of independent interest.  In this sketch we
restrict attention to the case of a $3$-manifold $M$ with
\emph{connected} boundary $S$.  Let $X \in \T(S)$ be a marked Riemann
surface structure on the boundary.

\emphpoint{Step 1. Construction of an isotropic cone in the space of
  measured foliations.}

The defining property of this cone in $\MF(S)$ is that it determines
which quadratic differentials $\phi \in Q(X)$ have dual trees $T_\phi$
that admit ``nice'' equivariant maps into trees on which $\pi_1M$ acts
by isometries.  Because of the way this cone is used in a later step
of the argument, here we must consider actions of $\pi_1M$ not only
$\R$-trees but also on $\Lambda$-trees, where $\Lambda$ is a more
general ordered abelian group.  And while an isometric embedding of
$T_\phi$ into an $\R$-tree on which $\pi_1M$ acts is the prototypical
example of a nice map, we must also consider the \emph{straight maps}
defined in \cite{dumas:holonomy} and certain partially-defined maps
arising from non-isometric trees with the same length function.

In the case of straight maps, we show:

\begin{bigthm}
\label{thm:main-floyd}
There is an isotropic piecewise linear cone $\L_{M,X} \subset \MF(S)$
with the following property:  If $\pi_1M$ acts on a $\Lambda$-tree $T$
by isometries, and if $\phi \in Q(X)$ is a quadratic differential
whose dual tree $T_\phi$ admits an equivariant straight map $T_\phi
\to T$, then the horizontal foliation of $\phi$ lies in $\L_{M,X}$.
\end{bigthm}

This result is stated precisely in Theorem
\ref{thm:straight-isotropic} below, and Theorem
\ref{thm:length-isotropic} presents a further refinement that is used
in the proof of the main theorem.

\emphpoint{Step 2. Limit points of $\V_M$ have foliations in the isotropic cone.}

In \cite{dumas:holonomy} we analyze the large-scale behavior of the
holonomy map, showing that straight maps arise naturally when
comparing limits in the Morgan-Shalen compactification---which are
represented by actions of $\pi_1S$ on $\R$-trees---to the dual trees
of limit quadratic differentials.  The main result can be summarized
as follows:

\begin{nothm}
If a divergent sequence in $Q(X)$ can be rescaled to have limit
$\phi$, then any Morgan-Shalen limit of the associated holonomy
representations corresponds to an $\R$-tree $T$ that admits an
equivariant straight map $T_\phi \to T$.
\end{nothm}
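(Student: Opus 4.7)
The plan is to compare two parameterizations of $\CP^1$-structures on $X$: the Schwarzian parameterization by $Q(X)$ (which defines $\hol$) and Thurston's parameterization by pairs $(Y,\lambda) \in \T(S) \times \ML(S)$, where $Y$ is the underlying hyperbolic structure and $\lambda$ the bending lamination of the associated pleated plane in $\H^3$. For each $\phi_n$ in the given divergent sequence, let $(Y_n,\lambda_n)$ be the Thurston data of the $\CP^1$-structure $\Sigma_n$ with Schwarzian derivative $\phi_n$. In these coordinates the holonomy $\rho_n = \hol(\phi_n)$ arises from bending the Fuchsian representation of $Y_n$ along $\lambda_n$, so its translation lengths satisfy $\ell_{\rho_n}(\gamma) \leq \ell_{Y_n}(\gamma)$ together with more refined complex-length estimates controlled by $i(\gamma,\lambda_n)$.

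The crux is an asymptotic comparison of the two sets of coordinates as $\|\phi_n\| \to \infty$: after rescaling by $\|\phi_n\|^{1/2}$ (reflecting the fact that $\phi$ is a quadratic object), the laminations $\lambda_n$ should converge in $\MF(S)$ to a fixed multiple of the horizontal foliation $F_\phi$ of the rescaled limit $\phi$, while the hyperbolic structures $Y_n$ remain controlled on compact subsets of the complement of the vertical critical graph of $\phi$. This is the main obstacle, and the natural line of attack is a WKB-style analysis of the Schwarzian equation with large parameter: its solutions should be oscillatory along horizontal trajectories of $\phi$ and exponentially suppressed transverse to them, so the pleating of the developing map concentrates along the horizontal foliation in the limit.

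Granted this comparison, the Morgan-Shalen limit tree $T$ is described by its length function $\ell_T(\gamma) = \lim_{n\to\infty} \ell_{\rho_n}(\gamma)/k_n$ for a normalizing sequence $k_n$ which, by the bending estimates, must grow like $\|\phi_n\|^{1/2}$. The same estimates then yield $\ell_T(\gamma) \leq C \cdot i(\gamma,F_\phi) = C \cdot \ell_{T_\phi}(\gamma)$ for every $\gamma \in \pi_1 S$. Such domination of length functions produces an equivariant map $f : T_\phi \to T$ by standard tree-comparison arguments (folding, collapsing, and passage to the minimal invariant subtree).

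To upgrade $f$ to a \emph{straight} map, I would observe that along a regular horizontal arc $\alpha$ of $\phi$ the developing maps of $\Sigma_n$ become asymptotically affine in the $\phi$-coordinate after the $\|\phi_n\|^{1/2}$ rescaling, so the displacement induced on $T$ depends linearly on the $\phi$-length of $\alpha$. This linearity is exactly the straightness condition used in \cite{dumas:holonomy}, and extending by equivariance and continuity through the singular set of the horizontal foliation completes the construction.
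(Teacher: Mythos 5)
The statement you are proving is Theorem~A of the companion paper \cite{dumas:holonomy}; the present paper quotes it (as Theorem~\ref{thm:holonomy-limits}) and does not re-prove it, so there is no internal proof to compare against verbatim. That said, the route you sketch is genuinely different from the one taken in \cite{dumas:holonomy}, which works directly with the developing map of the projective structure and its comparison to the collapsing map $\Tilde{X}\to T_\phi$ in the $|\phi|$-metric, rather than passing through Thurston's $(Y,\lambda)$-parameterization and the bending lamination. The cited proof does not need to identify the asymptotics of the bending lamination; it extracts the straight map from quantitative estimates on how the developing map transports nonsingular $|\phi|$-geodesic segments into $\H^3$.

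More importantly, two of your steps contain real gaps. First, the passage from the length-function inequality $\ell_T(\gamma)\leq C\,\ell_{T_\phi}(\gamma)$ to the existence of an equivariant map $T_\phi\to T$ is asserted to follow from ``standard tree-comparison arguments (folding, collapsing, and passage to the minimal invariant subtree),'' but there is no such general principle for $\R$-trees: domination of translation-length functions does not produce an equivariant morphism between minimal actions. (Skora's theorem, the closest result in this neighborhood, concerns \emph{equality} of length functions for small surface-group actions and yields an isometry, not a morphism from an inequality.) This is precisely why \cite{dumas:holonomy} builds the map geometrically rather than spectrally. Second, \emph{straightness} is not a length-function property: it asks that the map restrict to an isometric embedding on \emph{every} geodesic in $T_\phi$ that is the projection of a nonsingular $|\phi|$-geodesic segment, including short transversals far from any closed curve. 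Your ``linearity of displacement along horizontal arcs'' heuristic addresses horizontal trajectories, whereas the straightness condition is about transverse (height-carrying) segments and must hold uniformly after rescaling, including near (but not through) singularities; this requires the kind of two-sided displacement estimates, and an explicit handling of the thin part, that are the actual technical content of \cite{dumas:holonomy}. The WKB-type comparison in your second paragraph is indeed the right heuristic for why the pleating concentrates along $\F(\phi)$, and you rightly flag it as the main obstacle, but as written the argument skips exactly the two steps where the hard analysis lives.
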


Here rescaling of the divergent sequence uses the action of $\R^+$ on
$Q(X)$.  The precise limit result we use is stated in Theorem
\ref{thm:holonomy-limits}, and other related results and discussion
can be found in \cite{dumas:holonomy}.

When we restrict attention to the subset $\V_M \subset Q(X)$, the
associated holonomy representations lie in $\E_M$ and so they arise as
compositions of $\pi_1M$-representations with the map $i_* : \pi_1S
\to \pi_1M$ induced by the inclusion of $S$ as the boundary of $M$.
(Strictly speaking, this describes a Zariski open subset of $\E_M$.)
We think of these as representations that ``extend'' from
$i_*(\pi_1S)$ to the larger group $\pi_1M$.

Note that \emph{a priori} the passage from a $\pi_1S$-representation
to a $\pi_1M$-representation could radically change the geometry of
the associated action on $\H^3$, as measured for example by the
diameter of the orbit of a finite generating set.  This possibility,
combined with the need to keep track of the limiting $\pi_1S$- and
$\pi_1M$-dynamics simultaneously, requires us to consider
$\Lambda$-trees more general than $\R$-trees.

Using the valuation constructions of Morgan-Shalen, we show that there
is a similar extension property for the trees obtained as limits
points of $\E_M$, or more precisely, for their length functions
(Theorem \ref{thm:length-function-extension}).  In the generic case of
a non-abelian action, the combination of this construction with the
holonomy limit theorem above gives an $\R^n$-tree $\hat{T}$ (where
$\R^n = \Lambda$ is given the lexicographical order) on which $\pi_1M$
acts by isometries and a straight map
$$ T_\phi \to \hat{T}$$
where $\phi$ is the rescaled limit of a divergent sequence in
$\V_M$. These satisfy the hypotheses of
Theorem~\ref{thm:main-floyd}, so the horizontal foliation of $\phi$
lies in $\L_{M,X}$.  Limit points of $\E_M$ that correspond to abelian
actions introduce minor additional complications that are handled by
Theorem \ref{thm:length-isotropic}.

\emphpoint{Step 3. The foliation map $\F : Q(X) \to \MF(S)$ is symplectic.}

Hubbard and Masur showed that the foliation map $\F : Q(X) \to \MF(S)$
is a homeomorphism.  In order to use the isotropic cone $\L_{M,X}$ to
understand the set $\V_M$, we analyze the relation between
the foliation map, the complex structure of $Q(X)$, and the symplectic
structure of $\MF(S)$.  

We introduce a natural K\"ahler structure on $Q(X)$ corresponding to
the Weil-Petersson-type hermitian pairing
$$\langle \psi_1, \psi_2 \rangle_\phi = \int_X \frac{\psi_1
  \bar{\psi}_2}{4 |\phi|} $$ Here we have a base point $\phi \in Q(X)$
and the quadratic differentials $\psi_i \in T_\phi Q(X) \simeq Q(X)$
are considered as tangent vectors.  This integral pairing is not
smooth, nor even well-defined for all tangent vectors, due to
singularities of the integrand coming from higher-order zeros of
$\phi$.  However we show that the pairing does give a well-defined
K\"ahler structure relative to a stratification of $Q(X)$.

We show that the underlying symplectic structure of this K\"ahler
metric is the one pulled back from $\MF(S)$ by the foliation map:

\begin{bigthm}
\label{thm:main-symplectomorphism}
For any $X \in \T(S)$, the map $\F : Q(X) \to \MF(S)$ is a
real-analytic stratified symplectomorphism, where $Q(X)$ is given the
symplectic structure coming from the pairing $\langle \psi_1, \psi_2
\rangle_\phi$ and where $\MF(S)$ has the Thurston symplectic form.
\end{bigthm}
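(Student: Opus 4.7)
The plan is to work stratum by stratum in $Q(X)$ and to reduce both symplectic structures to the intersection form on the anti-invariant part of the first cohomology of an orientation double cover.

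Let $Q_\mu \subset Q(X)$ denote the complex-analytic stratum of quadratic differentials whose zero divisor has a fixed combinatorial type $\mu$. For each $\phi \in Q_\mu$ let $\pi_\phi : \hat{X}_\phi \to X$ be the double cover on which $\sqrt{\phi} = \alpha_\phi$ becomes a single-valued holomorphic $1$-form, branched over the odd-order zeros of $\phi$. The topological type of $\hat{X}_\phi$ depends only on $\mu$, so on a neighborhood in $Q_\mu$ the covers fit into a smooth family, and I would use periods of $\alpha_\phi$ over classes in the deck-anti-invariant part of $H_1(\hat{X}_\phi, Z(\alpha_\phi); \Z)$ as local holomorphic coordinates on $Q_\mu$.

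For a tangent vector $\psi \in T_\phi Q_\mu$, the associated lift $\beta_\psi = \pi_\phi^* \psi / \alpha_\phi$ is a meromorphic $1$-form on $\hat{X}_\phi$. Tangency to $Q_\mu$ forces $\psi$ to vanish to order at least $k-1$ at every zero of order $k$ of $\phi$, and I would use this to show that $\beta_\psi$ is square-integrable and that the Weil-Petersson-type pairing becomes
\begin{equation*}
\langle \psi_1, \psi_2 \rangle_\phi = \frac{1}{c}\int_{\hat{X}_\phi} \beta_{\psi_1} \wedge \overline{\beta_{\psi_2}}
\end{equation*}
for an appropriate constant $c$, interpreting the integrand as a $(1,1)$-form. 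Consequently, the associated symplectic form $-\Im\langle\param,\param\rangle_\phi$ is the pullback, via the linear map $\psi \mapsto [\beta_\psi]$, of the intersection pairing on $H^1(\hat{X}_\phi,\R)_-$.

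For the foliation side, I would fix a train track $\tau$ on $S$ carrying $\F(\phi)$ and lift it to a train track $\hat\tau$ on $\hat{X}_\phi$. The edges of $\hat\tau$, modulo the anti-invariance, provide a basis for a sublattice in the relative homology of $\hat X_\phi$, and the Thurston coordinates on $\MF(S)$ near $\F(\phi)$ correspond to imaginary periods of $\alpha_\phi$ over these cycles. Under this identification the Thurston form---which in train track coordinates is the algebraic intersection pairing on edges---becomes the intersection pairing on $H^1(\hat{X}_\phi,\R)_-$ composed with the imaginary-period map, via Poincar\'e--Lefschetz duality. The derivative of $\F$ at $\phi$ is $\psi \mapsto \Im(\beta_\psi/2)$ in period coordinates, so $\F^*\omega_{\mathrm{Th}}$ agrees with the symplectic form computed from $\langle\param,\param\rangle$ on $Q_\mu$. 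Real-analyticity is immediate, since $\F$ is the imaginary part of a linear map in these coordinates.

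The main obstacle I anticipate is assembling these stratum-wise identifications into a coherent stratified symplectomorphism in a global sense: verifying the factor conventions in the passage from train-track intersection pairings to the cohomological intersection form via Poincar\'e--Lefschetz duality; pinning down the precise $L^2$-integrability statement that makes the tangent bundle of $Q_\mu$ exactly the domain of the pairing; and controlling the behavior along strata closures, where the family $\hat X_\phi$ degenerates and tangent vectors to $Q(X)$ may leave the stratum and thus fall outside the natural domain of the Hermitian form.
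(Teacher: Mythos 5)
Your proposal follows essentially the same route as the paper: stratify by zero type, pass to the orientation double cover, express both the Hermitian pairing and the Thurston form via periods and the intersection pairing on anti-invariant cohomology, and match them through the derivative of the period map. One sharpening worth noting is that the order-of-vanishing constraint on tangent vectors makes the lifted $1$-forms $\beta_\psi$ genuinely \emph{holomorphic} on $\hat X_\phi$ (not merely square-integrable), which removes the $L^2$ subtleties you flag and makes the identification of the pairing with the cohomological cup product immediate; moreover, the ``stratified symplectomorphism'' conclusion is by definition a statement internal to each stratum, so degeneration along stratum closures does not need to be controlled.
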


The lack of a smooth structure on $\MF(S)$ means that the regularity
aspect of this result must be interpreted carefully.  We show
that for any point $\phi \in Q(X)$ there is a neighborhood in its
stratum and a train track chart containing $\F(\phi)$ in which the
local expression of the foliation map is real-analytic and symplectic.
The details are given in Theorem \ref{thm:symplectomorphism}.

\emphpoint{Step 4. Analytic sets with totally real limits are discrete.}

In a K\"ahler manifold, an isotropic submanifold is totally real.
While the piecewise linear cone $\L_{M,X}$ is not globally a manifold,
Theorem \ref{thm:main-symplectomorphism} allows us to describe
$\F^{-1}(\L_{M,X})$ locally in a stratum of $Q(X)$ in terms of totally
real, real-analytic submanifolds.  Since limit points of $\V_M$
correspond to elements of $\F^{-1}(\L_{M,X})$, this gives a kind of ``totally
real'' constraint on the behavior of $\V_M$ at infinity. 

To formulate this constraint we consider the set $\partial_\R\V_M
\subset S^{2N-1}$ of points in the unit sphere of $Q(X) \simeq \C^N$
that can be obtained as $\R^+$-rescaled limits of divergent sequences
in $\V_M$.  Projecting this set through the Hopf fibration $S^{2N-1}
\to \CP^{N-1}$ we obtain the set $\partial_\C \V_M$ of boundary points
of $\V_M$ in the complex projective compactification of $Q(X)$.  Using
the results of steps 1--3 we show (in Theorem
\ref{thm:real-and-complex}) that:
\begin{rmenumerate}
\item In a neighborhood of some point, $\cbdy \V_M$ is contained in a totally
real manifold, and
\item The intersection of $\rbdy \V_M$ with a fiber of the Hopf map
has empty interior.
\end{rmenumerate}

Using extension and parameterization results from analytic geometry
it is not hard to show that among analytic subvarieties of $Q(X)$,
only a discrete subset of can have both of these properties.
Condition (i) forces any analytic curve in $\V_M$ to extend to an
analytic curve in a neighborhood of some boundary point $p \in
\CP^{N-1}$.  Within this extension there is a generically a circle of
directions in which to approach the boundary point, some arc of which
is realized by the original curve.  Analyzing the correspondence
between this circle and the Hopf fiber over $p$, one finds that $\rbdy
\V_M$ contains an open arc of this fiber, violating condition (ii).

This contradiction shows that $\V_M$ contains no analytic curves,
making it a discrete set.  The intersection theorem follows.

\subsection*{Applications and complements}

The construction of the isotropic cone in Theorem \ref{thm:main-floyd}
was inspired by the work of Floyd on the space of boundary curves of
incompressible, $\bdy$-incompressible surfaces in $3$-manifolds
\cite{floyd:boundary-curves}.  Indeed, in the incompressible boundary
case, lifting such a surface to the universal cover and considering
dual trees in the boundary and in the $3$-manifold gives rise to an
isometric embedding of $\Z$-trees.  Using Theorem~\ref{thm:main-floyd} we
recover Floyd's result in this case.  This connection is explained in
more detail in Section \ref{sec:floyd}, where we also note that the
same ``cancellation'' phenomenon is at the core of both arguments.

Since Theorem \ref{thm:main-symplectomorphism} provides an
interpretation of Thurston's symplectic form in terms of Riemannian
and K\"ahler geometry of a (stratified) smooth manifold, we hope that
it might allow new tools to be applied to problems involving the space
of measured foliations.  In Section \ref{sec:hubbard-masur-constant}
we describe work of Mirzakhani in this direction, where it is shown
that a certain function connected to $\Mod(S)$-orbit counting problems
in Teichm\"uller space is constant.

As a possible extension of Theorem \ref{thm:main-intersection} one
might ask whether $\sH_X$ and $\E_M$ always intersect transversely, or
equivalently, whether skinning maps are always immersions.  In the
slightly more general setting of manifolds with rank-$1$ cusps a
negative answer was recently given by Gaster \cite{gaster}.  In
addition to Gaster's example, numerical experiments conducted jointly
with Richard Kent suggest critical points for some other manifolds
with rank-$1$ cusps \cite{dumas-kent:experiments}.

Nevertheless it would be interesting to understand the discreteness of
the intersection $\sH_X \cap \E_M$ through local or differential
properties rather than the compactifications and asymptotic arguments
used here.  The availability of rich geometric structure on the
character variety and its compatibility with the subvarieties in
question offers some hope in this direction; for example, both $\sH_X$
and $\E_M$ are Lagrangian with respect to the complex symplectic
structure of the character variety (see \cite{kawai}
\cite[Sec.~7.2]{labourie:notes} \cite[Sec.~1.7]{krasnov-schlenker}).

\subsection*{Structure of the paper}

Section \ref{sec:preliminaries} recalls some definitions and basic
results related to $\Lambda$-trees, measured foliations, and
Teichm\"uller theory.

Sections \ref{sec:isotropic1}--\ref{sec:analytic-curves} contain the
proofs of the main theorems in the case of a $3$-manifold with
connected boundary.  Working in this setting avoids some cumbersome
notation and other issues related to disconnected spaces, while all
essential features of the argument are present.  Sections
\ref{sec:isotropic1}--\ref{sec:isotropic2} are devoted to the
isotropic cone construction, Section \ref{sec:kahler} introduces the
stratified K\"ahler structure, and Sections
\ref{sec:3-manifolds}--\ref{sec:analytic-curves} combine these with
the results of \cite{dumas:holonomy} to prove Theorem \ref{thm:main}
in the connected boundary case.

In Section \ref{sec:disconnected} we adapt the definitions and results
of the previous sections as necessary to handle a manifold with
disconnected boundary, possibly including torus components, completing
the proof of Theorem \ref{thm:main}.

Finally, in Section \ref{sec:acylindrical} we prove an analogue of
Theorem \ref{thm:main} for the extended skinning map of an
acylindrical $3$-manifold.

\subsection*{Acknowledgments}

A collaboration with Richard Kent (in \cite{dumas-kent:dense}
\cite{dumas-kent:slicing}) provided essential inspiration in the early
stages of this project.  The author is grateful for this and for
helpful conversations with Athanase Papadopoulos, Kasra Rafi, and
Peter Shalen.  The author also thanks Maryam Mirzakhani for allowing
the inclusion of Theorem \ref{thm:mirzakhani} and the anonymous
referees for many helpful comments and suggestions.

\section{Preliminaries}
\label{sec:preliminaries}

\subsection{Ordered abelian groups}

An \emph{ordered abelian group} is a pair $(\Lambda,<)$ consisting of
an abelian group $\Lambda$ and a translation-invariant total order $<$
on $\Lambda$.  We often consider the order to be implicit and denote
an ordered abelian group by $\Lambda$ alone.  Note that an order on
$\Lambda$ also induces an order on any subgroup of $\Lambda$.

The positive subset of an ordered abelian group $\Lambda$ is the set
$\Lambda^+ = \{ g \in \Lambda \: | \: g > 0 \}$.  If $x \in \Lambda$
is nonzero, then exactly one of $x,-x$ lies in $\Lambda^+$, and we
denote this element by $|x|$.

If $x,y \in \Lambda$, we say that $y$ is \emph{infinitely larger} than
$x$ if $n |x| < |y|$ for all $n \in \N$.  If neither of $x,y$ is
infinitely larger than the other, then $x$ and $y$ are
\emph{archimedean equivalent}.  When the set of archimedean
equivalence classes of nonzero elements is finite, the number of such
classes is the \emph{rank} of $\Lambda$.  In what follows we consider
\emph{only} ordered abelian groups of finite rank.

A subgroup $\Lambda' \subset \Lambda$ is \emph{convex} if whenever
$g,k \in \Lambda'$, $h \in \Lambda$, and $g < h < k$ we have $h
\in \Lambda'$.  The convex subgroups of a given group are ordered by
inclusion.  A convex subgroup is a union of archimedean equivalence
classes and is uniquely determined by the largest archimedean
equivalence class that it contains (which exists, since the rank is
finite).  In this way the convex subgroups of a given ordered abelian
group are in one-to-one order-preserving correspondence with its
archimedean equivalence classes.

\subsection{Embeddings and left inverses}

If we equip $\R^n$ with the lexicographical order, then the inclusion
$\R \into \R^n$ as one of the factors is order-preserving.  This
inclusion has a left inverse $\R^n \to \R$ given by projecting onto
the factor.  This projection is of course a homomorphism but it is
\emph{not} order-preserving.

Similarly, the following lemma shows that an order-preserving
embedding of $\R$ into any ordered abelian group of finite rank has a
left inverse; this is used in Section \ref{sec:construction-of-isotropic}.

\begin{lem}
\label{lem:left-inverse}
If $\Lambda$ is an ordered abelian group of finite rank and $\ordinc :
\R \into \Lambda$ is an order-preserving homomorphism, then $\ordinc$
has a left inverse.  That is, there is a homomorphism $\varphi :
\Lambda \to \R$ such that $\varphi \circ \ordinc = \identity$.
\end{lem}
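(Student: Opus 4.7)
The plan is to reduce to an archimedean quotient, where H\"older's theorem supplies a left inverse, and then extend to all of $\Lambda$ using the fact that $\R$ is injective as an abelian group. The finite-rank hypothesis is exactly what makes this reduction possible, since it guarantees a finite chain of convex subgroups whose successive quotients are archimedean.

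First I would locate the relevant convex subgroups. Let $\Lambda_0 \subseteq \Lambda$ be the smallest convex subgroup containing $\ordinc(\R)$ (the intersection of all such), and let $\Lambda_1 \subsetneq \Lambda_0$ be the largest proper convex subgroup of $\Lambda_0$; both exist by the finite-rank hypothesis, since the convex subgroups of $\Lambda$ form a finite totally ordered chain. The elements of $\Lambda_0 \setminus \Lambda_1$ form a single archimedean class, so the ordered quotient $\Lambda_0/\Lambda_1$ is archimedean. By the minimality of $\Lambda_0$, no nonzero element of $\ordinc(\R)$ lies in $\Lambda_1$, so the composition
$$ \bar\ordinc : \R \xrightarrow{\ordinc} \Lambda_0 \longrightarrow \Lambda_0/\Lambda_1 $$
is an order-preserving injective homomorphism into an archimedean ordered group.

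Next I would apply H\"older's theorem to embed $\Lambda_0/\Lambda_1$ order-preservingly into $\R$ by some homomorphism $\iota$. The composition $\iota \circ \bar\ordinc : \R \to \R$ is an order-preserving (hence monotone and $\Q$-linear) homomorphism, so it has the form $x \mapsto cx$ for some $c > 0$. Setting $\psi = c^{-1}\iota$ and composing with the quotient map $\Lambda_0 \to \Lambda_0/\Lambda_1$ yields a homomorphism $\Lambda_0 \to \R$ that restricts to the identity on $\ordinc(\R)$. Finally, since $\R$ is a divisible abelian group, and therefore injective in the category of abelian groups, this homomorphism extends to some homomorphism $\varphi : \Lambda \to \R$, which is the desired left inverse.

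The argument faces no serious obstacle once the right convex subgroup is identified: all of the order-theoretic content is absorbed by H\"older's theorem, and the final extension from $\Lambda_0$ to $\Lambda$ is purely algebraic. That last step is also where order-preservation must be relinquished, consistent with the paper's remark that the projection $\R^n \to \R$ arising in the rank-$n$ example is not an order homomorphism.
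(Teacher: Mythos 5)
Your proof is correct, and it takes a genuinely different route from the paper's.  The paper invokes the full Hahn embedding theorem to place $\Lambda$ inside $\R^n$ with the lexicographical order, reads off the index $k$ of the first nonzero coordinate of $\ordinc(1)$, and defines the left inverse as the explicit projection $x \mapsto x_k/a_k$.  You instead work intrinsically with the chain of convex subgroups: you identify $\Lambda_0$, the smallest convex subgroup containing $\ordinc(\R)$, and $\Lambda_1$, its maximal proper convex subgroup; the key observation that $\ordinc(\R) \cap \Lambda_1 = 0$ (otherwise convexity would pull all of $\ordinc(\R)$ into $\Lambda_1$, contradicting the minimality of $\Lambda_0$) gives an injective order-preserving map $\R \to \Lambda_0/\Lambda_1$, which H\"older's theorem normalizes; the final extension to all of $\Lambda$ uses divisibility/injectivity of $\R$.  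Both arguments are ultimately anchored in the same rank-one fact (H\"older, i.e.\ Hahn in rank one), but yours avoids the full Hahn embedding and replaces the coordinate computation with a categorical extension step.  One small remark: the finite-rank hypothesis is slightly less essential to your argument than you suggest — taking $\Lambda_1$ to be the union of all proper convex subgroups of $\Lambda_0$ (which is still proper, since $\ordinc(1)$ lies in no proper convex subgroup of $\Lambda_0$) would let the quotient $\Lambda_0/\Lambda_1$ be archimedean in arbitrary rank, so your approach in fact proves a bit more than the paper's.  Your closing observation about where order-preservation is necessarily lost matches the paper's remark and is accurate, with the refinement that your $\psi \circ q : \Lambda_0 \to \R$ is still weakly order-preserving and only the final injectivity-based extension to $\Lambda$ abandons the order entirely.
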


To construct a left inverse we use the following structural result for
ordered abelian groups; part (i) is the \emph{Hahn embedding theorem}
(see e.g.~\cite{gravett} \cite[Sec.~II.2]{kokorin-kopytov}):

\begin{thm}
\label{thm:hahn}
Let $\Lambda$ be an ordered abelian group of rank $n$.
\begin{rmenumerate}
\item There exists an order-preserving embedding $\Lambda \into
\R^{n}$ where $\R^n$ is given the lexicographical order.

\item If $n=1$, the order-preserving embedding $\Lambda \into \R$ is
unique up to multiplication by a positive constant.
\end{rmenumerate}
\noproof
\end{thm}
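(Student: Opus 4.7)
The plan is to prove (i) by the classical Hahn approach, simplified by finite rank, and to derive (ii) from the explicit construction in H\"older's theorem. For (i), the chain of convex subgroups furnished by the rank-$n$ hypothesis is $\{0\} = \Lambda_0 \subsetneq \Lambda_1 \subsetneq \cdots \subsetneq \Lambda_n = \Lambda$, and each quotient $A_i = \Lambda_i/\Lambda_{i-1}$ inherits an archimedean order (its nonzero elements form a single archimedean class). H\"older's theorem---proved independently, e.g.\ by setting $\iota_i(x) = \sup\{p/q \in \QQ : qx \leq p u_i\}$ for a fixed positive $u_i \in A_i$---provides an order-preserving embedding $\iota_i : A_i \hookrightarrow \R$. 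The goal is to assemble the $\iota_i$ into a single order-preserving homomorphism $\Phi : \Lambda \hookrightarrow \R^n$ compatible with the lex order.

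To perform this assembly I would pass to the divisible hull $\tilde{\Lambda} = \Lambda \otimes_\Z \QQ$, which inherits both the order and the chain of convex subgroups $\tilde{\Lambda}_i$. In the category of $\QQ$-vector spaces each short exact sequence $0 \to \tilde{\Lambda}_{i-1} \to \tilde{\Lambda}_i \to \tilde{A}_i \to 0$ splits, so inductively chosen $\QQ$-linear sections yield, for each $i$, a homomorphism $\pi_i : \tilde{\Lambda} \to \tilde{A}_i$ whose kernel is exactly $\tilde{\Lambda}_{i-1}$. Extending the $\iota_i$ to $\QQ$-linear maps $\tilde{A}_i \to \R$ (using that $\R$ is $\QQ$-divisible), I set
\[ \Phi(x) = \bigl( \iota_n(\pi_n(x)),\, \iota_{n-1}(\pi_{n-1}(x)),\, \ldots,\, \iota_1(\pi_1(x)) \bigr) \in \R^n \]
and restrict back to $\Lambda \subset \tilde{\Lambda}$.

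The main obstacle is verifying that $\Phi$ is order-preserving; the rest is bookkeeping. The key observation is that the convex subgroups of $(\R^n, <_{\mathrm{lex}})$ are precisely $\{0\}^{n-k} \times \R^k$ for $k = 0,\ldots,n$, so the source and target chains match in length. Given $x > 0$ in $\tilde{\Lambda}$, let $i$ be minimal with $x \in \tilde{\Lambda}_i$. Then $\pi_j(x) = 0$ for all $j < i$ by the choice of the sections (each $\pi_j$ factors through $\tilde{\Lambda}/\tilde{\Lambda}_{j-1}$ and annihilates $\tilde{\Lambda}_{j-1} \supset \tilde{\Lambda}_i$ when $j < i$, which reverses my indexing—so one takes $\pi_j$ with kernel $\tilde{\Lambda}_{j-1}$ and uses that positivity of $x$ in $\tilde{\Lambda}_i \setminus \tilde{\Lambda}_{i-1}$ makes $\pi_i(x)$ the first nonzero image). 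Thus the leftmost nonzero entry of $\Phi(x)$ sits at position $n-i+1$ and is positive, so $\Phi(x) >_{\mathrm{lex}} 0$. Injectivity follows immediately, since $\Phi(x) = 0$ forces $x \in \tilde{\Lambda}_0 = 0$.

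For (ii), in the archimedean case $n=1$ I would use the formula from H\"older's theorem to obtain rigidity directly. Fix a positive $u \in \Lambda$ and any order-preserving embedding $\iota : \Lambda \hookrightarrow \R$. The archimedean property implies that for each $x \in \Lambda$ the set $C_x = \{ p/q : p \in \Z,\, q \in \N,\, qx \leq pu \}$ is a Dedekind cut whose supremum equals $\iota(x)/\iota(u)$, a value independent of the chosen $\iota$ because $\iota$ is order-preserving and $\R$-valued. Thus $\iota$ is determined by the single positive real $\iota(u)$, and any two order-preserving embeddings of $\Lambda$ into $\R$ differ by multiplication by the positive scalar given by the ratio of their values at $u$, completing the proof.
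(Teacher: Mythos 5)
The paper itself does not prove this theorem; it cites it as the Hahn embedding theorem (Gravett, Kokorin--Kopytov) and marks it \noproof, so there is no in-paper argument to compare against. Your proof is the standard one for the finite-rank case---H\"older's theorem applied to the archimedean quotients $A_i=\Lambda_i/\Lambda_{i-1}$ of the convex-subgroup filtration, assembled via a $\QQ$-linear splitting after passing to the divisible hull---and it is essentially correct. Two small points need repair. First, a homomorphism $\pi_i:\tilde\Lambda\to\tilde A_i$ with kernel \emph{exactly} $\tilde\Lambda_{i-1}$ cannot exist for $i<n$ (it would embed $\tilde\Lambda/\tilde\Lambda_{i-1}$ into $\tilde A_i$, which fails already for $\Lambda=\Z^2$, $i=1$); what your splitting actually produces, and all the argument needs, is that $\ker\pi_i\supseteq\tilde\Lambda_{i-1}$ (indeed $\ker\pi_j\supseteq\tilde\Lambda_{j-1}\supseteq\tilde\Lambda_i$ for $j>i$, killing the higher coordinates) and that $\pi_i$ restricted to $\tilde\Lambda_i$ is the canonical quotient map, so that $\pi_i(x)>0$ for $x>0$ with $x\in\tilde\Lambda_i\setminus\tilde\Lambda_{i-1}$. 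Second, the inequality in your H\"older/Dedekind-cut formula points the wrong way: with $C_x=\{p/q: qx\leq pu\}$ the set is bounded \emph{below} by $\iota(x)/\iota(u)$ and unbounded above, so you should either take the infimum or define $C_x=\{p/q: pu\leq qx\}$ and take the supremum; with that correction the rigidity argument for (ii) goes through, since $\sup C_x$ is defined purely by the order on $\Lambda$ and equals $\iota(x)/\iota(u)$ for every order-preserving embedding $\iota$.
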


\begin{proof}[{Proof of Lemma \ref{lem:left-inverse}.}]
By Theorem \ref{thm:hahn}, it suffices to consider the case of an
order-preserving embedding $\ordinc: \R \to \R^n$ where $\R^n$ has the
lexicographical order.  Since the only order-preserving
self-homomorphisms of the additive group $\R$ are multiplication by
positive constants, it is enough to find a homomorphism $\varphi: \R^n
\to \R$ such that $\varphi \circ \ordinc$ is order-preserving and
fixes a point.

Let $a = \ordinc(1)$, and write $a = (a_1, \ldots, a_n)$.  Since $a > 0$
in $\R^n$, the first nonzero element of the tuple $a$ is positive.
Let $k$ be the index of this element, i.e.~$k = \min \{ i \: | \: a_i
> 0 \}$.

We claim that for any $t \in \R$, the image $\ordinc(t)$ has the form
$(0,\ldots,0,b_k,\ldots,b_n)$.  If not, then after possibly replacing
$t$ by $-t$ we have $t \in \R_+$ such that $\ordinc(t)$ is infinitely
larger than $\ordinc(1)$.  The existence of a positive integer $n$ such
that $t < n$ shows that this contradicts the order-preserving property
of $\ordinc$.

Similarly, we find that if $t>0$ then $b = \ordinc(t)$ satisfies $b_k > 0$:
The order-preserving property of $\ordinc$ implies that $b_k \geq 0$, so
the only possibility to rule out is $b_k = 0$.  But if $b_k=0$ then
$\ordinc(1)$ is infinitely larger than $\ordinc(t)$, yet there is a
positive integer $n$ such that $1 < nt$, a contradiction.

Now define $\varphi : \R^n \to \R$ by
$$ \varphi(x_1, \ldots, x_n) = \frac{x_k}{a_k}.$$
This is a homomorphism satisfying $\varphi(\ordinc(1)) = 1$, and the properties
of $\ordinc$ derived above show that $\varphi \circ \ordinc$ is
order-preserving, as desired.
\end{proof}

We now consider the properties of embeddings $\Lambda \to \R^n$, such
as those provided by Theorem \ref{thm:hahn}, with respect to convex
subgroups.  First, a proper convex subgroup maps into $\R^{n-1}$:

\begin{lem}
\label{lem:sub-rn}
Let $F : \Lambda \to \R^n$ be an order-preserving embedding, where
$\Lambda$ has rank $n$.  If $\Lambda' \subset \Lambda$ is a
proper convex subgroup, then $F(\Lambda') \subset \{ (0,x_2, \ldots,
x_n) \}$.
\end{lem}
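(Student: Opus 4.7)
The plan is to reduce the statement to a careful analysis of how the embedding $F$ interacts with archimedean equivalence classes, and then match these up with the known convex subgroup structure of $\R^n$.

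First I would observe that an order-preserving embedding automatically respects the archimedean structure. Specifically, if $x$ is infinitely larger than $y$ in $\Lambda$, then $n|y| < |x|$ for all $n \in \N$, which by order-preservation and the group homomorphism property gives $n|F(y)| = |F(ny)| < |F(x)|$ in $\R^n$, so $F(x)$ is infinitely larger than $F(y)$. By taking contrapositives, archimedean-equivalent elements map to archimedean-equivalent elements. Since $F$ is injective, this gives an injection from archimedean classes of $\Lambda$ to those of $\R^n$, and it is order-preserving in the induced order on classes (i.e., the ``infinitely larger'' order). Because $\Lambda$ has rank $n$ and $\R^n$ (with the lexicographical order) also has exactly $n$ archimedean classes, this injection is a bijection.

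Next I would explicitly describe the archimedean classes of $\R^n$. For each $k \in \{1,\ldots,n\}$, let $C_k = \{ x \in \R^n : x_1 = \cdots = x_{k-1} = 0,\ x_k \neq 0\}$; these are the $n$ archimedean classes, and $C_1$ is the largest. The convex subgroups of $\R^n$ form the chain $\{0\} \subset C_n \cup \{0\} \subset (C_{n-1} \cup C_n) \cup \{0\} \subset \cdots \subset \R^n$, so every proper convex subgroup of $\R^n$ is contained in $\{x : x_1 = 0\}$.

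Finally, I would combine these two observations. Since $\Lambda'$ is a proper convex subgroup of $\Lambda$, it does not contain the largest archimedean class of $\Lambda$ (recall that convex subgroups are in order-preserving bijection with archimedean classes). By the bijection established in the first step, the image under $F$ of any nonzero element of $\Lambda'$ lies in some $C_k$ with $k \geq 2$, hence has first coordinate zero; and of course $F(0) = 0$ has first coordinate zero as well. This gives the desired inclusion $F(\Lambda') \subset \{(0, x_2, \ldots, x_n)\}$.

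There is no serious obstacle here: the only point that needs care is verifying that order-preservation of $F$ forces a bijective, order-preserving correspondence of archimedean classes. Once that is in hand, the lemma follows by matching up the unique order-preserving chain of convex subgroups on each side.
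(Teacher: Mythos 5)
Your proposal is correct and reaches the same conclusion as the paper, but via a more structural route. You establish that $F$ induces an order-preserving bijection between the $n$ archimedean classes of $\Lambda$ and the $n$ archimedean classes of $\R^n$, and then read off the result from the explicit description of the latter. The paper's proof is a compressed one-step contradiction: take $g$ in the largest archimedean class of $\Lambda$, so that $h < g$ for all $h \in \Lambda'$; if some $h \in \Lambda'$ had $F(h)$ with nonzero first coordinate, then a suitable integer multiple $kh \in \Lambda'$ would satisfy $F(kh) > F(g)$ by lexicographic comparison while still $kh < g$, contradicting that $F$ is order-preserving. Both arguments rest on the same underlying cancellation; yours is longer but isolates the archimedean-class bookkeeping as a reusable statement. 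One small logical imprecision to flag: you assert that ``archimedean-equivalent elements map to archimedean-equivalent elements'' follows by taking the contrapositive of ``$x$ infinitely larger than $y$ implies $F(x)$ infinitely larger than $F(y)$.'' The contrapositive of that implication runs in the wrong direction (it tells you about preimages, not images). What you actually need is the companion fact that if $x$ is \emph{not} infinitely larger than $y$ then $F(x)$ is not infinitely larger than $F(y)$; this is true and proved by the same kind of direct computation (if $|x| \leq m|y|$ for some $m$, then $|F(x)| = F(|x|) \leq F(m|y|) = m|F(y)|$), so the gap is easily patched, but ``by contrapositive'' is not the right justification.
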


\begin{proof}
Since $\Lambda' \neq \Lambda$, the convex subgroup $\Lambda'$ does not
contain the largest Archimedean equivalence class of $\Lambda$.  Thus
there exists a positive element $g \in \Lambda_+$ such that $h < g$ for all $h \in
\Lambda'$.

Suppose that there exists $h \in \Lambda'$ such that $F(h) = (a_1,
a_2, \ldots, a_n)$ with $a_1 \neq 0$.  Then we have $F(kh) = ka_1 >
F(g)$ for some $k \in \Z$.  This contradicts the order-preserving
property of $F$, so no such $h$ exists and $F(\Lambda')$ has the
desired form.
\end{proof}

Building on the previous result, the following lemma shows that in
some cases the embeddings given by Hahn's theorem behave functorially
with respect to rank-$1$ subgroups.  This result is used in Section
\ref{sec:extension}.

\begin{lem}
\label{lem:super-hahn}
Let $\Lambda$ be an ordered abelian group of finite rank and $\Lambda'
\subset \Lambda$ a subgroup contained in the minimal nontrivial convex
subgroup of $\Lambda$.  Then there is a commutative diagram of
order-preserving embeddings
$$
\begin{tikzpicture}[baseline=(current bounding box.center)]
\matrix (m) [matrix of math nodes, column sep=1.5em, row sep = 1.5em, text height=1.5ex, text depth=0.25ex]
{
\Lambda' & \Lambda\\
\R & \R^n\\
};
\path[->,font=\scriptsize]
(m-1-1) edge (m-1-2)
(m-1-2) edge (m-2-2)
(m-1-1) edge (m-2-1)
(m-2-1) edge (m-2-2);
\end{tikzpicture}
$$
where $i_n(x) = (0, \ldots, 0, x)$ and $n$ is the rank of $\Lambda$.
\end{lem}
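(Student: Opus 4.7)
The plan is to apply the Hahn embedding theorem (Theorem~\ref{thm:hahn}(i)) to fix an order-preserving embedding $F : \Lambda \into \R^n$, then show that $F$ automatically carries the minimal nontrivial convex subgroup of $\Lambda$ into the last coordinate axis $\{(0,\ldots,0,x_n)\}$. Since $\Lambda'$ lies inside that convex subgroup by hypothesis, the projection of $F|_{\Lambda'}$ onto the last coordinate will then provide the left vertical arrow of the diagram, and commutativity will be immediate from the construction.

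The key step is an iterated application of Lemma~\ref{lem:sub-rn} along the chain of convex subgroups
$$\{0\} \subsetneq \Lambda_1 \subsetneq \Lambda_2 \subsetneq \cdots \subsetneq \Lambda_{n-1} \subsetneq \Lambda_n = \Lambda,$$
where $\Lambda_k$ denotes the convex subgroup of rank $k$ (the union of the $k$ smallest archimedean equivalence classes of $\Lambda$). Lemma~\ref{lem:sub-rn} gives $F(\Lambda_{n-1}) \subset \{(0,x_2,\ldots,x_n)\}$ directly. Identifying this subset of $\R^n$ with $\R^{n-1}$ via projection onto the last $n-1$ coordinates, the restriction $F|_{\Lambda_{n-1}}$ becomes an order-preserving embedding of a rank-$(n-1)$ ordered abelian group into $\R^{n-1}$. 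Applying Lemma~\ref{lem:sub-rn} to this restriction strips off another leading coordinate, and repeating $n-1$ times yields $F(\Lambda_1) \subset \{(0,\ldots,0,x_n)\}$; the same inclusion therefore holds for $F(\Lambda') \subseteq F(\Lambda_1)$.

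I would then define the left vertical arrow $\Lambda' \to \R$ as the composition of $F|_{\Lambda'}$ with projection onto the last coordinate. This is an order-preserving embedding because the lexicographic order on tuples supported only in the last entry coincides with the usual order on $\R$, and the identity $i_n \circ (\text{projection}) = F|_{\Lambda'}$ holds tautologically.

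I do not anticipate any real obstacle; the argument is essentially bookkeeping with Hahn's theorem and Lemma~\ref{lem:sub-rn}. The only point worth checking carefully is that each successive restriction really does satisfy the hypotheses of Lemma~\ref{lem:sub-rn} in the reduced rank, which follows because $\Lambda_k$ has rank exactly $k$ and contains $\Lambda_{k-1}$ as a proper convex subgroup.
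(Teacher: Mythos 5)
Your proof is correct and is essentially the paper's proof: both fix a Hahn embedding $F : \Lambda \into \R^n$ and iteratively apply Lemma~\ref{lem:sub-rn} down the chain of convex subgroups to conclude that $F(\Lambda_1)$, and hence $F(\Lambda')$, lies in the last coordinate axis. The only difference is that the paper states the iteration in one sentence, while you spell out the rank bookkeeping at each step.
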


\begin{proof}
Let $\Lambda_1 \subset \Lambda_2 \subset \cdots \subset \Lambda_n =
\Lambda$ be the convex subgroups of $\Lambda$.  We can assume that
$\Lambda' = \Lambda_1$ since all other cases are handled by
restricting the maps from this one.

We are given the inclusion $i : \Lambda_1 \to \Lambda$ and the Hahn
embedding theorem provides an order-preserving embedding $F : \Lambda
\to \R^n$.  Applying Lemma \ref{lem:sub-rn} to each step in the chain
of convex subgroups of $\Lambda$, we find that for all $g \in
\Lambda_1$ we have
$$ F(g) = (0, \ldots, 0, f(g))$$
and the induced map $f : \Lambda_1 \to \R$ is order-preserving.  Since
$F \circ i = i_n \circ f$ by construction, these maps complete the
commutative diagram.
\end{proof}

\subsection{$\Lambda$-metric spaces and $\Lambda$-trees}
  \label{sec:lambda}

We refer to the book \cite{chiswell} for general background on
$\Lambda$-metric spaces and $\Lambda$-trees.  Here we recall the
essential definitions and fix notation.

As before let $\Lambda$ denote an ordered abelian group.  A
$\Lambda$-metric space is a pair $(M,d)$ where $M$ is a set and $d : M
\times M \to \Lambda$ is a function which satisfies the usual axioms
for the distance function of a metric space.  In particular an
$\R$-metric space (where $\R$ has the standard order) is the usual
notion of a metric space.

An isometric embedding of one $\Lambda$-metric space into another is
defined in the natural way.  Generalizing this, let $(M,d)$ be a
$\Lambda$-metric space and $(M',d')$ a $\Lambda'$-metric space.  An
\emph{isometric embedding} of $M$ into $M'$ is a pair $(f,\ordinc)$
consisting of a map $f : M \to M'$ and an order-preserving
homomorphism $\ordinc : \Lambda \to \Lambda'$ such that
$$ d'(f(x),f(y)) = \ordinc(d(x,y)) \; \text{for all} \; x,y \in M.$$
More generally we say $f : M \to M'$ is an isometric embedding if
there exists an order-preserving homomorphism $\ordinc$ such that the
pair $(f,\ordinc)$ satisfy this condition.

An ordered abelian group $\Lambda$ is an example of a $\Lambda$-metric
space, with metric $d(g,h) = |g-h|$.  An isometric embedding of the
subspace $[g,h] := \{ k \in \Lambda \: | \: g \leq h \leq k \} \subset
\Lambda$ into a $\Lambda$-metric space is a \emph{segment}.  A
$\Lambda$-metric space is \emph{geodesic} if any pair of points can be
joined by a segment.

A $\Lambda$-tree is a $\Lambda$-metric space $(T,d)$ satisfying three
conditions:
\begin{itemize}
\item $(T,d)$ is geodesic,
\item If two segments in $T$ share an endpoint but have no other
intersection points, then their union is a segment, and
\item If two segments in $T$ share an endpoint, then their
intersection is a segment (or a point).
\end{itemize}

The notion of $\delta$-hyperbolicity for metric spaces generalizes
naturally to $\Lambda$-metric spaces, where now $\delta \in \Lambda$,
$\delta \geq 0$.  In terms of this generalization, any $\Lambda$-tree
is $0$-hyperbolic.  (The converse holds under mild additional
assumptions on the space.)  The $0$-hyperbolicity condition has
various equivalent characterizations, but the one we will use in the
sequel is the following condition on $4$-tuples of points:

\begin{lem}[$0$-hyperbolicity of $\Lambda$-trees]
If $(T,d)$ is a $\Lambda$-tree, then for all $x,y,z,t \in T$ we have
$$ d(x,y) + d(z,t) \leq \max \left ( d(x,z) + d(y,t), \:d(x,t) + d(y,z)
\right).$$
\noproof
\end{lem}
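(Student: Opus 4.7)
The strategy is to identify the combinatorial shape of the subtree spanned by the four points $x,y,z,t$ and then verify the inequality by a direct computation of $\Lambda$-valued distances, exactly as in the $\R$-tree case.

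The first step is to establish the existence of medians: for any triple $a,b,c \in T$ there is a unique point $Y(a,b,c)$ lying on each of the three segments $[a,b]$, $[a,c]$, $[b,c]$, decomposing each of them as the concatenation of two subsegments based at $Y(a,b,c)$. Existence follows from the third tree axiom applied to the pair $[a,b]$, $[a,c]$, which share the endpoint $a$: their intersection is a segment $[a,m]$. A short argument then shows $[m,b] \cap [m,c] = \{m\}$, so by the second axiom $[m,b] \cup [m,c]$ is itself a segment, which must equal $[b,c]$; this gives $m \in [b,c]$ with the claimed decomposition.

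The second step applies medians to the four given points. Set $m = Y(x,y,z)$ and $n = Y(x,y,t)$; both lie on $[x,y]$, and by relabeling $x \leftrightarrow y$ if necessary we may assume $m$ lies between $x$ and $n$ on that segment. I would then verify that $n$ also lies on $[z,t]$, so that the subtree spanned by $\{x,y,z,t\}$ is the ``H-configuration'' consisting of the path $[x,y]$ with the leaf $z$ attached at $m$ and the leaf $t$ attached at $n$. In terms of the five $\Lambda$-valued lengths $a = d(x,m)$, $b = d(m,n)$, $c = d(n,y)$, $p = d(m,z)$, $q = d(n,t)$, direct substitution gives
$$ d(x,y) + d(z,t) \,=\, d(x,t) + d(y,z) \,=\, a + c + p + q + 2b, \qquad d(x,z) + d(y,t) \,=\, a + c + p + q. $$
Since $b \geq 0$, the first two sums are equal and at least as large as the third, which is exactly the stated four-point inequality (with equality of the two larger pairings).

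The main technical obstacle is the second step, namely showing that $n \in [z,t]$ (equivalently, that $n = Y(y,z,t)$) and handling the degenerate cases where $m = n$ or where one of $m, n$ coincides with $x$ or $y$. I expect this to follow from iterated use of the third axiom combined with the defining properties of the medians, essentially mirroring the standard $\R$-tree argument. Since only the tree axioms enter and the lengths manipulate formally as elements of $\Lambda$, no features of $\Lambda$ beyond its structure as an ordered abelian group are needed.
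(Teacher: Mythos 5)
Your proposal is correct and follows the standard median-based argument for $\Lambda$-trees; the paper itself gives no proof, citing Chiswell's book (Lemmas 1.2.6 and 2.1.6), which uses essentially this approach. The step you flag as a potential obstacle—that $n$ lies on $[z,t]$—does go through: once you know $n \in [y,z]$ (because $n \in [m,y] \subset [y,z]$) and $n \in [y,t]$, one shows $[n,m]\cup[m,z]$ meets $[n,t]$ only at $n$ (using that $[n,m]\subset[n,x]$ is disjoint from $[n,t]$ beyond $n$, and that a putative second intersection point would force two distinct geodesics between a pair of points), so $[z,t] = [z,m]\cup[m,n]\cup[n,t]$ passes through $n$. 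The degenerate cases $m=n$ or endpoint coincidences give $b=0$ or simpler shapes and cause no trouble.
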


For a proof and further discussion see 
\cite[Lem.~1.2.6 and Lem.~2.1.6]{chiswell}.
  By permuting a given $4$-tuple $x,y,z,t$ and
considering the inequality of this lemma, we obtain the following
corollary (see \cite[p.~35]{chiswell}):

\begin{lem}[Four points in a $\Lambda$-tree]
\label{lem:four-points-lambda-tree}
Let $(T,d)$ be a $\Lambda$-tree and $x,y,z,t \in T$.  Then among the
three sums
\begin{equation*}
d(x,y) + d(z,t), \;\:
d(x,z) + d(y,t), \;\:
d(x,t) + d(y,z),
\end{equation*}
two are equal, and these two are not less than the third. \noproof
\end{lem}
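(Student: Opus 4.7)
The plan is to derive the four-point condition as a purely formal consequence of the $0$-hyperbolicity inequality stated in the preceding lemma, by applying that inequality to three different orderings of the same four points.

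Write $A = d(x,y) + d(z,t)$, $B = d(x,z) + d(y,t)$, and $C = d(x,t) + d(y,z)$ for the three pair-sum quantities; note that these three sums correspond to the three ways of partitioning the unordered set $\{x,y,z,t\}$ into two pairs, so the symmetric group $S_4$ acts on them through its quotient $S_3$. The $0$-hyperbolicity inequality applied in the given order gives $A \leq \max(B,C)$. Next I would apply the same inequality to the tuples $(x,z,y,t)$ and $(x,t,y,z)$; reading off which pair-sums appear on each side yields $B \leq \max(A,C)$ and $C \leq \max(A,B)$ respectively.

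Having these three symmetric inequalities in hand, the conclusion is a short order-theoretic argument valid in any totally ordered abelian group. Relabel so that, without loss of generality, $A \leq B \leq C$. Then $\max(A,B) = B$, and the third inequality gives $C \leq B$; combined with $B \leq C$ this forces $B = C$. The condition $A \leq B = C$ then says that the two equal sums are at least as large as the remaining one, which is exactly the statement of the lemma.

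There is essentially no obstacle here; the proof is a bookkeeping exercise in permuting the four points, followed by a trivial linear-order deduction. The only thing to be slightly careful about is checking, for each of the three permutations used, that the left side of the $0$-hyperbolicity inequality really does produce the intended sum among $A, B, C$ and that the two summands on the right produce the other two, which is immediate from the definitions.
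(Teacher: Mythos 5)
Your proof is correct and follows exactly the route the paper indicates: the paper states (just before the lemma) that the result follows ``by permuting a given $4$-tuple $x,y,z,t$ and considering the inequality of this lemma,'' citing Chiswell, and you have carried out precisely that bookkeeping together with the elementary order-theoretic finish.
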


Given a $\Lambda$-tree, there are natural constructions that associate
trees to certain subgroups or extensions of $\Lambda$; in what follows
we require two such operations.  First, let $(T,d)$ be a
$\Lambda$-tree and $\Lambda' \subset \Lambda$ a convex subgroup.  For any
$x \in T$ we can consider the subset $T_{\Lambda',x} = \{ y \in T \: |
\: d(x,y) \in \Lambda'\}$.  Then the restriction of $d$ to
$T_{\Lambda',x}$ takes values in $\Lambda'$, and this gives
$T_{\Lambda',x}$ the structure of a $\Lambda'$-tree
\cite[Prop.~II.1.14]{morgan-shalen:valuations-trees}.

Second, suppose that $\ordinc : \Lambda \to \Lambda'$ is an
order-preserving homomorphism and that $(T,d)$ is a $\Lambda$-tree.
Then there is a natural \emph{base change} construction that produces
a $\Lambda'$-tree $\Lambda' \tensor_\Lambda T$ and an isometric
embedding $T \to \Lambda' \tensor_\Lambda T$ with respect to $\ordinc$
(see \cite[Thm.~4.7]{chiswell} for details).  Roughly speaking, if one
views $T$ as a union of segments, each identified with some interval
$[g,h] \subset \Lambda$, then $\Lambda' \tensor_\Lambda T$ is obtained
by replacing each such segment with $[\ordinc(g),\ordinc(h)] \subset
\Lambda'$.

\subsection{Group actions on $\Lambda$-trees and length functions}

Every isometry of a $\Lambda$-tree is either \emph{elliptic},
\emph{hyperbolic}, or an \emph{inversion}; see
\cite[Sec.~3.1]{chiswell} for a detailed discussion of this
classification.  Elliptic isometries are those with fixed points,
while hyperbolic isometries have an invariant axis (identified with a
subgroup of $\Lambda$) on which they act as a translation.
An inversion is an isometry that has no fixed point but which induces
an elliptic isometry after an index-$2$ base change; permitting such
base change allows us to make the standing assumption that
\emph{isometric group actions on $\Lambda$-trees that we consider are
  without inversions}.

The \emph{translation length} $\ell(g)$ of an isometry $g : T \to T$
of a $\Lambda$-tree is defined as
$$
\ell(g) = \begin{cases}
0 \; \text{ if } g \text{ is elliptic,}\\
|t| \; \text{ if } g \text{ is hyperbolic and acts on its axis as }h
  \mapsto h + t.
  \end{cases}$$ Note that $\ell(g) \in \Lambda^+ \cup \{0\}$.  It can
  be shown that the translation length is also given by $\ell(g) = \min
  \{ d(x,g(x)) \: | \: x \in T \}$.

 When a group $G$ acts on a $\Lambda$-tree by isometries, taking
the translation length of each element of $G$ defines a function $\ell :
 G \to \Lambda^+ \cup \{0\}$, the \emph{translation length function}
 (or briefly, the \emph{length function}) of the action.

When the translation length function takes values in a convex
subgroup, one can extract a subtree whose distance function takes
values in the same subgroup:

\begin{lem}
\label{lem:subtree}
Let $G$ act on a $\Lambda$-tree $T$ with length function $\ell$.  If
$\Lambda' \subset \Lambda$ is a convex subgroup and $\ell(G) \subset
\Lambda'$ then there is a $\Lambda'$-tree $T' \subset T$ that is
invariant under $G$ and such that $\ell$ is also the length function of the
induced action of $G$ on $T'$.
\end{lem}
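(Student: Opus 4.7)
My plan is to realize $T'$ as a ``$\Lambda'$-ball'' around a carefully chosen base point: fix $x_0 \in T$ with $d(x_0, gx_0) \in \Lambda'$ for every $g \in G$, and then set
\[
T' := \{y \in T \mid d(x_0, y) \in \Lambda'\}.
\]
The construction of $\Lambda'$-trees $T_{\Lambda',x}$ recalled in Section~\ref{sec:lambda} (following \cite[Prop.~II.1.14]{morgan-shalen:valuations-trees}) automatically makes $T'$ a $\Lambda'$-tree, so everything reduces to choosing $x_0$ well and then checking the two remaining properties.

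$G$-invariance and the length-function condition are straightforward once $x_0$ is in hand. For $y \in T'$ and $g \in G$, the triangle inequality gives $d(x_0, gy) \leq d(x_0, gx_0) + d(x_0, y)$, a sum of two elements of $\Lambda'$; convexity of $\Lambda'$ then forces $d(x_0, gy) \in \Lambda'$, so $gy \in T'$. For the length function, I will use the identity $d(x, gx) = \ell(g) + 2\,d(x, \mathrm{Min}(g))$ for isometries of a $\Lambda$-tree, where $\mathrm{Min}(g)$ denotes the axis (hyperbolic case) or fixed-point set (elliptic case). Combining this with $\ell(g), d(x_0, gx_0) \in \Lambda'$ and convexity yields $d(x_0, \mathrm{Min}(g)) \in \Lambda'$, so the closest point of $\mathrm{Min}(g)$ to $x_0$ lies in $T'$ and realizes the minimum value $\ell(g)$; hence the translation length of $g$ on $T'$ equals $\ell(g)$.

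The main obstacle is producing the base point $x_0$. If $G$ has a global fixed point, any such point serves. In the principal case where $G$ contains a hyperbolic element $g_0 \in G$, I propose placing $x_0$ on the axis $A_0$ of $g_0$, which gives $d(x_0, g_0^n x_0) = |n|\,\ell(g_0) \in \Lambda'$ immediately. For an arbitrary $h \in G$, I plan to combine Lemma~\ref{lem:four-points-lambda-tree} applied to the four-tuple $(x_0, g_0^n x_0, hx_0, hg_0^n x_0)$ with the standard formulas expressing $\ell(fg)$ in terms of the relative position of $\mathrm{Min}(f)$ and $\mathrm{Min}(g)$; together with the hypothesis $\ell(hg_0^n), \ell(g_0) \in \Lambda'$ and the convexity of $\Lambda'$, this will force $d(A_0, \mathrm{Min}(h)) \in \Lambda'$ and ultimately $d(x_0, hx_0) \in \Lambda'$. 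The delicate aspect is that $x_0$ may need to be adjusted along $A_0$ (using the translates $g_0^n x_0$, whose coordinates lie in $\Z\cdot\ell(g_0) \subset \Lambda'$) so as to sit within $\Lambda'$-distance of the bridge foot to each $\mathrm{Min}(h)$ uniformly in $h$; this combinatorial balancing is the real content of the argument. In the purely elliptic case without a common fixed point, an auxiliary argument is needed: passing to the quotient $T/\!\sim$ of $T$ by $\Lambda'$-closeness gives a $(\Lambda/\Lambda')$-tree on which $G$ acts with trivial length function, and a $G$-fixed class on that quotient lifts back to the desired base point in $T$.
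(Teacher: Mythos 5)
Your plan contains the right idea, but you have miscategorized where it applies, and as a result you have invented a hard unnecessary case. The quotient argument you sketch in your last sentence is not an ``auxiliary argument for the purely elliptic case'' --- it is the whole proof, and it is exactly what the paper does. Form $T_0 = T/\!\sim$ with $x \sim y$ iff $d(x,y) \in \Lambda'$; this is a $(\Lambda/\Lambda')$-tree, and the induced length function of $G$ on $T_0$ is the composition of $\ell$ with the quotient map $\Lambda \to \Lambda/\Lambda'$. Since the \emph{hypothesis} is $\ell(G) \subset \Lambda'$, this composition is identically zero --- whether or not $G$ has hyperbolic elements on $T$. (A $g$ that is hyperbolic on $T$ with $\ell(g) \in \Lambda'$ becomes elliptic on $T_0$.) Then \cite[Prop.~II.2.15]{morgan-shalen:valuations-trees} gives a global fixed point on $T_0$, and its fiber in $T$ is exactly your set $\{y : d(x_0,y)\in\Lambda'\}$ for any $x_0$ in the fiber, which is $G$-invariant because the fixed class is. Length preservation is \cite[Prop.~II.2.12]{morgan-shalen:valuations-trees} (your $\mathrm{Min}$-set argument would also work once $x_0$ is in hand).

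The gap, then, is in your ``principal case'': the combinatorial balancing along the axis of a hyperbolic $g_0$ that you defer is genuinely delicate and, as far as I can see, not obviously fixable by a single choice of $x_0$ on $A_0$ --- there is no reason a priori that a point on one axis should be within $\Lambda'$-distance of its translate under an arbitrary $h \in G$. You should drop Case 2 entirely and run the quotient-plus-fixed-point argument in all cases; it requires no case distinction and no balancing. Your $G$-invariance check via the triangle inequality and convexity is correct and can be kept, but it too becomes unnecessary once you observe that the fiber over a $G$-fixed point of $T_0$ is automatically $G$-invariant.
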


This lemma is implicit in the proof of Theorem 3.7 in
\cite{morgan:group-actions-on-trees}, which uses the structure theory
of actions developed in \cite{morgan-shalen:valuations-trees}.  For the convenience of the
reader, we reproduce the argument here.

\begin{proof}
Because $\Lambda'$ is convex, there is an induced order on the
quotient group $\Lambda / \Lambda'$.  Define an equivalence relation
on $T$ where $x \sim y$ if $d(x,y) \in \Lambda'$.  Then the quotient
$T_0 = T / \sim$ is a $(\Lambda/\Lambda')$-tree, and each fiber of the
projection $T \to T_0$ is a $\Lambda'$-tree.  The action of $G$ on $T$
induces an action on $T_0$ whose length function is the composition of
$\ell$ with the map $\Lambda \to \Lambda/\Lambda'$, which is
identically zero since $\ell(G) \subset \Lambda'$.  It follows that
the action of $G$ on $T_0$ has a global fixed point
\cite[Prop.~II.2.15]{morgan-shalen:valuations-trees}, and thus $G$ acts on the fiber of $T$ over
$T_0$, which is a $\Lambda'$-tree $T'$.  By \cite[Prop.~II.2.12]{morgan-shalen:valuations-trees}
the length function of the action of $G$ on $\Lambda'$ is $\ell$.
\end{proof}

\subsection{Measured foliations and train tracks}
\label{sec:mf}

Let $\MF(S)$ denote the space of measured foliations on a compact
oriented surface $S$ of genus $g$.  Then $\MF(S)$ is a piecewise
linear manifold which is homeomorphic to $\R^{6g-6}$.  A point $[\nu]
\in \MF(S)$ is an equivalence class up to Whitehead moves of a
singular foliation $\nu$ on $S$ equipped with a transverse measure of
full support.  For detailed discussion of measured foliations and of
the space $\MF(S)$ see \cite{FLP}.

Piecewise linear charts of $\MF(S)$ correspond to sets of measured
foliations that are carried by a train track; we will now discuss the
construction of these charts in some detail.  While this material is
well-known to experts, most standard references that discuss train
track charts use the equivalent language of measured laminations,
whereas our primary interest in foliations arising from quadratic
differentials makes the direct consideration of foliations preferable.
Additional details of the carrying construction from this perspective
can be found in \cite{papadopoulos:reseaux}
\cite{mosher:train-track-expansions}.

A \emph{train track} on $S$ is a $C^1$ embedded graph in which all
edges incident on a given vertex share a tangent line at that point.
Vertices of the train track are called \emph{switches} and its edges
are \emph{branches}.  We consider only \emph{generic} train tracks in
which each switch is has three incident edges, two \emph{incoming} and
one \emph{outgoing}, such that the union of any incoming edge and the
outgoing edge forms a $C^1$ curve.

The complement of a train track is a finite union of subsurfaces with
cusps on their boundaries.  In order to give a piecewise linear chart
of $\MF(S)$, each complementary disk must have at least three cusps on
its boundary and each complementary annulus must have at least one
cusp.  We will always require this of the train tracks we consider.

If $\tau$ is such a train track, let $W(\tau)$ denote the vector space
of real-valued functions $w$ on its set of edges that obey the
relation $w(a) + w(b) = w(c)$ for any switch with incoming edges
$\{a,b\}$ and outgoing edge $c$.  This \emph{switch relation} ensures
that $w$ determines a signed transverse measure, or \emph{weight}, on
the embedded train track.  Within $W(\tau)$ there is the finite-sided
convex cone of nonnegative weight functions, denoted $\MF(\tau)$.  It
is this cone which forms a chart for $\MF(S)$.

A measured foliation is \emph{carried} by the train track $\tau$ if the
foliation can be cut open near singularities and along saddle
connections and then moved by an isotopy so that all of the leaves lie
in an arbitrarily small open neighborhood of $\tau$ and are nearly parallel to its
branches, as depicted in Figure \ref{fig:carrying}.  Here ``cutting
open'' refers to the procedure of replacing a union of leaf segments
and saddle connections coming out of singularities with a subsurface
with cusps on its boundary.  The result of cutting open a measured
foliation is a \emph{partial measured foliation} in which there are
\emph{non-foliated regions}, each of which has a union of leaf
segments of the original foliation as a spine.

\begin{figure}
\begin{center}
\includegraphics[width=0.7\textwidth]{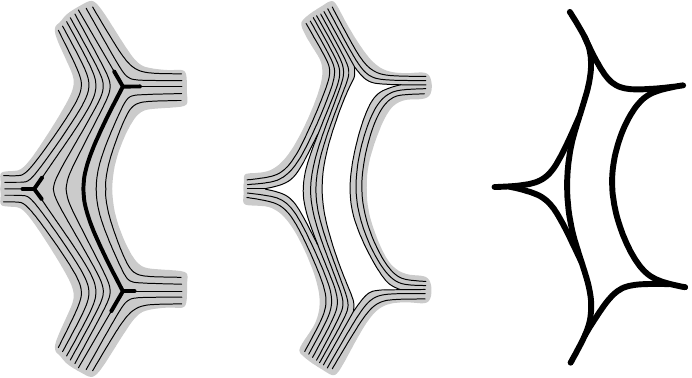}
\caption{Local picture of carrying: Cutting a measured foliation along
  saddle connections and singular leaf segments and inflating
  these to cusped subsurfaces allows an isotopy into a small
  neighborhood of the train track.\label{fig:carrying}}
\end{center}
\end{figure}

A measured foliation $\nu$ determines a weight on any train track that
carries it, as follows: For each branch $e \subset \tau$ choose a
\emph{tie} $r_e$, a short closed arc that crosses $e$ transversely at
an interior point and which is otherwise disjoint from $\tau$.  Now
select an open neighborhood of $U$ of $\tau$ that intersects each tie
$r_e$ in a connected open interval.  Let $\nu'$ be a partial measured
foliation associated to $\nu$ that has been isotoped to lie in $U$ and
to be transverse to each tie.  Note that each tie $r_e$ then has
endpoints in non-foliated regions of $\nu'$, since the endpoints of
$r_e$ lie outside $U$.  Let $w(e)$ be the total transverse measure
of $r_e$ with respect to $\nu'$.

The resulting function $w$ lies in $\MF(\tau)$ and regarding this
construction as a map $\nu \mapsto w$ gives a one-to-one correspondence
between equivalence classes of measured foliations that are carried by
$\tau$ and the convex cone $\MF(\tau)$.  Furthermore, these cones in
train track weight spaces form the charts of a piecewise linear atlas
on $\MF(S)$.

\subsection{The symplectic structure of $\MF(S)$}

The orientation of $S$ induces a natural antisymmetric bilinear map
$\omega_{\th}: W(\tau) \times W(\tau) \to \R$ on the space of weights
on a train track $\tau$.  This \emph{Thurston form} can be defined as
follows (compare \cite[Sec.~3.2]{penner-harer}
\cite[Sec.~3]{bonahon:shearing}): For each switch $v \in \tau$, let
$a_v, b_v$ be its incoming edges and $c_v$ its outgoing edge, where
$a_v,b_v$ are ordered so that intersecting $\{a_v, b_v, c_v\}$ with a
small circle around $v$ gives a positively oriented triple.  Then we
define
\begin{equation}
\label{eqn:thurston}
\omega_{\th}(w_1,w_2) = \frac{1}{2} \sum_v \det \begin{pmatrix}
w_1(a_v) & w_1(b_v)\\
w_2(a_v) & w_2(b_v)
\end{pmatrix}.
\end{equation}
If $\tau$ defines a chart of $\MF(S)$ then this form is nondegenerate,
and the induced symplectic forms on train track charts $\MF(\tau)$ are
compatible.  This gives $\MF(S)$ the structure of a piecewise linear
symplectic manifold.  

The Thurston form can also be interpreted as a homological
intersection number.  If $\tau$ can be consistently oriented then each
weight function $w$ defines a $1$-cycle $c_w = \sum_{e} w(e) \vec{e}$,
where $\vec{e}$ denotes the singular $1$-simplex defined by the
oriented edge $e$ of $\tau$.  In terms of these cycles, we have
$\omega_\th(w_1,w_2) = c_{w_1} \cdot c_{w_2}$.  For a general train
track, there is a branched double cover $\hat{S} \to S$ (with
branching locus disjoint from $\tau$) such that the preimage
$\hat{\tau} \subset \hat{S}$ is orientable.  Lifting weight functions
we obtain cycles $\hat{c}_{w_i} \in H^1(\hat{S},\R)$ such that
\begin{equation}
\label{eqn:thurston-intersection}
 \omega_\th(w_1,w_2) = \frac{1}{2}(\hat{c}_{w_1} \cdot \hat{c}_{w_2}).
\end{equation}

Note that if $\bar{S}$ denotes the opposite orientation of the surface
$S$, then there is a natural identification between measured foliation
spaces $\MF(S) \simeq \MF(\bar{S})$, but this identification does
\emph{not} respect the Thurston symplectic forms.  Rather, in
corresponding local charts we have $\omega_{\th}^S = -
\omega_{\th}^{\bar{S}}$.
 
\subsection{Dual trees}
  \label{sec:dual-trees}

  Let $\nu$ be a measured foliation on $S$ and $\Tilde{\nu}$ its
  lift to the universal cover $\Tilde{S}$.  There is a pseudo-metric
  $d$ on $\Tilde{S}$ where $d(x,y)$ is the minimum
  $\Tilde{\nu}$-transverse measure of a path connecting $x$ to $y$.
  The quotient metric space $T_\nu := \Tilde{S} / d^{-1}(0)$ is an
  $\R$-tree (see \cite{bowditch:trees-dendrons}
  \cite{morgan-shalen:free-actions}). The action of $\pi_1S$ on
  $\Tilde{S}$ by deck transformations determines an action on $T_\nu$
  by isometries.  The dual tree of the zero foliation $0 \in \MF(S)$
  is a point.

This pseudo-metric construction can be applied to the partial measured
foliation $\nu'$ obtained by cutting $\nu$ open along leaf segments from
singularities, as when $\nu$ is carried by a train track $\tau$.  The
result is a tree naturally isometric to $T_\nu$, which we identify with
$T_\nu$ from now on.  Non-foliated regions of $\Tilde{\nu}'$ are
collapsed to points in this quotient, so in particular each
complementary region of the lift $\Tilde{\tau}$ has a well-defined
image point $T_\nu$.

Similarly, the lift of a tie $r_e$ of $\tau$ to the universal cover
projects to a geodesic segment in $T_\nu$ of length $w(e)$; the
endpoints of this segment are the projections of the two complementary
regions adjacent to the lift of the edge $e$.

To summarize, we have the following relation between carrying and dual
trees:

\begin{prop}
\label{prop:carrying}
Let $\nu$ be a measured foliation carried by the train track $\tau$
with associated weight function $w$.  Let $\Tilde{\tau}$ denote the
lift of $\tau$ to the universal cover.  If $A,B$ are complementary
regions of $\Tilde{\tau}$ that are adjacent along an edge $\Tilde{e}$
of $\Tilde{\tau}$, and if $a,b$ are the associated points in $T_\nu$,
then we have
$$ w(e) = d(a,b)$$
where $d$ is the distance function of $T_\nu$.\noproof
\end{prop}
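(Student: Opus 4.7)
The plan is to verify the two inequalities separately. For the upper bound $d(a,b)\leq w(e)$: by the construction of the weight function recalled in Section \ref{sec:mf}, $w(e)$ is the $\nu'$-transverse measure of the tie $r_e$, a short arc crossing the branch $e$ of $\tau$ transversely with endpoints in the non-foliated regions adjacent to $e$. Lifting $r_e$ to an arc $\tilde{r}_e$ in $\tilde{S}$ crossing $\tilde{e}$ yields a path from a point of $A$ to a point of $B$ with $\tilde{\nu}'$-transverse measure equal to $w(e)$. Since $d(a,b)$ is defined as the infimum of transverse measures over all paths connecting $A$ to $B$ (using that both collapse to single points in $T_\nu$), the inequality is immediate.

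For the reverse inequality $d(a,b)\ge w(e)$, I would use two facts from the preceding construction: the quotient map $\pi : \tilde{S} \to T_\nu$ collapses each leaf of $\tilde{\nu}'$ to a point and each non-foliated region to a point, and $T_\nu$ is an $\R$-tree. The tie is transverse to $\tilde{\nu}'$ and meets each leaf at most once, so $\pi\circ\tilde{r}_e$ is an injective continuous path in $T_\nu$ from $a$ to $b$. In an $\R$-tree, any such injective continuous path parameterizes the unique geodesic arc between its endpoints, and the arc length of this parameterization equals $d(a,b)$.

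The remaining step, which I expect to be the main technical point, is to show that the arc length of $\pi\circ\tilde{r}_e$ is exactly $w(e)$. For this I would partition the tie into finitely many sub-arcs, each lying inside a local product neighborhood of a segment of $\tilde{e}$ on which $\tilde{\nu}'$ has the standard horizontal form. Inside such a product chart, the pseudo-distance between two points on distinct leaves equals the transverse measure of a transversal joining them: the intervening leaves themselves form a one-parameter family of measure-theoretic separators of total transverse measure equal to that quantity, and any competing path either crosses them all (giving at least that measure) or must exit the chart and re-enter it, in which case additivity of transverse measure over concatenations lets one iterate. Summing these local equalities over the partition identifies the arc length of $\pi\circ\tilde{r}_e$ with $w(e)$, and combining this with the tree-theoretic statement of the previous paragraph yields $d(a,b)=w(e)$.
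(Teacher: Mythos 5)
Your proof is correct and elaborates exactly what the paper records in the sentence immediately preceding the proposition (the lifted tie projects to a geodesic segment of length $w(e)$), which is why the paper marks it \verb|\noproof|; the two-inequality split is a natural way to verify this. One remark on the lower bound: your route via injectivity of $\pi\circ\tilde{r}_e$, the arc-of-a-tree fact, and a chart-by-chart arc-length computation works, but it is more roundabout than the standard argument, and both the injectivity claim and the local equality of pseudo-distance with transverse measure secretly rest on the same geometric input --- that each leaf of $\Tilde{\nu}'$ met by the tie is a properly embedded line separating $\Tilde{S}$ into two half-planes. Invoking that fact directly gives the lower bound in one step (any path from $A$ to $B$ must cross every one of the intervening separating leaves, hence has transverse measure at least $w(e)$) and avoids the iteration about paths exiting and re-entering the chart, which as written is not obviously a dichotomy: a path can leave the chart and still cross the same leaves elsewhere, and it is the separation property, not the chart, that forces the crossings.
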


\subsection{Teichm\"uller space and quadratic differentials}

Let $\T(S)$ be the Teichm\"uller space of marked isomorphism
classes of complex structures on $S$ compatible with its orientation.
For any $X \in \T(S)$ we denote by $Q(X)$ the set of holomorphic
quadratic differentials on $X$, a complex vector space of dimension
$3g-3$.

Associated to $\phi \in Q(X)$ we have the following structures on $X$:
\begin{itemize}
\item The flat metric $|\phi|$, which has cone singularities at the
zeros of $\phi$,
\item The measured foliation $\F(\phi)$ whose leaves integrate the
distribution $\ker(\Im(\sqrt{\phi}))$, with transverse measure 
given by $|\Im(\sqrt{\phi})|$, and
\item The dual tree $T_\phi := T_{\F(\phi)}$ and the
$\pi_1S$-equivariant map $\pi : \Tilde{X} \to T_\phi$ that collapses
leaves of the lifted foliation $\Tilde{\F(\phi)}$ to points of $T_\phi$.
\end{itemize}

The dual tree construction is homogeneous with respect to the action
of $\R^+$ on $Q(X)$ in the sense that for any $c \in \R^+$ we have
$$ T_{c \phi} = c^{1/2} T_\phi$$
where the right hand side represents the metric space obtained from
$T_\phi$ by multiplying its distance function by $c^{1/2}$.

Note that the point $0 \in Q(X)$ is a degenerate case in which there
is no corresponding flat metric, and by convention $\F(0)$ is the
empty measured foliation whose dual tree is a point.

We say that a $|\phi|$-geodesic is \emph{nonsingular} if its interior
is disjoint from the zeros of $\phi$.  Choosing a local coordinate $z$
in which $\phi = dz^2$ (a \emph{natural coordinate} for $\phi$),
a nonsingular $|\phi|$-geodesic segment $I$ becomes a line segment in
the $z$-plane.  The vertical variation of this segment in $\C$
(i.e.~ $|\Im(z_2 - z_1)|$, where $z_i$ are the endpoints) is the
\emph{height} of $I$.

Note that leaves of the foliation $\F(\phi)$ are geodesics of the
$|\phi|$-metric.  Conversely, a nonsingular $|\phi|$-geodesic $I$ is
either a leaf of $\F(\phi)$ or it is transverse to $\F(\phi)$.  In the
latter case, the height $h$ of $I$ is equal to its $\F(\phi)$-transverse
measure, and any lift of $I$ to $\Tilde{X}$ projects homeomorphically
to a geodesic segment in $T_\phi$ of length $h$.

\section{The isotropic cone: Embeddings}
\label{sec:isotropic1}

The goal of this section is to establish the following result relating
$3$-manifold actions on $\Lambda$-trees and measured foliations:

\begin{thm}
\label{thm:embedding-isotropic}
Let $M$ be a $3$-manifold with connected boundary $S$.  There exists
an isotropic piecewise linear cone $\L_M \subset \MF(S)$ with the
following property: If $\nu$ is a measured foliation on $S$ whose dual
tree embeds isometrically and $\pi_1S$-equivariantly into a
$\Lambda$-tree $T$ equipped with an isometric action of $\pi_1M$, then
$[\nu] \in \L_M$.
\end{thm}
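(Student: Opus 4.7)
My plan is to construct $\L_M$ chart-by-chart on train tracks. For each train track $\tau$ on $S$, I will exhibit a linear subspace $L_{M,\tau} \subset W(\tau)$ that contains every weight function whose corresponding carried foliation satisfies the extending hypothesis, and verify that $L_{M,\tau}$ is isotropic under the restriction of the Thurston form from \eqref{eqn:thurston}. The isotropic cone is then
\[
\L_M \;=\; \bigcup_\tau \bigl(L_{M,\tau} \cap \MF(\tau)\bigr),
\]
with $\tau$ ranging over train tracks whose weight cones cover $\MF(S)$ as a PL atlas; in each train track chart $\L_M$ lies in an isotropic linear subspace, so it is an isotropic PL cone in the desired sense. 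Positive scaling invariance is immediate from rescaling the target tree metric, and the fact that $L_{M,\tau}$ is a linear subspace (rather than just the set of weights with the extending property) reduces the isotropy verification to showing that $\omega_\th(w_1,w_2)=0$ for every pair $w_1,w_2 \in \MF(\tau)$ individually satisfying the extending hypothesis.

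The core step is exactly this pairwise vanishing. Using Proposition~\ref{prop:carrying}, each edge weight $w(e)$ of a foliation $\nu$ with the extending property equals the distance in the ambient $\Lambda$-tree $T$ between the two complementary-region points on either side of a lift of $e$; the switch relation becomes the betweenness statement that, at each lift of a switch $v$, the three complementary-region points $p,q,r$ satisfy $r \in [p,q]$. Given two weights $w_1,w_2$ with extending property into possibly different ambient trees $(T_1,\Lambda_1)$ and $(T_2,\Lambda_2)$, I first produce a common target: using Hahn's Theorem~\ref{thm:hahn} to embed both $\Lambda_1$ and $\Lambda_2$ into a single lexicographic product, together with the base-change operation $\Lambda' \tensor_\Lambda T$ recalled in Section~\ref{sec:lambda}, I obtain equivariant isometric embeddings of both $T_{\nu_1}$ and $T_{\nu_2}$ into a single $\Lambda'$-tree $T'$ carrying a $\pi_1 M$-action. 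The local contribution at each switch to $\omega_{\th}(w_1,w_2)$ then becomes a $2 \times 2$ determinant whose entries are distances between complementary-region points in the single tree $T'$.

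Cancellation of these contributions proceeds in the spirit of Floyd's argument: the $\pi_1 M$-action identifies lifts of switches that are inequivalent under $\pi_1 S$ alone, and for each such identification Lemma~\ref{lem:four-points-lambda-tree} on $4$-point configurations in a $\Lambda$-tree forces the paired determinants to sum to zero. The elementary input is that if $r \in [p,q]$ and $r' \in [p',q']$ with an isometry of $T'$ carrying the triple $(p,q,r)$ to $(p',q',r')$, then the corresponding pair of local determinants cancels once weights are expressed as the prescribed distances. Summing these cancellations over the finite set of $\pi_1 S$-orbits of switches in $\tau$ yields $\omega_\th(w_1,w_2) = 0$. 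Global assembly of $\L_M$ as a PL cone then reduces to checking that the subspaces $L_{M,\tau}$ transform into one another under the linear transition maps on $\MF(S)$ induced by train track splittings and shifts; this will follow from the fact that the complementary-region interpretation of weights is preserved by such moves.

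The main obstacle I foresee is executing the pairing cancellation rigorously in the general $\Lambda$-tree setting, where one cannot directly appeal to measure-theoretic or homological intersection counts. In particular, the interplay between the $\pi_1 M$-orbit structure on $\tilde\tau$ and the order structure of $\Lambda$ must be handled carefully, and non-minimal or abelian actions will likely require passing to an invariant subtree via Lemma~\ref{lem:subtree} before the four-points identity becomes useful. A secondary technical point is ensuring that enough $\pi_1 M$-pairings exist to cover every switch contribution without leftover terms; this should follow from the isometric (rather than merely length-preserving) nature of the embedding $T_\nu \hookrightarrow T$, which is precisely the feature that distinguishes Theorem~\ref{thm:embedding-isotropic} from the more general straight-map version stated in Theorem~\ref{thm:main-floyd}.
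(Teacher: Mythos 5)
Your overall framing—construct $\L_M$ chart-by-chart on a finite atlas of train tracks, identify a linear structure, and verify isotropy—matches the paper's strategy, but the two central steps of your argument contain errors that cannot be patched.

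First, the claim you reduce to, that $\omega_\th(w_1,w_2)=0$ for \emph{every} pair of weights $w_1,w_2$ individually satisfying the extension hypothesis, is strictly stronger than the theorem and is most likely false. The theorem only asserts that $\L_M$ is a \emph{finite union} of finite-sided convex cones lying in isotropic subspaces; it does not assert that the linear span of the extendable weights is isotropic. In the paper's construction, the isotropic subspaces arise from the set $W_4(\Delta_M)$ of weights satisfying the weak four-point condition \eqref{eqn:weight-four-point} in each $3$-simplex of a triangulation $\Delta_M$ of $M$, and different extendable weights generically force different choices of opposite edge pairs, landing in \emph{different} isotropic pieces. A short computation with the tetrahedron form $\Omega_\Sigma$ shows that if $w_1$ satisfies $w_1(e)+w_1(e') = w_1(f)+w_1(f')$ and $w_2$ satisfies $w_2(e)+w_2(e') = w_2(g)+w_2(g')$, then $\Omega_\Sigma(w_1,w_2)$ evaluates to a product of the form $(E_1-G_1)(F_2-E_2)$, which is not zero in general. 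Via Lemma~\ref{lem:symplectic-pullback} this propagates to the Thurston form. So reducing to pairwise vanishing is the wrong target.

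Second, the ``common target'' construction is not available. You propose to use Hahn's theorem and base change to embed $T_{\nu_1}$ and $T_{\nu_2}$ equivariantly into a single $\Lambda'$-tree $T'$ with a $\pi_1M$-action, but base change takes the two given ambient trees $T_1$ and $T_2$ to \emph{two distinct} $\Lambda'$-trees $\Lambda'\tensor_{\Lambda_1}T_1$ and $\Lambda'\tensor_{\Lambda_2}T_2$, and there is no pushout or amalgam that merges them into a single tree carrying a $\pi_1M$-action. The hypothesis gives, for each $\nu$ separately, \emph{some} ambient tree; it gives no relation between the ambient trees for different foliations.

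Third, the cancellation mechanism is misplaced. You describe pairing lifts of switches that become $\pi_1M$-equivalent though $\pi_1S$-inequivalent, but the Thurston form \eqref{eqn:thurston} is a sum over switches of $\tau$, i.e. over $\pi_1S$-orbits in $\Tilde{\tau}\subset\Tilde{S}$; there is no $\pi_1M$-action on this sum to induce a pairing, and absent an incompressibility hypothesis (which the theorem does not assume), $\Tilde{S}$ does not even embed in $\Tilde{M}$. In the paper the cancellation is three-dimensional and occurs in $M$, not on $S$: one extends $\Delta_\tau$ to a triangulation $\Delta_M$ of $M$, associates to the equivariant map $\verts{\Tilde{\Delta}_M}\to T$ a weight in $W_4(\Delta_M,\Lambda)$ (Proposition~\ref{prop:tree-map}), pushes down to $\R$ via a left inverse of $\ordinc$ (Lemma~\ref{lem:left-inverse}), and uses the fact that the sum of tetrahedron forms equals the pullback of the boundary triangle form because each \emph{interior face} of $\Delta_M$ appears twice with opposite orientation (Lemma~\ref{lem:symplectic-pullback}). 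That homological cancellation, combined with the four-point condition annihilating each $\Omega_\Sigma$ on a fixed choice of opposite pairs (Lemma~\ref{lem:four-point-isotropic}), is what makes each piece of $\L_\tau := i^*(W_4(\Delta_M^\tau))\cap\ML(\tau)$ isotropic. To repair your proposal you would need to abandon pairwise vanishing and the common-target step, introduce a triangulation of the filled-in $3$-manifold, and replace the switch-pairing heuristic with interior-face cancellation.
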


Here a \emph{piecewise linear cone} refers to a closed
$\R^+$-invariant subset of $\MF(S)$ whose intersection with any train
track chart $\MF(\tau)$ is a finite union of finite-sided convex cones
in linear subspaces of $W(\tau)$.  Such a cone is \emph{isotropic} if
the linear spaces can be chosen to be isotropic with respect to the
Thurston symplectic form.  Since transition maps between these charts
are piecewise linear and symplectic, it suffices to check these
conditions in any covering of the set by train track charts.

The first step in the proof of Theorem \ref{thm:embedding-isotropic}
will be to use the foliation and embedding to construct a weight
function on the $1$-skeleton of a triangulation of $M$.  We begin with
some generalities about train tracks, triangulations, and weight
functions.

\subsection{Complexes and weight functions}
  \label{sec:triangulations}

Let $\Delta$ be a simplicial complex, and let $\Delta^{(k)}$ denote
the its set of $k$-simplices.  Given an abelian group $G$, define
the space of \emph{$G$-valued weights on $\Delta$} as the $G$-module
consisting of functions $\Delta^{(1)} \to G$; we denote this space by
$$ W(\Delta,G) := G^{\Delta^{(1)}}. $$
The case $G = \R$ will be of primary interest and so we abbreviate
$W(G) := W(G,\R)$.  If $w \in W(\Delta,G)$ and $e \in \Delta^{(1)}$ we
say that $w(e)$ is the \emph{weight} of $e$ with respect to $w$.

A homomorphism of groups $\varphi: G \to G'$ induces a homomorphism of
weight spaces $\varphi_* : W(\Delta,G) \to W(\Delta,G')$, and an
inclusion of simplicial complexes $i : (M,\Delta) \to (M',\Delta')$
induces a $G$-linear restriction map $i^* : W(\Delta',G) \to
W(\Delta,G)$.  These functorial operations commute,
i.e.~$\varphi_*\circ i^* = i^* \circ \varphi_*$.

A map $f$ from $\verts{\Delta}$ to a $\Lambda$-metric space induces a
$\Lambda$-valued weight on $\Delta$ in a natural way: For each $e \in
\edges{\Delta}$ we define the weight to be the distance between the
$f$-images of its endpoints.  We write $w_f$ for the weight function
defined in this way.

This construction has a natural extension to equivariant maps on
regular covers.  Suppose that $\Tilde{\Delta}, \Delta$ are simplicial
complexes such that there is a regular covering $\pi : \Tilde{\Delta}
\to \Delta$ which is also a simplicial map.  Suppose also that the
deck group $\Gamma$ of this covering acts isometrically on a
$\Lambda$-metric space $E$.  Then if $f : \Tilde{\Delta} \to E$ is a
$\Gamma$-equivariant map, the resulting weight function $\Tilde{w}_f \in
W(\Tilde{\Delta},\Lambda)$ is also $\Gamma$-invariant, hence it
descends to a weight function $w_f \in W(\Delta,\Lambda)$ on the base
of the covering.

\subsection{Extending triangulations and maps}
  \label{sec:tree-map}

We will now consider the space of weight functions as defined above in
cases where the complex $\Delta$ is a triangulation of a $2$- or
$3$-manifold, possibly with boundary.

For example, let $\tau$ be a maximal, generic train track on a
surface $S$.  Then there is a triangulation $\Delta_\tau$ of $S$ dual
to the embedded trivalent graph underlying $\tau$.  Each triangle of
$\Delta_\tau$ contains one switch of the train track, each edge of
$\Delta_\tau$ corresponds to an edge of $\tau$, and each vertex of
$\Delta_\tau$ corresponds to a complementary region of $\tau$. The
correspondence between edges gives a natural (linear) embedding
$$ W(\tau) \into W(\Delta_\tau).$$

Now suppose that $\nu$ is a measured foliation on $S$ that is carried
by the train track $\tau$, so we consider the class $[\nu]$ as an
element of $\MF(\tau) \subset W(\tau)$.  Let $\Tilde{\tau}$ denote the
lift of $\tau$ to the universal cover $\Tilde{S}$.  As in Section
\ref{sec:dual-trees}, the carrying relationship between $\nu$ and
$\tau$ gives a map from complementary regions of $\tau$ to the dual
tree $T_\nu$.  In terms of the dual triangulation $\Delta :=
\Delta_\tau$, this is a map
$$ f : \verts{\Tilde{\Delta}} \to T_\nu,$$
and it is immediate from the definitions above and Proposition
\ref{prop:carrying} that the associated weight function $w_f \in
W(\Delta)$ is the image of $[\nu]$ under the embedding $W(\tau) \into
W(\Delta)$.

Let us further assume that, as in the hypotheses of Theorem
\ref{thm:embedding-isotropic}, there is an equivariant isometric
embedding of $T_\nu$ into a $\Lambda$-tree $T$ equipped with an action
of $\pi_1M$, where $M$ is a $3$-manifold with $\partial M = S$.  Using
this embedding we can consider the map $f$ constructed above as taking
values in $T$.
  We extend the triangulation $\Delta$ of $S$ to a
triangulation $\Delta_M$ of $M$, and the map $f$ to a
$\pi_1M$-equivariant map
$$ F : \verts{\Tilde{\Delta}_M} \to T.$$
Such an extension can be constructed by choosing a fundamental domain
$V$ for the $\pi_1M$-action on $\verts{\Tilde{\Delta}_M}$ and mapping
elements of $V \setminus \verts{\Tilde{\Delta}}$ to arbitrary points in
$T$.  Combining these with the values of $f$ on $\verts{\Tilde{\Delta}}$
and the free action of $\pi_1M$ gives a unique equivariant extension
to all of $\verts{\Tilde{\Delta}_M}$.

Associated to the map $F$ is the weight function $w_F \in
W(\Delta_M,\Lambda)$.  By construction, its values on the edges of
$\Delta$ are the coordinates of $[\nu]$ relative to the train track
chart of $\tau$, considered as elements of $\Lambda$ using the
embedding $\ordinc : \R \to \Lambda$ that is implicit in the isometric
map $T_\nu \to T$.

We record the constructions of this paragraph in the following
proposition.

\begin{prop}
\label{prop:tree-map}
Let $\nu$ be a measured foliation on $S = \partial M$ carried by a maximal generic
train track $\tau$, and let $\Delta_M$ be a triangulation of $M$
extending the dual triangulation of $\tau$.  Suppose that
there exists a $\Lambda$-tree $T$ equipped with an isometric action of
$\pi_1M$ and an equivariant isometric embedding
$$ h : T_\nu \to T,$$
relative to an order-preserving embedding $\ordinc: \R \to \Lambda$.
Then there exists a weight function $w \in W(\Delta_M,\Lambda)$ with
the following properties:
\begin{rmenumerate}
\item The weight $w$ is induced by an equivariant map $F :
\verts{\Tilde{\Delta}_M} \to T$
\item The restriction of $w$ to $\Delta_\tau$ is the image of $[\nu] \in
W(\tau)$ under the natural inclusion $W(\tau) \into W(\Delta_\tau)
\xrightarrow{\ordinc_*} W(\Delta_\tau,\Lambda)$.  \noproof
\end{rmenumerate}
\end{prop}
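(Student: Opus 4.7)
This proposition formalizes the construction sketched in the preceding paragraphs, so the plan is to assemble the pieces in order and verify the two properties. The proof breaks into four steps: build an equivariant map on boundary vertices from the carrying data, push it into $T$ via $h$, extend equivariantly across $M$, and then read off the weight function.

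For the first two steps, the carrying construction of Section~\ref{sec:dual-trees} produces a $\pi_1S$-equivariant map $f : \verts{\Tilde{\Delta}_\tau} \to T_\nu$ sending each lifted complementary region of $\Tilde{\tau}$ to its image point in the dual tree. Postcomposing with $h$ gives $h \circ f : \verts{\Tilde{\Delta}_\tau} \to T$, equivariant with respect to the action of $\pi_1S$ on $T$ through $i_* : \pi_1S \to \pi_1M$.

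For the extension across $M$, identify a component of the preimage of $S$ in $\Tilde{M}$ with the quotient $\Tilde{S}/\ker(i_*)$; because $h\circ f$ is equivariant through $i_*$, it descends to a well-defined map on the vertices of this component, and translation by $\pi_1M$ extends it to all vertices of $\verts{\Tilde{\Delta}_M}$ lying over $\verts{\Delta_\tau}$. Then choose a fundamental domain $V$ for the $\pi_1M$-action on the remaining vertices, assign arbitrary values in $T$ to the interior vertices in $V$, and propagate by the free $\pi_1M$-action to obtain an equivariant map $F : \verts{\Tilde{\Delta}_M} \to T$. I expect the only delicate point here is the possible non-injectivity of $i_*$; it is handled by the descent through $\ker(i_*)$, which is forced by the $\pi_1S$-equivariance of $h$.

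Finally, set $w := w_F$, the weight function whose value on an edge of $\Tilde{\Delta}_M$ is the $\Lambda$-distance in $T$ between $F$-images of its endpoints. Since $F$ is $\pi_1M$-equivariant and $\pi_1M$ acts on $T$ by isometries, $w_F$ is $\pi_1M$-invariant and hence descends to $w \in W(\Delta_M,\Lambda)$, giving (i). For (ii), let $e$ be an edge of $\Delta_\tau$ with lifted endpoints $\Tilde a, \Tilde b$; then
\[
w(e) \;=\; d_T\bigl(F(\Tilde a), F(\Tilde b)\bigr) \;=\; d_T\bigl(h(f(\Tilde a)), h(f(\Tilde b))\bigr) \;=\; \ordinc\bigl(d_{T_\nu}(f(\Tilde a), f(\Tilde b))\bigr),
\]
using that $h$ is an isometric embedding relative to $\ordinc$. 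By Proposition~\ref{prop:carrying}, the $T_\nu$-distance equals the $\tau$-weight of $[\nu]$ at $e$, so $w(e) = \ordinc([\nu](e))$, which is precisely the image of $[\nu]$ under $W(\tau) \into W(\Delta_\tau) \xrightarrow{\ordinc_*} W(\Delta_\tau,\Lambda)$.
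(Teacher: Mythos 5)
Your proof is correct and follows essentially the same route as the paper, which records this proposition with no separate proof because the construction is exactly the one sketched in the preceding paragraphs (build $f$ from carrying, postcompose with $h$, extend equivariantly to $F$, read off $w_F$). Your explicit handling of the descent of $h\circ f$ through $\ker(i_*)$ makes precise a point the paper leaves implicit, but the argument is the same.
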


\subsection{Triangle forms and the symplectic structure}
In \cite{penner-papadopoulos:symplectic}, Penner and Papadopoulos
relate the Thurston symplectic structure of $\MF(S)$ for a punctured
surface $S$ to a certain linear $2$-form on the space of weights on a
``null-gon track'' dual to an ideal triangulation of $S$.  In this
section we discuss a related construction for a triangulation of a
compact surface dual to a train track.

Let $\sigma$ be an oriented triangle with edges $e,f,g$ (cyclically
ordered according to the orientation).  Let $de, df, dg$ denote the
corresponding linear functionals on $\R^{\{e,f,g\}}$, which evaluate
  a function on the given edge.  We call the alternating $2$-form 
$$ \omega_\sigma := -\frac{1}{2} \left ( de
\wedge df + df \wedge dg + dg \wedge de \right )$$ the \emph{triangle
  form} associated with $\sigma$.  Note that if $-\sigma$ represents
the triangle with the opposite orientation, then $\omega_{-\sigma} = -\omega_{\sigma}$.

Given a triangulation $\Delta$ of a compact oriented $2$-manifold $S$,
the triangle form corresponding to any $\sigma \in \Delta^{(2)}$ (with
its induced orientation) is naturally an element of $\alttwo
W(\Delta)^*$.  Denote the sum of these by
\begin{equation}
\label{eqn:omega-delta}
\omega_\Delta = \sum_{\sigma \in \Delta^{(2)}} \omega_\sigma.
\end{equation}
This $2$-form on $W(\Delta)$ is an analogue of the Thurston symplectic
form, in a manner made precise by the following:

\begin{lem}
\label{lem:symplectic-on-weights}
If $\tau$ is a maximal generic train track on $S$ with dual
triangulation $\Delta = \Delta_\tau$, then the Thurston form on
$W(\tau)$ is the pullback of $\omega_{\Delta}$ by the natural
inclusion $W(\tau) \to W(\Delta)$.
\end{lem}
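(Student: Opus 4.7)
My strategy is to compare $\omega_\Delta$ and $\omega_\th$ summand by summand under the natural bijection between switches of $\tau$ and triangles of $\Delta_\tau$, then exploit the switch relations that cut $W(\tau)$ out of $W(\Delta)$.

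First, I would set up the combinatorial correspondence. Because $\Delta_\tau$ is dual to the trivalent graph $\tau$, each switch $v$ lies in the interior of a unique triangle $\sigma_v$ of $\Delta_\tau$, and the three edges of $\sigma_v$ are in canonical bijection with the three branches of $\tau$ at $v$: the edge $E_e \subset \sigma_v$ is the one transverse to the branch $e \in \{a_v,b_v,c_v\}$. The geometric input is to identify the cyclic order on the edges of $\sigma_v$ inherited from the orientation of $S$. I would argue that the three vertices of $\sigma_v$ correspond to the three sectors cut out by $\tau$ around $v$, and that traversing $\partial \sigma_v$ counterclockwise corresponds to going counterclockwise around $v$; consequently the positive cyclic order on the triple $(E_{a_v}, E_{b_v}, E_{c_v})$ agrees with the positive cyclic order $(a_v,b_v,c_v)$ used in the definition of $\omega_\th$.

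Next I would compute the restriction of each triangle form to $W(\tau)$. The inclusion $W(\tau) \hookrightarrow W(\Delta)$ sends a weight on branches to the weight on $\Delta$-edges via $E_e \mapsto e$, so $dE_e$ pulls back to $de$ and we obtain
$$\omega_{\sigma_v}|_{W(\tau)} = -\frac{1}{2}\bigl(da_v \wedge db_v + db_v \wedge dc_v + dc_v \wedge da_v\bigr).$$
The decisive step is to substitute the switch relation $dc_v = da_v + db_v$, which holds identically on $W(\tau)$. A short wedge-product computation collapses the three terms to $-da_v \wedge db_v$, and hence $\omega_{\sigma_v}|_{W(\tau)} = \frac{1}{2}\, da_v \wedge db_v$, which is precisely the $v$-summand of $\omega_\th$ as written in \eqref{eqn:thurston}.

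Summing over switches and invoking the switch/triangle bijection then gives $\omega_\Delta|_{W(\tau)} = \omega_\th$. The only delicate point is the orientation bookkeeping in the first step: one has to verify that the positively oriented triple of branches on a small circle around $v$ corresponds, under the duality $e \leftrightarrow E_e$, to the positively oriented triple of edges of $\sigma_v$ induced from the orientation of $S$. Once that matching is confirmed, the rest is an elementary bilinear algebra computation; no further geometric input is required.
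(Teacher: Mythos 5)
Your proof is correct and follows essentially the same route as the paper: sum the triangle forms over switches, impose the switch relation, and watch the wedge computation collapse each summand to $\frac{1}{2}\,da_v \wedge db_v$. The orientation bookkeeping you flag as "the only delicate point" is indeed the substance of what the paper compresses into the phrase "for an appropriate labeling of the sides as $e,f,g$," so your more explicit treatment of the duality between the cyclic order of branches around a switch and the cyclic order of edges of the dual triangle is a fair expansion rather than a departure.
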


\begin{proof}
By direct calculation: Both the Thurston form and
$\omega_{\Delta}$ are given as a sum of $2$-forms, one for each
triangle of $\Delta$ (equivalently, switch of $\tau$).  The image of
$W(\tau)$ in $W(\Delta)$ is cut out by imposing a switch condition for each
triangle $\sigma \in \Delta^{(2)}$, which for an appropriate labeling
of the sides as $e,f,g$ can be written as
$$de + df = dg.$$
On the subspace defined by this constraint the triangle form pulls
back to
$$ -\frac{1}{2} \left ( de \wedge df + df \wedge dg + dg \wedge de
\right ) = \frac{1}{2} de \wedge df$$
which is the associated summand in the Thurston form \eqref{eqn:thurston}.
\end{proof}

\subsection{Tetrahedron forms}

\begin{figure}
\begin{center}
\includegraphics[width=3.5in]{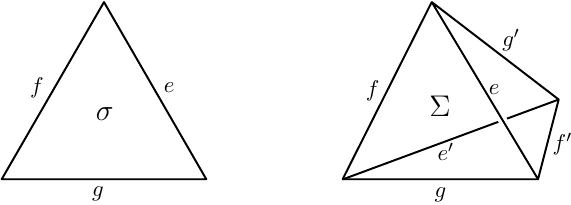}
\caption{Labeled edges of an oriented $2$-simplex $\sigma$ and an
  oriented $3$-simplex $\Sigma$.  The $2$-forms $\omega_\sigma$ and
  $\omega_\Sigma$ are defined in terms of these
  labels.\label{fig:tetrahedron}}
\end{center}
\end{figure}

Let $\Sigma$ be an oriented $3$-simplex.  Call a pair of edges of
$\Sigma$ \emph{opposite} if they do not share a vertex.  Label the edges of $\Sigma$
as $e,f,g,e',f',g'$ so that the following conditions are satisfied:
\begin{rmenumerate}
\item The pairs $\{e,e'\}$, $\{f,f'\}$, and $\{g,g'\}$ are opposite.
\item The ordering $e,f,g$ gives the oriented boundary of one of the
faces of $\Sigma$.
\end{rmenumerate}
An example of such a labeling is shown in Figure
\ref{fig:tetrahedron}.

Define the \emph{tetrahedron form} $\Omega_\Sigma \in \alttwo W(\Sigma)^*$ as
\begin{equation*}
\Omega_\Sigma  = -\frac{1}{2} \left ( d(e\!+\!e') \wedge
d(f\!+\!f') + d(f\!+\!f') \wedge d(g\!+\!g') +  d(g\!+\!g') \wedge d(e\!+\!e') \right )
\end{equation*}
Here we abbreviate $d(e+e') = de + de'$ and similarly for the other
edges.  It is easy to check that this $2$-form does not depend on the
labeling (as long as it satisfies the conditions above).  As in the
case of triangle forms, $\Omega_\Sigma$ is naturally a $2$-form on
the space of weights for any oriented simplicial complex containing
$\Sigma$.

A simple calculation using the definition above gives the following:
\begin{lem}
\label{lem:tet-is-sum-of-tri}
The tetrahedron form is equal to the sum of the triangle forms of its
oriented boundary faces, i.e.
$$ \Omega_\Sigma = \sum_{\sigma \in \partial \Sigma} \omega_\sigma. $$\noproof
\end{lem}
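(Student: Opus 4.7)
The identity is a finite algebraic computation, and my plan is to expand both sides in the basis of elementary wedges $da \wedge db$ indexed by pairs of edges of $\Sigma$, then match coefficients. Two structural observations make the bookkeeping manageable.

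First, under a labeling satisfying (i) and (ii), the four faces of $\Sigma$ are precisely $\{e,f,g\}$, $\{e,f',g'\}$, $\{e',f,g'\}$, $\{e',f',g\}$: a face misses one vertex of $\Sigma$, so from each opposite pair $\{a,a'\}$ it contains exactly the edge whose endpoints both lie in the face. The distinguished face of (ii) is $\{e,f,g\}$, and the other three arise from it by swapping a single edge with its opposite. Consequently, each of the $12$ non-opposite pairs of edges of $\Sigma$ lies in a unique face (the $4\cdot 3 = 12$ within-face pairs exhaust them).

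Second, both sides of the identity, when expanded, involve only the $12$ non-opposite-pair wedges, each with coefficient $\pm\frac{1}{2}$. For $\sum_{\sigma\in\partial\Sigma}\omega_\sigma$ this is immediate from the first observation: each face contributes one wedge per non-opposite pair of its edges. For $\Omega_\Sigma$, each cyclic summand $-\frac{1}{2}\,d(x+x')\wedge d(y+y')$ wedges two disjoint opposite-pair sums, producing four wedges of non-opposite pairs and none of the form $da\wedge da'$; together the three summands cover each non-opposite wedge exactly once.

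What remains is a sign check. The sign of $da\wedge db$ in $\Omega_\Sigma$ is determined by the cyclic order of the labels $(e,f,g)$ and $(e',f',g')$, while the sign in $\omega_{\partial\Sigma}$ is determined by the cyclic order of edges on the unique face containing $\{a,b\}$ carrying the orientation induced from $\Sigma$. The cyclic relabeling $e\to f\to g\to e$, $e'\to f'\to g'\to e'$ preserves both $\Omega_\Sigma$ and the set of oriented faces of $\Sigma$ and has only four orbits on the set of non-opposite pairs, so the identity reduces to verifying sign agreement on four representative wedges. The main obstacle is therefore the careful determination of the orientation induced by $\Sigma$ on each of its four boundary faces, and its translation into the cyclic edge ordering used in the triangle form---a step where orientation conventions must be lined up with condition (ii) to avoid an overall sign error. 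Once the induced orientations are set up consistently with (i)--(ii), the identity holds coefficient by coefficient.
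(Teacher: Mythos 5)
Your structural setup is sound and the reductions you state are all correct: the four boundary faces are, as edge sets, $\{e,f,g\}$, $\{e,f',g'\}$, $\{e',f,g'\}$, $\{e',f',g\}$; each of the twelve non-opposite edge pairs lies in a unique face; both $\Omega_\Sigma$ and $\sum_{\sigma\in\partial\Sigma}\omega_\sigma$ expand into exactly those twelve wedges, each with coefficient $\pm\tfrac12$; and the cyclic relabeling $e\mapsto f\mapsto g\mapsto e$ (and primes) preserves $\Omega_\Sigma$, permutes the oriented faces, and has four orbits on the non-opposite pairs. Since the paper leaves this lemma as a ``simple calculation'' with no proof, organizing the bookkeeping this way is a reasonable way to supply one.

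But you stop precisely at the step that carries the content: you never determine the cyclic edge orderings that the orientation of $\Sigma$ induces on the three non-distinguished faces, and you never compare signs, flagging this yourself as ``the main obstacle.'' Asserting that the identity ``holds coefficient by coefficient'' once the orientations are ``set up consistently'' is the thing to be proved, not a consequence of conditions (i)--(ii). To close the gap: if the distinguished face carries cyclic edge order $(e,f,g)$, then the induced boundary orientation of $\Sigma$ gives the other three faces cyclic edge orders $(e',f,g')$, $(e',f',g)$, $(e,f',g')$, i.e.\ prime exactly two of the three letters each time. With these, the match is immediate: for instance the face $(e',f,g')$ contributes $-\tfrac12\bigl(de'\wedge df + df\wedge dg' + dg'\wedge de'\bigr)$, which coincides term-by-term with the three wedges of $\Omega_\Sigma$ involving the pairs $\{e',f\}$, $\{f,g'\}$, $\{g',e'\}$; the other faces behave likewise, and all twelve coefficients come out $-\tfrac12$ on both sides. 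You should actually carry out at least one check per orbit (or all twelve, it takes a few lines), rather than assert agreement.
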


Now consider a triangulation $\Delta_M$ of an oriented $3$-manifold with
boundary $S$, and let $\Delta_S$ denote the induced triangulation of
the boundary.  Denote the sum of the tetrahedron forms by
$$ \Omega_{\Delta_M} = \sum_{\Sigma \in \tets{\Delta_M}} \!
\Omega_\Sigma \: \in \:
\alttwo W(\Delta_M)^*.$$
In fact, due to cancellation in this sum, the $2$-form defined above ``lives'' on the boundary:

\begin{lem}
\label{lem:symplectic-pullback}
The form $\Omega_{\Delta_M}$ is equal to the pullback of
$\omega_{\Delta_S}$ under the restriction map $W(\Delta_M) \to
W(\Delta_S)$.
\end{lem}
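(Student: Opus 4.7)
The plan is to expand the definition of $\Omega_{\Delta_M}$ using Lemma \ref{lem:tet-is-sum-of-tri}, and then exploit the standard orientation-cancellation phenomenon for faces in an oriented simplicial $3$-manifold. Starting from
$$\Omega_{\Delta_M} = \sum_{\Sigma \in \tets{\Delta_M}} \Omega_\Sigma = \sum_{\Sigma \in \tets{\Delta_M}} \; \sum_{\sigma \in \bdy \Sigma} \omega_\sigma,$$
I would reorganize the double sum by grouping according to which $2$-simplex of $\Delta_M$ is being summed. Every $2$-simplex of $\Delta_M$ is either interior (adjacent to two distinct tetrahedra) or lies in the boundary triangulation $\Delta_S$ (adjacent to exactly one tetrahedron).

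The first key observation is that for an interior $2$-simplex $\sigma$, the two adjacent tetrahedra $\Sigma_1, \Sigma_2$ each induce an orientation on $\sigma$ as a boundary face, and since $M$ is oriented these two induced orientations are opposite. Thus $\sigma$ contributes $\omega_\sigma + \omega_{-\sigma}$ to the double sum, which vanishes because $\omega_{-\sigma} = -\omega_\sigma$ by the definition of the triangle form. The second observation is that for a boundary $2$-simplex $\sigma \in \tris{\Delta_S}$, the orientation induced by the unique tetrahedron containing $\sigma$ agrees with the boundary orientation of $S$ inherited from the orientation of $M$, so $\sigma$ contributes exactly $\omega_\sigma$, where $\sigma$ carries its orientation as a simplex of $\Delta_S$. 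Combining these two observations yields
$$\Omega_{\Delta_M} = \sum_{\sigma \in \tris{\Delta_S}} \omega_\sigma,$$
where we view the right-hand side as an element of $\alttwo W(\Delta_M)^*$ via the restriction map $W(\Delta_M) \to W(\Delta_S)$ (which is valid because $\omega_\sigma$ depends only on the weights on the three edges of $\sigma$, all of which are edges of $\Delta_S$). By the defining equation \eqref{eqn:omega-delta} this right-hand side is exactly the pullback of $\omega_{\Delta_S}$, completing the proof.

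The routine verification consists of the two orientation claims above; the main subtlety is bookkeeping the orientation conventions so that the cancellation on interior faces and the orientation-matching on boundary faces come out correctly. Neither involves any real calculation beyond the identity $\omega_{-\sigma} = -\omega_\sigma$ already noted after the definition of the triangle form, together with the standard fact that in an oriented simplicial $3$-manifold a shared $2$-face inherits opposite orientations from its two adjacent tetrahedra while the boundary orientation on $\bdy M$ is precisely the one induced from any single adjacent tetrahedron.
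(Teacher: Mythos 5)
Your proof is correct and follows essentially the same approach as the paper: expand via Lemma \ref{lem:tet-is-sum-of-tri}, cancel the contributions from interior faces (which appear twice with opposite orientations), and observe that the surviving boundary terms recover $\omega_{\Delta_S}$ under the restriction map. You simply spell out the orientation bookkeeping a bit more explicitly than the paper does.
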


\begin{proof}
By Lemma \ref{lem:tet-is-sum-of-tri} we have
$$ \Omega_{\Delta_M} = \sum_{\Sigma \in \tets{\Delta_M\!}} \; \sum_{\sigma
  \in \partial \Sigma} \omega_\sigma.$$ In this sum, each interior
triangle of $\Delta_M$ appears twice (once with each orientation) and
so these terms cancel.  The remaining terms are the elements of
$\tris{\Delta_S}$ with the boundary orientation, so we are left with
the sum \eqref{eqn:omega-delta} defining $\omega_{\Delta_S}$.
The result is the pullback of $\omega_{\Delta_S}$ by the restriction map
because in the formula above, we are considering $\omega_\sigma$ as an
element of $\alttwo W(\Delta_M)^*$ rather than $\alttwo W(\Delta_S)^*$.
\end{proof}

\subsection{The four-point condition}

Given four points in a $\Lambda$-tree, Lemma
\ref{lem:four-points-lambda-tree} implies that there is always a
labeling $\{p,q,r,s\}$ of these points such that the distance function
satisfies
\begin{equation}
\label{eqn:four-point}
d(p,q) + d(r,s) = d(p,s) + d(r,q).
\end{equation}
We call this the \emph{weak four-point condition} to distinguish it
from the stronger four-point condition of Lemma
\ref{lem:four-points-lambda-tree} which also involves an inequality.

If we think of $p,q,r,s$ as labeling the vertices of a $3$-simplex
$\Sigma$, then the pairwise distances give a weight function $w :
\edges{\Sigma} \to \Lambda$.  Condition \eqref{eqn:four-point} is
equivalent to the existence of opposite edge pairs $\{e,e'\}, \{f,f'\}
\subset \edges{\Sigma}$ such that
\begin{equation}
\label{eqn:weight-four-point}
 w(e) + w(e') = w(f) + w(f').
\end{equation}

Given a simplicial complex $\Delta$, let
$W_4(\Delta,\Lambda)$ denote the set of $\Lambda$-valued weights such
that in each $3$-simplex of $\Delta$ there exist opposite edge pairs
so that \eqref{eqn:weight-four-point} is satisfied.

The following basic properties of $W_4(\Delta,\Lambda)$ follow
immediately from the definition of this set (and the relation between
the four-point condition and $4$-tuples in $\Lambda$-trees):

\begin{lem}\mbox{}
\label{lem:four-point-properties}
\begin{rmenumerate}
\item The set $W_4(\Delta,\Lambda)$ is a finite union of subspaces
(i.e.~$\Lambda$-submodules) of $W(\Delta,\Lambda)$; each subspace
corresponds to choosing opposite edge pairs in each of the $3$-simplices of $\Delta$.

\item If $\varphi: \Lambda \to \Lambda'$ is a homomorphism, then we
have $\varphi_*( W_4(\Delta,\Lambda)) \subset W_4(\Delta,\Lambda')$.

\item If $f : \verts{\Tilde{\Delta}} \to T$ is an equivariant map to a
$\Lambda$-tree, then $w_f \in W_4(\Delta,\Lambda)$
\end{rmenumerate}
\noproof
\end{lem}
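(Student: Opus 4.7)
The plan is to verify each of the three assertions directly from the definition of $W_4(\Delta,\Lambda)$ together with Lemma \ref{lem:four-points-lambda-tree}.

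For (i), observe that a tetrahedron has exactly three pairs of opposite edges, so the four-point condition \eqref{eqn:weight-four-point} in a single $3$-simplex $\Sigma$ amounts to choosing two of these three pairs; there are $\binom{3}{2}=3$ possibilities per tetrahedron and hence finitely many global choices over all of $\tets{\Delta}$. For any one such global choice, the corresponding collection of conditions \eqref{eqn:weight-four-point} is a finite system of linear equations in the weight values and therefore cuts out a $\Lambda$-submodule of $W(\Delta,\Lambda)$. The set $W_4(\Delta,\Lambda)$ is the union of these finitely many submodules, which proves (i). For (ii), if $w \in W_4(\Delta,\Lambda)$ then in each tetrahedron there are opposite edge pairs $\{e,e'\}$, $\{f,f'\}$ with $w(e)+w(e')=w(f)+w(f')$; applying the homomorphism $\varphi$ preserves this equality, and since $\varphi_*(w)(e) = \varphi(w(e))$ by definition, $\varphi_*(w)$ satisfies the same four-point relation with the same pairs in every tetrahedron, hence lies in $W_4(\Delta,\Lambda')$.

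For (iii), fix a tetrahedron $\Sigma$ of $\Delta$ and a lift $\Tilde{\Sigma} \subset \Tilde{\Delta}$, and let $p,q,r,s \in T$ denote the images of its vertices under $f$. Lemma \ref{lem:four-points-lambda-tree} asserts that two of the three sums $d(p,q)+d(r,s)$, $d(p,r)+d(q,s)$, $d(p,s)+d(q,r)$ are equal. Each of these sums is exactly the total $w_f$-weight of one of the three pairs of opposite edges of $\Tilde{\Sigma}$, so the equality of two of them is precisely condition \eqref{eqn:weight-four-point} for $w_f$ on $\Tilde{\Sigma}$. Because $w_f$ is $\pi_1 M$-invariant and descends to a well-defined weight on $\Delta$, this condition holds on $\Sigma$ independently of the choice of lift, and since $\Sigma$ was arbitrary we conclude $w_f \in W_4(\Delta,\Lambda)$.

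Each step reduces to either unwinding a definition or a single application of Lemma \ref{lem:four-points-lambda-tree}, so there is no substantive obstacle here; the only care required is the combinatorial bookkeeping that matches the three pairs of opposite edges of a tetrahedron with the three pairwise-sum distances appearing in the weak four-point condition.
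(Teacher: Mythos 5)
Your proof is correct and matches what the paper intends: the lemma is stated with \textsf{\noproof} because the author regards all three parts as immediate from the definition of $W_4(\Delta,\Lambda)$ together with Lemma \ref{lem:four-points-lambda-tree}, which is exactly the route you take (identifying the three sums in Lemma \ref{lem:four-points-lambda-tree} with the three opposite-edge-pair weight sums, applying $\varphi$ termwise for (ii), and using equivariance for the descent in (iii)). One small thing worth noticing: the paper's statement of (ii) contains a typo in the codomain, writing $W_4(\Delta,\Lambda)$ on the right where it should be $W_4(\Delta,\Lambda')$; your proof silently and correctly lands in $W_4(\Delta,\Lambda')$.
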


Ultimately, the isotropic condition in Theorem
\ref{thm:embedding-isotropic} arises from the following property of
the set $W_4(\Delta) = W_4(\Delta,\R)$:

\begin{lem}
\label{lem:four-point-isotropic}
Let $M$ be an oriented $3$-manifold and $\Delta_M$ a triangulation.
Then $W_4(\Delta_M)$ is a finite union of $\,\Omega_{\Delta_M}$-isotropic
subspaces of $W(\Delta_M)$. 
\end{lem}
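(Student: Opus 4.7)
The plan is to reduce the global claim to a local computation on each tetrahedron of $\Delta_M$. By Lemma \ref{lem:four-point-properties}(i), $W_4(\Delta_M)$ is a finite union of linear subspaces of $W(\Delta_M)$, each indexed by a choice, for every $\Sigma \in \tets{\Delta_M}$, of two opposite edge pairs $\{e,e'\}$ and $\{f,f'\}$ whose sums agree. Fix one such subspace $V$; it suffices to show that $V$ is $\Omega_{\Delta_M}$-isotropic.

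Since $\Omega_{\Delta_M} = \sum_{\Sigma} \Omega_\Sigma$ and each $\Omega_\Sigma$ is a $2$-form depending only on the weights of the six edges of $\Sigma$, the vanishing of $\Omega_{\Delta_M}|_V$ will follow once we show that $\Omega_\Sigma$ vanishes on the local subspace $V_\Sigma \subset W(\Sigma)$ cut out by the single linear relation coming from the four-point condition on $\Sigma$. Indeed, the restriction of $V$ to $W(\Sigma)$ lands inside $V_\Sigma$, so termwise vanishing of the sum gives $\Omega_{\Delta_M}|_V = 0$.

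For the local check, label the edges of $\Sigma$ as in Figure \ref{fig:tetrahedron}, with $\{e,e'\},\{f,f'\},\{g,g'\}$ the three pairs of opposite edges, and abbreviate
\[
A = d(e+e'), \qquad B = d(f+f'), \qquad C = d(g+g'),
\]
so that by definition $\Omega_\Sigma = -\tfrac{1}{2}(A \wedge B + B \wedge C + C \wedge A)$. The four-point condition selects one of the three relations $A = B$, $B = C$, or $C = A$ to hold identically on $V_\Sigma$. Assuming (without loss of generality, since the form is symmetric in the three variables) that $A = B$ on $V_\Sigma$, substitution yields
\[
A \wedge B + B \wedge C + C \wedge A \;=\; A \wedge A + A \wedge C + C \wedge A \;=\; 0,
\]
so $\Omega_\Sigma|_{V_\Sigma} = 0$. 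The remaining two cases are identical by the cyclic symmetry of the expression.

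There is no real obstacle here beyond this short algebraic identity; the conceptual content is entirely packaged in the definition of $\Omega_\Sigma$ as the unique (up to sign) cyclically symmetric combination of $A,B,C$, which automatically vanishes on the codimension-one subspace where any two of these three linear functionals agree. Combining the local vanishing over all $\Sigma \in \tets{\Delta_M}$ gives $\Omega_{\Delta_M}|_V = 0$, and taking the union over the finitely many subspaces $V$ arising from Lemma \ref{lem:four-point-properties}(i) completes the proof.
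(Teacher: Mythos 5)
Your proof is correct and takes essentially the same approach as the paper: reduce to a single subspace $V$ via Lemma \ref{lem:four-point-properties}(i), then show each tetrahedron form $\Omega_\Sigma$ vanishes on $V$ by substituting the four-point relation into its definition. You merely spell out the short algebraic cancellation (with the $A,B,C$ notation) that the paper leaves to the reader.
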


\begin{proof}
Let $V \subset W(\Delta_M)$ be one of the subspaces comprising
$W_4(\Delta_M)$ (as in Lemma \ref{lem:four-point-properties}.(i)).
Then for each $\Sigma \in \tets{\Delta_M}$ we have opposite edge pairs
$\{e,e'\}$ and $\{f,f'\}$ such that \eqref{eqn:weight-four-point}
holds, or equivalently, on the subspace $V$ the equation
$$ d(e+e') = d(f+f') $$
is satisfied.  Substituting this into the definition of the
tetrahedron form $\Omega_\Sigma$ gives zero.  Since
$\Omega_{\Delta_M}$ is the sum of these forms, the subspace $V$ is
isotropic.
\end{proof}

\subsection{Construction of the isotropic cone}
\label{sec:construction-of-isotropic}

We now combine the results on triangulations, weight functions, and
the symplectic structure of $\MF(S)$ with the constructions of
Proposition \ref{prop:tree-map} to prove Theorem
\ref{thm:embedding-isotropic}.

\begin{proof}[{Proof of Theorem \ref{thm:embedding-isotropic}}]
Let $\Upsilon$ be a finite set of maximal, generic train tracks such
that any measured foliation on $S$ is carried by one of them.
For each $\tau \in \Upsilon$, let $\Delta_M^\tau$ be an extension of
$\Delta_\tau$ to a triangulation of $M$.

Define
$$ \L_\tau = i^*(W_4(\Delta_M^\tau)) \cap \ML(\tau)$$
where $i^* : W(\Delta_M^\tau) \to W(\Delta_\tau)$ is the restriction
map (i.e. the map that
restricts a weight to the edges that lie on $S$).  That is, an element
of $\L_\tau$ is a measured foliation carried by $\tau$ whose associated weight
function on $\Delta_\tau$ can be extended to $\Delta_M^\tau$ in such a
way that it satisfies the weak $4$-point condition in each simplex.

Let $\L_M = \bigcup_{\tau \in \Upsilon} \L_\tau$.  By Lemmas
\ref{lem:symplectic-on-weights} and \ref{lem:four-point-isotropic},
the set $\L_M$ is an isotropic piecewise linear cone in $\MF(S)$. 
We need only show that for $\nu$ and $T_\nu \into T$ as in the statement of
the Theorem we have $[\nu] \in \L_M$.

Given such $\nu$ and $T_\nu \into T$, let $\tau \in \Upsilon$ carry $\nu$
and abbreviate $\Delta_M = \Delta_M^\tau$.  Let $F :
\verts{\Tilde{\Delta}_M} \to T$ and $w = w_F \in W(\Delta_M,\Lambda)$
be the map and associated weight function given by Proposition
\ref{prop:tree-map}.  By Lemma \ref{lem:four-point-properties}.(iii),
we have $w \in W_4(\Delta_M,\Lambda)$.

Let $\varphi : \Lambda \to \R$ be a left inverse to the inclusion
$\ordinc : \R \to \Lambda$ associated with the isometric embedding
$T_\nu \into T$; such a map exists by Lemma \ref{lem:left-inverse}.
Then $\varphi_* : W(\Delta_\tau,\Lambda) \to W(\Delta_\tau)$ is
correspondingly a left inverse to $\ordinc_* : W(\Delta_\tau) \into
W(\Delta_\tau,\Lambda)$.  Since by Proposition
\ref{prop:tree-map}.(ii) we have that $i^*(w) \in
W(\Delta_\tau,\Lambda)$ is the image of $[\nu] \in \ML(\tau)$ under
this inclusion, it follows that $\varphi_*(i^*(w)) =
i^*(\varphi_*(w))$ also represents $[\nu]$.

By Lemma \ref{lem:four-point-properties}.(ii) we have $\varphi_*(w)
\in W_4(\Delta_M)$, so we have shown that $[\nu] \in
i^*(W_4(\Delta_M^\tau)) \cap \ML(\tau) = \L_\tau \subset \L_M$, as
desired.
\end{proof}

\section{The isotropic cone: Straight maps and length functions}
\label{sec:isotropic2}

In this section we introduce refinements of Theorem
\ref{thm:embedding-isotropic} that will be used in the proof of the
main theorem.  These refinements replace with isometric embedding
hypothesis of Theorem \ref{thm:embedding-isotropic} with weaker
conditions relating the trees carrying actions of $\pi_1S$ and
$\pi_1M$.

\subsection{Straight maps}

We first recall (and generalize) the notion of a straight map, which
is a certain type of morphism of trees.  

Let $X \in \T(S)$ be a marked Riemann surface structure on $S$ and
$\phi \in Q(X)$ a holomorphic quadratic differential.  Recall that
there is a dual $\R$-tree $T_\phi$ and projection $\pi : \Tilde{X} \to
T_\phi$, and that nonsingular $|\phi|$-geodesics in $\Tilde{X}$
project to geodesics in $T_\phi$.  Let $\I_\phi$ denote the set of all
geodesics in $T_\phi$ that arise in this way (including both segments
and complete geodesics).

Let $T$ be an $\R$-tree.  Following \cite{dumas:holonomy}, we say that
a map $f : T_\phi \to T$ is \emph{straight} if it is an isometric
embedding when restricted to any element of $\I_\phi$.  Thus, for
example, an isometric embedding of $T_\phi$ is straight map, but the
converse does not hold (see e.g.~\cite[Lem.~6.5]{dumas:holonomy}).

Note that straightness of a map $T_\phi \to T$ depends on the
differential $\phi$ and not just on the isometry type of the dual
tree; Figure \ref{fig:straightness} shows an example of differentials
with isometric dual trees but distinct notions of straightness.

\begin{figure}
\begin{center}
\includegraphics[width=0.7\textwidth]{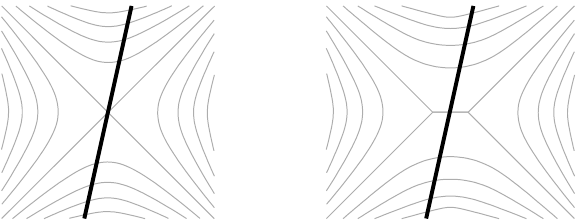}
\caption{Quadratic differentials with isometric dual trees may induce
  different notions of straight mapping: The local foliation
  pictures shown here have isometric leaf spaces, but the indicated path is
  required to map isometrically by a straight map in one case (right)
  but not in the other (left).\label{fig:straightness}}
\end{center}
\end{figure}

More generally, if $T$ is a $\Lambda$-tree, we say that a map $f :
T_\phi \to T$ is straight if there is an order-preserving map $\ordinc
: \R \to \Lambda$ such that the restriction of $f$ to each element of
$\I_\phi$ is an isometric embedding with respect to $\ordinc$.  As in
the case of $\R$-trees, isometric embeddings (now in the sense of
section \ref{sec:lambda}) are examples of straight maps.

For the degenerate case $\phi = 0$, we make the convention that any
map of the point $T_0$ to a $\Lambda$-tree is straight.

\subsection{Isotropic cone for straight maps}

In the following generalization of Theorem
\ref{thm:embedding-isotropic} we fix a Riemann surface structure on $S
= \partial M$ and consider straight maps instead of isometric embeddings.

\begin{thm}
\label{thm:straight-isotropic}
Let $M$ be an oriented $3$-manifold with connected boundary $S$, and
let $X \in \T(S)$ be a marked Riemann surface structure on $S$. There
exists an isotropic cone $\L_{M,X} \subset \MF(S)$ with the following
property: If $\phi \in Q(X)$ is a holomorphic quadratic differential
such that there exists a $\Lambda$-tree $T$ equipped with an isometric
action of $\pi_1M$ and a $\pi_1S$-equivariant straight map
$$ h : T_\phi \to T$$
then $[\F(\phi)] \in \L_{M,X}$.
\end{thm}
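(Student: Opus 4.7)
The plan is to follow the template of the proof of Theorem~\ref{thm:embedding-isotropic}, but with a train track chosen to be adapted to $\phi$ so that the straightness of $h$ gives enough control over the boundary weights. The key new observation is that a straight map preserves distances (up to $\ordinc$) only along elements of $\I_\phi$, so the ties of the train track used in the construction must be realizable as nonsingular $|\phi|$-geodesic arcs transverse to $\F(\phi)$. This is automatic for the natural horizontal--vertical train track $\tau_\phi$ associated to $\phi$, whose branches run along the horizontal foliation and whose ties are vertical.

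To construct the cone, note that as $\phi$ ranges over $Q(X)$ the train track $\tau_\phi$ assumes only finitely many combinatorial types, by the standard stratification of $Q(X)$ according to the combinatorics of the horizontal foliation. Let $\tau_1,\dots,\tau_N$ be representatives of these types and, for each $i$, extend the dual triangulation $\Delta_{\tau_i}$ of $S$ to a triangulation $\Delta_M^i$ of $M$. Define
$$ \L_{M,X} \;:=\; \bigcup_{i=1}^N i^*\!\bigl(W_4(\Delta_M^i)\bigr)\cap\MF(\tau_i), $$
where $i^*$ is the restriction map; Lemmas~\ref{lem:symplectic-on-weights} and~\ref{lem:four-point-isotropic} then show that this is an isotropic piecewise linear cone.

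Given $\phi \neq 0$ and a straight equivariant map $h : T_\phi \to T$ with associated $\ordinc : \R \to \Lambda$, choose $\tau_i$ of the same combinatorial type as $\tau_\phi$, so that $\F(\phi)$ is carried by $\tau_i$. As in Proposition~\ref{prop:tree-map}, restrict $h\circ\pi$ to $\verts{\tilde{\Delta}_{\tau_i}}$ and extend $\pi_1M$-equivariantly to $\verts{\tilde{\Delta}_M^i}$ to produce a weight $w \in W(\Delta_M^i,\Lambda)$; Lemma~\ref{lem:four-point-properties}(iii) gives $w \in W_4(\Delta_M^i,\Lambda)$. For each boundary edge $e \in \Delta_{\tau_i}$, realize it as a nonsingular $|\phi|$-geodesic tie: its $T_\phi$-projection lies in $\I_\phi$, so straightness of $h$ yields $w(e) = \ordinc(w_{\tau_i}(e))$, where $w_{\tau_i}(e)$ is the train-track coordinate of $[\F(\phi)]$. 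Applying a left inverse $\varphi : \Lambda \to \R$ of $\ordinc$ (Lemma~\ref{lem:left-inverse}) and using Lemma~\ref{lem:four-point-properties}(ii) produces $\varphi_* w \in W_4(\Delta_M^i,\R)$ with $(\varphi_* w)|_{\Delta_{\tau_i}} = w_{\tau_i}$, so $[\F(\phi)] \in \L_{M,X}$. The degenerate case $\phi = 0$ is handled by the convention $[\F(0)]=0$, which lies in any cone.

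The main technical obstacle is arranging the finite family $\tau_1,\dots,\tau_N$ so that every $\phi \in Q(X)$ is covered, paying attention to combinatorial degenerations (horizontal saddle connections, higher-order zeros, non-minimal foliations) and ensuring that in each case the relevant edges of $\Delta_{\tau_i}$ can indeed be realized as nonsingular $|\phi|$-geodesics. Apart from this bookkeeping, the argument is a direct adaptation of the embedding case, with straightness on $\I_\phi$ replacing full isometry of the dual tree at the crucial step.
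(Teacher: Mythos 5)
Your strategy is the same as the paper's, and the key insight---that straightness of $h$ is usable only along elements of $\I_\phi$, so the ties of the carrying train track must be realizable as nonsingular $|\phi|$-geodesics---is exactly the right one. You also correctly structure the cone as a finite union over pairs (train track, extended triangulation of $M$) using $W_4(\Delta_M)$, a left inverse of $\ordinc$, and Lemmas \ref{lem:symplectic-on-weights} and \ref{lem:four-point-isotropic}; this matches the paper.

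However, the finiteness claim is a genuine gap, not mere bookkeeping. You assert that ``the natural horizontal--vertical train track $\tau_\phi$'' has only finitely many combinatorial types as $\phi$ ranges over $Q(X)$ ``by the standard stratification of $Q(X)$ according to the combinatorics of the horizontal foliation.'' No such finite stratification is standard, and the claim as stated is dubious: the stratification of $Q(X)$ by zero multiplicities is finite, but the \emph{combinatorics} of the horizontal foliation (e.g., which separatrices are recurrent, which saddle connections exist) is not determined by the symbol, and there is no canonical ``horizontal--vertical'' train track to begin with---cutting the foliation open near its singularities requires global choices. You acknowledge this as ``the main technical obstacle'' at the end but do not resolve it. The paper resolves it with Lemma \ref{lem:delaunay-track}: it builds, for each nonzero $\phi$, a train track dual to a triangulation of $X$ by \emph{Delaunay} saddle connections (so ties are nonsingular $\phi$-geodesics and branch weights are their heights), and then proves finiteness of the isotopy classes via a compactness argument on the unit sphere of $Q(X)$ together with the Masur--Smillie bound on Delaunay edge lengths. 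A further refinement (Lemma \ref{lem:adapted-train-track}) handles horizontal saddle connections. Without some such argument, the definition of $\L_{M,X}$ as a \emph{finite} union---hence a piecewise linear cone---is not established. Also note a minor inaccuracy: in the paper's construction the ties are slanted nonsingular $\phi$-geodesic saddle connections, not vertical arcs; what matters is that each tie is a nonsingular geodesic transverse to $\F(\phi)$, with weight equal to its $\phi$-height, not that it is vertical.
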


In the proof of Theorem \ref{thm:embedding-isotropic}, the assumption
that the map $T_\phi \to T$ is isometric embedding was only used
through its role in the construction of Section \ref{sec:tree-map}: A
train track carrying $\nu$ gives a map $f : \verts{\Tilde{\Delta}_\tau}
\to T_\nu$ whose associated weight function represents $[\nu]$, and
since $h : T_\nu \to T$ is an isometric embedding, the composition $h
\circ f$ has the same associated weight.

Attempting to reproduce this with the weaker hypotheses of Theorem
\ref{thm:straight-isotropic}, we can again choose a train track $\tau$
carrying $\F(\phi)$ and construct a map $f :
\verts{\Tilde{\Delta}_\tau} \to T_\phi$.  We would then like to
compose $f$ with the straight map $h : T_\phi \to T$ without changing
the associated weight function.  This will hold if the segments in
$T_\phi$ corresponding to the ties of $\tau$ are mapped isometrically
by $h$, so it is enough to know that they correspond to nonsingular
$|\phi|$-geodesic segments in $\Tilde{X}$.  To summarize, we have:

\begin{prop}
\label{prop:tree-straight-map}
Let $\tau$ be a train track that carries $\F(\phi)$ 
such that each tie of $\Tilde{\tau}$
corresponds to a nonsingular $|\phi|$-geodesic
segment in $\Tilde{X}$.  Let $\Delta_M$ be a triangulation of $M$
extending the dual triangulation of $\tau$.  Suppose that there exists
a $\Lambda$-tree $T$ equipped with an isometric action of $\pi_1M$ and
a $\pi_1S$-equivariant straight map
$$ h : T_\phi \to T,$$
relative to an order-preserving embedding $\ordinc: \R \to \Lambda$.  Then there
exists a weight function $w \in W(\Delta_M,\Lambda)$ satisfying
conditions (i)--(ii) of Proposition \ref{prop:tree-map}.  \noproof
\end{prop}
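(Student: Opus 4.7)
The plan is to run the proof of Proposition \ref{prop:tree-map} essentially unchanged, substituting the straight map $h : T_\phi \to T$ for the isometric embedding $T_\nu \to T$. The whole point of the tie hypothesis is that it collapses the difference between the two situations at the single step where one computes edge weights from the $T$-distances between images of endpoints.

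First, the carrying of $\F(\phi)$ by $\tau$ gives, as in Section \ref{sec:tree-map}, a $\pi_1S$-equivariant map $f : \verts{\tilde{\Delta}_\tau} \to T_\phi$ that sends each complementary region of $\tilde{\tau}$ to its image in the dual tree, and by Proposition \ref{prop:carrying} the associated weight $w_f \in W(\Delta_\tau)$ is the image of $[\F(\phi)] \in W(\tau)$ under the natural inclusion $W(\tau) \into W(\Delta_\tau)$.

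Second, I would set $F_0 := h \circ f : \verts{\tilde{\Delta}_\tau} \to T$, which is $\pi_1S$-equivariant for the $\pi_1S$-action on $T$ through $i_* : \pi_1S \to \pi_1M$. The key claim is that $w_{F_0} = \ordinc_*(w_f)$ as an element of $W(\Delta_\tau,\Lambda)$. Fix an edge $e$ of $\Delta_\tau$, lift it to $\tilde{e}$, and let $r_{\tilde{e}}$ be the tie associated to $\tilde{e}$, joining the complementary regions $A$ and $B$ of $\tilde{\tau}$ adjacent along $\tilde{e}$. By hypothesis $r_{\tilde{e}}$ is a nonsingular $|\phi|$-geodesic segment, and it is transverse to the lifted foliation by the defining property of ties, so it projects homeomorphically to a geodesic segment $\gamma \in \I_\phi$ of length equal to its transverse measure $w_f(e)$, joining $f(A)$ to $f(B)$. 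Straightness of $h$ then gives that $h|_\gamma$ is an isometric embedding with respect to $\ordinc$, so $d_T(F_0(A),F_0(B)) = \ordinc(w_f(e))$, as claimed.

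Third, extend $F_0$ to a $\pi_1M$-equivariant map $F : \verts{\tilde{\Delta}_M} \to T$ by exactly the device used in the proof of Proposition \ref{prop:tree-map}: choose a fundamental domain for $\pi_1M$ acting on $\verts{\tilde{\Delta}_M}$, use the values coming from $F_0$ on the vertices where those are available (via a chosen lift of $S$), and assign arbitrary values of $F$ to the remaining vertices. Then $w := w_F \in W(\Delta_M,\Lambda)$ satisfies (i) tautologically, and by the second step its restriction to $\Delta_\tau$ equals $w_{F_0} = \ordinc_*(w_f)$, giving (ii). The reduction to Proposition \ref{prop:tree-map} is almost formal, so no real obstacle remains; the sole substantive content is the weight-preservation verification in the second paragraph, which is immediate once one observes that the tie hypothesis is precisely what places the relevant $T_\phi$-segments inside $\I_\phi$, the class of segments along which straightness provides an isometry.
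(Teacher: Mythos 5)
Your argument is exactly the paper's intended proof: the paper marks this proposition \noproof{} precisely because the preceding paragraph already explains that composing $f$ with the straight map $h$ preserves the relevant weights once the ties are nonsingular $|\phi|$-geodesics, and your second paragraph carries out that verification correctly (the tie projects to an element of $\I_\phi$ with length $w_f(e)$, on which straightness makes $h$ an $\ordinc$-isometry). The equivariant extension in your third step is the same device used in Proposition~\ref{prop:tree-map}, so the proposal matches the paper's approach throughout.
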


Therefore, while we used an arbitrary finite collection of train track
charts covering $\MF(S)$ in the previous section, we now have a
stronger condition that the carrying train track must satisfy.  The
existence of a suitable finite collection of train tracks that cover
$Q(X)$ is given by:

\begin{lem}
\label{lem:delaunay-track}
For each nonzero $\phi \in Q(X)$ there exists a triangulation $\Delta$
of $X$ and a maximal train track $\tau$ such that:
\begin{rmenumerate}
\item The vertices $\Delta$ are zeros of $\phi$, the edges are
saddle connections of $\phi$, and the triangulation $\Delta$ is dual to
the train track $\tau$ in the sense of Section
\ref{sec:triangulations},
\item The foliation $\F(\phi)$ is carried by $\tau$ in such a way each
edge $e$ of $\Delta$ becomes a tie of the corresponding edge of
$\tau$; in particular,
\item The $\phi$-heights of the edges of $\Delta$ give the weight
function on $\tau$ representing $[\F(\phi)]$.
\end{rmenumerate}
Furthermore, there is a finite set of pairs $(\Delta, \tau)$ such that
the triangulation and train track constructed above can always be
chosen to be isotopic to an element of this set.
\end{lem}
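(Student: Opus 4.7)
The plan is to produce $(\Delta, \tau)$ directly from the flat geometry of $(X,|\phi|)$: take $\Delta$ to be a Delaunay-type triangulation of the half-translation surface, and let $\tau$ be its dual trivalent graph equipped with the train-track structure at each switch coming from the horizontal direction of $\phi$.

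For existence, I would start from the Delaunay cell decomposition of $(X,|\phi|)$, whose vertices are the zeros of $\phi$ and whose edges are saddle connections. If this decomposition is already a triangulation, take it as $\Delta$; otherwise refine each non-triangular cell by choosing diagonal saddle connections inside it. This realizes (i). Place a switch of $\tau$ in the interior of each triangle of $\Delta$ and let each branch cross one edge of $\Delta$ transversely at its midpoint; this produces a trivalent graph dual to $\Delta$. The smooth structure at each switch is fixed by the \emph{long-edge rule}: in a triangle with edges of $\phi$-heights $h_1, h_2, h_3$, the flat triangle identity in a natural coordinate forces one of the heights to equal the sum of the other two, and I declare the edge attaining this maximum to be the outgoing branch. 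The resulting switch conditions for $\tau$ then coincide with the height relations among the edges of $\Delta$, so the height function defines an element of $\MF(\tau)$.

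To verify carrying and condition (ii), cut $\F(\phi)$ along the singular leaf segments issuing from the zeros of $\phi$ to obtain a partial measured foliation, and apply a small isotopy within each cut triangle to push the foliated bands into a neighborhood of $\tau$ while retracting the non-foliated regions onto the vertices of $\Delta$. Each edge $e$ of $\Delta$ then becomes a tie of its dual branch of $\tau$, and the total transverse measure across $e$ is exactly the $\phi$-height of $e$, giving (ii) and (iii). Horizontal saddle connections cause no trouble: they simply appear as zero-weight edges, compatibly with the switch conditions. If one wants to avoid them altogether, genericity lets us choose the added diagonals in any non-triangular Delaunay cell to avoid horizontal directions, since the set of horizontal saddle-connection directions is countable.

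The finiteness statement is the substantive point. All constructions above are invariant under $\R^+$-scaling of $\phi$, so it suffices to work on the compact unit sphere in $Q(X)$ with respect to any norm. For a fixed Riemann surface $X$, the combinatorial type of the Delaunay cell decomposition of $(X,|\phi|)$ is locally constant in $\phi$ off a real codimension-one ``cocircular'' locus, and compactness of the unit sphere then forces only finitely many such combinatorial types to arise. Each type admits only finitely many refinements to a triangulation by diagonal saddle connections, and for fixed $\Delta$ the assignment of an outgoing branch at each switch is a further finite combinatorial datum. The principal obstacle I expect to confront is making this finiteness precise; it ultimately reduces to the compactness of the unit sphere in $Q(X)$ combined with standard finiteness results for Delaunay triangulations of flat surfaces of bounded area and prescribed topology.
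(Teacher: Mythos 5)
Your proposal follows the same architecture as the paper's proof (Delaunay triangulation with vertices at the zeros, dual trivalent graph smoothed by the ``long-edge rule,'' carrying and weights read off from heights), but there are two gaps. The first is your treatment of horizontal saddle connections. You assert they ``cause no trouble'' because they appear as zero-weight edges compatible with the switch conditions, but the switch condition alone does not determine the $C^1$ structure at the switch: in a triangle with one horizontal edge, the other two edges have equal heights, and both choices of outgoing branch satisfy the weight relation. Which smoothing yields a valid train track chart that carries $\F(\phi)$ with the edges of $\Delta$ as ties needs to be verified, not assumed. Your suggested alternative --- choosing the diagonals added to non-triangular Delaunay cells generically --- does not help, because the original Delaunay edges themselves may be horizontal. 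The paper resolves this cleanly by perturbing $\phi$ to a nearby differential $\phi'$ with the same zero structure and no horizontal saddle connections, realizing the same combinatorial triangulation $\phi'$-geodesically, and reading off the smoothing from the $\phi'$-heights.

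The second gap is the finiteness argument. Your claim that ``the combinatorial type of the Delaunay decomposition is locally constant off a real codimension-one cocircular locus'' plus compactness of the unit sphere gives finitely many types is not a valid inference in this form: a locally constant function off a nowhere-dense closed set on a compact space can still take infinitely many values unless one also controls the structure of the exceptional locus (e.g., knows it is subanalytic with finitely many complementary components, which you do not establish). The paper instead uses the specific quantitative bound of Masur--Smillie (Thm.~4.4 of \cite{masur-smillie}) that Delaunay edge lengths are bounded by the flat diameter, together with compactness of the unit sphere to get a uniform radius $R$, then observes that only finitely many isotopy classes of arcs between zeros fit in a ball of radius $R$ in the universal cover. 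This yields the finite list of pairs $(\Delta,\tau)$ directly, without appealing to the stratification of $Q(X)$ by Delaunay combinatorics. You acknowledge the gap honestly, but as written the finiteness step is the substantive missing piece.
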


\begin{proof}
Consider a Delaunay triangulation $\Delta$ of the singular Euclidean
surface $(X,|\phi|)$ with vertices at the zeros of $\phi$, as in
\cite[Sec.~4]{masur-smillie}.  Such a triangulation has nonsingular
$|\phi|$-geodesic segments as edges and is defined by the condition
that each triangle has a circumcircle (with respect to the singular
Euclidean structure $|\phi|$) which is ``empty'', i.e.~has no zeros of
$\phi$ in its interior.  There are only finitely many Delaunay
triangulations of a given singular Euclidean surface, and for
generic $X$ and $\phi$ there is a unique one.

First suppose that this triangulation $\Delta$ has no horizontal
edges.  Each triangle has two ``vertically short'' edges whose heights
sum to that of the third edge, and we construct a train track $\tau$
by placing a switch in each triangle so that the incoming branches at
the switch are dual to the short edges of the triangle (as shown in
Figure \ref{fig:delaunay-track}).  The complementary regions of $\tau$
are disk neighborhoods of the vertices of the triangulation, so $\tau$
is maximal.  Thus $\Delta, \tau$ satisfy condition (i).

After cutting along leaf segments near singularities, an isotopy
pushes the leaves of $\F(\phi)$ into a small neighborhood of the train
track.  Throughout this isotopy the image of an edge $e$ of the
triangulation in the dual tree remains the same, and so it corresponds
to the tie $r_e$ of the train track.  The length of the image segment
in $T_\phi$ is the height of the geodesic edge, so properties
(ii)--(iii) follow.

It remains to consider the possibility that the Delaunay triangulation
has horizontal edges.  In this case we can still form a dual train
track but it is not clear whether the dual to a horizontal edge should
be incoming or outgoing at the switch in a given triangle.  To
determine this, we consider a slight deformation of $\phi$ to a
quadratic differential $\phi'$ with the same zero structure but no
horizontal saddle connections.  (A generic deformation preserving the
multiplicities of zeros will have this property.)  For a small enough
deformation, the same combinatorial triangulation can be realized
geodesically for $\phi'$, and the heights of the previously horizontal
edges determine how to form switches for $\tau$.

Finally we show that only finitely many isotopy classes of pairs
$(\Delta,\tau)$ arise from this construction.  In fact, it suffices to
consider $\Delta$ alone since filling in the train track
$\tau$ involves only finitely many choices (incoming and outgoing
edges for each switch).  

The construction of $\Delta$ is independent of scaling $\phi$ so we
assume that $|\phi|$ has unit area, i.e.~$\|\phi\| = 1$ where
$\|\param\|$ is the $L^1$ norm.  The resulting family of metrics (the
unit sphere in $Q(X)$) is compact, and in particular the diameters of
these spaces are uniformly bounded.  By \cite[Thm.~4.4]{masur-smillie}
this diameter bound also gives an upper bound, $R$, on the length of
each edge of the Delaunay triangulation.  The number of zeros of
$\Tilde{\phi}$ in a ball of $|\phi|$-radius $R$ in $\Tilde{X}$ is
uniformly bounded (again by compactness of the family of metrics, and
the fixed number of zeros of $\phi$ on $X$), so the edges that appear
in $\Delta$ belong to finitely many isotopy classes of arcs between
pairs of zeros.  Thus, up to isotopy, only finitely many
triangulations can be constructed of these arcs.
\end{proof}

\begin{figure}
\begin{center}
\includegraphics[width=0.75\textwidth]{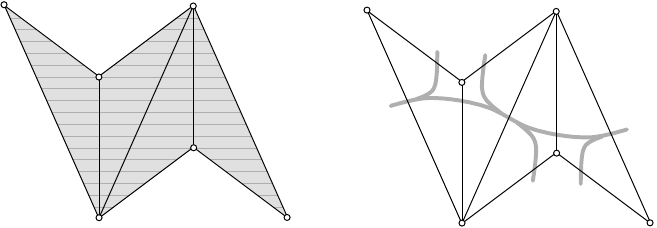}
\caption{A geodesic triangulation for a quadratic differential with
  vertices at the zeros and the associated train track carrying
  the measured foliation.\label{fig:delaunay-track}}
\end{center}
\end{figure}

With these preliminaries in place, it is straightforward to generalize
the proof Theorem \ref{thm:embedding-isotropic}:

\begin{proof}[{Proof of Theorem \ref{thm:straight-isotropic}}]
Let $\Upsilon_X$ denote the finite set of train tracks given by Lemma
\ref{lem:delaunay-track}, and extend each dual triangulation
$\Delta_\tau$ to a triangulation $\Delta_M^\tau$ of $M$.  Define
$$ \L_{M,X} = \bigcup_{\tau \in \Upsilon_X} \L_{\tau},$$
where $\L_{\tau} = i^*(W_4(\Delta_M^\tau)) \cap \ML(\tau)$.  As
before, Lemmas \ref{lem:symplectic-on-weights} and
\ref{lem:four-point-isotropic} show that this set is an isotropic
cone in $\MF(S)$.

If $\phi \in Q(X)$ and $h : T_\phi \to T$ is a straight map as in the
statement of the Theorem, then by Lemma \ref{lem:delaunay-track} and
Proposition \ref{prop:tree-straight-map} we have a train track $\tau
\in \Upsilon_X$ and weight function $w \in W(\Delta_M^\tau,\Lambda)$
such that $i^*(\varphi_*(w)) \in W(\Delta_\tau)$ represents $[\F(\phi)]$.
Here we retain the notation of the previous section, i.e.~$\varphi$
denotes a left inverse of $\ordinc : \R \to \Lambda$ and $i^* :
W(\Delta_M^\tau) \to W(\Delta_\tau)$ restricts a weight function to
the boundary triangulation.

By Proposition \ref{prop:tree-straight-map}, the weight $w$ is associated
to a map $\Tilde{\Delta}_M^{\tau (0)} \to T$.  As in the proof of
Theorem \ref{thm:embedding-isotropic}, this implies $w \in
W_4(\Delta_M^\tau,\Lambda)$ and therefore $i^*(\varphi_*(w)) \in
\L_\tau$.  We conclude $[\F(\phi)] \in \L_{M,X}$.
\end{proof}

\subsection{Isotropic cone for length functions}
  \label{sec:length-isotropic}

In this section we introduce a further refinement to the isotropic
cone construction that addresses special properties of abelian actions
of groups on $\R$-trees (which are described below).

Keeping the notations $M, S, X$ of the previous section, suppose that
we have $\phi \in Q(X)$ and a $\pi_1S$-equivariant straight map
$h : T_\phi \to T$ as in Theorem \ref{thm:straight-isotropic}.  Then
the image $h(T_\phi) \subset T$ is naturally an $\R$-tree preserved by $\pi_1S$.
Let $\ell : \pi_1S \to \R$ denote the translation length function of
this action and write $T_\ell = h(T_\phi)$.  Then $T_\ell$ is the intermediate
step in a factorization of $h$ as a straight map followed by an
isometric embedding:
$$
\begin{tikzpicture}[baseline=(current bounding box.center)]
\matrix (m) [matrix of math nodes, column sep=4em, row sep = 1.5em, text height=1.5ex, text depth=0.25ex]
{ T_\phi & T_\ell & T\\ };
\path[->>,font=\scriptsize] (m-1-1) edge node[above] {straight} (m-1-2);
\path[right hook->,font=\scriptsize] (m-1-2) edge node[above] {embed} (m-1-3);
\end{tikzpicture}
$$
Theorem \ref{thm:straight-isotropic} shows that this situation forces
$[\F(\phi)]$ to lie in an isotropic cone.

The generalization we now consider is to replace $T_\ell$ with a pair
of trees $T_\ell, T_\ell'$ on which $\pi_1S$ acts with the same length
function $\ell$---we say these actions are \emph{isospectral}.
We suppose that one of these is the image of a straight map while the
other isometrically embeds in a $\Lambda$-tree $T$ with a $\pi_1M$ action.  From this
weaker connection between $T_\phi$ and $T$, i.e.
$$
\begin{tikzpicture}[baseline=(current bounding box.center)]
\matrix (m) [matrix of math nodes, column sep=5em, row sep = 1.5em, text height=1.5ex, text depth=0.25ex]
{ T_\phi & T_\ell & T_\ell' & T\\ };
\path[->>,font=\scriptsize] (m-1-1) edge node[above] {straight} (m-1-2);
\path[<->,dotted,font=\scriptsize] (m-1-2) edge node[above] {isospectral}(m-1-3);
\path[right hook->,font=\scriptsize] (m-1-3) edge node[above] {embed} (m-1-4);
\end{tikzpicture}
$$
we can still conclude $[\F(\phi)] \in \L_{M,X}$.  The
following theorem makes this precise.

\begin{thm}
\label{thm:length-isotropic}
Let $T$ be a $\Lambda$-tree on which $\pi_1M$ acts.  Let
$T_\ell,T_\ell'$ be $\R$-trees on which $\pi_1S$ acts minimally with
length function $\ell$.  Let $\phi \in Q(X)$ be a holomorphic
quadratic differential such that there exists a $\pi_1S$-equivariant
straight map $h : T_\phi \to T_\ell$ and a $\pi_1S$-equivariant
isometric embedding $k : T_\ell' \to T$.  Then $[\F(\phi)] \in
\L_{M,X}$.
\end{thm}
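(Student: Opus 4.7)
The plan is to reduce Theorem~\ref{thm:length-isotropic} to Theorem~\ref{thm:straight-isotropic} by producing a $\pi_1S$-equivariant isometry $j : T_\ell \to T_\ell'$ so that the given data can be composed into a single straight map into $T$. Granting such a $j$, the composition $k \circ j \circ h : T_\phi \to T$ is $\pi_1S$-equivariant, and its restriction to each $I \in \I_\phi$ factors as $h|_I$ (an isometric embedding into $T_\ell$, by straightness of $h$) followed by the two isometries $j$ and $k$. Hence $k \circ j \circ h$ is itself a straight map, and since $T$ is a $\Lambda$-tree carrying an isometric action of $\pi_1M$, Theorem~\ref{thm:straight-isotropic} applied to this composition yields $[\F(\phi)] \in \L_{M,X}$.

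The core task is therefore the construction of $j$, which relies on the rigidity of minimal $\R$-tree actions with prescribed translation length function. In the irreducible case, i.e.~when the common $\pi_1S$-action on $T_\ell$ and $T_\ell'$ is non-abelian and has no fixed end, this is the classical rigidity theorem of Culler and Morgan: any two minimal irreducible actions of a group on $\R$-trees with the same length function are equivariantly isometric. Applied to $T_\ell$ and $T_\ell'$, this immediately produces $j$.

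The remaining abelian case is the main obstacle, and is the reason this refinement is singled out from Theorem~\ref{thm:straight-isotropic}. Here the Culler--Morgan theorem does not apply, and one must instead argue directly by classifying minimal abelian actions on $\R$-trees. Concretely, such an action of $\pi_1S$ with length function $\ell$ is either a point action (when $\ell \equiv 0$) or is equivariantly isometric to the standard action of $\pi_1S$ on $\R$ by translations through a homomorphism $\rho : \pi_1S \to \R$ with $|\rho| = \ell$; in the latter situation $\rho$ is determined by $\ell$ up to sign, and the two sign choices give equivariantly isometric actions via $x \mapsto -x$. This classification implies that $T_\ell$ and $T_\ell'$ are $\pi_1S$-equivariantly isometric in the abelian case as well, completing the construction of $j$ and hence the proof. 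Verifying that minimality genuinely forces these normal forms---ruling out, for example, spurious branching or additional fixed ends in the tree---is the delicate point at the heart of the argument.
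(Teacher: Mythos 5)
The non-abelian case of your proof is correct and exactly matches the paper: Culler--Morgan rigidity supplies the equivariant isometry $j : T_\ell \to T_\ell'$, and composing $k \circ j \circ h$ gives a straight map to which Theorem~\ref{thm:straight-isotropic} applies.

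The abelian case is where the gap lies, and it is a serious one. Your claimed classification --- that a minimal abelian action of $\pi_1S$ on an $\R$-tree with length function $\ell = |\chi|$ is either a point or equivariantly isometric to the shift action on $\R$ --- is false, and the paper in fact explicitly warns against it: ``In this case there may be many non-isometric trees on which $\pi_1S$ acts with this length function,'' citing Brown's work on the Bieri--Neumann--Strebel invariant. The issue is that minimality of an abelian action does not rule out branching. If $\chi$ has nontrivial kernel $K = \ker\chi$, then $K$ acts on each horosphere $\beta^{-1}(c)$ (where $\beta$ is the Busemann function of the fixed end), and this action can permute many branches emanating from each point of a line; the convex hull of a single orbit is then a genuinely branching minimal tree. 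Two such trees arising from different $K$-actions on the branch set are not equivariantly isometric, even though both have length function $|\chi|$. So there is, in general, no equivariant isometry $j : T_\ell \to T_\ell'$, and your reduction to Theorem~\ref{thm:straight-isotropic} does not go through.

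This obstruction is exactly what motivates the paper's quite different treatment of the abelian case. Rather than seeking a global isometry, the paper shows (Lemma~\ref{lem:straight-to-R}) that post-composing $h$ with the Busemann function $\beta : T_\ell \to \R$ preserves straightness, and then constructs only a \emph{partially defined} map $h' : E \to T_\ell'$ on the finite set of relevant segment endpoints (Theorem~\ref{thm:local-straightness}), using pushing maps toward the fixed end of $T_\ell'$ (Lemma~\ref{lem:pushing}) so that distances between endpoints of each segment in $\I$ are matched. That local data is all the isotropic-cone construction actually needs, since it only feeds into a weight function on a finite triangulation. You would need an argument of this kind --- producing a distance-preserving correspondence on the finitely many relevant pairs of endpoints, not a global isomorphism --- to repair the abelian case.
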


Evidently this theorem would follow directly from Theorem
\ref{thm:straight-isotropic} if the isospectrality condition
implied the existence of an isometry $T_\ell \simeq T_\ell'$, for this
isometry would allow $h$ and $k$ to be composed, giving a straight map
$T_\phi \to T$.  This approach works for some length functions but not for
others, so before giving the proof we discuss the relevant dichotomy.

\subsection{Abelian and non-abelian actions}

Recall that an isometric action of a group $\Gamma$ on an $\R$-tree is
called \emph{abelian} if the associated length function has the form
$\ell(g) = |\chi(g)|$ where $\chi: \Gamma \to \R$ is a homomorphism;
otherwise, the action (or length function) is called
\emph{non-abelian}.  We have the following fundamental result of
Culler and Morgan:

\begin{thm}[\cite{culler-morgan}]
\label{thm:culler-morgan}
Let $T_\ell, T_\ell'$ be $\R$-trees equipped with minimal, isospectral
actions of a group $\Gamma$.  If the length function $\ell$ is non-abelian,
then there is an equivariant isometry $T_\ell \to T_\ell'$.
\end{thm}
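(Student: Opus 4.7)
The plan is to reconstruct each of the trees $T_\ell$ and $T_\ell'$ from $\ell$ alone, up to equivariant isometry, via their hyperbolic axes. It suffices to build on each tree a $\Gamma$-invariant subtree that exhausts it by minimality and on which the metric and $\Gamma$-action are determined entirely by $\ell$; identifying these subtrees yields the equivariant isometry.

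First I would observe that the set $H = \{\gamma \in \Gamma : \ell(\gamma) > 0\}$ of hyperbolic elements is visible from $\ell$, and that in any $\R$-tree carrying a $\Gamma$-action with length function $\ell$, each $\gamma \in H$ has a uniquely defined axis $A_\gamma$ on which it translates by $\ell(\gamma)$. The central step is a length-function dictionary: for any pair $\alpha,\beta \in H$, the combinatorial-metric configuration of $A_\alpha$ and $A_\beta$ --- whether the axes are disjoint, meet in a point, overlap in a segment, or coincide with matching or reversed orientation, together with the length of any bridge or overlap --- is determined by the values $\ell(\alpha)$, $\ell(\beta)$, $\ell(\alpha\beta)$, $\ell(\alpha\beta^{-1})$. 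This is extracted by applying Lemma \ref{lem:four-points-lambda-tree} to the four-tuple $\{x,\alpha x,\beta x,\alpha\beta x\}$ for a basepoint $x$ chosen on $A_\alpha$ or on the bridge between $A_\alpha$ and $A_\beta$: the equality-plus-inequality among the three pairwise distance sums isolates the overlap or bridge length. An analogous three-axis computation, using triples $\alpha,\beta,\gamma$, pins down the relative position of the three axes in terms of $\ell$ on the subgroup they generate.

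With this dictionary I would construct the isometry. Pick $\gamma_0 \in H$; because $\ell$ is non-abelian, such a $\gamma_0$ exists and can be supplemented by a second hyperbolic element whose axis is not parallel to $A_{\gamma_0}$. Choose a $\langle\gamma_0\rangle$-equivariant isometric identification $\Phi_0 : A_{\gamma_0}(T_\ell) \to A_{\gamma_0}(T_\ell')$. For every other $\gamma \in H$, the dictionary pins down the attachment of $A_\gamma$ to $A_{\gamma_0}$ identically in both trees, so $\Phi_0$ extends canonically and isometrically to $A_\gamma$; consistency on triple intersections follows from the three-axis version of the dictionary. Equivariance is automatic from $A_{g\gamma g^{-1}} = gA_\gamma$ and conjugation invariance of $\ell$. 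This defines a $\Gamma$-equivariant isometric bijection between the subtrees $T_H = \bigcup_{\gamma \in H} A_\gamma$ in $T_\ell$ and in $T_\ell'$. Finally, $T_H$ is nonempty, $\Gamma$-invariant, and (by non-abelianness) contains two axes whose union is not a line, so by the minimality hypothesis $T_H = T_\ell$ and similarly in $T_\ell'$, producing the desired equivariant isometry $T_\ell \to T_\ell'$.

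The main obstacle is the non-abelian input at the very first step: in the abelian case two non-isometric minimal trees can share the same length function (e.g.\ a line versus a tree with a sprig attached), so the length-function dictionary must genuinely fail there, and conversely one must verify that non-abelianness is exactly what makes the dictionary unambiguous for every configuration of axes that arises. Handling the borderline cases --- parallel axes with an offset, axes that share only an endpoint, and reversed orientations --- with careful sign conventions and a good choice of basepoint is the technical heart. Once these cases are dispatched, the rest of the argument is a gluing construction that is forced to commute with $\Gamma$ and that fills up the whole tree by minimality.
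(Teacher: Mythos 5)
The paper does not prove this statement: it is imported verbatim from Culler--Morgan, so there is no internal proof to compare against. Measured against the published argument, your outline captures the right mechanism. Culler and Morgan do show that for a minimal action with a hyperbolic element the tree is the union of the translation axes, and that the mutual configuration of axes is encoded in $\ell$ via identities such as $\ell(\alpha\beta)=\ell(\alpha)+\ell(\beta)+2\,d(A_\alpha,A_\beta)$ for disjoint axes and $\max(\ell(\alpha\beta),\ell(\alpha\beta^{-1}))=\ell(\alpha)+\ell(\beta)$ for meeting axes. But their actual reconstruction does not glue axes directly: they pass to a \emph{based} length function $L_p(\gamma)=d(p,\gamma p)$, choosing $p$ to be an endpoint of the (compact, by non-abelianness) intersection of two well-chosen axes, show $L_p$ is computable from $\ell$, and then invoke Chiswell's theorem that a based length function determines the tree with its action. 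That detour is not cosmetic --- it is how they dispose of the coherence problem your gluing has to face by hand.

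The soft spot in your version is the claim that $\Phi_0$ ``extends canonically'' to each $A_\gamma$ using only the pair $(\gamma_0,\gamma)$. The data $\ell(\gamma_0),\ell(\gamma),\ell(\gamma_0\gamma),\ell(\gamma_0\gamma^{-1})$ determine the length and orientation of $A_\gamma\cap A_{\gamma_0}$ (or of the bridge), but not its \emph{position} along $A_{\gamma_0}$; and $\Phi_0$ itself is only determined up to translation by the $\langle\gamma_0\rangle$-equivariance you imposed. So pairwise data cannot pin the attachments down. You must first rigidify: use a second hyperbolic $\gamma_1$ whose axis meets $A_{\gamma_0}$ in a bounded set (this is exactly what non-abelianness buys --- in the abelian case all axes share an end and no such anchor exists), let the distinguished endpoint of $A_{\gamma_0}\cap A_{\gamma_1}$ fix $\Phi_0$ uniquely, and only then does the triple data $(\gamma_0,\gamma_1,\gamma)$ locate every other axis. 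Your text gestures at both ingredients, so I would call this a deferral of the technical heart rather than a wrong approach; it is precisely the step that the based-length-function formulation automates. One small correction: a line with a sprig attached is not a \emph{minimal} tree, so it is not a counterexample in the abelian case; the genuine examples of isospectral non-isometric minimal abelian actions are those of Brown \cite{brown:bns}, cited in the paper.
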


As remarked above, this shows that the conclusion of Theorem
\ref{thm:length-isotropic} follows from Theorem
\ref{thm:straight-isotropic} whenever the length function is
non-abelian.  Thus we assume from now on that $\ell = |\chi|: \pi_1S
\to \R$ is an abelian length function.  In this case there may be many
non-isometric trees on which $\pi_1S$ acts with this length function
\cite{brown:bns}.

An \emph{end} of an $\R$-tree is an equivalence class of rays, where
two rays are equivalent if their intersection is a ray.
An abelian action of $\pi_1S$ on an $\R$-tree $T_\ell$ has a fixed end
(see \cite[Cor.~2.3]{culler-morgan} or \cite[Thm.~7.5]{alperin-bass}).
The fixed end has an associated \emph{Busemann function}
$\beta : T_\ell \to \R$, that intertwines the action of $\pi_1S$ on
$T_\ell$ with the translation action on $\R$ induced by $\chi$
\cite[Thm.~7.6]{alperin-bass}.  Furthermore the function $\beta$ is
unique up to adding a constant.  Here we use the term ``Busemann
function'' following e.g.~\cite[Sec.~2]{levitt:bns}, which is consistent
with its use in the theory of metric spaces of non-positive curvature
\cite[Sec.~II.8]{bridson-haefliger}; the same object is called an
\emph{end map} and discussed in \cite[Sec.~2.3]{chiswell},
while the above-cited result in \cite{alperin-bass} simply calls the
map $\alpha$.

The following result from \cite[Sec.~6]{dumas:holonomy} shows that composition with
a Busemann function preserves straightness of maps from $T_\phi$:

\begin{lem}
\label{lem:straight-to-R}
Let $T_\ell$ be an $\R$-tree equipped with an abelian action of
$\pi_1S$ by isometries, and let $\beta : T \to \R$ denote a
Busemann function of a fixed end.  If $h : T_\phi \to T_\ell$ is an
equivariant straight map, then $\beta \circ h : T_\phi \to \R$ is also straight.
\noproof
\end{lem}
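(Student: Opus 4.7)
The plan is to show that for each $I\in\I_\phi$, the composition $\beta\circ h$ restricts to an isometric embedding $I\to\R$. Because $h$ is straight, $h|_I$ is already an isometric embedding onto a geodesic $J:=h(I)\subset T_\ell$, so it suffices to prove that $\beta|_J\colon J\to\R$ is an isometric embedding, equivalently that $\beta|_J$ is monotone of unit slope.

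The first ingredient is a purely local analysis of Busemann functions on $\R$-trees. At every point $p\in T_\ell$ the fixed end singles out a unique ``end direction'' $d_\omega(p)$, along which $\beta$ decreases at unit rate, while $\beta$ increases at unit rate along every other direction. Parametrizing $J$ by arclength and computing the one-sided derivatives of $\beta$ along $J$ at an interior point in terms of whether the backward tangent $d^-$ or the forward tangent $d^+$ agrees with $d_\omega$, one finds that $\beta|_J$ is piecewise linear with slopes in $\{\pm 1\}$; that a local maximum is impossible, since it would force $d^-=d^+=d_\omega$, violating $d^-\neq d^+$ for a geodesic; and that the only possible obstruction to monotonicity is an interior local minimum, which occurs precisely when neither $d^-$ nor $d^+$ equals $d_\omega$. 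At such a hypothetical fold $T_\ell$ branches into at least three directions, and $J$ enters and exits through two branches disjoint from the one containing $\omega$.

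The remaining task is to exclude such an interior local minimum, and this is where both the abelian structure of the action on $T_\ell$ and the equivariance of $h$ must be used. Since $\ell=|\chi|$ for a homomorphism $\chi\colon\pi_1S\to\R$, every hyperbolic element of $\pi_1S$ translates along an axis terminating at $\omega$, and $\beta$ intertwines the $\pi_1S$-action on $T_\ell$ with the shift $t\mapsto t-\chi(g)$ on $\R$; in particular the end direction $d_\omega$ is canonically determined by the abelian action. Suppose for contradiction that a fold occurs at $p_0:=h(x_0)$ for some $I\in\I_\phi$ and some interior $x_0\in I$. Then equivariance produces, for every $g\in\pi_1S$, a fold at $g\cdot p_0$ on the geodesic $g\cdot J=h(g\cdot I)$ with $\beta$-value shifted by $-\chi(g)$. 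Combining this with the abundance of nonsingular $|\phi|$-geodesics passing through lifts of $x_0$ in $\widetilde X$, one aims to exhibit a single geodesic $I^\star\in\I_\phi$ whose image under $h$ would have to carry two distinct interior local minima of $\beta$, contradicting the uniqueness of the nearest point on a geodesic in an $\R$-tree to a given end. The main obstacle is the construction of this $I^\star$, which requires careful control of how nonsingular $|\phi|$-geodesics in $\widetilde X$ interact with nontrivial deck translations of $\pi_1S$; this technical input is developed in \cite[Sec.~6]{dumas:holonomy}, from which the lemma is quoted.
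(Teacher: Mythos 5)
Your reduction is correct and the local Busemann analysis is sound: since $h$ is already isometric on each $I\in\I_\phi$, the lemma reduces to showing $\beta$ is monotone along $J=h(I)$, and along such a geodesic the Busemann function is piecewise linear with slopes $\pm1$, admits no local maximum (both one-sided directions would have to coincide with the unique end direction), and can only fail to be monotone at an interior local minimum where $J$ passes through a branch point of $T_\ell$ while avoiding the branch toward the fixed end. You have also correctly identified that a single geodesic with two such minima would contradict uniqueness of the nearest point to the end, so that is a legitimate target contradiction.

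The genuine gap is the step you flag, but the route you sketch does not close it, and not merely for lack of detail. Equivariance gives a local minimum of $\beta$ along $h(gI)$ at $g\cdot p_0$ for every $g\in\pi_1S$, but extracting a single $I^\star\in\I_\phi$ with two local minima on $h(I^\star)$ is not a formal consequence of this: even granting an $I^\star$ whose underlying nonsingular $|\phi|$-geodesic passes through the relevant points (itself nontrivial, since the concatenation may cross a zero of $\phi$), a local minimum of $\beta|_{h(I^\star)}$ at a point $p$ requires that $h(I^\star)$ occupy two branches at $p$ that both avoid the end, and this is not implied by $h(I)$ or $h(gI)$ being folded there --- $h(I^\star)$ could just as well leave $p$ through the branch toward the end, or through a branch not met by $J$ at all. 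So both an existence problem (realizing $I^\star$ in $\I_\phi$) and a transfer problem (forcing $h(I^\star)$ to inherit the folds) are unaddressed. The paper states the lemma without proof, attributing it to \cite[Sec.~6]{dumas:holonomy}, so there is no in-paper argument to compare against; what is clear is that the needed mechanism must exploit more of the interaction between straight maps and the flat geometry of $|\phi|$ than the branch-point combinatorics your sketch relies on, and you are right that this is the ingredient your proposal leaves to the citation.
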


Effectively this result will allow us to replace $T_\ell$ with $\R$ in
the hypotheses of Theorem \ref{thm:length-isotropic} since any
straight map can be composed with a Busemann function, preserving
straightness and without changing the length function.

In the proof of Theorem \ref{thm:straight-isotropic}, the straightness
of $h : T_\phi \to T$ was only used to conclude that the map is
isometric when applied to the endpoints of each segment in $T_\phi$
that corresponds to one of the $\phi$-geodesic edges of a
triangulation of $X$ (furnished by Lemma \ref{lem:delaunay-track}).
To generalize the proof to the situation of Theorem
\ref{thm:length-isotropic}, it will therefore suffice to show that if
there exists a straight map $T_\phi \to T_\ell$, then there also
exists a partially-defined map $T_\phi \dashrightarrow T_\ell'$ that
is ``locally straight'' in that it is an isometry when applied to the
endpoints of any of these segments.  Since these segments arise from
lifting the finite set of edges of a triangulation of $X$, they lie in
finitely many $\pi_1S$-equivalence classes.  Thus, Theorem
\ref{thm:length-isotropic} is reduced to:

\begin{thm}
\label{thm:local-straightness}
Let $T_\ell$ and $T'_\ell$ be $\R$-trees on which $\pi_1S$ acts
minimally and isospectrally, with abelian length function $\ell$, and
suppose $h : T_\phi \to T_\ell$ is an equivariant straight map, for
some $\phi \in Q(X)$.

Let $\I \! \subset \! \I_\phi$ be a set of segments in
$T_\phi$ that arise from nonsingular $\phi$-geodesic segments in
$\Tilde{X}$ and suppose $\I\!$ contains only finitely many
$\pi_1S$-equivalence classes.  Let $E \subset T_\phi$ be the set of
endpoints of elements of $\I$.

Then there exists an equivariant map $h': E \to T_\ell'$ such that
for any segment $J \in \I$ with endpoints $x,y$, we have
$$ d(h(x),h(y)) = d(h'(x),h'(y)).$$
\end{thm}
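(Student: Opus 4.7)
My approach is to exploit the abelian structure of both actions via their Busemann functions, using Lemma~\ref{lem:straight-to-R} to reduce the distance condition to a scalar statement, and then to build $h'$ by mapping orbit representatives of $E$ onto a single axis in $T_\ell'$, shifted far enough toward the fixed end that finitely many obstructions all disappear.

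First, let $\beta$ denote the Busemann function of the fixed end $\epsilon$ for the $\pi_1S$-action on $T_\ell$. By Lemma~\ref{lem:straight-to-R}, the composition $b := \beta \circ h : T_\phi \to \R$ is itself a straight map, and combining the straightness of $h$ with that of $b$ along any $J \in \I$ with endpoints $x, y$ yields
$$ d_{T_\ell}(h(x), h(y)) = |b(x) - b(y)|. $$
Thus the distance data I must reproduce in $T_\ell'$ is controlled entirely by the $\chi$-equivariant scalar function $b$.

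The degenerate case $\chi \equiv 0$ collapses both trees to points by minimality, so assume $\chi \not\equiv 0$, fix $g_0 \in \pi_1S$ with $\chi(g_0) \neq 0$, and let $A \subset T_\ell'$ be the axis of the hyperbolic isometry $g_0$, parametrized as $A \cong \R$ so that the Busemann function $\beta'$ of the fixed end $\epsilon'$ for the $\pi_1S$-action on $T_\ell'$ satisfies $\beta'(A(t)) = t$. The crucial geometric input is that for any $\gamma \in \pi_1S$ the translate $\gamma A$ is another axis with an end at $\epsilon'$, so $A$ and $\gamma A$ agree along a common sub-ray toward $\epsilon'$ and diverge at a single branch point $p_\gamma \in A$. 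Choose orbit representatives $e_1, \ldots, e_N$ of $E$ and representatives $J_1, \ldots, J_m$ of $\I$ with endpoints $x_i = g_i e_{k_i}$ and $y_i = g_i' e_{l_i}$, and set $\gamma_i := g_i^{-1} g_i'$, $c_k := b(e_k)$. Since only finitely many $\gamma_i$ arise, I can fix a single constant $M > 0$ large enough that both $c_{k_i} - M$ and $c_{l_i} - M + \chi(\gamma_i)$ lie below $\beta'(p_{\gamma_i})$ for every $i$. Defining $q_k := A(c_k - M)$ and extending equivariantly by $h'(g \cdot e_k) := g \cdot q_k$ yields the required map: by the choice of $M$, both $q_{k_i}$ and $\gamma_i q_{l_i}$ lie on the common sub-ray $A \cap \gamma_i A$ toward $\epsilon'$, so
$$ d_{T_\ell'}(h'(x_i), h'(y_i)) = d(q_{k_i}, \gamma_i q_{l_i}) = |\beta'(q_{k_i}) - \beta'(\gamma_i q_{l_i})| = |c_{k_i} - c_{l_i} - \chi(\gamma_i)| = |b(x_i) - b(y_i)|, $$
and equivariance propagates this to every segment in $\I$.

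The step I expect to require the most care is the well-definedness of the equivariant extension in the presence of a non-trivial $\pi_1S$-stabilizer $H_k$ of some representative $e_k$. Any such $H_k$ must lie in $\ker \chi$, since elements fixing $e_k$ in $T_\phi$ also fix $h(e_k)$ in $T_\ell$ and therefore have translation length zero, forcing $\chi|_{H_k} = 0$; each element of $H_k$ then acts on $T_\ell'$ as an elliptic isometry, but need not fix the chosen point $q_k \in A$. In the application to Theorem~\ref{thm:length-isotropic}, where $\I$ can be taken to come from the Delaunay triangulations of Lemma~\ref{lem:delaunay-track} with endpoints at lifts of zeros of $\phi$, the stabilizers are trivial and this subtlety evaporates. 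For the fully general statement one would need to refine the construction by choosing $q_k$ in the common fixed set of $H_k$ along a sub-ray of $A$ toward $\epsilon'$, which should follow from an analysis of how the (cyclic) elliptic subgroup $H_k$ acts on translates of $A$.
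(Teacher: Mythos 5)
Your proof takes essentially the same route as the paper's: reduce the distance condition to a scalar statement via the Busemann function and Lemma~\ref{lem:straight-to-R}, then realize the target points far enough toward the fixed end of $T_\ell'$ so that the finitely many branching obstructions all evaporate. Your explicit placement on the axis $A$ of a hyperbolic element is exactly the ``section of $\beta'$'' that the paper uses (they remark that ``any complete geodesic $\R \to T'$ that extends a ray representing the fixed end'' works), and your shift by $M$ is the paper's pushing map $P'_s$ applied with $s$ large; the paper packages the choice of $M$ through Lemma~\ref{lem:pushing} rather than through the branch points $p_\gamma$ as you do, but the two are the same estimate.

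One worthwhile observation: you flag the potential non-freeness of the $\pi_1S$-action on $E$, which the paper's proof does not explicitly address when it ``extends $g'$ equivariantly'' from the orbit representatives $E_0$. Your diagnosis is sound (elements of $\mathrm{Stab}(e_k)$ lie in $\ker\chi$ and fix the end $\epsilon'$, hence fix a tail of any ray to $\epsilon'$), and the same device — push far enough toward $\epsilon'$ — repairs both arguments, since the finitely many stabilizer generators impose finitely many further constraints on $M$ (or $s$). One small correction: there is no reason for $\mathrm{Stab}(e_k)$ to be cyclic; a point stabilizer for a surface-group action on an $\R$-tree can have higher rank. What matters is only that it is finitely generated (automatic for subgroups of $\pi_1S$), so the finite-constraint argument still goes through. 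This does not affect the validity of either proof.
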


The proof will depend on properties of a certain endomorphism of the
tree $T_\ell'$ related to the end fixed by $\pi_1S$.

Given an $\R$-tree $T$ and an end $e$, for any $x \in T$ and $s\geq0$
let $P_s(x)$ denote the point on the ray from $x$ to $e$ such that
$d(x,P_s(x)) = s$.  Then $P_s : T \to T$ is a weakly contracting map,
and if $\pi_1S$ acts on $T$ fixing $e$, then $P_s$ is
$\pi_1S$-equivariant.  We call $P_s$ the \emph{pushing map} of
distance $s$ for the end $e$.

\begin{lem}
\label{lem:pushing}
Let $T$ be an $\R$-tree and let $\beta : T \to \R$ denote a Busemann
function of an end $e$.  Then for any $p,q \in T$ there exists $s_0\geq0$
such that $d(P_s(p),P_s(q)) = |\beta(p) - \beta(q)|$ for all $s \geq
s_0$, where $P_s : T \to T$ is the pushing map for the end $e$.
\end{lem}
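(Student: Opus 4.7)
The plan is to identify the unique branch point $z$ where the rays from $p$ and $q$ toward $e$ first merge into a common subray, and then evaluate both sides of the claimed identity in terms of the elementary distances $d(p,z)$ and $d(q,z)$.

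First I would invoke the standard fact that in an $\R$-tree any two rays representing a common end have intersection equal to a ray (this is immediate from the axioms of an $\R$-tree and the definition of an end). Writing $R_p$ and $R_q$ for the rays from $p$ and $q$ to $e$, I let $z$ be the initial point of $R_p\cap R_q$, so that $[p,z]\subset R_p$ and $[q,z]\subset R_q$ are (possibly degenerate) segments of lengths $d(p,z)$ and $d(q,z)$, after which $R_p$ and $R_q$ agree along the common subray $[z,e)$.

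Next I would set $s_0 = \max(d(p,z),d(q,z))$. For $s\geq s_0$, the definition of the pushing map places $P_s(p)$ and $P_s(q)$ on the common subray $[z,e)$, at distances $s-d(p,z)$ and $s-d(q,z)$ from $z$ respectively. Since these two points lie on a single geodesic ray, the tree distance between them is simply
$$ d(P_s(p),P_s(q)) = |d(p,z)-d(q,z)|. $$
On the other hand, for any $y\in[z,e)$ sufficiently far from $z$ we have $d(p,y)=d(p,z)+d(z,y)$ and $d(q,y)=d(q,z)+d(z,y)$, so passing to the limit $y\to e$ in the defining expression for the Busemann function gives $\beta(p)-\beta(q)=d(p,z)-d(q,z)$. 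Combining these two identities yields $d(P_s(p),P_s(q))=|\beta(p)-\beta(q)|$ for all $s\geq s_0$, as claimed.

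There is no real obstacle here; the argument is essentially a one-picture observation once the branch point $z$ has been located, and its only nontrivial input is the general fact that two rays to the same end in an $\R$-tree eventually coincide.
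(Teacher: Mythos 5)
Your proof is correct and takes essentially the same approach as the paper's: you locate the branch point where the two rays to $e$ merge (your $z$ is the paper's $o$), choose $s_0$ large enough to push both points past it onto the common subray, and then read off both sides of the identity in terms of $d(p,z)$ and $d(q,z)$. The only difference is that you spell out the Busemann computation $\beta(p)-\beta(q)=d(p,z)-d(q,z)$, which the paper simply asserts.
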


\begin{proof}
The ray from $p$ to $e$ and the ray from $q$ to $e$ overlap in a ray
from $o$ to $e$, where $o$ is a point on the geodesic segment from $p$ to $q$, and
the Busemann function satisfies $|\beta(p) - \beta(q)| = | d(p,o) -
d(q,o) |$.  Let $r(t)$ parameterize the ray from $o$ to $e$, $t \geq
0$.  Then for $s \geq \max(d(p,o), d(q,o))$ we have $P_s(p) = r(s -
d(p,o))$ and $P_s(q) = r(s - d(q,o))$.  Since $r$ is an isometry onto
its image, we have $d(P_s(p),P_s(q)) = |d(p,o) - d(q,o)|$.
\end{proof}

Using the pushing map we can now give the

\begin{proof}[{Proof of Theorem \ref{thm:local-straightness}}]
Enlarging $\I$ if necessary, we can take this set and its set of
endpoints $E$ to be $\pi_1S$-invariant.

Let $E_0 \subset E$ be a finite subset containing exactly one point from
each $\pi_1S$-orbit in $E$.  Let $\beta,\beta'$ be Busemann
functions of the fixed ends of $\pi_1S$ acting on $T_\ell, T_\ell'$.
  For each
$x \in E_0$, choose a point $g'(x) \in T_\ell'$ such that $\beta'(g'(x)) =
\beta(h(x))$, giving a map $g' : E_0 \to T_\ell'$.  This is possible since
the map $\beta' : T' \to \R$ admits a section, e.g.~any complete
geodesic $\R \to T'$ that extends a ray representing the fixed end.

Using the action of $\pi_1S$ on $T_\ell$, we extend $g'$ to
an equivariant map $g' : E \to T'$ which then satisfies $\beta'(g'(x))
= \beta(h(x))$ for all $x \in E$.

For any $s \geq 0$ let $h'_s(x) = P_s'(g'(x))$ where $P'_s : T_\ell' \to T_\ell'$
is the pushing map for the fixed end of $\pi_1S$. By Lemma
\ref{lem:straight-to-R}, for any segment $J \in \I$ with
endpoints $x,y \in E$ we have
$$d(h(x),h(y)) = |\beta(h(x)) - \beta(h(y))| = | \beta'(g'(x)) -
\beta'(g'(y)) |$$
and by Lemma \ref{lem:pushing} there exists $s_J \geq 0$ such that for
all $s\geq s_J$ we have
$$ d(h'_s(x), h'_s(y)) = |\beta(g'(x)) - \beta(g'(y))| =
d(h(x),h(y)).$$
Taking $s$ larger than the maximum of $s_J$ as $J$ ranges over a
finite set representing each $\pi_1S$-orbit in $\I$, the
above condition holds for each such representative, and by
equivariance, for each $J \in \I$.  Then $h' = h'_s : E \to T_\ell'$ is the
desired map.
\end{proof}

As remarked above this completes the proof of Theorem \ref{thm:length-isotropic}.

\subsection{Application: Floyd's Theorem}
  \label{sec:floyd}

In this section we discuss some context for Theorems
\ref{thm:embedding-isotropic}, \ref{thm:straight-isotropic}, and
\ref{thm:length-isotropic}.
The contents of this section are not used in the sequel.

Theorem \ref{thm:embedding-isotropic} and its refinements can be seen as
generalizations of the following theorem of Floyd \cite{floyd:boundary-curves}:

\begin{thm}
\label{thm:floyd}
Let $M$ be a compact, irreducible $3$-manifold with boundary $S$.
Then the set of boundary curves of two-sided incompressible,
$\partial$-incompressible surface in $M$ is contained in a 
finite union of half-dimensional piecewise linear cells in $\MF(S)$.
\end{thm}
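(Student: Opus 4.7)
The plan is to derive Floyd's theorem directly from Theorem \ref{thm:embedding-isotropic} by associating to each two-sided incompressible, $\partial$-incompressible surface $F \subset M$ a pair of simplicial $\Z$-trees related by an equivariant isometric embedding, so that the class of $\partial F$ lands in the isotropic piecewise linear cone $\L_M$. The crucial point is that $\L_M$ depends only on $M$, so a single finite union of PL cells will capture every such boundary curve system simultaneously.

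First I would fix such an $F$, set $c = \partial F \subset S$, and lift to the universal cover. Incompressibility forces every component of $\Tilde{F} \subset \Tilde{M}$ to be a properly embedded plane, and $\partial$-incompressibility forces every component of $\Tilde{c} = \Tilde{F} \cap \Tilde{S} \subset \Tilde{S}$ to be a properly embedded line. The dual graphs of these decompositions---one vertex per complementary region, one edge per separating component---are simplicial $\Z$-trees $T_F$ and $T_c$ carrying simplicial isometric actions of $\pi_1 M$ and $\pi_1 S$ respectively; the absence of loops in the dual graphs is essentially a restatement of the two incompressibility hypotheses. The inclusion $\Tilde{S} \hookrightarrow \Tilde{M}$ then descends to a $\pi_1 S$-equivariant simplicial map $h : T_c \to T_F$, sending the component of $\Tilde{S} \setminus \Tilde{c}$ containing a point $x$ to the component of $\Tilde{M} \setminus \Tilde{F}$ containing $x$.

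The main technical step---and the ``cancellation'' phenomenon flagged in the introduction---is to verify that $h$ is an isometric embedding. One must rule out both \emph{collapses} (distinct components of $\Tilde{c}$ lying in a common component of $\Tilde{F}$) and \emph{folds} (two consecutive edges of a geodesic in $T_c$ mapping to the same edge of $T_F$). No-collapse is immediate since distinct components of $\Tilde{c}$ separate $\Tilde{S}$ and therefore lie in distinct components of $\Tilde{F}$. For no-fold, if two distinct components $A,B$ of $\Tilde{S} \setminus \Tilde{c}$ lay in a common component of $\Tilde{M} \setminus \Tilde{F}$, a standard cut-and-paste argument combining a path in $\Tilde{M} \setminus \Tilde{F}$ joining $A$ to $B$ with the arc structure of $\Tilde{c}$ would produce an essential disk in $M$ pairing an arc of $\partial F$ with an arc of $S$, contradicting $\partial$-incompressibility. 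I expect this no-fold verification to be the main obstacle, in the sense that it is where the topological hypotheses on $F$ actually enter the argument; once correctly set up it is a classical consequence of incompressibility.

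With the embedding in hand, the unit-weight multicurve $c$ defines a class $[\nu] \in \MF(S)$ whose dual $\R$-tree is (the $\R$-base change of) $T_c$, and $h$ extends to an equivariant isometric embedding of $\R$-trees into $T_F \tensor_\Z \R$, on which $\pi_1 M$ still acts by isometries. Theorem \ref{thm:embedding-isotropic} then yields $[\nu] \in \L_M$. Since $\L_M$ is by construction a finite union of finite-sided convex cones contained in $\omega_\th$-isotropic linear subspaces of train track weight spaces, and any such subspace in the $(6g-6)$-dimensional symplectic manifold $\MF(S)$ has dimension at most $3g-3$, the set of boundary curves of all two-sided incompressible, $\partial$-incompressible surfaces lies in a common finite union of half-dimensional piecewise linear cells in $\MF(S)$, which is Floyd's theorem.
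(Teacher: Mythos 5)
Your approach is the same as the paper's in its essential structure: build dual trees from $\Tilde{F}\subset\Tilde{M}$ and $\Tilde{\partial F}\subset\Tilde{S}$, show the induced map $T_{\partial F}\to T_F$ is an equivariant isometric embedding, and feed this into Theorem~\ref{thm:embedding-isotropic}. Two things deserve comment.

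First, a hypothesis is missing. For the lifted inclusion $\Tilde{S}\hookrightarrow\Tilde{M}$ to make sense (with $\Tilde{S}$ the universal cover of $S$ sitting inside $\Tilde{M}$), you need $\pi_1 S\to\pi_1 M$ injective, i.e.\ \emph{incompressible boundary}. The paper states this restriction explicitly and derives Floyd's theorem only in that case, citing Floyd for the general statement. Your write-up uses the embedded $\Tilde{S}$ without remarking on this.

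Second, the paper's verification that $h$ is an isometric embedding is both cleaner and, in one respect, more correct than yours. The paper observes that $h$ is automatically $1$-Lipschitz (minimizing crossings of paths in $\Tilde{S}$ with $\Tilde{\partial F}$ is a more constrained problem than minimizing crossings of paths in $\Tilde{M}$ with $\Tilde{F}$), and then shows that any strict contraction of a distance, after a general-position isotopy, yields a boundary-compression disk—contradicting $\partial$-incompressibility. This single argument simultaneously handles what you call folds and collapses. Your breakdown into two cases is fine in principle, but your ``no-collapse'' justification is a non-sequitur: the fact that distinct components of $\Tilde{c}$ separate $\Tilde{S}$ has nothing to do with whether they lie in the same plane of $\Tilde{F}$ (and indeed a single lifted component of $F$ can perfectly well have several boundary lines in $\Tilde{S}$). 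In fact no-collapse is not an independent condition at all: for a simplicial map of simplicial trees, injectivity on vertices (your no-fold in the strong form you actually argue) automatically forces injectivity on edges, since two edges with the same pair of images would, by vertex-injectivity, have the same pair of endpoints and hence coincide. So you should drop the separate no-collapse claim and its flawed justification, and either phrase the argument as the paper does (weak contraction plus the $\partial$-compression consequence of a strict contraction) or make precise what ``standard cut-and-paste'' means in your no-fold step—this is exactly where $\partial$-incompressibility is used, and it is the real content of the proof.
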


Note that the correspondence between measured laminations and measured
foliations on surfaces (see e.g.~\cite{Levitt}) allows us to consider
the boundary of a surface in $M$ as an element of $\MF(S)$. 
The original statement in \cite{floyd:boundary-curves} uses the
language of measured laminations.

Floyd's theorem answers a question of Hatcher, who established a
similar result for manifolds with torus boundary
\cite{hatcher:boundary-curves}.  Hatcher's theorem is often used
through its corollary that a knot complement manifold has finitely
many boundary slopes.

In both cases the half-dimensional set is constructed as an isotropic
cone in the symplectic space $\MF(S)$, and these results can be
compared to the more elementary (co)homological version: As a
consequence of Poincar\'e duality, the image of the connecting map
$$H_2(M,\partial M) \xrightarrow{\delta} H_1(\partial M)$$
is isotropic with respect to the intersection pairing.  Dually, the
image of the map $H^1(M) \to H^1(\partial M)$ induced by inclusion of
the boundary is isotropic for the cup product.

To show the connection with our results, we derive Floyd's theorem
from Theorem \ref{thm:embedding-isotropic} under the additional
assumption that the boundary $S$ is incompressible:

\begin{proof}[{Proof of Theorem \ref{thm:floyd}} (incompressible
boundary case)]
Let $F$ be an incompressible and $\partial$-incompressible surface in
$M$.  The preimage $\Tilde{F}$ of $F$ in $\Tilde{M}$ is a collection
of planes, separating $\Tilde{M}$ into a countable family of
complementary regions.  The adjacency graph of these regions, with one
vertex for each region and one unit-length edge for each plane, gives
an $\R$-tree (which comes from an underlying $\Z$-tree) on which
$\pi_1M$ acts by isometries.  In this tree, the distance between two
vertices is the minimum number of intersections between $\Tilde{F}$
and a path between the corresponding complementary regions in
$\Tilde{M}$.

Similarly, the boundary curves $\partial F$ lift to a collection of
lines separating $\Tilde{S}$ and give a dual tree $T_{\partial F}$ on
which $\pi_1S$ acts by isometries.  The equivalence between
laminations and foliations allows us to identify $\partial F$ with
measured foliation class in $\MF(S)$; under this correspondence,
$T_{\partial F}$ becomes the dual tree of that measured foliation (in
the sense of Section \ref{sec:dual-trees}).

Since the boundary is incompressible, the inclusion $S \into M$ lifts
to $\Tilde{S} \to \Tilde{M}$ which induces a map $T_{\partial F} \to
T_F$.  This map of trees is an isometric embedding: It is weakly
contracting, since minimizing the number of intersections of a path in
$\Tilde{S}$ with $\Tilde{\partial F}$ is a more constrained problem
than minimizing intersections of a path in $\Tilde{M}$ with
$\Tilde{F}$.  However, if an isotopy of such a path in $\Tilde{M}$
were to decrease the number of intersections with $\Tilde{F}$ (i.e.~if
the map $T_{\partial F} \to T_F$ strictly contracted any distance),
then putting the isotopy in general position relative to $\Tilde{F}$
would reveal a boundary compression of $F$.  Since $F$ is
$\partial$-incompressible, this is a contradiction.

Applying Theorem \ref{thm:embedding-isotropic} to $T_{\partial F} \to
T_F$ we conclude that $\partial F \in \L_M$.  Since $\L_M$ is an
isotropic piecewise linear cone, the desired conclusion follows.
\end{proof}

Comparing Floyd and Hatcher's proofs with that of Theorem
\ref{thm:embedding-isotropic} shows that the same ``cancellation''
phenomenon is at work in both cases.  Briefly, the connection is as
follows.  Floyd and Hatcher analyze weight functions on \emph{branched
  surfaces} that carry all of the incompressible,
$\partial$-incompressible surfaces in $M$.  Weights on a branched
surface satisfy a linear condition at each singular vertex.  When the
Thurston form is applied to a pair of weights on the boundary train
track of a branched surface, these vertex conditions lead to pairwise
cancellation of terms in the Thurston form, giving an isotropic space
of boundary weights.

The finite set of branched surfaces that are used in this argument
come from a construction of Floyd-Oertel \cite{floyd-oertel} which is
based on normal surface theory and a triangulation of the
$3$-manifold.  In this way, the weight conditions at the singular
vertices of a branched surface are dual to the weak $4$-point
condition \eqref{eqn:four-point} in each $3$-simplex that defines the
cone $\L_M$ in our approach, and the role of the spaces
$W_4(\Delta_M)$ in the proof of Theorem \ref{thm:embedding-isotropic}
is analogous to that of the space of boundary weights of a branched
surface in the arguments of Floyd and Hatcher.

\section{The K\"ahler structure of $Q(X)$}
\label{sec:kahler}

The goal of this section is to introduce a K\"ahler metric on $Q(X)$
and then to show that the foliation map $\F : Q(X) \to \MF(S)$
identifies the underlying symplectic space with the Thurston
symplectic structure on $\MF(S)$.  The K\"ahler metric we construct
has singularities but we show that it is smooth relative to a
stratification of $Q(X)$.

\subsection{The stratification}
  \label{sec:stratification}

Let $Z$ be a manifold.  A \emph{stratification} of $Z$ is a locally
finite collection of locally closed submanifolds $\{ Z_i \: | \: i \in
I \}$ of $Z$, the \emph{strata}, indexed by a set $I$ such that
\begin{enumerate}
\item $Z = \bigcup_{i \in I} Z_i$
\item $Z_i \cap \bar{Z_j} \neq 0$ if and only if $Z_i \subset
\bar{Z_j}$
\end{enumerate}
These conditions induce a partial order on $I$ where $i \leq j$ if $Z_i
\subset \bar{Z_j}$.  A stratification of a complex manifold $Z$ is a
\emph{complex analytic stratification} if the closure and boundary of
each stratum (i.e.~$\bar{Z_i}$ and $\bar{Z_i} \setminus Z_i$)
are complex analytic sets.

Let $\Q(S)$ denote the space of holomorphic quadratic differentials on
marked Riemann surfaces diffeomorphic to $S$, i.e.~the set of all
pairs $(X,\phi)$ where $X \in \T(S)$ and $\phi \in Q(X)$.  This space
is a vector bundle over $\T(S)$ isomorphic to the cotangent bundle
$T^*\T(S)$.  Let $s_0 : \T(S) \to \Q(S)$ denote the zero section.

There is a natural complex analytic stratification of $\Q(S)$
according to the numbers and types of zeros of the quadratic
differential (see \cite{veech:quadratic-differentials}
\cite{masur-smillie}).  Specifically, let the \emph{symbol} of a
nonzero quadratic differential $\phi$ be the pair $(\n, \epsilon)$
where $\n = (n_1, \ldots, n_k)$ is the list of multiplicities (in
weakly decreasing order) of the zeros of $\phi$, and where $\epsilon =
\pm1$ according to whether $\phi$ is the square of a holomorphic
$1$-form ($\epsilon=1$) or not ($\epsilon=-1$).  Thus we have $\sum_i
n_i = 4g-4$ and there are finitely many possible symbols; we denote
the set of all such symbols by $\symb$.

Given $\pi \in \symb$ let $\Q(S,\pi)$ denote the set of quadratic
differentials with symbol $\pi$.  This set is a manifold, with local
charts described below (in Section \ref{sec:period-coordinates}).  The
stratification of $\Q(S)$ is formed by the sets $\Q(S,\pi)$ and the
zero section $s_0(\T(S))$.

There is a related stratification of a fiber $Q(X)$ with the following
properties:

\begin{lem}
\label{lem:smooth-stratification}
For each $X \in \T(S)$, there exists a complex analytic stratification
$\{ Q_i(X) \}$ of $Q(X)$ such that:
\begin{rmenumerate}
\item Each stratum is a connected and $\C^*$-invariant.
\item The symbol is constant on each stratum $Q_i(X)$, and 
\item If $q \in Q_i(X)$ and $v \in T_{q}Q_i(X)$, then the
meromorphic function $v/q$ has at most simple poles.
\end{rmenumerate}
\end{lem}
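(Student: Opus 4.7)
The plan is to pull back the known stratification of the total space $\Q(S)$ to the fiber $Q(X)$ and decompose into connected components. For each $\pi \in \symb$ set
\[
Q(X,\pi) := Q(X) \cap \Q(S,\pi),
\]
and include $\{0\}$ as the bottom stratum. Since the $\C^*$-action on $Q(X)$ does not change the divisor, each $Q(X,\pi)$ is $\C^*$-invariant, and (ii) is immediate from the definition.

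The first technical point is to show that each $Q(X,\pi)$ is a locally closed complex submanifold of $Q(X)$ whose closure is complex analytic. For the latter, the condition that $\phi \in Q(X)$ have divisor dominating a fixed partition is cut out by vanishing of Taylor coefficients of $\phi$ to prescribed orders at varying points, which is analytic (the ``at least $\pi$'' locus is the image of a proper analytic map from an incidence variety $\{(\phi,(q_i))\in Q(X)\times X^k : \mathrm{ord}_{q_i}\phi \geq n_i\}$ to $Q(X)$). For manifold structure near $\phi_0 \in Q(X,\pi)$ with zeros $p_i$ of order $n_i$, I would parameterize nearby elements of $Q(X,\pi)$ by the positions of the zeros together with the line on which $\phi$ lies, appealing to the implicit function theorem for the holomorphic map $Q(X)\setminus\{0\}\to \mathrm{Sym}^{4g-4}X$, $\phi\mapsto \mathrm{div}(\phi)$.

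The heart of the lemma is the tangent space computation (iii). Given $\phi \in Q(X,\pi)$ and a smooth family $\phi_t \in Q(X,\pi)$ with $\phi_0 = \phi$, work in a local coordinate $z$ near $p_i$ in which $\phi$ reads $u(z)(z-p_i)^{n_i}\,dz^2$ with $u$ a local unit. By the manifold description above, the nearby zero $p_i(t)$ of $\phi_t$ depends analytically on $t$, so
\[
\phi_t = u_t(z)\bigl(z-p_i(t)\bigr)^{n_i}\,dz^2.
\]
Differentiating at $t=0$ yields
\[
v(z) = \dot u_0(z)(z-p_i)^{n_i}\,dz^2 \;-\; n_i\,u(z)\,\dot p_i(0)\,(z-p_i)^{n_i-1}\,dz^2,
\]
so $v$ vanishes to order at least $n_i-1$ at $p_i$, and hence $v/\phi$ has a pole of order at most one there. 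No other poles can occur since $v$ is holomorphic and $\phi$ is nonzero away from the $p_i$. Conversely, any $v \in Q(X)$ with $\mathrm{ord}_{p_i}v \geq n_i-1$ can be realized as the tangent vector of such a family (move each zero infinitesimally in the direction prescribed by the leading coefficient of $v/\phi$ at $p_i$), which matches the Riemann–Roch count of the tangent dimension and so pins down $T_\phi Q(X,\pi)$ exactly. This gives (iii).

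For (i) one finally refines the stratification by replacing each $Q(X,\pi)$ by its connected components; $\C^*$-invariance of components is automatic because $\C^*$ is connected. The closure–incidence axioms of a stratification follow from the fact that a limit of differentials of symbol $\pi$ has symbol obtained from $\pi$ by coalescing zeros, i.e.~the closure of one stratum meets another only if the latter lies entirely in the closure. The main obstacle is the local manifold structure together with the tangent space identification in the paragraph above; everything else is bookkeeping once the tangent space $T_\phi Q(X,\pi)$ is identified with $H^0(X, K_X^2(-\sum(n_i-1)p_i))$ sitting inside $Q(X)=H^0(X,K_X^2)$.
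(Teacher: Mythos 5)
Your tangent-space computation for (iii) is essentially the same as the paper's: the paper writes $q_t = \alpha_t^*(z^k\,dz^2)$ and differentiates, while you write $q_t = u_t(z)(z-p_i(t))^{n_i}\,dz^2$ and differentiate; both give vanishing of order $\geq n_i-1$ at the zero. That part is fine.

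The gap is in the decomposition itself. You take the strata to be the connected components of $Q(X,\pi) = Q(X) \cap \Q(S,\pi)$ and assert these are locally closed complex submanifolds, giving a sketch based on the implicit function theorem applied to $\phi \mapsto \mathrm{div}(\phi)$. This is exactly the point that cannot be taken for granted. Although $\Q(S,\pi)$ is smooth (period coordinates), the fiber over a fixed $X$ need not be: the projection $\Q(S,\pi) \to \T(S)$ is not in general a submersion, and the locus in $\mathrm{Sym}^{4g-4}X$ cut out by multiplicity conditions can meet the linear system $|K_X^2|$ non-transversely. Your IFT sketch does not address this, and the precise failure locus is not controlled. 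For the same reason the frontier condition for a stratification (a higher stratum's closure must contain each lower stratum it meets, in its entirety) is genuinely at issue when you restrict to the fiber $Q(X)$, and you assert it without argument. This is precisely why the paper does \emph{not} take $Q(X) \cap \Q(S,\pi)$ as strata: it instead invokes general refinement theory for complex analytic stratifications (Whitney, Goresky--MacPherson) to refine the symbol stratification of $\Q(S)$ so that $Q(X)$ becomes a union of strata, automatically yielding smooth, frontier-compatible strata on which the symbol is constant; $\C^*$-invariance is preserved because the refinement operations (boolean operations, singular locus, irreducible components) are $\C^*$-equivariant. If you want to make your more explicit approach work, you would need to separately stratify the singular loci of the $Q(X,\pi)$, at which point you have essentially reconstructed the refinement machinery.
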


\begin{proof}
A complex analytic stratification can always be refined so that a
given complex analytic subset becomes a union of strata (see
\cite{whitney} \cite[Thm.~1.6, p.~43]{goresky-macpherson}), and a further refinement
can be taken so that the strata are connected.  Here \emph{refinement}
refers to changing the stratification in such a way that each new
stratum is entirely contained in one of the old strata.

Applying this to the stratification of $\Q(S)$ discussed above and the
closed subvariety $Q(X)$ we obtain a stratification of $\Q(S)$ such
that the symbol is constant on each stratum and so that $Q(X)$ is a
union of strata.  In particular there is an induced stratification $\{
Q_i(X) \}$ of $Q(X)$ satisfying (ii).  The original
stratification of $\Q(S)$ is $\C^*$-invariant, and the strata of the
refinement can be constructed using finitely many operations that
preserve this invariance (i.e.~boolean operations and passage from a
complex analytic set to its singular locus or to an irreducible
component), so property (i) also follows.

Thus the proof is completed by the
following lemma, which shows that property (iii) is a consequence of
property (ii).
\end{proof}

\begin{lem}
Let $M \subset Q(X)$ be a submanifold on which the symbol is constant.
Then for any $(q,\dot{q}) \in TM$, the function $\dot{q}/q$ has at
most simple poles on $X$.
\end{lem}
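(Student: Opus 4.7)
The claim is purely local: away from the zeros of $q$ the ratio $\dot q/q$ is visibly holomorphic, so I only need to analyze what happens at each zero of $q$. My plan is to reduce the assertion to a local computation by choosing a smooth curve $\gamma:(-\epsilon,\epsilon)\to M$ with $\gamma(0)=q$ and $\gamma'(0)=\dot q$, writing $\phi_t:=\gamma(t)$ and, in a holomorphic chart $z$ near a given zero $p$ of $q$ with $z(p)=0$, expressing $\phi_t = f_t(z)\,dz^2$. Then $f_t$ depends smoothly on $t$ and holomorphically on $z$, and $f_0$ has a zero of order exactly $n$ at $z=0$. Everything to be shown about $\dot q/q$ near $p$ is then a statement about $\dot f_0/f_0$.

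The crux of the argument is to leverage the constant-symbol hypothesis to prevent the order-$n$ zero at $p$ from splitting under small perturbations within $M$. By continuity and the argument principle, the total multiplicity of zeros of $f_t$ in a small disk around $0$ remains $n$ for small $t$. If this single zero of $f_0$ decomposed into two or more zeros of smaller multiplicity for some nearby $t$, then the multiset of zero multiplicities of $\phi_t$ on $X$ would strictly refine that of $q$, contradicting the assumption that the symbol is locally constant along $M$. This is the one point in the proof where the hypothesis is essential, and it is the step I expect to require the most care.

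Once no-splitting is in hand, I would track the location $z(t)$ of the single zero via the center-of-mass integral
\[
z(t) \;=\; \frac{1}{2\pi i\, n}\oint_{|z|=r} z\,\frac{f_t'(z)}{f_t(z)}\,dz,
\]
which depends smoothly on $t$. Factoring $f_t(z) = (z-z(t))^n g_t(z)$ gives a holomorphic, nonvanishing $g_t$ at $z=z(t)$ whose smooth dependence on $t$ follows from a Cauchy-integral representation applied to $f_t(z)/(z-z(t))^n$. Differentiating at $t=0$, and using $z(0)=0$, yields
\[
\dot f_0(z) \;=\; -n\,z'(0)\,z^{n-1}g_0(z) + z^n \dot g_0(z),
\]
so that
\[
\frac{\dot f_0(z)}{f_0(z)} \;=\; \frac{-n\,z'(0)}{z} \;+\; \frac{\dot g_0(z)}{g_0(z)},
\]
whose second term is holomorphic near $0$ since $g_0(0)\neq 0$. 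Thus $\dot q/q$ has at most a simple pole at $p$, and applying the same analysis at each of the finitely many zeros of $q$ gives the conclusion.
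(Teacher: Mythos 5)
Your proof is correct and rests on the same essential observation as the paper's: the constant-symbol hypothesis forces the order-$k$ zero of $q_t$ to move as a single zero rather than split, after which a direct differentiation of the local factorization produces a factor of $z^{k-1}$ in $\dot q$. The difference is in implementation. The paper writes $q_t = \alpha_t^*(z^k\,dz^2)$ for a smooth family of local biholomorphisms $\alpha_t$, citing the Hubbard--Masur universal deformation result for polynomial differentials to justify this normal form, and then differentiates $\alpha_t(z)^k(\alpha_t'(z))^2\,dz^2$ at $t=0$. You instead establish the non-splitting directly via the argument principle, track the zero location $z(t)$ by the residue (center-of-mass) integral, and prove the factorization $f_t(z)=(z-z(t))^n g_t(z)$ with $g_t$ smooth in $t$ and nonvanishing; differentiating this at $t=0$ gives the same conclusion. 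Your route is more self-contained (no appeal to the universal deformation lemma) at the cost of a little more bookkeeping; the paper's route is shorter but imports more machinery. One small point worth making explicit when you invoke "strict refinement": for $t$ small the zeros of $\phi_t$ are confined to disjoint small disks about the zeros of $q$, each disk carrying total multiplicity equal to the original order, so a split in one disk cannot be compensated by a merge elsewhere; with that said, the contradiction with constancy of the symbol is airtight.
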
 

\begin{proof}
Let $q_t$ be a smooth family of quadratic differentials in $M$ with
$q_0 = q$ and with tangent vector $\dot{q}$ at $t=0$.

Let $p \in X$ be a zero of $q$ of order $k > 0$, and choose a local
coordinate $z$ in which $z(p) = 0$ and $q = z^k dz^2$.  Since $q_t$
has the same symbol as $q$ for small $t$, in a neighborhood of $p$ we can write
$$ q_t = \alpha_t^*(z^k dz^2)$$
where $\alpha_t$ is a smooth family of holomorphic functions defined on
$\{ |z| < \epsilon \}$ and $\alpha_0(z) = z$.  This is equivalent to
the statement that the family of polynomial differentials $(z^k + a_{k-2} z^{k-2} +
\ldots + a_0)dz^2$ is a universal deformation of $z^k dz^2$ (see
\cite[Prop.~3.1]{hubbard-masur}).
Since $\alpha_t^*(z^k dz^2) =
\alpha_t(z)^k \left ( \alpha_t'(z)  \right )^2 dz^2$, a
calculation gives
$$ \dot{q} = z^{k-1} \left ( k \dot{\alpha} + 2 z \dot{\alpha}' \right ) \: dz^2$$
and $\dot{q}$ has a zero of order at least $k-1$ at $p$.  It follows
that $\dot{q}/q$ has at most simple poles.
\end{proof}

\subsection{The K\"ahler form}

The vector space $Q(X)$ is a complex manifold with a global
parallelization which identifies $T_\phi Q(X) \simeq Q(X)$ for any $\phi
\in Q(X)$.  We consider the hermitian pairing $\langle \param , \param
\rangle_\phi$ on $T_\phi Q(X)$ defined by
\begin{equation}
\label{eqn:pairing}
\langle \psi_1, \psi_2 \rangle_\phi := \int_X \frac{\psi_1
  \bar{\psi}_2}{4 |\phi|}.
\end{equation}
Note that in this expression we consider
$\psi_1 \bar{\psi_2}/|\phi|$ as a complex-valued quadratic form on
$TX$, and we integrate the corresponding complexified volume form.
With respect to a local choice of a holomorphic $1$-form
$\sqrt{\phi}$, the integrand can also be written as a
$$\frac{i}{2} \left ( \frac{\psi_1}{2 \sqrt{\phi}} \wedge
\frac{\bar{\psi_2}}{2 \bar{\sqrt{\phi}}} \right ) .$$ A branched double
covering of $X$ can be used to globalize this interpretation (as in
the proof of Theorem \ref{thm:symplectomorphism} below).

Let $g_\phi$ and $\omega_\phi$ the real and imaginary parts of this
hermitian pairing, i.e.
$$ \langle \psi_1, \psi_2 \rangle_\phi  = g_\phi(\psi_1,\psi_2) + i \:\omega_\phi(\psi_1,\psi_2).$$
Similarly, we write $\|\psi\|_\phi^2 = g_\phi(\psi,\psi) = \langle
\psi, \psi \rangle_\phi$.

The pairing is not defined for all vectors
because the function $\psi_1 \bar{\psi}_2 / |\phi|$ is not necessarily
integrable on $X$.  However, it is defined on the strata $Q_i(X)$:

\begin{thm}
\label{thm:kahler}
For each stratum $Q_i(X) \subset Q(X)$ we have:
\begin{rmenumerate}
\item The pairing $\langle \psi_1, \psi_2 \rangle_\phi$ is well-defined and
positive-definite on the tangent bundle $T Q_i(X)$,
\item The alternating form $\omega_\phi$ on $T Q_i(X)$ can be expressed as
$$ \omega_\phi = \frac{i}{2} \partial \bar{\partial} N,$$
where $N : Q(X) \to \R$ is defined by $N(\phi) = \|\phi\|$.  In
particular $\omega_\phi$ is closed, and thus
\item The hermitian form $\langle \param
, \param \rangle_\phi$ defines a K\"ahler structure on $Q_i(X)$.
\end{rmenumerate}
\end{thm}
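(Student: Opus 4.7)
The plan is to use the function $N(\phi) = \|\phi\|$ as a global K\"ahler potential on $Q_i(X)$ and to prove (i)--(iii) by direct computation, with Lemma \ref{lem:smooth-stratification}(iii) supplying the essential control that makes all integrals converge on the stratum.

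For (i), I would carry out a local analysis near each zero of $\phi$. Fix a zero $p$ of order $k \geq 1$ and a natural coordinate $z$ centered at $p$ in which $\phi = z^k\, dz^2$. By Lemma \ref{lem:smooth-stratification}(iii), any $\psi \in T_\phi Q_i(X) \subset Q(X)$ satisfies that $\psi/\phi$ has at most simple poles, so in this coordinate $\psi = z^{k-1} h(z)\, dz^2$ for some holomorphic $h$. The integrand $|\psi_1 \bar\psi_2|/|\phi|$ is then bounded by a constant multiple of $|z|^{k-2}$, whose integral in polar coordinates is a constant times $\int r^{k-1}\,dr$, convergent at $r=0$ for every $k \geq 1$. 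Away from zeros the integrand is continuous, so the global integral converges absolutely. Positive-definiteness is immediate from the formula, since $|\psi|^2/|\phi|$ is a pointwise nonnegative $(1,1)$-form that is strictly positive on the (open and dense) locus where both $\psi$ and $\phi$ are nonzero.

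For (ii), I would first carry out the pointwise computation: writing $\phi = f\,dz^2$ and $\psi = g\,dz^2$ in a natural coordinate and setting $h(t,\bar t) = |f + tg|^2$, a direct differentiation gives
\[
\left.\frac{\partial^2 \sqrt{h}}{\partial t\,\partial \bar t}\right|_{t=0} \;=\; \frac{|g|^2}{4|f|}.
\]
Integrating over $X$ recovers $\|\psi\|_\phi^2$, so the hermitian form on the diagonal agrees with the (formal) complex Hessian of $N$; polarization extends this to arbitrary pairs $(\psi_1, \psi_2)$. To upgrade this pointwise identity into the assertion that $\omega_\phi = \tfrac{i}{2}\partial\bar\partial N$ as a smooth $(1,1)$-form on $Q_i(X)$, one must commute the two derivatives with integration over $X$. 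This is where I expect the main technical difficulty to lie: since the stratum $Q_i(X)$ has constant symbol, the zeros of $\phi + t\psi$ move holomorphically and preserve their multiplicities for small $t$, so one may cover $X$ by finitely many disks adapted to the moving zeros in which the $|z|^{k-2}$ bound survives with constants uniform in $t$. Dominated convergence then legitimizes the exchange of $\partial_t \partial_{\bar t}$ and integration, giving $N \in C^2$ on $Q_i(X)$ with the prescribed complex Hessian.

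Once (i) and (ii) are established, (iii) is immediate: the pairing $\langle \cdot,\cdot\rangle_\phi$ is a positive-definite hermitian metric on $Q_i(X)$, whose imaginary part $\omega_\phi$ equals $\tfrac{i}{2}\partial\bar\partial N$ and is therefore $d$-closed. Compatibility with the complex structure (inherited from the ambient $Q(X) \simeq \C^{3g-3}$) is built into the hermitian character of the pairing. The principal obstacle throughout is the uniform integrability needed for the differentiation-under-the-integral step, which in turn depends on carefully exploiting the holomorphic motion of the zeros within the stratum to produce local dominating functions that patch to a global integrable bound.
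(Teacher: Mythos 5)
Your overall strategy is the same as the paper's: treat $N(\phi) = \|\phi\|$ as a K\"ahler potential, verify the pointwise Hessian identity $\partial_t \bar\partial_t |f+tg| = |g|^2/4|f|$, establish well-definedness and positivity from the at-most-simple-pole condition of Lemma \ref{lem:smooth-stratification}(iii), and justify the interchange of $\partial\bar\partial$ with $\int_X$. Parts (i) and (iii) match the paper closely, and your local computation in (i) is correct — the integrand near a zero of order $k$ is $O(|z|^{k-2})$, integrable for all $k \geq 1$.

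There is, however, a gap in the justification you offer for differentiating under the integral in (ii). You assert that ``since the stratum $Q_i(X)$ has constant symbol, the zeros of $\phi + t\psi$ move holomorphically and preserve their multiplicities for small $t$.'' That is not true: $\psi \in T_\phi Q_i(X)$ makes the affine line $\phi + t\psi$ \emph{tangent} to the stratum at $t = 0$, but it generically leaves the stratum for $t \neq 0$, and a zero of $\phi$ of order $k \geq 2$ will typically split (e.g.\ in a natural coordinate with $\phi = z^k\,dz^2$ and $\psi = z^{k-1}h(z)\,dz^2$, one has $\phi + t\psi = z^{k-1}(z + t h(z))\,dz^2$, so a simple zero breaks off near $z \approx -t h(0)$). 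Consequently the pointwise Hessian $|\psi|^2/4|\phi + t\psi|$ has a moving singularity whose location depends on $t$, and the naive uniform dominating function $C|z|^{k-2}$ that you invoke does not exist: for each small $z$ there is a small $t$ making $\phi + t\psi$ vanish at $z$, so $\sup_{|t|<\delta}$ of the integrand is identically $+\infty$ near the zero. To repair this you should replace $\phi + t\psi$ by a holomorphic disc $t \mapsto \phi_t$ \emph{inside} $Q_i(X)$ with $\phi_0 = \phi$, $\dot\phi_0 = \psi$ (this is possible because $Q_i(X)$ is a complex submanifold of $Q(X)$), for which the zero multiplicities are genuinely constant and the local $|z|^{k-2}$ bound does hold uniformly in $t$. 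The paper itself sidesteps the issue, asserting only that the formal calculation ``is justified by the finiteness of $\omega_\phi$'' with a pointer to Royden's continuity estimate, but your attempt to spell out the domination argument via $\phi + t\psi$ introduces an error that the terser formulation avoids.
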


A formula for the second derivatives of $N$ equivalent to (ii)
above was derived by Royden (see \cite[Lem.~1]{royden}) in the case of the
open stratum consisting of differentials with at most simple zeros.
Royden also analyzes the failure of $N$ to be twice differentiable in
certain directions transverse to the other strata, however the fact
that $N$ is $C^2$ when restricted to a stratum and the analogous
derivative formula follow easily by similar methods.  We describe the
necessary adaptation of his argument below.

\begin{proof}
The function $|z|^{-1}$ is integrable in a neighborhood of $0$ in $\C$, so if
$\psi_1/\phi$ has at most simple poles, then $\langle \psi_1, \psi_2
\rangle_\phi$ is finite for all $\psi_2 \in Q(X)$.  By part (iii) of
Lemma \ref{lem:smooth-stratification}, this holds for all $(\phi,\psi_1) \in
TQ_i(X)$, so the pairing is well-defined there.  For any $\psi \neq
0$, the function $\frac{|\psi|^2}{|\phi|}$ is positive except for
finitely many zeros, and thus
$$ \|\psi\|_\phi^2 = \int_X \frac{|\psi|^2}{4 |\phi|} > 0.$$

Now we consider the existence of derivatives of the function $N(\phi)
= \int_X |\phi|$.  The lack of smoothness of $|\phi|$ at the zeros of
$\phi$ is the only problem: If $K \subset X$ is compact and contains
no zeros of $\phi$, then $\phi \mapsto \int_{K} |\phi|$ is $C^\infty$
on a neighborhood of $\phi$ in $Q(X)$, and its derivatives are
obtained by differentiating inside the integral.  Thus our strategy
will be to determine the resulting formula for $\partial
\bar{\partial}N$ away from the zeros and then show that the zeros
contribute nothing.

Fix $\phi \in Q_i(X)$ and for $\epsilon > 0$ let $X_\epsilon$ denote
an open neighborhood of the zero set of $\phi$ such that each
connected component of $X_\epsilon$ is a disk containing a single zero
of $\phi$, and where each such disk admits a local holomorphic
coordinate $z$ in which the restriction of $\phi$ is identified with
$z^k dz^2$ on the open disk $\{ |z| < \epsilon \} \subset \C$.  Here
$k \in \N$ depends on the component (and is equal the multiplicity of the
zero of $\phi$ it contains).  Such standard disk neighborhoods exist
for all $\epsilon$ sufficiently small.

Then we can write $N(\phi) = N^\epsilon_0(\phi) + N^\epsilon_1(\phi)$
where:
\begin{equation*}
\begin{split}
N^\epsilon_0(\phi) &= \int_{X_\epsilon} |\phi|\\
N^\epsilon_1(\phi) &= \int_{X \setminus X_\epsilon} |\phi|
\end{split}
\end{equation*}
As explained above, function $N^\epsilon_1(\phi)$ is smooth on a
neighbhorhood of $\phi$ in $Q(X)$, so it restricts to a smooth
function on $Q_i(X)$.  From now on we consider $N^\epsilon_0$ and
$N^\epsilon_1$ as functions on $Q_i(X)$.  We claim:
\begin{itemize}
\item[(I)]  $N^{\epsilon}_0$ is smooth on a neighborhood of
$\phi$ in $Q_i(X)$,
\end{itemize}
and that at the point $\phi$ we have
\begin{itemize}
\item[(II)] $\partial \bar{\partial} N^{\epsilon}_1 \to -2i \omega_\phi$
as $\epsilon \to 0$, and 
\item[(II)] $\partial \bar{\partial} N^{\epsilon}_0 \to 0$
as $\epsilon \to 0$.
\end{itemize}
Since (I) allows the expression $\partial \bar{\partial}N = \partial \bar{\partial}
N^\epsilon_1 + \partial \bar{\partial} N^\epsilon_0$, taking
$\epsilon \to 0$ and using (II) and (III) gives the desired formula
$\frac{i}{2} \partial \bar{\partial} N =  \omega_\phi$.  Thus the Theorem is
reduced to these claims.

We begin with (II), which amounts to differentiating inside the
integral defining $N^\epsilon_1$. Let $a : U \to Q_i(X)$ be a local
holomorphic parameterization where $U \subset \C^m$ is a neighborhood
of the origin and $a(0) = \phi$. Consider the pointwise norm
$|a(\zeta)|$ as a function of $\zeta$.  Differentiating
$|a| = \sqrt{a \bar{a}}$ we find
$$\frac{\partial^2 |a|}{\partial \zeta_k \partial \bar{\zeta_l}}(0) = 
\frac{a_k \bar{a_l}}{4 |\phi|}$$
where for brevity we have written $a_k$ for
$\frac{\partial{a}}{\partial \zeta_k}(0)$, and this formula is valid at any
point where $\phi \neq 0$.  Since the vectors $a_k$ form a basis for $T_\phi
Q_i(X)$, this equivalent to the statement that for all $\psi_1, \psi_2
\in T_\phi Q_i(X)$ we have $\partial
\bar{\partial} n(\psi_1,\psi_2) = (-2i) \frac{1}{4|\phi(z)|} \Im(\psi_1(z)
\bar{\psi_2}(z))$ where $n(\psi) = |\psi(z)|$ and $z$ is any point
with $\phi(z) \neq 0$.  Integrating over the set $X \setminus X_\epsilon$,
which is compact and which does not contain any zeros of $\phi$, we
have the corresponding expression
$$ \partial \bar{\partial} N^\epsilon_1(\psi_1,\psi_2) =
 (-2i) \int_{X \setminus
  X_\epsilon} \frac{\Im(\psi_1 \bar{\psi_2})}{4|\phi|}.$$
Since $\psi_1,\psi_2 \in T_\phi Q_i$, the form
$\frac{1}{4 |\phi|} \Im(\psi_1 \bar{\psi_2})$ is integrable on $X$, and
by definition its integral over $X$ is $\omega_\phi(\psi_1,\psi_2)$.  Since the
measure of $X_\epsilon$ goes to zero as $\epsilon \to 0$, claim (II) follows.

Now we consider what happens near the zeros of $\phi$.  Here it will
be essential that we are working in a stratum, so the symbol is
constant.  This means that the zeros move holomorphically as a function of the point
in $Q_i(X)$ and with constant multiplicity.  In the same coordinate system where $\phi$ restricted to a
component of $X_\epsilon$ becomes $z^k dz^2$ on $\Delta_\epsilon := \{ |z| < \epsilon
\}$, each $\psi$ in a neighborhood $U$ of $\phi$ in 
$Q_i(X)$ can therefore be expressed as
$$ e^{h(\psi,z)} (z - u(\psi))^k dz^2 $$
where $u : U \to \Delta_\epsilon$ and $h(\psi,z) : U \times
\Delta_\epsilon \to \C$ are holomorphic functions and 
$u(\phi) = 0$.  Thus $N^\epsilon_0(\psi)$ is the sum of finitely many terms of the form
$$ \int_{|z| < \epsilon} |e^{h(\psi,z)} (z - u(\psi))^k|
|dz|^2 = \int_{|z| < \epsilon} e^{\Re h(\psi,z)} |z - u(\psi)|^k |dz|^2. $$
Using the change of variable $w = z - u(\psi)$ this integral becomes
\begin{equation}
\label{eqn:good-variation-form}
\int_{|w + u(\psi)| < \epsilon} e^{\Re h(\psi,w + u(\psi))} |w|^k |dw|^2.
\end{equation}
In this form, the integrand is $C^\infty$ as a function of $\psi$ and all of its
$\psi$-derivatives are continuous in $w$.  Furthermore, on the
boundary curve $|w + u(\psi)| = \epsilon$ the integrand is smooth in
both $\psi$ and $w$ (since for $\psi$ near $\phi$ this curve avoids
the locus $w=0$ where the integrand may fail to be smooth).

These conditions are exactly what we need to differentiate under the
integral in computing derivatives of \eqref{eqn:good-variation-form},
and since $N_0^\epsilon$ is a sum of finitely many terms of this form,
we find that it is smooth.  This establishes (II).

Finally, we must estimate the derivative of $N_0^\epsilon$ using
\eqref{eqn:good-variation-form}.  The second $\psi$-derivatives of the
integral at $\psi = \phi$ split into an interior term (an integral over
$|w| < \epsilon$) and boundary terms (integrals over $|w| =
\epsilon$).  The boundary terms involve up to second derivatives of
the boundary curve as a function of $\psi$ and the first partial
derivatives of the integrand with respect to both $\psi$ and $w$.  The
interior term involves the second $\psi$-partial derivatives of the
integrand.  This gives an overall estimate for any second partial
derivative of the function $N_0^\epsilon$ at $\phi$ of the form
$$ O(A \, B_{2,0}^{\mathrm{int}} + L \, B_{1,1}^{\mathrm{bdy}} \, C_2)$$
where
\begin{itemize}
\item $A = \pi \epsilon^2$ is the area of the region of integration at
$\psi = \phi$,
\item $B_{2,0}^{\mathrm{int}}$ is an interior upper bound (i.e.~on
$|w| < \epsilon$) for the integrand and its
$\psi$-partial derivatives of order at most $2$
\item $B_{1,1}^{\mathrm{bdy}}$ is a boundary upper bound (i.e.~on $|w| =
\epsilon$) for the integrand and its first partial derivatives with
respect to $\psi$ and $w$, 
\item $L = 2 \pi \epsilon$ is the length of the boundary curve, and
\item $C_2$ is a bound on the $\psi$-derivatives of the boundary.
\end{itemize}
Since $h$ and $u$ are holomorphic, their derivatives of any fixed order
are uniformly bounded once we take $U$ and
$\epsilon$ small enough.  Examining the integrand of
\eqref{eqn:good-variation-form} we then find $B_{2,0}^{\mathrm{int}} =
O(\epsilon^k)$, $B_{1,1}^{\mathrm{bdy}} = O(\epsilon^{k-1})$ (with the
$w$-derivative of the integrand on $|w|=\epsilon$ being the dominant
term), and $C_2 = O(1)$.  Hence $\partial \bar{\partial} N_0^\epsilon$
is $O(\epsilon^2\! \cdot \! \epsilon^k + \epsilon \!\cdot\! \epsilon^{k-1}
\!\cdot\! 1) = O(\epsilon^k)$ as $\epsilon \to 0$. Since $k > 0$ this
establishes (III).
\end{proof}

Before proceeding to relate the K\"ahler structure of Theorem
\ref{thm:kahler} to the symplectic structure of $\MF(S)$ we will need
to describe convenient local coordinates for both spaces.  After
discussing suitable period and train track coordinates, we return to
the matter of relating these spaces in Section \ref{sec:symplectomorphism}.

\subsection{Double covers and periods}

For any $\phi \in Q(X)$ let $\Tilde{X}_\phi$ denote the Riemann
surface of the locally-defined one-form $\sqrt{\phi}$ on $X$,
i.e.~$\Tilde{X}_\phi$ is a branched double cover of $X$ in which
$\phi$ is canonically expressed as the square of a $1$-form (also
denoted by $\sqrt{\phi}$). 
By construction $\Tilde{X}_\phi$ has a holomorphic involution $\sigma$
such that $\sigma^*\sqrt{\phi} = -\sqrt{\phi}$ and $X = \Tilde{X}_\phi /
\sigma$.

The one-form $\sqrt{\phi}$ on $\Tilde{X}_\phi$ has \emph{absolute
  periods} obtained by integration along cycles in
$H_1(\Tilde{X}_\phi)$ and \emph{relative periods} obtained by
integration along cycles in $H_1(\Tilde{X}_\phi,\Tilde{Z}_\phi)$
where $\Tilde{Z}_\phi$ is the set of zeros of $\sqrt{\phi}$.
Since $\sigma^*\sqrt{\phi} = -\sqrt{\phi}$, these integrals vanish for
cycles invariant under $\sigma$ and nontrivial periods are only
obtained from cycles in the $-1$-eigenspace, which we denote by
$$ H_1^{\odd}(\phi) := \{ c \in H_1(\Tilde{X}_\phi, \Tilde{Z}_\phi; \C)
\: | \: \sigma_*c = -c \}.$$
Collectively, the periods of $\sqrt{\phi}$ determine its cohomology
class as an element of
$$ H^1_{\odd}(\phi) := \{ \theta \in H^1(\Tilde{X}_\phi ,\Tilde{Z}_\phi;
\C) \: | \: \sigma^*\theta = -\theta \}.$$

Note that a saddle connection $I$ of $\phi$ determines an element $[I]
\in H_1^{\odd}(\phi)$ by taking the difference of its two lifts to
$\Tilde{X}_\phi$.  The result is well-defined up to sign.  We can
therefore consider relative periods of $\sqrt{\phi}$ along such saddle
connections.

Since integration of $\sqrt{\phi}$ gives a local natural coordinate
for $\phi$, the relative period of a saddle connection is simply its
displacement vector in such a coordinate system.  In particular, the
height of a saddle connection $I$ is given by
\begin{equation}
\label{eqn:saddle-height}
\phi\text{-height}(I) = \left | \Im \int_{[I]} \sqrt{\phi}  \right |
\end{equation}

\subsection{Period coordinates for strata}
  \label{sec:period-coordinates}

Let $\Q(S,\pi)$ be a stratum in $\Q(S)$.  The topological type of the
double cover $\Tilde{X}_\phi \to X$ is determined by the symbol of
$\phi$, so in a small neighborhood $U$ of $\phi$ in $\Q(S,\pi)$ we can
trivialize the family of double covers and (co)homology groups.
Thus we can regard each space $H^1_{\odd}(\psi)$, where $\psi \in U$, as an
instance of a single cohomology space $H^1_{\odd}(\pi)$ that is
determined by topological information contained in $(X,\phi)$; we let
$H_1^{\odd}(\pi)$ denote the corresponding trivialization of the
family of homology groups.  When considering a class $a \in
H_1^{\odd}(\pi)$ we write $a_{\psi}$ for a representing cycle in
$H_1^{\odd}(\psi)$.

Using this local trivialization, the cohomology class of $\sqrt{\phi}$
determines the \emph{relative period map}
$$ \per : U \to H^1_{\odd}(\pi).$$
Explicitly, as a linear function on cycles $a \in H_1^{\odd}(\pi)$
the map is given by
$$ \per(\phi)(a) = \int_{a_\phi} \sqrt{\phi}.$$
This map provides local coordinates for strata
\cite{veech:quadratic-differentials}
\cite[Sec.~28]{veech:teichmuller-geodesic-flow} \cite{masur-smillie}:

\begin{thm}
The relative period construction gives local biholomorphic coordinates for
$\Q(S,\pi)$, i.e.~for any sufficiently small open set $U \subset
\Q(S,\pi)$ the period map $\per : U \to H^1_{\odd}(\pi)$ is a
diffeomorphism onto an open set. In particular, we have $\dim_\C
H^1_-(\pi) = \dim_\C \Q(S,\pi)$.\noproof
\end{thm}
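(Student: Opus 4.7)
The plan is to verify three properties of the period map: it is holomorphic; its differential is injective at every point of $U$; and the complex dimensions of $\Q(S,\pi)$ and $H^1_{\odd}(\pi)$ agree. Together with the holomorphic inverse function theorem, these imply $\per$ is a local biholomorphism onto an open subset of $H^1_{\odd}(\pi)$, which is the claim.

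Holomorphicity is immediate after trivializing the family of double covers $\Tilde{X}_\psi \to X_\psi$ and the associated anti-invariant homology groups over a contractible neighborhood $U \subset \Q(S,\pi)$. For each class $a \in H_1^{\odd}(\pi)$ the representing cycle $a_\psi$ may be chosen to vary smoothly in $\psi \in U$, and the function $\psi \mapsto \int_{a_\psi}\sqrt{\psi}$ is holomorphic in $\psi$ because $\sqrt{\psi}$ depends holomorphically on $\psi$ in each local chart of the trivialization, with branching behavior absorbed into the choice of representative.

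For injectivity of the differential at $\phi$, take $\dot\phi \in T_\phi \Q(S,\pi)$ and let $\phi_t$ be a smooth path in the stratum with $\phi_0 = \phi$ and tangent $\dot\phi$. Away from the zeros of $\phi$ one has
$$ \frac{d}{dt}\Bigl|_{t=0}\sqrt{\phi_t} = \frac{\dot\phi}{2\sqrt{\phi}}, $$
so the hypothesis $d\per|_\phi(\dot\phi) = 0$ becomes $\int_{a_\phi} \dot\phi/(2\sqrt{\phi}) = 0$ for every $a \in H_1^{\odd}(\pi)$. By Lemma \ref{lem:smooth-stratification}(iii) the meromorphic function $\dot\phi/\phi$ has at most simple poles on $X$, so $\dot\phi/\sqrt{\phi}$ is a meromorphic $1$-form on $\Tilde X_\phi$ whose singularities lie among the preimages of the zeros of $\phi$; it therefore defines a class in $H^1_{\odd}(\phi)$. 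Vanishing of all its $H_1^{\odd}$-periods forces that class to be zero, and reading this vanishing through the universal deformation model $(z^n + a_{n-2}z^{n-2} + \cdots + a_0)dz^2$ at each zero (used in the proof of Lemma \ref{lem:smooth-stratification}) then forces $\dot\phi = 0$.

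Finally, I would compute both complex dimensions from the symbol $\pi = (\n,\epsilon)$. Riemann--Hurwitz applied to $\Tilde X_\phi \to X$ determines the genus $\Tilde g$ of the cover and its branch locus in terms of $g$ and $\n$; a direct Euler-characteristic computation using the long exact sequence of the pair $(\Tilde X_\phi, \Tilde Z_\phi)$ together with the $\sigma$-eigenspace decomposition then yields $\dim_\C H^1_{\odd}(\pi)$. The same data produces $\dim_\C \Q(S,\pi)$ through the projection $\Q(S,\pi) \to \T(S)$, combining $\dim_\C \T(S) = 3g-3$ with the contribution from the universal deformation at each zero. Matching the two expressions gives the dimension equality. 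The main obstacle is the injectivity step: one must correctly identify $\dot\phi/\sqrt{\phi}$ as a relative cohomology class on $\Tilde X_\phi$ and argue, via the local deformation model at each zero, that vanishing of its periods entails $\dot\phi = 0$; the dimension bookkeeping is routine but requires a case split on whether $\epsilon = +1$ (so $\sqrt{\phi}$ descends to $X$) or $\epsilon = -1$.
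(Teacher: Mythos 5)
The paper does not prove this theorem; it states it with \verb|\noproof| and cites Veech and Masur--Smillie, whose arguments are geometric: one shows that periods determine the half-translation structure up to isotopy (by cutting into triangles/polygons and regluing), so the period map is a local homeomorphism, and holomorphy then follows from the explicit description of the map. Your proposal takes a genuinely different analytic route---compute $d\per$, show it is injective, match dimensions, and invoke the holomorphic inverse function theorem. That is a legitimate strategy in principle, but as written it has a substantive gap.

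The gap is that your injectivity argument only addresses tangent directions lying inside a fixed fiber $Q(X)$, whereas the theorem is about the stratum $\Q(S,\pi)$ in the bundle over Teichm\"uller space. Concretely, Lemma \ref{lem:smooth-stratification}(iii) (``$\dot\phi/\phi$ has at most simple poles'') is stated and proved for a submanifold of $Q(X)$ with $X$ \emph{fixed}, and the universal deformation model $(z^k+a_{k-2}z^{k-2}+\cdots+a_0)\,dz^2$ you appeal to likewise describes deformations of the differential on a fixed complex structure. But $\dim_\C \Q(S,\pi) = 2g-2+k$ while the fiberwise stratum has dimension only $k-g+1$ generically: the stratum $\Q(S,\pi)$ contains an entire $(3g-3)$-dimensional worth of directions that change the underlying Riemann surface, and for a tangent vector with nonzero projection to $T\T(S)$, the object ``$\dot\phi$'' is not a quadratic differential on $X$ and $\dot\phi/\sqrt{\phi}$ is not a meromorphic $1$-form on $\Tilde X_\phi$ in the naive sense. (Note that the paper's own Lemma \ref{lem:period-derivative} is likewise restricted to $\psi\in T_\phi Q_i(X)$.) So the argument as given cannot conclude that $d\per$ is injective on all of $T_\phi\Q(S,\pi)$. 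In addition, even within the fixed-$X$ subspace, the crucial final step---that vanishing of the relative cohomology class $[\dot\phi/(2\sqrt{\phi})]$ forces $\dot\phi=0$---is asserted via ``reading this through the deformation model'' without an actual argument ruling out that the form is exact relative to $\Tilde Z_\phi$. To repair the approach you would need either to identify $T_\phi\Q(S,\pi)$ with a suitable hypercohomology group and show $d\per$ coincides with the Gauss--Manin isomorphism there, or else revert to the classical geometric construction of local inverses from the flat structure.
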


While the result above applies to strata in $\Q(S)$, each stratum
$Q_i(X)$ of $Q(X)$ is a complex submanifold of $\Q(S,\pi)$ for some
$\pi \in \symb$, so we have:

\begin{cor}
\label{cor:relative-period}
Let $\phi \subset Q_i(X)$ be a quadratic differential with symbol
$\pi$.  Then there is an open neighborhood of $\phi$ in $Q_i(X)$ in
which the relative period map to $H^1_-(\pi)$ is biholomorphic onto
its image.\noproof
\end{cor}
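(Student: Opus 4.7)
The plan is to deduce the corollary directly from the preceding theorem by restriction to a complex submanifold. By Lemma~\ref{lem:smooth-stratification}(ii) the symbol is constant on $Q_i(X)$, so $Q_i(X)$ is contained in the fiber over $X$ of a single stratum $\Q(S,\pi) \subset \Q(S)$. Moreover, since the stratification of $Q(X)$ was constructed (in the proof of Lemma~\ref{lem:smooth-stratification}) by taking the trace on $Q(X)$ of the symbol stratification of $\Q(S)$ and then passing to a complex analytic refinement, each $Q_i(X)$ is a complex submanifold of $\Q(S,\pi)$.

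With this in place, I would choose an open neighborhood $U$ of $\phi$ in $\Q(S,\pi)$ on which the preceding theorem makes the relative period map $\per : U \to H^1_{\odd}(\pi)$ a biholomorphism onto an open subset, and then set $V := U \cap Q_i(X)$, an open neighborhood of $\phi$ in $Q_i(X)$. The restriction $\per|_V$ is holomorphic, since restriction of a holomorphic map to a complex submanifold is holomorphic, and injective, as the restriction of an injective map. Because $\per|_U$ is a biholomorphism, it sends the complex submanifold $V \subset U$ onto a complex submanifold $\per(V) \subset \per(U) \subset H^1_{\odd}(\pi)$, and the restriction $\per : V \to \per(V)$ is then a biholomorphism.

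The argument is essentially formal once the complex submanifold assertion is granted, so no substantive obstacle arises; the only thing one must be careful about is distinguishing biholomorphy as a map into $H^1_{\odd}(\pi)$ from biholomorphy onto the (lower-dimensional) image, which is the content of the statement.
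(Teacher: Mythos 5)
Your proposal is correct and takes exactly the paper's route: the paper justifies the corollary (marked \noproof) with precisely the observation that each $Q_i(X)$ is a complex submanifold of $\Q(S,\pi)$, so the local biholomorphism of the period map on $\Q(S,\pi)$ restricts to a biholomorphism onto its image. You merely spell out the formal steps of this restriction argument in more detail than the paper does.
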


Later we will need  the following formula for the derivative of the relative period
coordinates:

\begin{lem}[Douady-Hubbard]
\label{lem:period-derivative}
Let $\phi \in Q_i(X)$ and $\psi \in T_\phi Q_i(X)$.  Then for any
$a \in H_1^{\odd}(\pi)$ we have
$$d\per_\phi(\psi)(a) = \int_{a_\phi} \frac{\psi}{2\sqrt{\phi}}.$$
\end{lem}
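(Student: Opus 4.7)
The plan is to represent the periods as integrals of a smoothly-varying family of closed one-forms on a fixed surface, differentiate under the integral sign, and identify the result at regular points while showing that the contributions from near the zeros of $\phi$ cancel.

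First, I would trivialize the family of double covers. Let $\phi_t = \phi + t\psi$, which stays in $Q_i(X)$ for small $t$ since $T_\phi Q_i(X)$ is the tangent space of the stratum. Using the constancy of the symbol along $\phi_t$, I can choose a smooth family of diffeomorphisms $h_t \colon (\tilde X_\phi, \tilde Z_\phi) \to (\tilde X_{\phi_t}, \tilde Z_{\phi_t})$ with $h_0 = \mathrm{id}$, sending branch/zero sets to branch/zero sets and intertwining the involutions. Writing $V$ for the generating vector field $\dot h_t|_{t=0}$, define the pulled-back family of closed $1$-forms
\[
\omega_t := h_t^*\sqrt{\phi_t} \quad \text{on } \tilde X_\phi.
\]
Each $\omega_t$ is closed (holomorphic for the complex structure $h_t^*J$), $\sigma$-anti-invariant, and vanishes on $\tilde Z_\phi$. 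Under the trivialization, $a_{\phi_t} = h_t(a_\phi)$, so $\per(\phi_t)(a) = \int_{a_\phi} \omega_t$ with a fixed integration cycle.

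Next I would differentiate under the integral sign, interchanging $\frac{d}{dt}|_{t=0}$ with $\int_{a_\phi}$. At any point where $\phi$ is nonzero, a local branch of $\sqrt{\phi_t}$ varies smoothly in $t$ and the product rule gives
\[
\dot\omega := \tfrac{d}{dt}\big|_{t=0}\omega_t = \tfrac{\psi}{2\sqrt{\phi}} + \mathcal{L}_V\sqrt{\phi} = \tfrac{\psi}{2\sqrt{\phi}} + d(i_V\sqrt{\phi}),
\]
where closedness of $\sqrt{\phi}$ was used in the Cartan formula. Integrating over the relative cycle $a_\phi$ converts the exact piece to a boundary term evaluated at the endpoints, which lie in $\tilde Z_\phi$. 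Since $\sqrt{\phi}$ vanishes on $\tilde Z_\phi$ (as a $1$-form, to first order on the double cover), the contraction $i_V\sqrt{\phi}$ vanishes there and the boundary term drops out, leaving $d\per_\phi(\psi)(a) = \int_{a_\phi} \tfrac{\psi}{2\sqrt{\phi}}$.

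The main obstacle is the legitimacy of the calculation near the zeros of $\phi$: the $1$-form $\sqrt{\phi}$ is singular on the branch locus (on the cover), and $\frac{\psi}{2\sqrt{\phi}}$ is a priori a meromorphic $1$-form rather than a smooth one. I would address this by invoking Lemma 4.2: on the stratum $Q_i(X)$, $\psi/\phi$ has at most simple poles, so $\psi$ vanishes to order at least $k-1$ at a zero of $\phi$ of order $k$, which in the natural coordinate $w$ on $\tilde X_\phi$ (where $\sqrt{\phi} = c\, w^{k+1}dw$ for $k$ odd, or two sheets with $\sqrt{\phi} = c\, z^{k/2}dz$ for $k$ even) makes $\psi/(2\sqrt{\phi})$ a \emph{holomorphic} $1$-form near the endpoint, so the target integral is absolutely convergent. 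The same order analysis shows $\omega_t$ and its $t$-derivative are uniformly $L^1$ along $a_\phi$, justifying the interchange of limit and integral. Finally, the choice of trivialization $h_t$ does not affect the answer because a different trivialization differs by an exact term whose integral over the relative cycle again vanishes at the zero endpoints, confirming the formula is intrinsic.
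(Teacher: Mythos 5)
Your argument is correct and rests on the same core mechanism as the Douady--Hubbard computation that the paper cites: differentiate the period integral and observe that the boundary contribution from the moving zeros vanishes because $\sqrt{\phi}$ vanishes on $\Tilde{Z}_\phi$. The paper does not give a self-contained proof; it cites \cite{douady-hubbard} (who write the period as $\int_{a_t}^{b_t}\sqrt{\phi_t(z)}\,dz$ with moving endpoints, work out the Leibniz rule, and note the endpoint terms die) and observes that the only fact their argument uses --- constancy of the zero multiplicities along the deformation --- is supplied by the hypothesis $\psi \in T_\phi Q_i(X)$. Your version re-derives this from scratch by trivializing the family of double covers, pulling back to a fixed surface via $h_t$, and applying Cartan's formula to split $\dot\omega$ into $\frac{\psi}{2\sqrt{\phi}}$ plus an exact piece; this is a cleaner, coordinate-free packaging of the same cancellation, and your regularity check that $\psi/(2\sqrt{\phi})$ is holomorphic on the cover (using that $\psi/\phi$ has at most simple poles on the stratum) is exactly the right point to make.

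One small but genuine error: the justification ``$\phi_t = \phi + t\psi$ stays in $Q_i(X)$ for small $t$ since $T_\phi Q_i(X)$ is the tangent space of the stratum'' does not hold --- the stratum $Q_i(X)$ is a submanifold of $Q(X)$, not a linear subspace, so a line tangent to it at $\phi$ need not remain inside it, and if $\phi_t$ leaves the stratum the zeros split and the trivialization $h_t$ ceases to exist. The fix is immediate: take $\phi_t$ to be \emph{any} smooth curve in $Q_i(X)$ with $\phi_0=\phi$ and $\dot\phi_0=\psi$ (such a curve exists because $Q_i(X)$ is a manifold); your first-order computation of $\dot\omega$ depends only on $\dot\phi_0$, so nothing else changes. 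This is also implicitly how the paper reads ``$\psi$ is tangent to a family of differentials whose zeros have constant multiplicity.''
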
\hfill \qedsymbol

The proof in \cite{douady-hubbard} is for differentials with simple
zeros, however, the argument only uses the fact that the period of a
saddle connection for a family $\phi_t$ (where $\phi_0 = \phi$ and
$\ddtzero\phi= \psi$) can be expressed in the form
$$ \int_{a_t}^{b_t} \sqrt{\phi_t(z)} dz $$
where $a_t$ and $b_t$ are smooth paths traced out by the zeros of
$\phi_t$ as $t$ varies near $0$.  The assumptions on $\phi, \psi$ in
the Lemma above imply that $\psi$ is tangent to a family of
differentials whose zeros have constant multiplicity, and so the same
argument applies.

\subsection{Adapted train tracks}

We now consider train track coordinates for $\MF(S)$ compatible with
the relative period construction described above.  The following
refinement of Lemma \ref{lem:delaunay-track} ensures that we can
always choose these coordinates so that the foliation in question lies
in the interior of the train track chart.

\begin{lem}
\label{lem:adapted-train-track}
For each nonzero $\phi \in Q(X)$ there exists a triangulation $\Delta$
of $X$ by saddle connections and a dual maximal train track $\tau$
satisfying conditions (i)-(iii) of Lemma \ref{lem:delaunay-track} and
such that none of the saddle connections in $\Delta$ are horizontal.
In particular, the point $[\F(\phi)]$ lies in the interior of the
train track chart $\MF(\tau)$.
\end{lem}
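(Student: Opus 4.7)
The plan is to start from the Delaunay triangulation $\Delta_0$ and train track $\tau_0$ provided by Lemma~\ref{lem:delaunay-track} and to eliminate each horizontal edge by a diagonal flip, exploiting the fact that such a flip inside a Delaunay quadrilateral necessarily replaces a horizontal diagonal with a non-horizontal one. If $\Delta_0$ has no horizontal edges we are already done, so assume it has horizontal edges $e_1,\dots,e_k$. First I would note that no two $e_i$ can share a triangle: in natural coordinates for $\phi$, the signed imaginary parts of the three edges of any triangle sum to zero, so if two of them vanish then so does the third, which would force a degenerate triangle. Hence the Delaunay quadrilaterals $Q_{e_i}$, obtained by unfolding the two triangles of $\Delta_0$ adjacent to $e_i$, have pairwise disjoint interiors.

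Next comes the geometric heart of the argument. By the empty-circumscribed-circle characterization of Delaunay triangulations of flat surfaces, each $Q_{e_i}$ unfolds to a convex Euclidean quadrilateral with $e_i=p_iq_i$ as a diagonal. Since $e_i$ is horizontal, the two vertices $r_i,s_i$ opposite to $e_i$ lie on strictly opposite sides of the horizontal line through $e_i$, and hence have imaginary parts of opposite sign. The other diagonal $e_i':=r_is_i$ therefore has strictly positive $\phi$-height and is a non-horizontal saddle connection whose interior is contained in $Q_{e_i}$ and disjoint from every other edge of $\Delta_0$; an inspection of the remaining four boundary edges of $Q_{e_i}$ shows that they are non-horizontal as well. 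Because the $Q_{e_i}$ are pairwise disjoint, I can perform all the swaps $e_i\leadsto e_i'$ simultaneously, producing a triangulation $\Delta$ of $X$ by $\phi$-saddle connections with no horizontal edges at all.

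The dual maximal train track $\tau$ is then constructed exactly as in Lemma~\ref{lem:delaunay-track}: in each triangle of $\Delta$ the switch is placed so that the two edges of smaller $\phi$-height are incoming and the edge of larger $\phi$-height is outgoing, and this partition is unambiguous precisely because $\Delta$ has no horizontal edges. The carrying of $\F(\phi)$ by $\tau$ and the identification of $\phi$-heights with train-track weights then follow from the same cutting-and-isotopy argument used in Lemma~\ref{lem:delaunay-track}, and finiteness of isotopy types of $(\Delta,\tau)$ is preserved because each $e_i'$ lies inside $Q_{e_i}$ and so has $|\phi|$-length bounded by the Masur--Smillie diameter estimate. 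The ``in particular'' conclusion is then immediate: every branch of $\tau$ carries strictly positive weight, which is precisely the condition for $[\F(\phi)]$ to lie in the interior of $\MF(\tau)$.

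The main obstacle I expect is verifying the strict convexity of $Q_{e_i}$ after unfolding, especially when a Delaunay vertex is a cone point of high total angle $(n_j+2)\pi$: one must check that the two opposite vertices really do lie on opposite sides of the horizontal line through $e_i$ in the (possibly branched) natural coordinate chart, and that $e_i'$ is genuinely embedded. Both points reduce to a careful local application of the empty-circle condition for Delaunay triangulations on flat surfaces, and constitute the one nontrivial geometric input underlying the whole argument.
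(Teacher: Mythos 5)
Your proposal takes a genuinely different route from the paper's. The paper does not touch the combinatorics of the Delaunay triangulation at all; instead it flows along a Teichm\"uller geodesic $(X_t,\phi_t)$ generated by $e^{i\theta}\phi$ with $\theta$ chosen so the geodesic is recurrent in moduli space. Recurrence keeps the $|\phi_t|$-diameter bounded for a sequence of times $t\to\infty$, while the $\phi$-horizontal saddle connections grow exponentially in $|\phi_t|$-length; by the Masur--Smillie diameter bound they are eventually not edges of the $\phi_t$-Delaunay triangulation, and since saddle connections of $\phi_t$ are also saddle connections of $\phi$, that triangulation does the job. You instead try to flip the horizontal edges of the original $\phi$-Delaunay triangulation in place.

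There is a genuine gap in the flip step. The assertion that ``each $Q_{e_i}$ unfolds to a convex Euclidean quadrilateral'' is not a consequence of the empty-circumdisk condition: a Delaunay edge can bound a non-convex quadrilateral. For instance, take $q,r,s$ the vertices of an equilateral triangle and $p$ its centroid, with $pq$ horizontal; the Delaunay triangulation of these four points is $\{pqr,\,pqs,\,prs\}$, with all three circumdisks empty, yet the quadrilateral $p,r,q,s$ across $pq$ is non-convex with $p$ reflex, and the opposite diagonal $rs$ leaves the quadrilateral. In such a configuration the ``flip'' $e_i\leadsto e_i'$ does not produce a triangulation at all: the would-be triangles overlap, and the claims that $e_i'$ is embedded, disjoint from the other edges of $\Delta_0$, and of bounded $|\phi|$-length, all fail. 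You flag the convexity issue at the end as the one nontrivial input to be verified, but the point is that the Delaunay property alone does not give it, and nothing in your argument rules out such configurations for a horizontal Delaunay edge on a translation surface. The paper's strategy of perturbing the metric rather than the triangulation bypasses this completely.
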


\begin{proof}
Each face of the train track chart $\MF(\tau)$ is defined by the
weight of some branch of the track being zero.  In this case the
weights are heights of saddle connections, so excluding horizontal
edges will result in $\F(\phi)$ being in the interior of the chart.

If the Delaunay triangulation of Lemma \ref{lem:delaunay-track} has no
horizontal edges, or if $\phi$ itself has no horizontal saddle
connections, then we are done.  Otherwise we must alter the
construction of the triangulation to eliminate the horizontal edges.
Note that $\phi$ has only finitely many horizontal saddle connections.

Consider the Teichm\"uller geodesic $(X_t,\phi_t)$ determined by $e^{i
  \theta} \phi$.  The Riemann surfaces and quadratic differentials in
this family are identified by locally affine maps, so saddle
connections of $\phi$ are also saddle connections of $\phi_t$ and
vice versa.  Furthermore, if $\theta \neq \pi$ then horizontal saddle
connections of $\phi$ have $\phi_t$-length growing exponentially in
$t$.  If we choose $\theta \neq \pi$ so that the geodesic is recurrent
in moduli space (a dense set of directions have this property
\cite{kleinbock-weiss}), then by choosing $t$ large enough we can
assume that $X_t$ has bounded $|\phi_t|$-diameter while the
$\phi$-horizontal saddle connections are arbitrarily long with respect
to $|\phi_t|$.

Since the length of an edge of the Delaunay triangulation is bounded
by the diameter of the surface \cite[Thm.~4.4]{masur-smillie}, this
shows that for large $t$ the $\phi$-horizontal saddle connections are
not edges of the Delaunay triangulation of $\phi_t$.  Thus the
Delaunay triangulation of $\phi_t$ gives the desired
triangulation by non-horizontal saddle connections of $\phi$.
\end{proof}

\subsection{The symplectomorphism}
\label{sec:symplectomorphism}

Hubbard and Masur showed that for any $X \in \T(S)$, the foliation map
$\F : Q(X) \to \MF(S)$ is a homeomorphism \cite{hubbard-masur}.  We
now show that this map relates the K\"ahler structure on $Q(X)$
introduced above to the Thurston symplectic structure of $\MF(S)$:

\begin{thm}
\label{thm:symplectomorphism}
For any $X \in \T(S)$, the map $\F : Q(X) \to \MF(S)$ is a
real-analytic stratified symplectomorphism.  That is:
\begin{rmenumerate}
\item For any $\phi \in Q_i(X)$ there exists an open neighborhood $U
\subset Q_i(X)$ of $\phi$ in its stratum and a train track
coordinate chart $\MF(\tau) \subset \MF(S)$ covering $\F(U)$ so that
the restriction
$$ \F : U \to \MF(\tau) $$ 
is a real-analytic diffeomorphism onto its image, and
\item The derivative $d\F_\phi$ defines a symplectic linear map from
$T_\phi Q_i(X)$ into $W(\tau) \simeq T_{\F(\phi)} \MF(\tau)$, where
$T_\phi Q_i(X)$ is equipped with the symplectic form $\omega_\phi$
and $W(\tau)$ is given the Thurston symplectic form.
\end{rmenumerate}
\end{thm}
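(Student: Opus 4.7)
The plan is to pass to the branched double cover $\Tilde{X}_\phi$, where $\sqrt{\phi}$ is globally defined, and realize both $\omega_\phi$ and $\F^*\omega_{\th}$ there as the same wedge integral of real $1$-forms extracted from tangent vectors.

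For the local chart, I would apply Lemma \ref{lem:adapted-train-track} at $\phi \in Q_i(X)$ to obtain a geodesic triangulation $\Delta$ of $X$ by non-horizontal saddle connections with dual maximal train track $\tau$, placing $\F(\phi)$ in the interior of $\MF(\tau)$. The same $\Delta, \tau$ work on a small neighborhood $U \subset Q_i(X)$, and by Lemma \ref{lem:delaunay-track}(iii) combined with \eqref{eqn:saddle-height}, the restriction $\F|_U$ sends $\psi$ to the weight $e \mapsto \Im\int_{[e]_+}\sqrt{\psi}$, where $[e]_+$ is a chosen lift to $\Tilde{X}_\psi$ and orientations are fixed so that these imaginary parts are positive at $\phi$. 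Factored through the biholomorphic relative period map of Corollary \ref{cor:relative-period}, this presents $\F|_U$ as a real-analytic map into $W(\tau)$.

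For the symplectic identity at $\phi$, let $\psi_i \in T_\phi Q_i(X)$ and set $\tilde{\eta}_i = \tilde{\psi}_i/(2\sqrt{\phi})$, a $\sigma^*$-antiinvariant meromorphic $1$-form on $\Tilde{X}_\phi$ with at most simple poles (Lemma \ref{lem:smooth-stratification}(iii)), and write $\tilde{\eta}_i = \alpha_i + i \beta_i$ with $\alpha_i, \beta_i$ real. A direct computation in natural $\phi$-coordinates, using the local expression of the hermitian pairing (\ref{eqn:pairing}) and the relation $\int_X f = \tfrac{1}{2}\int_{\Tilde{X}_\phi} \pi^* f$, yields
\[
\omega_\phi(\psi_1, \psi_2) = \tfrac{1}{2}\int_{\Tilde{X}_\phi} \beta_1 \wedge \beta_2.
\]
On the Thurston side, Lemma \ref{lem:period-derivative} identifies $d\F_\phi(\psi_i)(e)$ with $\int_{[e]_+} \beta_i$. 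The crucial observation is that $\Tilde{X}_\phi$ is \emph{itself} an orienting cover for $\tau$, because $\sqrt{\phi}$ provides a global orientation of the lifted horizontal foliation and hence of the lifted train track. Under this identification the cycles $\hat{c}_{d\F_\phi(\psi_i)}$ appearing in \eqref{eqn:thurston-intersection} are Poincar\'e--Lefschetz duals to the classes $[\beta_i] \in H^1_-(\Tilde{X}_\phi, \Tilde{Z}; \R)$, so that (up to a universal sign and scale absorbed in conventions) \eqref{eqn:thurston-intersection} reduces to exactly the same integral, giving $\F^* \omega_{\th} = \omega_\phi$ on $T_\phi Q_i(X)$.

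Matching the two forms establishes symplecticity of $d\F_\phi$, and since $\omega_\phi$ is nondegenerate (Theorem \ref{thm:kahler}) this forces $d\F_\phi$ to be injective; combined with the Hubbard--Masur theorem that $\F$ is a homeomorphism, the resulting real-analytic immersion $\F|_U$ is a real-analytic diffeomorphism onto its image, completing both (i) and (ii). The main obstacle is the precise matching of the two integral expressions for $\omega_\phi$ and $\F^*\omega_{\th}$: it requires three compatibilities to align simultaneously---$\Tilde{X}_\phi$ plays both the role of the square-root cover for $\sqrt{\phi}$ and that of an orienting cover for $\tau$; the weight vectors $d\F_\phi(\psi_i)$ correspond under Poincar\'e--Lefschetz duality to the classes $[\beta_i]$; and the at-most-simple poles of $\alpha_i, \beta_i$ at the branch locus $\Tilde{Z}$ are tame enough that the integrals, wedge products, and duality statements remain valid. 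Careful bookkeeping of signs, orientation conventions, and normalization constants through these identifications is the technical heart of the argument.
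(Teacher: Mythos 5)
Your proposal takes essentially the same route as the paper's proof: pass to the branched double cover $\Tilde{X}_\phi$, express $\omega_\phi(\psi_1,\psi_2)$ as $\tfrac{1}{2}\int_{\Tilde{X}_\phi}(\Im\theta_1)\wedge(\Im\theta_2)$ with $\theta_i = \hat\psi_i/(2\sqrt{\phi})$, observe that $\Tilde{X}_\phi$ orients the lifted train track $\hat\tau$, use Douady--Hubbard (Lemma \ref{lem:period-derivative}) to compute $d\F_\phi$ as imaginary parts of periods, and identify the Thurston form with an intersection number of cycles via Poincar\'e duality. The one place you are more cautious than necessary: the $1$-forms $\theta_i$ are actually \emph{holomorphic} on $\Tilde{X}_\phi$ (not merely meromorphic with simple poles) because the at-most-simple poles of $\psi_i/\phi$ are absorbed by the branching of $\sqrt{\phi}$; consequently the cohomology classes $[\Im\theta_i]$ live naturally in absolute cohomology $H^1(\Tilde{X}_\phi;\R)$, and the duality invoked is ordinary Poincar\'e duality on the closed surface rather than Poincar\'e--Lefschetz relative duality against $\Tilde{Z}$. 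These are cosmetic, not gaps.
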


\begin{prooflist}
\item Let $\phi \in Q_i(X)$.  Applying Lemma
\ref{lem:adapted-train-track} we obtain a neighborhood $U \subset
Q_i(X)$ of $\phi$ and a train track $\tau$ that carries the
horizontal foliation of each $\psi \in U$ by assigning to each branch
the height of an associated edge of the $\psi$-geodesic triangulation.

Lift $\tau$ and the dual $\phi$-geodesic triangulation $\Delta$ to the
cover $\Tilde{X}_\phi$, obtaining a triangulation $\hat{\Delta}$ and double
covering of train tracks $\hat{\tau} \to \tau$.  Orient the edges of
$\hat{\Delta}$ so that the integral of $\Im \sqrt{\phi}$ over any edge is
positive.  (This integral is nonzero because the original
triangulation did not have any $\phi$-horizontal edges.)  Then the
integral of $\Im \sqrt{\phi}$ over an edge $\hat{e}$ is the
$\phi$-height of the corresponding edge $e$ of $\Delta$.

The covering train tracks and oriented triangulations obtained in this
way for other $\psi \in U$ are naturally isotopic to $\hat{\tau}$ and
$\hat{\Delta}$, so this construction extends throughout $U$.  Thus for
all $\psi \in U$ we have realized the weights on $\tau$ defining
$[\F(\psi)]$ as the imaginary parts of periods of $\sqrt{\psi}$, which
by Corollary \ref{cor:relative-period} are real-analytic functions.

It remains to show that the derivative of the map to $\ML(\tau)$ is an
isomorphism, so that after shrinking $U$ appropriately we have a
diffeomorphism onto an open set.  However this is a consequence of the
proof of (ii) below since the Thurston symplectic form is
nondegenerate.

\item We need to show that any $\psi_1,\psi_2 \in T_\phi
Q_i(X)$ satisfy
\begin{equation}
\label{eqn:symplectic-goal}
\omega_\phi(\psi_1,\psi_2) = \omega_{\th} \left ( d \F_\phi(\psi_1),
d\F_\phi(\psi_2) \right ).
\end{equation}
We begin by analyzing the left hand side.  Let $\hat{\psi_i}$ denote the
lift of $\psi_i$ to the double cover $\Tilde{X}_\phi$ and define
$$ \theta_i = \frac{\hat{\psi_i}}{2 \sqrt{\phi}} \in \Omega(\Tilde{X}_\phi).$$
These $1$-forms are holomorphic because all poles of $\psi_i / \phi$
are simple and occur at branch points of the covering $\Tilde{X}_\phi
\to X$.  Since the $2$-form $\frac{i}{2} \theta_1 \wedge \bar{\theta}_2$ is
the lift of the integrand of $\langle \psi_1, \psi_2
\rangle_{\phi}$ to the degree-$2$ cover $\Tilde{X}_\phi$, we have
$$ \omega_\phi(\psi_1,\psi_2) = \frac{1}{2} \Im \int_{\Tilde{X}_\phi}
\frac{i}{2} \theta_1 \wedge \bar{\theta}_2 = \frac{1}{4} \Re
\int_{\Tilde{X}_\phi} \theta_1 \wedge \bar{\theta}_2.$$
For any holomorphic $1$-forms $\theta_i$ we have
$$ \Re(\theta_1 \wedge \bar{\theta}_2) = 2 (\Re \theta_1) \wedge (\Re
\theta_2) = 2 (\Im \theta_1) \wedge (\Im \theta_2)$$
so we can express the integral above as
\begin{equation}
\label{eqn:omega-phi-reduction}
\omega_\phi(\psi_1,\psi_2) = \frac{1}{2} \int_{\Tilde{X}_\phi} (\Im
\theta_1) \wedge (\Im \theta_2) = \frac{1}{2} [ \Im \theta_1] \cdot
[\Im \theta_2],
\end{equation}
where in the last expression $[\alpha]$ denotes the de Rham
cohomology class of a closed $1$-form $\alpha$ and $[\alpha] \cdot
[\beta]$ is the cup product.

Now consider the pairing $\omega_{\th}\left ( d \F_\phi(\psi_1),
d\F_\phi(\psi_2) \right )$.  The tangent vector $d \F_\phi(\psi_i) \in
W(\tau)$ is a weight function whose value on a branch $e$ is the
derivative of the height of the associated edge $e'$ of $\Delta$.  The
height of an edge is the imaginary part of the period of
$\sqrt{\phi}$, so Lemma \ref{lem:period-derivative} gives a formula
for the derivatives of these periods.  Namely, after lifting to the
covering train track $\hat{\tau}$ we find that $d \F_\phi(\psi_i)$
corresponds to the weight function $\hat{w}_i \in W(\hat{\tau})$
defined by
$$ \hat{w}_i(e) = \int_{\hat{e}} \Im \theta_i,$$
where $e$ is a branch of $\tau$ (identified with its dual edge of
$\Delta$) and $\hat{e}$ is an associated oriented edge of
$\hat{\Delta}$.  The orientation of $\hat{\Delta}$ induces a
consistent orientation of $\hat{\tau}$ so that all intersections of
$\hat{\tau}$ with $\hat{\Delta}$ become positively oriented.  In terms
of this orientation, the expression above shows that the de Rham
cohomology class $[\Im \theta_i]$ is Poincar\'e dual to the cycle
$$ \hat{c}_i = \sum_{e \in \hat{\tau}} w_i(e) \vec{e}.$$
Using the formula \eqref{eqn:thurston-intersection} for the Thurston
form as a homological intersection of such cycles and the duality of
intersection and cup product, we have
$$  \omega_{\th}( d \F_\phi(\psi_1), d \F_\phi(\psi_2)) = \frac{1}{2} \left
( \hat{c}_1 \cdot \hat{c}_2 \right ) = \frac{1}{2} [\Im \theta_1]
\cdot [\Im \theta_2].$$ With \eqref{eqn:omega-phi-reduction} this
gives the desired equality between symplectic pairings.
\end{prooflist}

\begin{remark}
The smoothness of the foliation map when restricted to a set of
quadratic differentials with constant symbol is implicit in
\cite{hubbard-masur}.  Because we consider only tangent vectors to
strata in $Q(X)$, the subtle issues that arise from breaking up
high-order zeros (and which underlie the failure of differentiability
for the full map $Q(X) \to \ML(S)$) do not arise here.
\end{remark}

\subsection{Application: The Hubbard-Masur constant}
\label{sec:hubbard-masur-constant}

Here we mention an application of Theorem \ref{thm:symplectomorphism}
that is not used in the sequel.  It is immediate from the definition
\eqref{eqn:pairing} that the hermitian form $\langle \psi_1, \psi_2
\rangle_\phi$ is invariant under the action of $S^1 \simeq \{ e^{i
  \theta} \}$ on $Q(X)$ by scalar multiplication:
$$ \langle c \psi_1, c \psi_2\rangle_{c \phi} = \langle
\psi_1, \psi_2\rangle_{\phi} \; \text{ if }\; |c| = 1.$$
It follows that this $S^1$-action preserves the volume form
associated to the stratified K\"ahler structure on $Q(X)$, and thus the
symplectomorphism with $\MF(S)$ gives:
\begin{cor}
\label{cor:volume-preserving}
The action of $S^1$ on $\MF(S)$ induced by the foliation map $\F :
Q(X) \to \MF(S)$ preserves the volume form associated to the Thurston
symplectic structure. \noproof
\end{cor}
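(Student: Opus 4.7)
The plan is to deduce this directly from Theorem \ref{thm:symplectomorphism} together with the $S^1$-invariance of the hermitian pairing $\langle \cdot, \cdot \rangle_\phi$ noted immediately above; once the correct dictionary between the two sides is set up, the statement is essentially a matter of packaging.

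First I would verify that multiplication by $c \in S^1$ on $Q(X)$ preserves each stratum $Q_i(X)$. Since $c\phi$ has the same zero divisor as $\phi$ and the same parity $\epsilon$ (it is a square iff $\phi$ is, because $c$ admits a square root in $S^1$), it has the same symbol, so $c \cdot Q_i(X) = Q_i(X)$. The invariance identity $\langle c\psi_1, c\psi_2 \rangle_{c\phi} = \langle \psi_1,\psi_2 \rangle_\phi$ then says precisely that scaling by $c$ is an isometry of the stratified K\"ahler metric, so it preserves $\omega_\phi$ on each stratum and, taking the $(\dim_\C Q_i(X))$-th exterior power, preserves the associated K\"ahler volume form.

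Next, the Hubbard-Masur homeomorphism $\F : Q(X) \to \MF(S)$ intertwines the $S^1$-action on $Q(X)$ with the induced action on $\MF(S)$ by definition. By Theorem \ref{thm:symplectomorphism}, $\F$ restricts to a real-analytic symplectomorphism from each stratum of $Q(X)$ onto an open piece of a train track chart of $\MF(S)$, pulling the Thurston form $\omega_\th$ back to $\omega_\phi$; passing to top exterior powers on each stratum identifies the K\"ahler volume form with the Thurston volume form there. The top-dimensional stratum of $Q(X)$ (differentials with only simple zeros) maps via $\F$ onto an open dense subset of $\MF(S)$, so $S^1$-invariance of the Thurston volume form holds on a dense open set and extends to train track charts by continuity.

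The only mildly nontrivial point, and the one I would spend a sentence on explicitly, is this last stratified-to-global upgrade: the K\"ahler volume form is defined stratum by stratum while the Thurston volume form is a single piecewise linear object on $\MF(S)$. Once one observes that the top stratum is open and dense and that strata are $S^1$-invariant, no obstacle remains.
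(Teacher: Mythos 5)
Your argument is the paper's: the $S^1$-invariance of the hermitian pairing gives $S^1$-invariance of the stratified K\"ahler volume form on $Q(X)$, which the stratified symplectomorphism of Theorem \ref{thm:symplectomorphism} transports to the Thurston volume form on $\MF(S)$; the paper records this as immediate after stating the invariance identity. One small inaccuracy in your write-up: $c\cdot Q_i(X) = Q_i(X)$ does not follow merely from the symbol being $S^1$-invariant, since the strata $Q_i(X)$ are obtained by a further refinement of $\Q(S,\pi)\cap Q(X)$ (not every symbol-constant set is a single stratum); the needed $\C^*$-invariance is instead part (i) of Lemma \ref{lem:smooth-stratification}, so the conclusion stands.
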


In particular this corollary applies to the \emph{antipodal involution} $i_X
: \MF(S) \to \MF(S)$ which corresponds to multiplication by $-1$ in
$Q(X)$.  This map exchanges the vertical and horizontal measured
foliations of any quadratic differential on $X$.

Let $b(X) \subset \MF(S)$ denote the unit ball of the extremal length
function on $X$:
$$ b(X) = \{ [\nu] \in \MF(S) \: | \: \Ext_{[\nu]}(X) \leq 1 \}. $$
Equivalently $b(X)$ is the image of the $L^1$ norm ball in $Q(X)$
under the foliation map, so it is invariant under $i_X$.

Let $\Lambda(X)$ denote the volume of this set with respect to the
Thurston symplectic form on $\MF(S)$; this defines the
\emph{Hubbard-Masur function} $\Lambda : \T(S) \to \R^+$.  This
function appears as a coefficient in various counting problems related
to the action of the mapping class group $\Mod(S)$ on $\T(S)$ studied
in \cite{ABEM}.

Using Corollary \ref{cor:volume-preserving}, Mirzakhani has shown
(personal communication):
\begin{thm}[Mirzakhani]
\label{thm:mirzakhani}
The Hubbard-Masur function is constant.  That is, the volume of $b(X)$
depends only on the topological type of $S$ and is independent of the
point $X \in \T(S)$.
\end{thm}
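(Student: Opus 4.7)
The plan is to use Theorem~\ref{thm:symplectomorphism} to rewrite $\Lambda(X)$ as a K\"ahler volume on $Q(X)$ and then exploit the rotational $S^1$-symmetry (Corollary~\ref{cor:volume-preserving}) to evaluate it via a Duistermaat--Heckman computation. The classical identity $\Ext_{\F(\phi)}(X) = \|\phi\|$ of Gardiner--Masur gives $b(X) = \F(B_X)$ with $B_X := \{\phi \in Q(X) : \|\phi\|\leq 1\}$. Since $\F$ is a stratified symplectomorphism, we have
\begin{equation*}
\Lambda(X) \;=\; \int_{B_X} \frac{\omega_X^n}{n!}, \qquad n = 3g-3,
\end{equation*}
and the problem becomes to show that this K\"ahler volume does not depend on~$X$.

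Next I would examine the $S^1$-action $\phi \mapsto e^{i\theta}\phi$ on $Q(X)$. It preserves both $B_X$ and $\omega_X$; a direct computation from \eqref{eqn:pairing} shows it is Hamiltonian with moment map $H := \|\phi\|/4$, and that its unique fixed point is the origin, where the isotropy representation on $T_0 Q(X) \simeq Q(X)$ acts as ordinary scalar multiplication and therefore has all $n$ complex weights equal to $1$. The degree-one homogeneity of the potential $N(\phi) = \|\phi\|$ yields the scaling identity $D_t^*\omega_X = t\omega_X$ for $t>0$, from which $\int_{\{N \leq t\}} \omega_X^n/n! = t^n\Lambda(X)$. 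In particular the Duistermaat--Heckman measure $H_*(\omega_X^n/n!)$ on $[0,\infty)$ is absolutely continuous with density $\rho(t) = 4^n n\, t^{n-1}\Lambda(X)$.

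The Duistermaat--Heckman localization formula computes this same density from the isotropy data at the unique fixed point as $\rho(t) = (2\pi)^n\, t^{n-1}/\bigl((n-1)!\, w_1\cdots w_n\bigr)$. With $w_1=\cdots=w_n=1$, independently of $X$, this reads $\rho(t) = (2\pi)^n t^{n-1}/(n-1)!$, and equating the two expressions for $\rho$ forces $\Lambda(X) = (\pi/2)^n/n!$, an $X$-independent quantity. This is the conclusion of the theorem.

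The main technical obstacle is the justification of DH localization at the unique fixed point in this stratified-K\"ahler setting: the potential $N(\phi) = \|\phi\|$ is only Lipschitz (not $C^2$) at $\phi=0$, so $\omega_X$ does not extend smoothly to the origin and the standard smooth DH theorem does not apply verbatim. I would resolve this by carrying out the computation on $Q(X)\setminus\{0\}$, where $\omega_X$ is smooth and the $S^1$-action is free, and using the scaling $D_t^*\omega_X = t\omega_X$ together with the one-variable identity $\omega_X|_{\C\phi_0} = \tfrac{1}{4}\,dr\wedge d\theta$ on each complex line through the origin. These show that the asymptotic behavior of the $S^1$-action near $0$ is, one complex direction at a time, indistinguishable from the universal diagonal model on $\C^n$ with all weights equal to $1$, so that the localization formula holds with the claimed (hence $X$-independent) local contribution.
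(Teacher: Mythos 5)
Your approach is genuinely different from the paper's. Mirzakhani's argument (the one reproduced in the paper) never invokes Duistermaat--Heckman: it differentiates $\Lambda$ at a point $X_0$ in the direction of a Beltrami coefficient $\mu$, uses Gardiner's formula to write $d\Lambda_{X_0}(\mu)$ as the integral over the sphere $S(X_0)$ of the interior product of a radial vector field $V_\mu$ with $\omega_\th^n$, and then observes that $V_\mu$ is odd under the antipodal involution $i_X$ while (by Corollary~\ref{cor:volume-preserving}) the volume form and the sphere are $i_X$-invariant, so the integral vanishes. In contrast you try to evaluate $\Lambda(X)$ outright as a universal constant via DH localization. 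Both strategies use the $S^1$-symmetry, but the paper only needs the $\Z/2$-subaction, while you use the full $S^1$-Hamiltonian structure.

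There is a real gap in the proposed argument, and you have correctly located it but not repaired it. The Duistermaat--Heckman localization formula expressing the DH density through the isotropy weights at a fixed point is a statement about a \emph{smooth} Hamiltonian $S^1$-manifold; it relies on an equivariant Darboux chart near the fixed point. Here $\omega_X = \tfrac{i}{2}\partial\bar\partial N$ blows up at $\phi=0$ (already in the one-dimensional model, $\omega = \tfrac{1}{4r}\,dx\wedge dy$), so $\omega_X$ does not extend to a smooth form at the origin, there is no normal form, and the ``local contribution formula'' $\rho(t) = (2\pi)^n t^{n-1}/\bigl((n-1)!\,w_1\cdots w_n\bigr)$ cannot be cited as it stands. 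Your closing paragraph asserts that the behavior near $0$ is ``one complex direction at a time, indistinguishable from the universal diagonal model,'' but restricting $\omega_X$ to complex lines through $0$ only captures the diagonal entries of the complex Hessian $\partial_i\partial_{\bar j}N$; the off-diagonal terms, which do depend nontrivially on $X$, enter $\omega_X^n$ and are simply not controlled by the slicewise computation. So the step from the scaling identity and the line restrictions to the DH density is not justified.

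The argument can probably be repaired, but along different lines than you indicate. The correct route avoids the singular fixed point entirely: $\rho(t)$ equals the reduced symplectic volume of $M_t := \{N = 4t\}/S^1 \simeq \CP^{n-1}$, the scaling $D_s^*\omega_X = s\,\omega_X$ forces $[\omega_{\mathrm{red},t}] = t\,[\omega_{\mathrm{red},1}]$ in $H^2(\CP^{n-1};\R)$, and the ordinary (regular-value) Duistermaat--Heckman theorem identifies the slope with the Euler class of the principal $S^1$-bundle $S^{2n-1}\to\CP^{n-1}$, a purely topological and hence $X$-independent quantity. That pins down $[\omega_{\mathrm{red},1}]$ and therefore $\rho(1)$ and $\Lambda(X)$. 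This is close in spirit to your proposal but replaces the appeal to fixed-point localization, which does not apply, with reduction at regular levels, which does. Note also that the stratification is harmless for the volume computation only because the top stratum (differentials with simple zeros) is dense and the other strata have measure zero; this should be said explicitly.
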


The following argument is based on the above-cited communication with
Mirzakhani.  An analogous statement in a different dynamical context
is established in \cite{yue:flip}.

\begin{proof}
We will use the antipodal map $i_X$ to show that the derivative of
$\Lambda$ vanishes identically.  Since $\T(S)$ is connected it will
then follow that $\Lambda$ is constant.

Let $S(X) = \bdy b(X)$ denote the extremal length unit sphere in
$\MF(S)$.  We think of this as a family of hypersurfaces in $\MF(S)$
parameterized by $X \in \T(S)$.

Fix a point $X_0 \in T(S)$.  For any other point $X \in \T(S)$,
both $S(X_0)$ and $S(X)$ intersect each ray in $\MF(S)$ in a single
point, so we can consider $S(X)$ as obtained from $S(X_0)$ by scaling
each point $[\nu] \in S(X_0)$ by a positive real number 
\begin{equation}
\label{eqn:scaling-function}
\left ( \Ext_{[\nu]}(X_0) / \Ext_{[\nu]}(X) \right )^{1/2}.
\end{equation}
Regarding this expression as a function of $[\nu]$, we have described
the spheres $S(X)$ as a family of ``radial graphs'' over $S(X_0)$.
Using this description, the derivative of this family at $X=X_0$ (if
it exists) is a vector field along $S(X_0)$ which is \emph{radial},
i.e.~it is a pointwise multiple of the vector field
$\frac{\partial\:}{\partial t}$ generating the $\R^+$ action.
Since the scaling function relating $S(X_0)$ to $S(X)$ is a quotient
of powers of the extremal length functions, differentiability of this
family at $X=X_0$ is a consequence of Gardiner's formula
\cite{gardiner:minimal-norm}, which states that the derivative of
extremal length is given by
$$ \left . \frac{d}{dt} \Ext_{[\nu]}(X_t) \right |_{t=0} = 2 \Re \int_{X_0}
\mu \, \F^{-1}([\nu]) $$
where $X_t$ is smooth a path in $\T(S)$ and $\mu$ is a Beltrami coefficient on
$X_0$ representing $\left . \frac{d\:}{dt} X_t \right |_{t=0}$.
Differentiating \eqref{eqn:scaling-function} using this formula, we
find that the derivative of $S(X)$ at $X = X_0$ is the continuous vector field
\begin{equation*}
V_\mu([\nu]) = - \left ( \Re \int_{X_0} \mu \, \F^{-1}([\nu]) \right ) \:
\frac{\partial \:}{\partial t}.
\end{equation*}
As above $\frac{\partial \:}{\partial t}$ is the vector field on
$\MF(S)$ generating the $\R^+$-action.  The derivative of the volume
enclosed by $S(X)$ at $X=X_0$ is therefore the integral over $S(X_0)$
of the interior product of this vector field with the Thurston volume form,
$$ d \Lambda_{X_0}(\mu) = \int_{S(X_0)} V_\mu \: \lrcorner \: \omega_\th^n$$
where $n = \frac{1}{2} \dim_\R \MF(S)$.
Since it corresponds to the $L^1$ norm sphere in $Q(X)$, the sphere
$S(X_0)$ is invariant under the antipodal involution.  By Corollary
\ref{cor:volume-preserving} the volume form $\omega_\th^n$ is also
$i_X$-invariant.  But since $\F^{-1}(i_X([\nu])) =
-\F^{-1}([\nu])$, the vector field $V_\mu$ is odd under this
involution (i.e.~$i_X^*(V_\mu) = - V_\mu$) as is the integrand $V_\mu
\: \lrcorner \: \omega_{Th}^n$.  Since the integral of an odd form
over $S(X_0)$ vanishes we have $d \Lambda_{X_0}(\mu) = 0$.
\end{proof}

\section{Character varieties and holonomy}
\label{sec:3-manifolds}

\subsection{Character varieties}
  \label{sec:character-varieties}

Let $G$ be one of the complex algebraic groups $\SL_2\C$ or $\PSL_2\C$
and let $\Gamma$ be a finitely generated group.  We denote by
$\sR(\Gamma,G) := \Hom(\Gamma,G)$ the $G$-representation variety of
$\Gamma$, which carries an action of $G$ by conjugation.  The
categorical quotient
$$\X(\Gamma,G) := \sR(\Gamma,G) \sslash G$$
is the \emph{character variety}, or more precisely, the variety of
characters of representations of $\Gamma$ in $G$.  See
\cite{culler-shalen} \cite[Sec.~II.4]{morgan-shalen:valuations-trees}
\cite{heusener-porti} for detailed discussion of these spaces.  Both
$\sR(\Gamma,G)$ and $\X(\Gamma,G)$ are affine algebraic varieties
defined over $\QQ$.  The ring $\QQ[\X(\Gamma,\SL_2\C)]$ is generated
by the \emph{trace functions} $\{ t_\gamma \}_{\gamma \in \Gamma}$
which are induced by the conjugation-invariant functions on
$\sR(\Gamma,G)$ defined by
$$ t_\gamma(\rho) =  \tr(\rho(\gamma)).$$
Similarly the ring $\QQ[\X(\Gamma,\PSL_2\C)]$ is generated by the
squares of trace functions.

There are two types of natural maps between character varieties that we
will use in the sequel.  First, the covering map $\SL_2\C \to
\PSL_2\C$ induces a map of character varieties
$$ r : \X(\Gamma, \SL_2\C) \to \X(\Gamma,\PSL_2\C),$$
which is a finite-to-one, proper, and whose image is a union of
irreducible components; in fact, the group $H_1(\Gamma,\Z / 2 \Z)$
acts on $\X(\Gamma,\SL_2\C)$ by biregular maps, and $r$ is the
quotient mapping for this action
\cite[Sec.~V.1]{morgan-shalen:degenerations-iii}.  Secondly, if $\phi
: \Gamma \to \Gamma'$ is a group homomorphism, then composing
representations with $\varphi$ induces a map of character varieties
$$ \varphi^* : \X(\Gamma',G) \to \X(\Gamma,G),$$
which is a regular map.

These constructions are functorial in the sense that the maps $r$ and
$\varphi^*$ fit into a commutative diagram
\begin{equation}
\label{eqn:functorial}
\begin{tikzpicture}[baseline=(current bounding box.center)]
\matrix (m) [matrix of math nodes, column sep=2em, row sep = 1.5em, text height=1.5ex, text depth=0.25ex]
{
\X(\Gamma',\SL_2\C) & \X(\Gamma,\SL_2\C)\\
\X(\Gamma',\PSL_2\C) & \X(\Gamma,\PSL_2\C)\\
};
\path[->,font=\scriptsize]
(m-1-1) edge node[above] {$\varphi^*$} (m-1-2)
(m-2-1) edge node[above] {$\varphi^*$} (m-2-2)
(m-1-1) edge node[left] {$r$} (m-2-1)
(m-1-2) edge node[right] {$r$} (m-2-2);
\end{tikzpicture}
\end{equation}

Since character varieties we consider are for groups of the form $\Gamma
=\pi_1N$ where $N$ is a compact $2$- or $3$-manifold, we often use the
abbreviated notation
$$ \X(N,G) := \X(\pi_1N, G).$$
Note that this algebraic variety does not depend on a choice of
orientation for $N$.

\subsection{The Morgan-Shalen compactification}
  \label{sec:morgan-shalen}

In \cite{morgan-shalen:valuations-trees} a compactification of $\X(\Gamma,\SL_2\C)$ is defined by mapping
$\X(\Gamma,\SL_2\C)$ into the infinite-dimensional projective space
$\P(\R^{\Gamma}) := (\R^{\Gamma} \setminus \{0\}) / \R^+$ by
\begin{equation}
\label{eqn:morgan-shalen}
[\rho] \mapsto \left ( \log (|t_\gamma(\rho)| + 2) \right
)_{\gamma \in \Gamma}.
\end{equation}
The image of this map is precompact and taking the closure gives the
\emph{Morgan-Shalen compactification} of $\X(\Gamma,\SL_2\C)$.
A boundary point $[ \ell ]$ of this compactification is a projective
equivalence class of functions $\ell : \Gamma \to \R$, and any
function arising this way is the translation length function of an
action of $\Gamma$ on an $\R$-tree by isometries.  These $\R$-trees
are constructed algebraically, using valuations on the function fields
of subvarieties of $\X(\Gamma,\SL_2\C)$.  The intermediate stages of
this algebraic construction also involve $\Lambda$-trees of higher
rank.  For later use we will now recall some key steps in their
construction.

\subsection{Valuation constructions}
In what follows we consider irreducible subvarieties $V \subset
\X(\Gamma,\SL_2\C)$, and $k$ will denote a countable subfield of $\C$
over which $V$ is defined.  (For example if $\X(\Gamma,\SL_2\C)$ is
irreducible we can take $V = \X(\Gamma,\SL_2\C)$ and $k=\QQ$.)  The
function field $k(V)$ of such a variety is a finitely generated
extension of $k$, and we consider $k$-valuations $v : k(V)^* \to
\Lambda$, where $\Lambda$ is an ordered abelian group.  Without loss
of generality we can assume that $\Lambda$ has finite rank
\cite[Ch.~5, Sec.~10]{zariski-samuel-ii}.  A valuation is
\emph{supported at infinity} if there exists a regular function $f \in
k[V]$ with $v(f) < 0$.

Boundary points of the Morgan-Shalen compactification correspond to
valuations as follows:

\begin{thm}[{\cite[Thm.~I.3.6]{morgan-shalen:valuations-trees}}]
\label{thm:valuation-from-boundary-point}
If $V \subset \X(\Gamma,\SL_2\C)$ is an irreducible subvariety defined over
$k$ and $[\ell]$ is a boundary point of $V$ in the Morgan-Shalen
compactification, then there exists a valuation $v : k(V)^* \to
\Lambda$ such that
\begin{rmenumerate}
\item $v$ is \emph{supported at infinity},
\item If $\Lambda_1 \subset \Lambda$ is the minimal nontrivial convex
subgroup, then for each $\gamma \in \Gamma$ either $v(t_\gamma) > 0$ or
$v(t_\gamma) \in \Lambda_1$, and 
\item There is an order-preserving embedding $p : \Lambda_1 \to \R$ such
that $\ell(\gamma) = p(\max(-v(t_\gamma),0))$. \noproof
\end{rmenumerate}
\end{thm}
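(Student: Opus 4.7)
The plan is to extract the valuation $v$ from a divergent sequence realizing $[\ell]$. Unwinding the definition of the Morgan-Shalen compactification \eqref{eqn:morgan-shalen}, the boundary point $[\ell]$ of $V$ is the limit of a sequence $[\rho_i]$ with $\rho_i \in V$, meaning that there exist scalars $\mu_i \searrow 0$ such that $\mu_i \log(|t_\gamma(\rho_i)| + 2) \to \ell(\gamma)$ for every $\gamma \in \Gamma$. Since $\ell \not\equiv 0$, at least one $t_\gamma$ is unbounded on $\{\rho_i\}$, so the sequence diverges in $V$.

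The heart of the argument is to construct $v$ by recording the asymptotic order of growth of regular functions $f \in k[V]^*$ along $\{\rho_i\}$. My approach is to fix a nonprincipal ultrafilter $\omega$ on $\N$ and, inside the ordered abelian group $\R^{\N}/\omega$, consider the assignment $f \mapsto -\lim_{\omega} \log|f(\rho_i)|$. This is defined on all nonzero $f$ and satisfies the ultrametric inequality. To cut this infinite-rank gadget down to a finite-rank valuation on $k(V)$, I would let $\Lambda$ be the subgroup of $\R^{\N}/\omega$ generated by the values on a finite set of $k$-algebra generators of $k[V]$, extended multiplicatively to the fraction field. Finite rank of $\Lambda$ rests on Abhyankar's inequality, which bounds the rank of any valuation on $k(V)/k$ by $\mathrm{tr.deg}_k k(V) = \dim V$. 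This finite-rank step is where I expect the main difficulty: the naive ultralimit construction records far more asymptotic information than a genuine valuation should, and one must verify that passage to finitely generated subgroups is compatible both with the valuation axioms and with the normalization scale supplied by $\mu_i$. An alternative route, if the ultralimit approach proves too delicate, is to take a projective completion $\bar V$ of $V$, apply resolution of singularities to obtain a smooth model $\widetilde{\bar V}$, and extract a composite discrete valuation along a chain of divisors through a subsequential limit of $\rho_i$; Abhyankar's inequality again guarantees rank at most $\dim V$.

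Granting the valuation, the three conclusions follow by unwinding the construction. Property (i) is immediate since any unbounded $t_\gamma$ gives $v(t_\gamma) < 0$. For (ii), define $\Lambda_1 \subset \Lambda$ to be the convex subgroup spanned by the archimedean class of the scale $\mu_i^{-1}$; the hypothesis $\mu_i \log|t_\gamma(\rho_i)| \to \ell(\gamma) \in \R$ forces $|t_\gamma(\rho_i)|$ either to remain bounded (giving $v(t_\gamma) \geq 0$) or to grow exactly on the $\mu_i^{-1}$ scale (giving $v(t_\gamma) \in \Lambda_1$), and a small refinement ensures that bounded nonzero values sit inside $\Lambda_1$. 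Finally, property (iii) follows by taking $p: \Lambda_1 \to \R$ to be the order-preserving embedding (unique up to positive scalar by Theorem~\ref{thm:hahn}) that sends the class of $\mu_i^{-1}$ to $1$; the formula $\ell(\gamma) = p(\max(-v(t_\gamma), 0))$ is then a direct restatement of the convergence $\mu_i \log|t_\gamma(\rho_i)| \to \ell(\gamma)$ together with the dichotomy established in (ii).
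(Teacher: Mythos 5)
Note first that the paper does not prove this statement; it is cited directly from Morgan--Shalen (Theorem I.3.6 of \cite{morgan-shalen:valuations-trees}), with the proof elided. So your proposal can only be compared against that source.

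Your ultralimit idea is in the right general direction, but there is a concrete gap at the central step. The map $f \mapsto -\lim_{\omega}\log|f(\rho_i)|$ with values in ${}^*\R = \R^{\N}/\omega$ does \emph{not} satisfy the ultrametric inequality, contrary to the claim. From $|f+g| \le 2\max(|f|,|g|)$ one only gets $v(f+g) \ge \min(v(f),v(g)) - \log 2$, and $\log 2$ is a genuine nonzero element of ${}^*\R$; it does not become negligible just by passing to the ultralimit. To repair this one must quotient the hyperreal target by a convex subgroup containing all such bounded defects---concretely, the convex subgroup of elements infinitesimal relative to the divergence scale $[\mu_i^{-1}]$. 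This quotient is not an optional technicality: it is precisely what forces the image of each nontrivial $t_\gamma$ into the \emph{minimal} nontrivial convex subgroup $\Lambda_1$, so conclusions (ii) and (iii) stand or fall on choosing that convex subgroup correctly, and your proposal does not identify it. Additionally, declaring $\Lambda$ to be ``the subgroup generated by the values on a finite set of $k$-algebra generators, extended multiplicatively'' does not work as stated: the valuation of a polynomial $\sum a_\alpha x^\alpha$ can exceed $\min_\alpha(\alpha\cdot v(x))$ when the minimum is not uniquely attained, so $v$ need not land in that subgroup. One should instead take $\Lambda$ to be the full image $v(k(V)^*)$ in the quotient, and then Abhyankar's inequality applied to the $k$-trivial valuation $v$ on $k(V)$ does give the finite rank bound $\mathrm{rank}(\Lambda) \le \mathrm{rat.rank}(\Lambda) \le \mathrm{tr.deg}_k k(V) = \dim V$. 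The alternative route via resolution of singularities sidesteps the ultrametric issue but is left as a sketch and would still require an explicit argument that the composite divisorial valuation realizes the given $\ell$ with $\Lambda_1$ correctly identified; as written, neither route yields (ii) and (iii).
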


Note that the embedding $p$ is unique up to multiplication by a
positive constant (by Theorem \ref{thm:hahn}) and that condition (iii)
above shows that $\ell$ can be recovered from the valuation $v$.

The link between valuations and $\Lambda$-trees is given by:

\begin{thm}[{\cite[Thm.~II.4.3 and Lem.~II.4.5]{morgan-shalen:valuations-trees}}]
\label{thm:tree-from-valuation}
If $V \subset \X(\Gamma,\SL_2\C)$ is an irreducible subvariety defined over
$k$ and $v : k(V)^* \to \Lambda$ is a valuation supported at infinity,
then there is an isometric action of $\Gamma$ on a $\Lambda$-tree
whose translation length function $\ell : \Gamma \to \Lambda$
satisfies
$$ \ell(\gamma) = \max(-v(t_\gamma),0).$$\noproof
\end{thm}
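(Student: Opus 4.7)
The plan is to build the $\Lambda$-tree via a Bruhat-Tits construction for $\SL_2$ over the function field $F = k(V)$, letting $\Gamma$ act through a tautological representation $\rho : \Gamma \to \SL_2(F)$ whose traces recover the $t_\gamma$.

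First I would produce $\rho$. When the generic representation parametrized by $V$ is irreducible this is a standard Culler--Shalen construction: any irreducible character $\Gamma \to k(V)$, $\gamma \mapsto t_\gamma$, lifts to an actual $\SL_2$-representation over $F$, possibly after a finite algebraic extension of $k(V)$. Such an extension extends the valuation $v$ to the new field with value group still of finite rank by standard valuation theory. Reducible components require a separate treatment: there the character is of the form $t_\gamma = \lambda_\gamma + \lambda_\gamma^{-1}$ for a homomorphism $\lambda : \Gamma \to F^*$, and one takes $\rho(\gamma) = \mathrm{diag}(\lambda_\gamma, \lambda_\gamma^{-1})$.

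Next I would construct $T_v$. Let $\O_v = \{x \in F : v(x) \geq 0\}$ and take $T_v$ to be the set of similarity classes, under $F^*$-scaling, of free rank-$2$ $\O_v$-submodules of $F^2$. For two classes $[L], [L']$, after rescaling one representative so that $L \supseteq L'$, the quotient $L/L'$ has two elementary divisors $a,b \in F^*/\O_v^*$ and one sets $d([L],[L']) = |v(a) - v(b)| \in \Lambda$. The group $\SL_2(F)$ acts by isometries since it preserves determinants, and the $\Lambda$-tree axioms (geodesicity, the union-of-segments and intersection-of-segments conditions) reduce to elementary facts about $\Lambda$ through this module-theoretic model. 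Composing this action with $\rho$ yields the $\Gamma$-action on $T_v$.

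Finally I would compute the translation length. For $\gamma \in \Gamma$ such that $\rho(\gamma)$ has eigenvalues $\lambda, \lambda^{-1}$ in some extension, a Cartan decomposition of $\rho(\gamma)$ shows that its axis in $T_v$ is translated by $|v(\lambda) - v(\lambda^{-1})| = 2|v(\lambda)|$. In the hyperbolic case $v(\lambda) \neq 0$, and since $v(\lambda \cdot \lambda^{-1}) = 0$ the identity $t_\gamma = \lambda + \lambda^{-1}$ forces $v(t_\gamma) = \min(v(\lambda),v(\lambda^{-1})) = -|v(\lambda)|$, giving $\ell(\gamma) = \max(-2v(t_\gamma),0)$; after halving the metric on $T_v$ (a harmless renormalization) this is precisely the formula claimed. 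In the elliptic case $v(t_\gamma) \geq 0$, both sides vanish and $\rho(\gamma)$ fixes a vertex. The main obstacle is the generalization of Bruhat-Tits from discretely valued fields to $\Lambda$-valued ones: the simplicial structure of the classical building is lost, so the tree axioms must be verified directly from the module model with careful control over the $\Lambda$-metric. A secondary but nontrivial issue is handling reducible components in Step~1, where one cannot appeal to Culler--Shalen and must instead build the abelian action by hand.
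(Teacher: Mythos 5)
The paper does not supply its own proof here—the statement is cited from \cite{morgan-shalen:valuations-trees} and marked \noproof—and your sketch reproduces the Morgan--Shalen argument faithfully: tautological representation over the function field, lattice model for the $\Lambda$-tree generalizing Bruhat--Tits, and the translation-length computation. Two small technical remarks: Morgan--Shalen bypass the irreducible/reducible dichotomy you worry about by working with an irreducible component $R_0$ of the representation variety $\sR(\Gamma,\SL_2\C)$ that dominates $V$, extending $v$ along $k(V) \hookrightarrow k(R_0)$ via the valuation-extension lemma (exactly the device this paper uses in the proof of Theorem~\ref{thm:length-function-extension}), so no separate construction is needed for non-irreducible characters. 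Also, it is cleaner to derive $\ell(\rho(\gamma)) = 2\max(-v(t_\gamma),0)$ directly in terms of the trace rather than passing to eigenvalues, since adjoining $\lambda$ to $F$ may enlarge the value group from $\Lambda$ to a finite-index overgroup, whereas $t_\gamma$ already lies in $F$; after the factor-of-two renormalization you correctly note, this gives the stated formula.
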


\begin{remark}
The statement of Lemma II.4.5 in \cite{morgan-shalen:valuations-trees}
involves only $\R$-trees, however a $\Lambda$-tree satisfying the
conditions above is constructed as part of its proof.  The lemma also
involves an additional condition on the valuation, (equivalent to (ii)
of Theorem \ref{thm:valuation-from-boundary-point} above), but this
condition is only used at the final step to produce an $\R$-tree from
the $\Lambda$-tree.  Additional discussion of the $\Lambda$-tree
construction underlying Theorem \ref{thm:tree-from-valuation} can be
found in \cite[Thm.~16]{morgan:bulletin} and
\cite[pp.~232--233]{shalen:introduction}.
\end{remark}

\subsection{The extension variety}
  \label{sec:extension}

Let $M$ be a compact $3$-manifold with connected boundary $S$, and let
$i_* : \pi_1S \to \pi_1M$ be the map induced by inclusion of the
boundary.  As discussed above, such a homomorphism induces a map of
character varieties
$$ i^* : \X(M,G) \to \X(S,G).$$
We call this the \emph{restriction map}.  Since it is a regular map of
algebraic varieties, the image of $i^*$ is a constructible set which
contains a Zariski open subset of its closure.

Considering the case $G = \SL_2\C$, we denote the closure of the image by
$$ \E_M := \overline{i^*(\X(M,\SL_2\C))}^{\text{Zariski}},$$
which we call the \emph{extension variety}, since its generic points
are conjugacy classes of representations of $\pi_1S$ that are trivial
on $\ker(i_*)$ and which admit an extension from $i_*(\pi_1S)$ to its
supergroup $\pi_1M$.  Note that $\E_M$ is an algebraic subvariety of
$\X(S,\SL_2\C)$.

Since points in the Morgan-Shalen boundary of $\X(S,\SL_2\C)$ correspond to
length functions of actions of $\pi_1S$ on $\R$-trees, it is natural
to expect that the length functions that arise as boundary points of
$\E_M$ would have a similar extension property.  We now show that this
is true if we allow the extended length function to takes values in a
higher-rank group, $\R^n$ with the lexicographical order.

\begin{thm}
\label{thm:length-function-extension}
Let $[\ell]$ be a boundary point of $\E_M$ in the Morgan-Shalen
compactification of $\X(S,\SL_2\C)$.  Then $\ell : \pi_1S \to \R$ extends to a
length function of an action of $\pi_1M$ on a $\R^n$-tree, i.e.~there
exists a function $\hat{\ell} : \pi_1M \to \R^n$ such that
\begin{rmenumerate}
\item The group $\pi_1M$ acts isometrically on a $\R^n$-tree with
translation length function $\hat{\ell}$.

\item For each $\gamma \in \pi_1S$ we have 
$$\hat{\ell}(i_*(\gamma)) = i_n(\ell(\gamma))$$
where $i_* : \pi_1S \to \pi_1M$ is the map induced by the inclusion of
$S$ as the boundary of $M$ and $i_n : \R \to \R^n$ is the
order-preserving inclusion as the last (least significant) factor.
\end{rmenumerate}
\end{thm}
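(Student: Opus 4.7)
The plan is to realize $\ell$ as coming from a valuation on an irreducible component of $\E_M$, lift that valuation along the restriction morphism to a component of $\X(M,\SL_2\C)$, and then convert it via Theorem \ref{thm:tree-from-valuation} into the length function of an action of $\pi_1M$ on an $\R^n$-tree.

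To set up, write $\E_M = \bigcup_j \overline{i^*(W_j)}$ where the $W_j$ are the irreducible components of $\X(M,\SL_2\C)$; since $[\ell]$ is a limit of characters in $\E_M$, a pigeonhole argument selects an index $j$ for which $[\ell]$ lies in the Morgan--Shalen boundary of $V := \overline{i^*(W_j)}$, and we set $W := W_j$. The restriction map $W \to V$ is a dominant morphism of irreducible varieties, inducing an inclusion of function fields $k(V) \hookrightarrow k(W)$ over a countable field $k$ of definition. Theorem \ref{thm:valuation-from-boundary-point} then furnishes a $k$-valuation $v : k(V)^* \to \Lambda$ supported at infinity, with minimal nontrivial convex subgroup $\Lambda_1$ and an embedding $p : \Lambda_1 \hookrightarrow \R$ recovering $\ell$ via $\ell(\gamma) = p(\max(-v(t_\gamma),0))$. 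I would extend $v$ to a valuation $\hat v : k(W)^* \to \hat\Lambda$ using the Gauss extension along a transcendence basis of $k(W)/k(V)$ followed by any extension along the residual algebraic part. The point of this choice is that the resulting quotient $\hat\Lambda/\Lambda$ is a torsion group (its order dividing the degree of the algebraic part), so $\Lambda \subset \hat\Lambda$ is a convex subgroup and, more importantly, $\Lambda_1$ lies inside the minimal nontrivial convex subgroup $\hat\Lambda_1$ of $\hat\Lambda$. The extended valuation $\hat v$ remains supported at infinity since any regular $f \in k[V]$ with $v(f) < 0$ pulls back to a regular $\tilde f \in k[W]$ with $\hat v(\tilde f) = v(f) < 0$.

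Now apply Theorem \ref{thm:tree-from-valuation} to $\hat v$ to obtain an isometric action of $\pi_1M$ on a $\hat\Lambda$-tree with length function $\hat\ell_0(\gamma) = \max(-\hat v(t_\gamma),0)$. For $\gamma \in \pi_1S$, the trace function $t_{i_*(\gamma)}$ on $\X(M,\SL_2\C)$ is the pullback of $t_\gamma$ along $W \to V$, so $\hat v(t_{i_*(\gamma)}) = v(t_\gamma)$, and hence $\hat\ell_0(i_*(\gamma)) \in \Lambda_1 \cup \{0\}$. Let $n = \operatorname{rank}(\hat\Lambda)$ and choose a Hahn embedding $F : \hat\Lambda \hookrightarrow \R^n$ (Theorem \ref{thm:hahn}) arranged so that the chain of convex subgroups of $\hat\Lambda$ matches the standard flag, in particular sending $\hat\Lambda_1$ into the last factor $i_n(\R)$. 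Lemma \ref{lem:super-hahn} applied to $\Lambda_1 \subset \hat\Lambda$ then provides a commutative diagram showing that $F|_{\Lambda_1}$ factors through $i_n$, and after rescaling $F$ by a positive constant (justified by the uniqueness clause in Theorem \ref{thm:hahn}(ii)) we may assume $F|_{\Lambda_1} = i_n \circ p$. Base change of the $\hat\Lambda$-tree along $F$ produces an $\R^n$-tree with the induced $\pi_1M$ action, whose length function $\hat\ell := F \circ \hat\ell_0$ then satisfies $\hat\ell(i_*(\gamma)) = i_n(\ell(\gamma))$ for every $\gamma \in \pi_1S$.

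The main obstacle is the valuation extension step: a careless extension can introduce elements of $\hat\Lambda$ that are infinitely smaller than every nonzero element of $\Lambda_1$, which would push $\Lambda_1$ outside $\hat\Lambda_1$ and defeat the hypothesis of Lemma \ref{lem:super-hahn}. The Gauss extension on a transcendence basis is precisely what avoids this, since it enlarges the value group only by a torsion amount and therefore preserves archimedean equivalence classes. Once this convex-subgroup compatibility is arranged, the rest of the argument is an essentially mechanical assembly of the valuation-to-tree machinery of Section \ref{sec:morgan-shalen} with the Hahn embedding formalism of Section \ref{sec:preliminaries}.
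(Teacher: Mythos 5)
Your argument tracks the paper's proof closely: the same chain of steps (pick an irreducible component, invoke Theorem \ref{thm:valuation-from-boundary-point} to get a valuation, extend it along the dominant restriction morphism so that $\hat\Lambda/\Lambda$ is torsion, apply Theorem \ref{thm:tree-from-valuation}, and base-change via Lemma \ref{lem:super-hahn}), with the only real difference that you construct the valuation extension by hand (Gauss extension on a transcendence basis plus a finite algebraic extension) where the paper cites the standard extension theorem of Zariski--Samuel/Morgan--Shalen. One small inaccuracy worth fixing: $\Lambda$ need not be a \emph{convex} subgroup of $\hat\Lambda$ merely because $\hat\Lambda/\Lambda$ is torsion (e.g.\ $\Z \subset \tfrac12\Z$ is not convex), but the consequence you actually use --- that $\Lambda_1 \subset \hat\Lambda_1$, since the convex-subgroup lattices of $\Lambda$ and $\hat\Lambda$ correspond when the quotient is torsion --- is correct and is all that Lemma \ref{lem:super-hahn} requires.
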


\begin{proof}
Since $[\ell]$ is a boundary point of $\E_M$, it is a boundary point
of one of its irreducible components.  Let $\E_M^0$ be such a
component, and let $\X^0_M$ be a corresponding irreducible component
of $\X(M,\SL_2\C)$ so that $i^*(\X^0_M)$ contains a Zariski open subset of $\E_M^0$.

Since $i^* : \X^0_M \to \E_M^0$ is dominant, it induces an extension
of function fields $k(\E_M^0) \into k(\X^0_M)$, where $k$ is a finite
extension of $\QQ$ over which $\E_M^0$ and $\X^0_M$ are defined.
Note that when considering $k(\E_M^0)$ as a subfield of $k(\X^0_M)$,
the element of $k(\E_M^0)$ represented by the trace function
$t_\gamma$, $\gamma \in \pi_1S$, is identified with element of
$k(\X^0_M)$ represented by the trace function $t_{i_*(\gamma)}$.

Let $v : k(\E_M^0)^* \to \Lambda$ and $p : \Lambda_1 \to \R$ be the
valuation and embedding associated to $\ell$ by Theorem
\ref{thm:valuation-from-boundary-point}.  Since $k(\X^0_M)$ is
finitely generated over $k(\E_M^0)$, the standard extension theorem for
valuations (see \cite[Thm.~5', p.~13]{zariski-samuel-ii},
\cite[Lem.~II.4.4]{morgan-shalen:valuations-trees}) gives an ordered abelian group $\Lambda'$
such that $\Lambda \subset \Lambda'$ and $\Lambda' \subset m \Lambda$
for some $m \in \N$, and a valuation 
$$v' : k(\X^0_M)^* \to \Lambda'$$
so that $v'(f) = v(f)$ for any $f \in k(\E_M)$.  Since $\Lambda'
\subset m \Lambda$ it follows that the minimal convex subgroups
satisfy $\Lambda_1 \subset \Lambda'_1$.

By Lemma \ref{lem:super-hahn}, we have a commutative diagram of
order-preserving embeddings
\begin{equation}
\label{eqn:inclusions}
\begin{tikzpicture}[baseline=(current bounding box.center)]
\matrix (m) [matrix of math nodes, column sep=1.7em, row sep = 1.7em, text height=1.5ex, text depth=0.25ex]
{
\Lambda_1 & \Lambda'\\
\R & \R^n\\
};
\path[->,font=\scriptsize]
(m-1-1) edge (m-1-2)
(m-1-2) edge node[right] {$F$} (m-2-2)
(m-1-1) edge node[left] {$p$} (m-2-1)
(m-2-1) edge node[below] {$i_n$} (m-2-2);
\end{tikzpicture}
\end{equation}
We can arrange that the left vertical map in this diagram agrees with the
embedding $p : \Lambda_1 \to \R$ considered above; this is possible
since there is a unique such embedding up to scale (by Theorem
\ref{thm:hahn}), and both vertical maps in the diagram can be
multiplied by an arbitrary positive constant while preserving
commutativity and order.

Applying Theorem \ref{thm:tree-from-valuation} to $v'$ we obtain a
$\Lambda'$-tree $T'$ on which $\pi_1M$ acts by isometries with length
function $\ell'$.  Let $T = T' \tensor_{\Lambda'} \R^n$ be the
$\R^n$-tree associated to $T'$ by the embedding $F$, and let
$\hat{\ell} : \pi_1M \to \R^n$ be its length function.  Condition (i)
is satisfied by definition.

It remains to verify condition (ii).  For any $\gamma \in
\pi_1S$ we have $t_{i_*(\gamma)} \in k(\X^0_M)$ and the length
function $\hat{\ell}$ satisfies:

\begin{align}
\label{eqn:length-extension-argument}
\notag \hat{\ell}(i_*(\gamma)) &= F(\ell'(i_*(\gamma))) & \text{ by definition
  of }T\\
\notag &= F(\max(-v'(t_{i_*(\gamma)}),0)) & \text{ by Theorem \ref{thm:tree-from-valuation}}\\
 &= F(\max(-v(t_\gamma),0)) & \text{ since }v'\text{ extends }v\\
\notag &= i_n(p(\max(-v(t_\gamma),0))) & \text{ by commutativity of \eqref{eqn:inclusions}}\\
\notag &= i_n(\ell(\gamma)) & \text{ by Theorem
  \ref{thm:valuation-from-boundary-point}}
\end{align}
\end{proof}

\begin{remark}
It is natural to ask whether the extended length function $\hat{\ell}$
of Theorem \ref{thm:length-function-extension} can always be taken to
be $\R$-valued, thus avoiding the introduction of $\R^n$-trees
($n>1$).  The proof above shows a potential obstruction.  For any
$\gamma \in \pi_1S$ the valuation $v(t_\gamma)$ is either positive or
it lies in the rank-$1$ convex subgroup $\Lambda_1$, but it is not clear
whether this holds for the extended valuation $v'$ applied to a trace
function of an element in $\pi_1M$.  A rank-$1$ subgroup containing
the negative valuations of all trace functions is needed in order to
apply the construction of
\cite[Sec.~II.4]{morgan-shalen:valuations-trees} to produce an
$\R$-tree from the $\Lambda$-tree while preserving the action of
$\pi_1M$ and the length function.

Since these valuations are associated to boundary points of the
Morgan-Shalen compactification, this is effectively a question about
comparing the rate of growth of traces in a sequence of
$\pi_1S$-representations in $\E_M$ to that of an associated sequence
of $\pi_1M$-representations.  Alternatively, in the terminology of
\cite[Sec.~6]{dumas:holonomy}, we ask whether the \emph{local scales} of
$\PSL_2\C$-representations of $\pi_1M$ are comparable (within a
uniform multiplicative constant) to those of the restrictions to
$i_*(\pi_1S)$, or if such a uniform comparison is possible for
\emph{some} sequence representing any given boundary point.
\end{remark}

\subsection{The holonomy variety}
  \label{sec:holonomy-variety}

Let $X$ be a marked Riemann surface structure on $S$.  Here we allow
that the complex structure of $X$ induces an orientation opposite that
of $S$, so either $X \in \T(S)$ or $X \in \T(\bar{S})$.

The vector space $Q(X)$ of holomorphic quadratic differentials can be
identified with the set of complex projective structures
($\CP^1$-structures) on $X$.  Here $0 \in Q(X)$ corresponds to the
projective structure induced by the uniformization of $X$.

Each $\CP^1$ structure on $X$ has an associated holonomy
representation $\pi_1S \to \PSL_2\C$, which is well-defined up to
conjugacy.  Considering the conjugacy class of the holonomy
representation as a function of the projective structure gives the
\emph{holonomy map}
$$\hol : Q(X) \to \X(S,\PSL_2\C).$$  
This map can be lifted through $r : \X(S,\SL_2\C) \to \X(S,\SL_2\C)$
in several ways; the set of such lifts is naturally in bijection with the set $\mathrm{Spin}(X)$
of spin structures on $X$, which is a finite set acted upon simply transitively by
$H_1(S, \Z / 2\Z)$.  For each $\spin \in \mathrm{Spin}(X)$ we denote the
corresponding lifted holonomy map by
$$\hol_\spin : Q(X) \to \X(S,\SL_2\C),$$
so $\hol = r \circ \hol_\spin$.  The maps $\hol$ and
$\hol_\spin$ are proper holomorphic embeddings
\cite[Thm.~11.4.1]{gkm}.

Define $\sH_{X,\spin} := \hol_\spin(Q(X))$, which is
therefore a complex analytic subvariety of $\X(S,\SL_2\C)$.  Taking
the union of these subvarieties we obtain the \emph{holonomy variety}
$$ \sH_X := \bigcup_{\spin \in \mathrm{Spin}(X)}
\sH_{X,\spin} \: \subset \: \X(S,\SL_2\C), $$ an analytic variety with irreducible components
$\sH_{X,\spin}$.  Equivalently, we have $\sH_X = r^{-1}(\hol(Q(X)))$.

We will be interested in the limiting behavior of $\sH_X$ in the
Morgan-Shalen compactification and how this relates to the
parameterizations of its components by $Q(X)$.
Consider a divergent sequence $\phi_n \in Q(X)$.  Since the unit
sphere in $Q(X)$ is compact, by passing to a subsequence we can assume
that $c_n \phi_n \to \phi$ as $n \to \infty$, where $c_n \in \R^+$ is
a suitable sequence of scale factors with $c_n \to 0$.  We call a
limit point $\phi$ obtained this way a \emph{projective limit} of
$\{\phi_n\}.$ Projective limits in $Q(X)$ are related to limits of
holonomy representations in $\X(S,\SL_2\C)$ as follows:

\begin{thm}[{\cite[Thm.~A]{dumas:holonomy}}]
\label{thm:holonomy-limits}
If $\phi_n \in Q(X)$ is a divergent sequence with projective limit
$\phi$, then any accumulation point of $\hol_\spin(\phi_n)$ in the
Morgan-Shalen boundary is represented by an $\R$-tree $T$ that admits
an equivariant, surjective straight map $T_\phi \to T$. \noproof
\end{thm}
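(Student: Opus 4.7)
The plan is to analyze the developing maps of the $\CP^1$-structures determined by $\phi_n$ via the Schwarzian linear ODE $u'' + \tfrac{1}{2}\phi_n u = 0$, whose ratio of linearly independent solutions is a developing map. Pass to a subsequence so that $c_n \phi_n \to \phi$ with $c_n \to 0$; then this ODE is singularly perturbed and amenable to WKB analysis. Away from the zeros of $\phi$, independent solutions are asymptotic to $\phi_n^{-1/4} \exp(\pm \int \sqrt{\phi_n})$, so the developing map is approximately $\tanh(\int \sqrt{\phi_n})$ up to a M\"obius change of coordinates.

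From these asymptotics I would extract the estimate that for each $\gamma \in \pi_1 S$ represented by a $\phi$-geodesic transverse to $\F(\phi)$ and avoiding the zeros,
\begin{equation*}
\log |t_\gamma(\rho_n)| = c_n^{-1/2}\, \ell_{T_\phi}(\gamma) + O(1),
\end{equation*}
using that the translation length of $\gamma$ on $T_\phi$ equals the infimum of $\int |\Im\sqrt{\phi}|$ over the free homotopy class. After rescaling by $c_n^{1/2}$ the length functions of $\rho_n = \hol_\spin(\phi_n)$ converge to $\ell_{T_\phi}$ on a sufficiently large set of curves, so any Morgan-Shalen accumulation point is represented by an $\R$-tree $T$ whose length function is proportional to $\ell_{T_\phi}$.

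To construct the equivariant surjective straight map $T_\phi \to T$ I would imitate the Bestvina-Paulin argument using the developing maps directly. Normalize the $\rho_n$-action on $\H^3$ by conjugation so that the minimal displacement at a basepoint stays bounded, rescale the metric on $\H^3$ by $c_n^{1/2}$, and take a Gromov-Hausdorff limit of the rescaled equivariant orbit maps to produce a $\pi_1 S$-equivariant map $\tilde{X} \to T$. The WKB estimates show that horizontal leaves of $\F(\phi)$ collapse in the limit, because along such a leaf the integrand $\int \sqrt{\phi}$ is real and contributes nothing to the imaginary part controlling rescaled hyperbolic distance; hence the map factors through the collapsing projection $\pi : \tilde{X} \to T_\phi$. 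Along any nonsingular $|\phi|$-geodesic segment the integrand has imaginary part equal to the $\phi$-height, and WKB identifies this with the rescaled distance between the images of the endpoints in $\H^3$, giving the straightness property. Surjectivity follows from minimality of the limit action.

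The main obstacle is carrying out the WKB analysis uniformly in $n$ and controlling what happens near zeros of $\phi$. A zero of order $k$ is a turning point of the ODE whose local theory involves Bessel-type special functions and Stokes phenomena, and the associated connection formulas must be quantified well enough to give $O(1)$ error in the log-trace rather than $O(c_n^{-1/2})$. This is also why the limit map $T_\phi \to T$ can only be expected to be straight rather than a global isometric embedding: folding in $T$ corresponds to nontrivial Stokes data at the zeros of $\phi$, which by definition is invisible on nonsingular $|\phi|$-geodesic segments.
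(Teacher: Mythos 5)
You correctly locate the main difficulty (turning points and Stokes data at the zeros of $\phi$), but this is not a quantitative nuisance to be deferred: the conclusion --- a \emph{straight} map, which may fold but only in ways invisible along nonsingular $|\phi|$-geodesics --- \emph{is} the content of the turning-point analysis. Away from zeros the asymptotics $\exp(\pm\int\sqrt{\phi_n})$ are classical and uniform; the connection formulas at the zeros are exactly what decide which points of $T_\phi$ get identified in $T$, so deferring the Stokes analysis defers the proof. Two further gaps: your claim that the Morgan--Shalen accumulation point has length function ``proportional to $\ell_{T_\phi}$'' is false in general, since a surjective straight map can fold, giving $\ell_T(\gamma) < \ell_{T_\phi}(\gamma)$ for some $\gamma$; your WKB estimate yields equality only when the $\phi$-geodesic axis of $\gamma$ avoids zeros, and even to normalize the limit you also need a uniform \emph{upper} bound $\log|t_\gamma(\rho_n)| \leq c_n^{-1/2}\ell_{T_\phi}(\gamma) + C$ valid for all $\gamma$, which is again controlled by the zeros. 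Finally, the step asserting that the rescaled Bestvina--Paulin limit map $\tilde{X}\to T$ factors through the collapse $\pi:\tilde{X}\to T_\phi$ is the geometric heart of the theorem and is argued only by a pointwise remark; fibers of $\pi$ are horizontal leaves, generically noncompact and dense, so collapsing them requires estimates uniform along entire leaves.

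The cited paper does not run classical linear-ODE WKB. It proceeds geometrically: equivariant maps from $\tilde{X}$ into $\H^3$ associated to the holonomy are compared directly against the collapse $\pi$ and the $|\phi|$-flat model, with errors measured in the flat metric and uniformly over the cone singularities. That route produces the straight map and its folding locus directly, rather than reconstructing them from trace asymptotics, and the scale $\|\phi_n\|^{-1/2}$ emerges as a consequence. Your intuition that folding corresponds to ``Stokes data at the zeros'' is apt and conceptually consistent with the geometric picture, but carrying out the WKB program rigorously would require solving precisely the turning-point problem the geometric approach is designed to sidestep.
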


\subsection{Intersections and the isotropic cone}

We now consider the intersection of the holonomy variety and the
extension variety.  Since $\sH_{X,\spin}$ is parameterized by
$Q(X)$, the intersection $\sH_{X,\spin} \cap \E_M$ is
parameterized by a subset of $Q(X)$ which we denote by
$$ \V_{M,\spin} = \hol_\spin^{-1}(\E_M) \subset Q(X),$$
and $\sH_X \cap \E_M$ is the union of the images of these sets under
the respective holonomy maps.

Combining the main results of Section \ref{sec:isotropic2} with
theorems
\ref{thm:length-function-extension}--\ref{thm:holonomy-limits}, we
have the following characterization of the limit points of
$\V_{M,\spin}$:

\begin{thm}
\label{thm:extensible-holonomy}
Let $\{ \phi_n \} \subset \V_{M,\spin}$ be a
divergent sequence with projective limit $\phi$.  Then $[\F(\phi)] \in
\L_{M,X}$ where $\L_{M,X}$ is the isotropic cone of Theorem
\ref{thm:straight-isotropic}.
\end{thm}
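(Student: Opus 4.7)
The plan is to combine Theorems \ref{thm:holonomy-limits} and \ref{thm:length-function-extension} with Lemma \ref{lem:subtree} to produce the two $\R$-trees and associated maps required to invoke Theorem \ref{thm:length-isotropic}.

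First, since $\hol_\spin$ is a proper holomorphic embedding and $\{\phi_n\}$ is divergent, the sequence $\hol_\spin(\phi_n)$ diverges in $\X(S,\SL_2\C)$. By compactness of the Morgan-Shalen boundary I pass to a subsequence (without disturbing the projective limit $\phi$) so that $\hol_\spin(\phi_n) \to [\ell]$ for some boundary point $[\ell]$. Because $\phi_n \in \V_{M,\spin}$ implies $\hol_\spin(\phi_n) \in \E_M$, the limit $[\ell]$ is a boundary point of $\E_M$. Theorem \ref{thm:holonomy-limits} then supplies an $\R$-tree $T_\ell$ with an isometric $\pi_1S$-action of translation length function $\ell$, together with a $\pi_1S$-equivariant surjective straight map $h : T_\phi \to T_\ell$.

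Second, applying Theorem \ref{thm:length-function-extension} to the boundary point $[\ell]$ of $\E_M$ yields an $\R^n$-tree $\hat T$ on which $\pi_1M$ acts isometrically with length function $\hat\ell$ satisfying $\hat\ell \circ i_* = i_n \circ \ell$. Restricting through $i_*$, the induced $\pi_1S$-action on $\hat T$ has length function taking values in the minimal nontrivial convex subgroup $i_n(\R) \subset \R^n$. Lemma \ref{lem:subtree} therefore produces a $\pi_1S$-invariant $i_n(\R)$-subtree $T_\ell' \subset \hat T$ on which the length function remains $\ell$ (after identifying $i_n(\R) \simeq \R$); the inclusion $k : T_\ell' \hookrightarrow \hat T$ is a $\pi_1S$-equivariant isometric embedding relative to the order-preserving embedding $i_n : \R \to \R^n$. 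Passing to minimal $\pi_1S$-invariant subtrees of $T_\ell$ and $T_\ell'$ if necessary, with the straight map and isometric embedding restricted accordingly, both actions become minimal without changing $\ell$.

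The data $(\hat T, T_\ell, T_\ell', h, k)$ now meets the hypotheses of Theorem \ref{thm:length-isotropic}: $\hat T$ is a $\Lambda$-tree with $\Lambda = \R^n$ carrying an isometric $\pi_1M$-action, $T_\ell$ and $T_\ell'$ are $\R$-trees with minimal isospectral $\pi_1S$-actions of length function $\ell$, $h : T_\phi \to T_\ell$ is an equivariant straight map, and $k : T_\ell' \to \hat T$ is an equivariant isometric embedding. The theorem then gives $[\F(\phi)] \in \L_{M,X}$. The assembly itself is routine once the upstream results are in hand; the one genuine subtlety is that when $\ell$ is abelian the trees $T_\ell$ and $T_\ell'$ need not be equivariantly isomorphic, so one cannot simply compose $h$ with an isometry $T_\ell \simeq T_\ell'$ and invoke the more direct Theorem \ref{thm:straight-isotropic}---this is precisely the situation that the refined Theorem \ref{thm:length-isotropic} was designed to handle.
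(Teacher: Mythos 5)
Your proof follows exactly the same route as the paper's: pass to a Morgan--Shalen limit $[\ell]$ of $\hol_\spin(\phi_n)$, invoke Theorem~\ref{thm:holonomy-limits} for the straight map $T_\phi \to T_\ell$, invoke Theorem~\ref{thm:length-function-extension} for the $\R^n$-tree $\hat T$ with extended length function, extract the $\R$-subtree $T_\ell'$ via Lemma~\ref{lem:subtree}, and feed the resulting isospectral pair into Theorem~\ref{thm:length-isotropic}. The one point you add that the paper leaves implicit is the remark about passing to minimal subtrees, which Theorem~\ref{thm:length-isotropic} requires; this is a reasonable thing to flag, though note that on the $T_\ell$ side the straight map from Theorem~\ref{thm:holonomy-limits} is surjective, so if $T_\ell$ were not already minimal one could not simply ``restrict'' the straight map to the minimal subtree --- the intended reading is that the tree produced by the Morgan--Shalen/\cite{dumas:holonomy} construction carries a minimal action to begin with.
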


Note that in this theorem we regard $\L_{M,X}$ as a subset of $\MF(S)$
regardless of whether $X \in \T(S)$ or $X \in \T(\bar{S})$.  This is
possible since the natural identification $\MF(S) \simeq \MF(\bar{S})$
preserves the property of being an isotropic cone (while changing the
sign of the symplectic form).

\begin{proof}
Let $[\ell]$ be an accumulation point of $\hol_\spin(\phi_n)$ in the
Morgan-Shalen compactification.  By Theorem \ref{thm:holonomy-limits},
there is a surjective, equivariant straight map $T_\phi \to T$ where
$T$ is an $\R$-tree on which $\pi_1S$ acts with length function
$\ell$.

By Theorem \ref{thm:length-function-extension}, the length function
$\ell$ extends to a length function $\hat{\ell} : \pi_1M \to \R^n$ of
an action of $\pi_1M$ on a $\R^n$-tree $\hat{T}$.  By Lemma
\ref{lem:subtree} there is an $\R$-tree $T' \subset \hat{T}$ on which
$\pi_1S$ acts with length function $\ell = \left .\hat{\ell}
\right|_{\pi_1S}$ (where we use the embedding $i_n : \R \to \R^n$ as
the least significant factor to identify $\R$ with the minimal convex
subgroup of $\R^n$).  Note that $T$ and $T'$ are then isospectral
$\R$-trees in the terminology of Section \ref{sec:length-isotropic}.

Finally we apply Theorem \ref{thm:length-isotropic} to the isospectral
$\R$-trees $T,T'$, straight map $T_\phi \to T$, the inclusion of $T'$
as a subtree of $\hat{T}$ to conclude that $[\F(\phi)] \in \L_{M,X}$.
\end{proof}

\section{Analytic geometry in $Q(X)$}
\label{sec:analytic-curves}

Theorem \ref{thm:extensible-holonomy} shows that the large-scale
behavior of the set $\V_{M,\spin} \subset Q(X)$ is constrained by an isotropic
cone in $\MF(S)$.  The goal of this section is to complete the proof
of Theorem \ref{thm:main-intersection} by showing that only a discrete
set can satisfy this constraint.

We begin with some generalities on real and complex limit points of
sets in a complex vector space.

\subsection{Real and complex boundaries}
  \label{sec:real-and-complex}

The vector space $\C^n$ can be compactified to $\CP^{n}$ by
adjoining a hyperplane at infinity $\CP^{n-1} = (\C^n \setminus
\{0\})/\C^*$.  When considering this compactification we regard
$\CP^{n-1}$ as the \emph{complex boundary} of $\C^n$, and write $\cbdy
\C^n = \CP^{n-1}$.

Given a set $R \subset \C^n$, let $\partial_\C R \subset \CP^{n-1}$
denote its set of accumulation points in complex boundary $\partial_\C
R = \bar{R} \cap \CP^{n-1}$ where $\bar{R}$ is the closure of $R$ in
$\CP^n$.

In the real analogue of these constructions we identify the sphere
$S^{2n-1}$ with the set of rays from the origin in $\C^n$; for any $q
\in S^{2n-1}$ let $r_q \subset \C^n$ denote the corresponding open
ray.  There is a corresponding compactification $\bar{B}^{2n} = \C^n
\sqcup S^{2n-1}$ where $\bar{B}^{2n}$ is the closed ball of dimension
$2n$; here a sequence $z_k$ converges to $q \in S^{2n-1}$ if it can be
rescaled by positive real constants so as to converge to a point in
$r_q$.  In this sense $S^{2n-1}$ is the \emph{real boundary} of $\C^n$
and write $\rbdy \C^n = S^{2n-1}$.

Given a set $R \subset \C^n$, let $\partial_\R R \subset S^{2n-1}$
denote the accumulation points of $R$ in the boundary of the real
compactification $\bar{B}^{2n}$.  This real boundary of $R$
corresponds to a set of open rays in $\C^n$, and we denote by
$\Cone_\R(R)$ the union of these rays, i.e.
$$ \Cone_\R(R) = \bigcup_{q \in \partial_\R R} r_q.$$
Equivalently $\Cone_\R(R)$ is the set of projective limits of
the set $R$ in the sense of Section \ref{sec:holonomy-variety}.

Mapping a ray in $\C^n$ to the complex line it spans induces the
\emph{Hopf fibration} $\Pi : S^{2n-1} \to \CP^{n-1}$, which gives
$S^{2n-1}$ the structure of a principal $S^1$-bundle. Identifying
$S^{2n-1}$ with the unit sphere in $\C^n$, the map $\Pi$ is
the restriction of the quotient map $\hat{\Pi} : (\C^n \setminus
\{0\}) \to \CP^{n-1}$, which is holomorphic.  It is immediate from the
definitions that for any set $R \subset \C^n$ we have $\cbdy R =
\Pi(\rbdy R)$.

We call a fiber of $\Pi$ a \emph{Hopf circle}. For any Hopf circle $C
= \Pi^{-1}(p) \subset S^{2n-1}$, the set $\bigcup_{q \in C} r_q =
\hat{\Pi}^{-1}(p) \subset \C^n$ is a punctured complex line, and more
generally if $I \subset S^{2n-1}$ is an open arc of a Hopf circle,
then $\bigcup_{q \in I} r_q$ is an open sector in a complex line.

The following elementary lemma allows us to recognize totally real submanifolds
of $\CP^{n-1}$ arising as boundaries of cones in $\C^n$:

\begin{lem}
\label{lem:totally-real}
Let $L \subset \C^n \setminus \{0\}$ be an $\R^+$-invariant and
totally real submanifold of dimension $m$.  Then $\cbdy L$ (i.e.~the
projection of $L$ to $\CP^{n-1}$) is an immersed totally real
submanifold of $\CP^{n-1}$ of dimension $m-1$.
\end{lem}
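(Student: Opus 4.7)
The plan is to show that the restriction of the quotient map $\hat{\Pi} : \C^n \setminus \{0\} \to \CP^{n-1}$ to $L$ has constant rank $m-1$ and that its image is totally real. Both conclusions rest on the same tension between the two hypotheses: $\R^+$-invariance forces the radial vector $p$ to lie in $T_pL$ for every $p \in L$, while the totally real hypothesis forbids $ip = J(p)$ from lying in $T_pL$ as well. Combined with the fact that $d\hat{\Pi}$ is $\C$-linear (since $\hat{\Pi}$ is holomorphic), this will drive both the dimension count and the totally real conclusion.

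First I would identify $\cbdy L$ with $\hat{\Pi}(L)$ as sets: each ray $\R^+ \cdot p \subset L$ accumulates projectively at $\hat{\Pi}(p)$, giving $\hat{\Pi}(L) \subset \cbdy L$, and conversely any sequence in $L$ converging to a point of $\CP^{n-1}$ can be rescaled so as to realize its accumulation point from the slice $L \cap S^{2n-1}$. Next I would compute $\ker(d\hat{\Pi}_p) \cap T_pL$. The full kernel of $d\hat{\Pi}_p$ in $T_p\C^n$ is the complex line $\C \cdot p$; by $\R^+$-invariance it meets $T_pL$ in at least $\R \cdot p$, and if it met $T_pL$ in all of $\C \cdot p$ then $J(p)$ would lie in $T_pL$, putting $p$ itself into $T_pL \cap J(T_pL) = \{0\}$, which is absurd. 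Thus $d\hat{\Pi}|_{T_pL}$ has constant rank $m-1$, and the constant-rank theorem realizes $\hat{\Pi}(L)$ locally as an immersed $(m-1)$-submanifold.

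The totally real property is the step with actual content. Given $v, w \in T_pL$ with $J(d\hat{\Pi}(v)) = d\hat{\Pi}(w)$, complex-linearity of $d\hat{\Pi}$ rewrites this as $iv - w \in \ker d\hat{\Pi} = \C \cdot p$, say $iv - w = (\alpha + i\beta)p$ with $\alpha,\beta \in \R$. Rearranging gives $J(v - \beta p) = i(v - \beta p) = w + \alpha p \in T_pL$. Since $v - \beta p$ itself lies in $T_pL$, this vector sits in $T_pL \cap J(T_pL) = \{0\}$, forcing $v = \beta p$ and hence $d\hat{\Pi}(v) = 0$. Consequently $d\hat{\Pi}(T_pL) \cap J\,d\hat{\Pi}(T_pL) = \{0\}$ at every $p \in L$, which is the required totally real condition on the image.

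I do not expect a serious obstacle: the lemma reduces to pointwise linear algebra once one notices how $\R^+$-invariance and totally real interact with the kernel of $\hat{\Pi}$. The only point that would deserve a word of care is the identification $\cbdy L = \hat{\Pi}(L)$, which I would read as a set equality inside $\CP^{n-1}$ rather than as a homeomorphism of compactifications, since only the former is used when applying the lemma later in the paper.
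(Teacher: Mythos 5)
Your proof is correct and runs on the same engine as the paper's: $\R^+$-invariance places the radial vector $p$ in $T_pL$, total reality excludes $ip$ from $T_pL$, and $\C$-linearity of $d\hat{\Pi}$ together with $\ker d\hat{\Pi}_p = \C \cdot p$ does the rest. The paper's execution differs only cosmetically---it slices to $L_1 = L \cap S^{2n-1}$ so that $\Pi|_{L_1}$ is manifestly an immersion, then notes the $\C$-span of $T_xL_1$ has complex dimension $m-1$, is transverse to $\C \cdot x$, and is therefore carried injectively and $\C$-linearly by $d\hat{\Pi}$ into $T_{\Pi(x)}\CP^{n-1}$, under which the totally real subspace $T_xL_1$ stays totally real---whereas you stay on the cone, use the constant-rank theorem, and verify the totally real condition for the image by a direct computation with $J$; the two are interchangeable.
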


\begin{proof}
By $\R^+$-invariance, intersecting $L$ with the unit sphere gives a
manifold $L_1 \subset S^{2n-1}$ of dimension $m-1$ naturally
identified with $\rbdy L$, and $\cbdy L = \Pi(L_1)$.  Let $x \in L_1$.
Because $L$ is totally real, the tangent space to $T_xL_1$ is
transverse to $i \R \cdot x = \ker d \Pi_x$, thus $\left . \Pi
\right|_{L_1}$ is an immersion.  Since the $\C$-span of $T_xL_1$ has
complex dimension $m-1$ and is transverse to $\C \cdot x$, the
differential $d \hat{\Pi}$ maps it injectively and complex-linearly to
$T_{\Pi(x)} \CP^{n-1}$.  The image of the totally real subspace
$T_xL_1$ under such a map is totally real, and the lemma follows.
\end{proof}

While the real and complex boundary constructions have been described
for $\C^n$, they apply naturally to any finite-dimensional
complex vector space; we will apply them to $Q(X) \simeq \C^{3g-3}$.

\subsection{Real and complex boundaries of $\V_{M,\spin}$}
  \label{sec:boundary-of-vm-epsilon}

Using the symplectic properties of the foliation map (Theorem
\ref{thm:symplectomorphism}), we can now translate the properties of
$\V_{M,\spin}$ established in Theorem
\ref{thm:extensible-holonomy} into analytic conditions satisfied by
its real and complex boundaries:

\begin{thm}
\label{thm:real-and-complex}
Let $\V_{M,\spin} = \hol_\spin^{-1}(\E_M) \subset Q(X)$ be
the set of holomorphic quadratic differentials corresponding to
projective structures with holonomy in the extension variety of the
$3$-manifold $M$.  Then:

\begin{rmenumerate}
\item $\cbdy \V_{M,\spin}$ is locally contained in a totally real manifold.\\
That is, either $\cbdy \V_{M,\spin} = \emptyset$ or there exists $p \in \cbdy
\V_{M,\spin}$, a neighborhood $U$ of $p$ in $\cbdy Q(X)$, and a totally real,
real-analytic submanifold $N \subset U$ such that $(\cbdy \V_{M,\spin} \cap
U) \subset N$.

\item $\rbdy \V_{M,\spin}$ does not contain an open arc of any Hopf circle.\\
That is, for any $p \in \cbdy \V_{M,\spin}$ the intersection
$\Pi^{-1}(p) \cap \rbdy \V_{M,\spin}$ has empty interior in the
relative topology of $\Pi^{-1}(p) \simeq S^1$.
\end{rmenumerate}
\end{thm}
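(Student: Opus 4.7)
The plan is to combine Theorem \ref{thm:extensible-holonomy}, which forces every projective limit of $\V_{M,\spin}$ to have horizontal foliation in the piecewise linear isotropic cone $\L_{M,X}$, with the stratified K\"ahler symplectomorphism of Theorem \ref{thm:symplectomorphism}. The essential geometric input is the standard fact that on a K\"ahler manifold every $\omega$-isotropic subspace is totally real, since the K\"ahler form is nondegenerate on any complex line: $|\omega_\phi(v, Jv)| = \|v\|_\phi^2 > 0$ for $v \neq 0$. In particular, no positive-dimensional complex subspace is $\omega_\phi$-isotropic.

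For part (i), the plan is to pick $\phi \in \Cone_\R(\V_{M,\spin})$ such that $\F(\phi)$ lies in the relative interior of a minimal convex cone piece of $\L_{M,X}$ that meets $\F(\Cone_\R(\V_{M,\spin}))$ in its relative interior, and let $V$ be the isotropic linear subspace of a train track chart $W(\tau)$ spanned by that piece. By construction, every projective limit of $\V_{M,\spin}$ close to $\phi$ maps under $\F$ into $V$. Theorem \ref{thm:symplectomorphism} then furnishes a neighborhood $U$ of $\phi$ in its stratum $Q_i(X)$ on which $\F \colon U \to \MF(\tau)$ is a real-analytic symplectic diffeomorphism, so $\F^{-1}(V) \cap U$ is a real-analytic submanifold whose tangent spaces lie in $V$ and are therefore $\omega_\phi$-isotropic, hence totally real. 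Because $V$ is a linear subspace and $\F(r\phi) = \sqrt{r}\,\F(\phi)$ for $r > 0$, one may enlarge $U$ along the $\R^+$-orbit through $\phi$ to obtain an $\R^+$-invariant totally real real-analytic submanifold; Lemma \ref{lem:totally-real} then projects this to the required totally real submanifold of $\CP^{N-1}$ near $p = \hat\Pi(\phi)$, which locally contains $\cbdy \V_{M,\spin}$ by Theorem \ref{thm:extensible-holonomy}.

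For part (ii), the plan is a proof by contradiction. Suppose $\Pi^{-1}(p) \cap \rbdy \V_{M,\spin}$ contains an open arc $I$; then $S_I := \bigcup_{q \in I} r_q$ is an open 2-real-dimensional subset of the complex line $\hat\Pi^{-1}(p)$, every point of which is a projective limit of $\V_{M,\spin}$, so Theorem \ref{thm:extensible-holonomy} gives $\F(S_I) \subset \L_{M,X}$. Since $\hat\Pi^{-1}(p) \setminus \{0\}$ is a single $\C^*$-orbit, the $\C^*$-invariance of strata (Lemma \ref{lem:smooth-stratification}) places all of $S_I$ in one stratum $Q_i(X)$; the finite covering of $\F(S_I)$ by train track charts together with the finite piecewise linear decomposition of $\L_{M,X}$ then permits, after shrinking to an open subarc, the arrangement $\F(S_I) \subset V$ for a single isotropic linear subspace $V \subset W(\tau)$. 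At any $z \in S_I$, the tangent space $T_zS_I = \C \cdot z$ is a complex line, on which $\omega_z$ is nondegenerate by the K\"ahler identity, so Theorem \ref{thm:symplectomorphism} makes the Thurston form nondegenerate on $d\F_z(T_zS_I) \subset V$, contradicting the isotropy of $V$.

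The principal obstacle is the general-position reduction in (i): finding a projective limit $\phi$ that maps into the relative interior of a minimal convex cone piece of $\L_{M,X}$ rather than onto a seam where distinct isotropic subspaces meet. I expect this to be handled by descending induction on dimension using the finite piecewise linear decomposition from Theorem \ref{thm:straight-isotropic}, together with the $\R^+$-invariance and nonemptiness of $\F(\Cone_\R(\V_{M,\spin}))$. By contrast, once this reduction is in place, part (ii) reduces to the one-line K\"ahler observation that a complex line is never $\omega$-isotropic.
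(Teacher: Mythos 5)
Your overall strategy is the paper's: combine Theorem \ref{thm:extensible-holonomy} (projective limits have foliations in $\L_{M,X}$) with Theorem \ref{thm:symplectomorphism} and the K\"ahler observation that isotropic implies totally real. Part~(ii) of your argument is essentially the paper's: an open arc of a Hopf circle gives an open sector $D$ in a complex line, $D$ lies in a single stratum by $\C^*$-invariance, and after a Baire-type shrinking $D$ sits in the preimage of one isotropic subspace, contradicting the fact that the tangent space $T_zD = \C\cdot z$ is a complex line and so cannot be isotropic for the K\"ahler form. This part is correct.

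In part~(i), however, there is a genuine gap, and it is not the one you identified. You want to choose $\phi \in \Cone_\R(\V_{M,\spin})$ so that $\F(\phi)$ is in the relative interior of a generic piece $V$ of $\L_{M,X}$, build a totally real manifold as $\F^{-1}(V)$ near $\phi$, enlarge $\R^+$-invariantly, and project by Lemma \ref{lem:totally-real}. The problem is the final claim that the projected manifold locally contains $\cbdy\V_{M,\spin}$ near $p=\hat\Pi(\phi)$. A point $p'\in\cbdy\V_{M,\spin}$ near $p$ is $\hat\Pi(\psi)$ for some projective limit $\psi$, but $\psi$ can sit anywhere on the Hopf circle over $p'$; it need not be close to $\phi$, and because $\F(e^{i\theta}\psi')$ varies nontrivially with $\theta$, such $\psi$ may lie in a different stratum of $Q(X)$ or have $\F(\psi)$ in a different cone piece of $\L_{M,X}$. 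Nothing in your genericity reduction over the PL decomposition of $\L_{M,X}$ forces these $\psi$ into $\F^{-1}(V)$ near $\phi$. The paper sidesteps this by never constructing the totally real manifold from a single piece $V$: it first picks a maximal stratum $Q_k(X)$ among those meeting $\sC_{M,\spin}$ and a $\C^*$-invariant tubular neighborhood $U_0$ of it (so that every ray over a nearby Hopf circle lies in $Q_k(X)$), then observes that $\cbdy(\F^{-1}(\L_{M,X})\cap Q_k(X))$ is a \emph{subanalytic} set (the image under the proper real-analytic map $\Pi$ of the semianalytic set $\rbdy(\F^{-1}(\L_{M,X})\cap Q_k(X))$), stratifies it via the theorems of Hironaka and Hardt so that $\Pi$ maps strata to strata with constant rank, and takes $N$ to be a maximal such stratum meeting $\cbdy\V_{M,\spin}$. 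The containment $(\cbdy\V_{M,\spin}\cap U)\subset N$ then holds automatically because $\cbdy\V_{M,\spin}\cap\cbdy U_0$ lies in the stratified subanalytic set to begin with; the total realness of $N$ is then proved by pulling back to a stratum $N'$ of $\rbdy$ mapping onto $N$ and applying the isotropic-implies-totally-real argument to $\Cone_\R(N')$ before projecting via Lemma \ref{lem:totally-real}. This subanalytic stratification machinery, especially Hardt's constant-rank result for the Hopf map, is the essential technical tool your proposal lacks.
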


\begin{prooflist}
\item Define
$$\sC_{M,\spin} := \Cone_\R(\V_{M,\spin}),$$
and suppose that $\cbdy \V_{M,\spin} \neq \emptyset$ so that
$\sC \neq \emptyset$.  Note that $\rbdy \V_{M,\spin} = \rbdy
\sC_{M,\spin}$ and $\cbdy \V_{M,\spin} = \cbdy
\sC_{M,\spin}$.  By Theorem \ref{thm:extensible-holonomy} we
have $\sC_{M,\spin} \subset \F^{-1}(\L_{M,X})$ and so
$$ \cbdy \V_{M,\spin} \subset \cbdy \F^{-1}(\L_{M,X}).$$
Thus it will suffice to locally cover $\cbdy \F^{-1}(\L_{M,X})$ by a
totally real, real-analytic manifold and to ensure that this set
contains a limit point of $\sC_{M,\spin}$.

Among the strata $Q_i(X)$ intersected by $\sC_{M,\spin}$, let $Q_k(X)$ be a
maximal element (one not contained in the boundary of another stratum
intersecting $\sC_{M,\spin}$).  Thus $Q_k(X)$ has an open tubular neighborhood
$U_0 \subset Q(X)$ disjoint from all other strata intersecting $\sC_{M,\spin}$,
so that $\emptyset \neq (\sC_{M,\spin} \cap U_0) \subset Q_k(X)$.  Furthermore
since $Q_k(X)$ is $\C^*$-invariant, we can choose $U_0$ to be
$\C^*$-invariant as well.  In particular the set of complex lines in
$U_0$ is an open set $\cbdy U_0 \subset \cbdy Q(X)$.

By Theorem \ref{thm:symplectomorphism}, the intersection
$(\F^{-1}(\L_{M,X}) \cap Q_k(X))$ is a semianalytic set, i.e.~it is
locally defined by finitely many equations and inequalities of
real-analytic functions.  Indeed, for each $p \in (\sC_{M,\spin} \cap U_0)$ the
theorem gives an open neighborhood $V$ of $p$ in $Q_k(X)$ such that
$\F : V \to \ML(\tau)$ is a real-analytic map into a train track chart,
and $\ML(\tau) \cap \L_{M,X}$ is a union of convex cones in linear
subspaces (thus semianalytic).

Since $(\F^{-1}(\L_{M,X}) \cap Q_k(X))$ is a union of rays in $Q(X)$,
its real boundary is the same as its intersection with the unit sphere
in $Q(X)$ and is also semianalytic.  Thus the set
$$ \cbdy (\F^{-1}(\L_{M,X}) \cap Q_k(X)) = \Pi(\rbdy
(\F^{-1}(\L_{M,X}) \cap Q_k(X))$$ is the image of a semianalytic set
under a proper real-analytic mapping, i.e.~a subanalytic set.  Such
sets can be can be stratified by connected, real-analytic, subanalytic
manifolds (see \cite{hironaka:subanalytic} \cite{hardt:stratification}), and
furthermore stratifications of $\rbdy (\F^{-1}(\L_{M,X}) \cap Q_k(X))$
and $\cbdy (\F^{-1}(\L_{M,X}) \cap Q_k(X))$ can be chosen so that
$\Pi$ maps strata to strata and so that the differential of $\Pi$ has
constant rank on each stratum \cite[Cor.~4.4]{hardt:stratification}.

Let $N \subset \cbdy (\F^{-1}(\L_{M,X}) \cap Q_k(X))$ be a stratum maximal
among those intersecting $\cbdy \V_{M,\spin}$. Taking a tubular neighborhood
$U$ of $N$ in $\cbdy U_0$ gives an open set in $\cbdy Q(X)$ in which
$\emptyset \neq (\cbdy \V_{M,\spin} \cap U) \subset (N \cap U)$.

It remains to show that the real-analytic manifold $N$ is totally
real. Since $\Pi$ is surjective, for each $p \in N$ there is a stratum
$$N' \subset \rbdy (\F^{-1}(\L_{M,X}) \cap Q_k(X))$$ such that $\Pi(N')$
contains an open neighborhood of $p$ in $N$.  Thus $N'' :=
\Cone_\R(N')$ is a real-analytic submanifold of $Q(X)$ and $N''
\subset (\F^{-1}(\L_{M,X}) \cap Q_k(X))$.

Theorem \ref{thm:symplectomorphism} then implies that $N''$ locally
maps by $\F$ into a finite union of isotropic subspaces of a train
track chart $\ML(\tau)$, and that in these local charts $\F$ is a
real-analytic, symplectic map.  Thus $N''$ is isotropic with respect
to the symplectic structure of $Q_k(X)$ given by Theorem
\ref{thm:kahler}.  Since the symplectic form on $Q_k(X)$ is induced by
a K\"ahler structure, an isotropic manifold is totally real.  We have
therefore described an open neighborhood of an arbitrary point $p \in
N$ as $\Pi(\rbdy N'') = \cbdy N''$ where $N'' \subset Q(X)$ is
$\R^+$-invariant and totally real.  It follows by Lemma
\ref{lem:totally-real} that $N$ itself is totally real.

\item Suppose on the contrary that $\rbdy \V_{M,\spin}$ contains an open arc
of a Hopf circle, or equivalently that the set $\sC_{M,\spin}$ contains an
open sector $D$ in a complex line in $Q(X)$.  Then $D$ lies in a
stratum $Q_i(X) \subset Q(X)$, so after possibly shrinking $D$ we can
apply Theorem \ref{thm:symplectomorphism} as in the previous paragraph
to conclude that $D$ is totally real, a contradiction.
\end{prooflist}

In order to show that $\V_{M,\spin}$ is discrete, we will derive a
contradiction from the conditions (i)--(ii) of Theorem
\ref{thm:real-and-complex} and the assumption that $\V_{M,\spin}$ contains an
analytic curve.  The next few paragraphs develop necessary machinery
for analyzing the real and complex boundaries of such curves, after
which we return to the proof of Theorem \ref{thm:main-intersection} in
sections \ref{sec:hopf-circles}--\ref{sec:discreteness}.

\subsection{Tangent cones and analytic curves}

A general reference for the following material is
\cite{chirka:complex-analytic-sets}.  If $E$ is a subset of $\C^n$ and
$p \in \C^n$, the \emph{tangent cone} of $E$ at $p$ is the set $C(E,p)
\subset \C^n$ of points of the form
$$ \lim_{k \to \infty} t_k(z_k - p) $$
where $z_k \in E$, $z_k \to p$, and $t_k \to 0^+$ as $k \to \infty$.  The following
basic properties of the tangent cone follow immediately from the
definition:
\begin{enumerate}
\item $C(E,p)$ is a closed $\R^+$-invariant set.

\item $C(E,p) \neq 0$ if and only if $p$ lies in the closure
$\bar{E}$.
\item For any $E_1,E_2 \subset \C^n$ we have $C(E_1 \cap
E_2,p) \subset C(E_1,p) \cap C(E_2,p)$.
\item If $E$ is a submanifold in a neighborhood of $p \in E$,
then $C(E,p)$ is the tangent space $T_pE$.
\end{enumerate}
It follows from (4) that if $E$ is an
analytic curve (i.e.~a complex analytic set of dimension $1$) and $p
\in E$ is a smooth point then $C(E,p)$ is a complex line.  More
generally, the tangent cone of an analytic curve at any point is a
finite union of complex lines.

Furthermore, just as a complex submanifold is locally a graph over its
tangent space, in a neighborhood of a point an analytic curve can be
parameterized as follows (see \cite[Sec.~1.6.1]{chirka:complex-analytic-sets}):

\begin{lem}
\label{lem:analytic-curve-parameterization}
Let $U$ be an open neighborhood of $0 \in \C^n$ and let $E \subset U$
be an analytic curve containing $0$ and irreducible at that point.
Suppose that $C(E,0) = \{ (w_1,0,\ldots,0) \: | \: w_1 \in \C \}$.  Then there exists a
natural number $m$ and a holomorphic map $f : (\Delta,0) \to (E,0)$
such that
$f(\zeta) = (f_1(\zeta), \ldots, f_n(\zeta))$ where
$$ f_1(\zeta) = \zeta^m$$
and for each $k > 1$ we have
$$ f_k(\zeta) = \zeta^{m_k} h_k(\zeta)$$
where $m_k > m$ and either $h_k(\zeta) \equiv 0$ or $h_k(\zeta)$ is
holomorphic with $h_k(0) \neq 0$.  The image $f(\Delta)$ contains $E
\cap U'$ for some neighborhood $U'$ of $0$.  \noproof
\end{lem}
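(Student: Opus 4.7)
The plan is to apply the classical local parameterization of an irreducible germ of a complex analytic curve (Puiseux's theorem, or equivalently, existence of a normalization) and then read off the required vanishing orders from the hypothesis on the tangent cone.

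First I would invoke the fact that an irreducible one-dimensional analytic germ admits a smooth one-dimensional normalization: concretely, this yields a holomorphic map $g : (\Delta,0) \to (E,0) \subset \C^n$ that is injective away from $0$ and whose image contains $E \cap U'$ for some neighborhood $U'$ of the origin. Writing $g = (g_1, \ldots, g_n)$, set $m_k = \mathrm{ord}_0 g_k$ (with $m_k = \infty$ if $g_k \equiv 0$), so that $g_k(\zeta) = \zeta^{m_k} h_k(\zeta)$ with either $h_k \equiv 0$ or $h_k(0) \neq 0$.

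Next I would reparameterize so that the first component becomes $\zeta^m$ exactly. The hypothesis $C(E,0) = \{ (w_1, 0, \ldots, 0) \: | \: w_1 \in \C \}$ contains the direction $(1, 0, \ldots, 0)$, so $E \not\subset \{ w_1 = 0 \}$ and $m := m_1 < \infty$. Since $h_1(0) \neq 0$, on a smaller disk $h_1$ admits a holomorphic $m$-th root $u$, and $\eta := \zeta \, u(\zeta)$ is a local biholomorphism fixing $0$. Composing $g$ with the inverse change of variable produces the desired $f$ with $f_1(\zeta) = \zeta^m$; the remaining components retain the form $f_k(\zeta) = \zeta^{m_k} \tilde{h}_k(\zeta)$ with the same orders $m_k$, and $\tilde{h}_k(0) \neq 0$ unless $\tilde{h}_k \equiv 0$.

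Finally I would extract the inequality $m_k > m$ for each $k > 1$ with $f_k \not\equiv 0$. Any nonzero $v \in C(E,0)$ arises as $v = \lim_j t_j f(\zeta_j)$ with $\zeta_j \to 0$ in $\Delta$ and $t_j \in \R^+$ chosen so that the limit exists; the first coordinate is $v_1 = \lim_j t_j \zeta_j^m$, so if $v_1 \neq 0$ then the product $t_j |\zeta_j|^m$ stays bounded away from $0$ and $\infty$, while the $k$-th coordinate $v_k = \lim_j t_j \zeta_j^{m_k} \tilde{h}_k(\zeta_j)$ has modulus comparable to $(t_j |\zeta_j|^m) \, |\zeta_j|^{m_k - m}$. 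Were $m_k \leq m$ and $\tilde{h}_k \not\equiv 0$, a suitable choice of argument for $\zeta_j$ would produce a $v$ with both $v_1$ and $v_k$ nonzero, contradicting the assumption that $C(E,0)$ lies on the $w_1$-axis. The only conceptually nontrivial ingredient is the appeal to the normalization of an irreducible curve germ; the rest is routine bookkeeping with orders of vanishing, and I do not anticipate a serious obstacle beyond careful verification of the reparameterization and the subsequence choice in the tangent cone calculation.
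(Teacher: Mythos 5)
The paper does not prove this lemma; it appends \texttt{\textbackslash noproof} and cites Chirka (Sec.~1.6.1 of his book). Your argument is the standard one — normalization/Puiseux of an irreducible curve germ, reparameterize by extracting an $m$-th root of the leading unit in the first coordinate, then read off the vanishing orders from the tangent-cone hypothesis — and this matches Chirka's treatment in spirit, so there is nothing structurally different here.

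One small imprecision in the final step is worth noting. You write that if $m_k \leq m$ (with $\tilde h_k \not\equiv 0$) then a suitable choice of arguments of $\zeta_j$ yields $v \in C(E,0)$ with both $v_1$ and $v_k$ nonzero. That is true only when $m_k = m$. When $m_k < m$, any scaling $t_j$ that keeps $t_j|\zeta_j|^m$ bounded away from $0$ forces $t_j|\zeta_j|^{m_k}\to\infty$, so $t_j f(\zeta_j)$ has no finite limit and you do not produce a tangent vector with $v_1 \neq 0$ at all. The contradiction in that subcase is therefore not ``both coordinates nonzero'' but rather either (a) $C(E,0)$ contains no vector with nonzero first coordinate, contradicting $C(E,0)=\C\cdot(1,0,\dots,0)$, or equivalently (b) taking $t_j=|\zeta_j|^{-m_k}$ gives a tangent vector with $v_1=0$ but $v_k\neq 0$, which is off the $w_1$-axis. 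Either reformulation closes the gap; the rest of the sketch — existence of the normalization $g$, $m_1<\infty$ because $E\not\subset\{w_1=0\}$, and the biholomorphic reparameterization $\eta=\zeta\,u(\zeta)$ — is correct as written.
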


Of course one can permute coordinates to obtain a similar
parameterization when $C(E,0)$ is any of the coordinate axes in
$\C^n$.  Also note that in this lemma the number $m$ is the
\emph{multiplicity} of $E$ at $p$, and $m>1$ if and only if $p$ is a
singular point.

\subsection{Analytic curves near a totally real manifold}

Two results characterizing the behavior of an analytic curve near a
totally real submanifold of $\C^n$ will be essential in the sequel.
The first describes the tangent cone of an analytic curve in the
complement of a totally real manifold at a boundary point.  A (closed)
\emph{complex half-line} is a set of the form $\{ L(x + i y) \: | \: y
\geq 0 \}$ where $L : \C \to \C^n$ is an injective complex linear map.

\begin{thm}[{Chirka \cite[Prop.~19]{chirka:boundaries}}]
\label{thm:analytic-curve-tangent}
Let $E \subset (U \setminus M)$ be an analytic curve where $U \subset
\C^n$ is open and $M \subset U$ is a closed, totally real submanifold
of class $C^k$ for some $k > 1$.
  Then for any $p \in (\bar{E} \cap
M)$ the tangent cone $C(E,p)$ is a nonempty finite union of complex
lines and half-lines.
\end{thm}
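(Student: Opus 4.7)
The plan is to reduce the statement to a local analytic-geometric fact about an analytic curve meeting a totally real submanifold and then to perform a branch-by-branch analysis using the Puiseux-type parameterization of Lemma \ref{lem:analytic-curve-parameterization}. First, since $M$ is $C^k$ totally real at $p$ with $k>1$, I would choose $C^k$ coordinates near $p$ whose differential at $p$ is $\C$-linear and in which $M$ is straightened to a piece of $\R^m \subset \C^n$, where $m = \dim_\R M$. Because the tangent cone is a first-order invariant at $p$ and transforms equivariantly under the complex-linear differential of the chart, no information about $C(E,p)$ is lost.

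The central step is to show that, in a neighborhood of $p$, the closure $\bar{E}$ in $U$ is itself a complex analytic subvariety of pure dimension one. This is a removable-singularity statement across the totally real set $M$ and should follow from local polynomial convexity of $C^2$ totally real submanifolds (in the spirit of Alexander, Stolzenberg, and Chirka). Once this extension is in hand, the text already observes that the tangent cone $C(\bar{E}, p)$ of an analytic curve is a finite union of complex lines, one for each local irreducible branch of $\bar{E}$ at $p$.

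Since $C(E,p) \subset C(\bar{E}, p)$, the problem reduces to understanding, for each branch $B \subset \bar{E}$ with parameterization $f\colon \Delta \to B$ as in Lemma \ref{lem:analytic-curve-parameterization}, the contribution to $C(E,p)$ coming from the open set $\Delta \setminus f^{-1}(M)$. The preimage $f^{-1}(M)$ is a closed subset of $\Delta$ containing $0$ whose regularity at $0$ is inherited from the $C^k$ regularity of $M$ together with the holomorphy of $f$; near $0$ it is either $\{0\}$ alone or a finite union of $C^1$ arcs through $0$ dividing a small disk into sectors. For each such sector, I would compute the arc of tangent directions it contributes to the complex line tangent to $B$, using the leading-order form $f(\zeta) \sim \zeta^m v$ of the parameterization. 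A matching between the Puiseux exponent $m$ and the real codimension of $M$ inside the tangent plane of $B$ should then force each sector's contribution to be either the full complex line or exactly a closed complex half-line in the sense of the definition preceding the theorem.

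The main obstacle I foresee is the extension step: showing that $\bar{E}$ is a complex analytic curve in a full neighborhood of $p$ in $U$. Analytic curves in the complement of a totally real submanifold do not in general extend across it, and the relevant removable-singularity theorem must correctly deploy both the $C^2$-type regularity of $M$ and the analytic structure of $E$. A secondary subtlety lies in the sector analysis: one must verify that the openings of the sectors of $\Delta \setminus f^{-1}(M)$ interact with the Puiseux exponent of $f$ in precisely the way needed to yield complex half-lines rather than sectors of arbitrary aperture in the tangent cone. This reflects a genuine infinitesimal compatibility between the real geometry of $M$ and the complex-analytic structure of $B$, and is, in my view, the heart of Chirka's statement.
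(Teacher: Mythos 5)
The paper does not prove Theorem~\ref{thm:analytic-curve-tangent}: it is quoted verbatim from Chirka \cite[Prop.~19]{chirka:boundaries} and used as a black box, so there is no internal proof for your attempt to be compared against. That said, your proposed proof has a genuine gap which the paper's own structure flags.

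Your central step is to show that $\bar{E}$ is a complex analytic curve in a full neighborhood of $p$, by a removable-singularity argument across $M$. This is exactly the content of Theorem~\ref{thm:analytic-curve-extension}, and the paper is explicit that it requires \emph{real-analytic} $M$ (``With additional regularity for the submanifold $M$, one has the following extension result''). For $M$ merely of class $C^k$ with $k>1$, the extension can fail: the classical examples of analytic discs with boundary attached to a smooth totally real manifold give analytic curves in $U\setminus M$ with no analytic continuation past $M$. Local polynomial convexity of $C^2$ totally real submanifolds (Stolzenberg, H\"ormander--Wermer, etc.) does not give this continuation --- polynomial convexity controls hulls of compacta near $M$, not analytic continuation of one-dimensional varieties across $M$. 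In fact the appearance of \emph{half}-lines in the conclusion of Theorem~\ref{thm:analytic-curve-tangent} is precisely the signature of branches of $E$ that do not continue across $M$; if $\bar{E}$ were always analytic, your argument would (correctly) give full lines for branches that pass through $M$ transversally, but the non-continuable branches are exactly the ones that have to be handled without the extension step, and those are the interesting case. So a proof of Chirka's proposition must establish the tangent-cone structure directly at the given $C^k$ regularity, without first promoting $\bar{E}$ to a variety.

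Your secondary worry --- that the sector analysis must show each sector contributes exactly a half-line rather than an arbitrary aperture --- is also well placed, and you correctly identify it as the technical heart of the statement; but it is subordinate to the extension issue above, since your sector picture presupposes the Puiseux parameterization of a branch of $\bar{E}$ that you have not yet produced. For these reasons the proposal, as written, does not give a proof of the theorem; given that the paper itself treats the result as a citation, the appropriate move in this context is to rely on Chirka's proof rather than to reprove it.
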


With additional regularity for the submanifold $M$, one has the
following extension result:

\begin{thm}[{Chirka \cite[Sec.~1]{chirka:boundaries}, Alexander \cite{alexander:continuing}}]
\label{thm:analytic-curve-extension}
Let $E \subset (U \setminus M)$ be an analytic curve where $U \subset
\C^n$ is open and $M \subset U$ is a closed, totally real,
real-analytic submanifold.  Then for any $p \in (\bar{E} \cap M)$, the
set $E$ admits an analytic continuation near $p$,
i.e.~there exists a neighborhood $U'_p$ of $p$ and an analytic curve
$E'_p \subset U'_p$ such that $(E \cap U'_p) \subset E'_p$.
\end{thm}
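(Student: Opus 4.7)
The plan is to reduce the theorem to a local reflection argument. Since $M$ is a closed, totally real, real-analytic submanifold of $\C^n$ of some dimension $k$, complexifying a real-analytic parameterization of $M$ near $p$ produces a biholomorphism of a neighborhood of $p$ in $\C^n$ onto a neighborhood of $0$ in $\C^n$ that sends $M$ onto the standard totally real subspace $\R^k \times \{0\}^{n-k} \subset \C^n$. After this change of coordinates I may assume $p = 0$ and $M = \R^k \times \{0\}^{n-k}$. Since $\bar{E}$ has only finitely many irreducible components at $0$ and a finite union of analytic curves is again an analytic curve, it suffices to extend each irreducible branch separately.

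For a single irreducible branch, Theorem \ref{thm:analytic-curve-tangent} guarantees that its tangent cone at $0$ is either a single complex line $L$ or a single closed complex half-line lying in a complex line $L$. An orthogonal change of coordinates in $\R^n$, extended complex-linearly to $\C^n$, preserves $M$; after such a map I may take $L$ to be the $z_1$-axis. Lemma \ref{lem:analytic-curve-parameterization} (in the line case), or its analog on a half-disk (in the half-line case), then provides a holomorphic parameterization of the branch of the form $f(\zeta) = (\zeta^m, \zeta^{m_2} h_2(\zeta), \ldots, \zeta^{m_n} h_n(\zeta))$, defined on the unit disk $\Delta$ or on a half-disk.

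The crucial technical step is to show that $f$ extends continuously to the relevant boundary arc of its parameter domain with boundary values lying in $M$; this is the content of Chirka's boundary-regularity theorem for analytic curves bounded by a totally real submanifold, and it relies on finite-length estimates for $E$ near $M$ combined with the tangent-cone analysis of Theorem \ref{thm:analytic-curve-tangent}. Once the continuous extension of $f$ is in hand, a Schwarz-type reflection built from the real-analytic structure of $M$ promotes it to a holomorphic extension: the complexification of $M$ yields, in a neighborhood of $M$, an anti-holomorphic involution whose fixed locus contains $M$, and composing $f$ with this involution across the relevant boundary arc produces a holomorphic map on a full neighborhood of $0$ in the parameter disk. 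The image of the extended map is the required analytic curve $E'_p$ containing $E$ in some neighborhood $U'_p$ of $p$. The main obstacle is the boundary-regularity step, since without the real-analyticity of $M$ one obtains at best a continuous extension rather than a holomorphic one; it is precisely the additional analytic structure of $M$ that allows the reflection step to upgrade continuous boundary values to an analytic continuation.
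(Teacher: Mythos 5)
The paper does not give a proof of this theorem; it is stated with an attribution to Chirka and Alexander, together with a pointer to Chirka's book for further discussion. So your proof attempt can only be assessed on its own terms, not against an in-text argument.

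Your sketch identifies the correct high-level strategy — normalize $M$ by complexifying a real-analytic parameterization, analyze the tangent cone, parameterize branches, and reflect across the complexification of $M$ — and the final Schwarz-reflection step (including the observation that for $\dim_\R M = k < n$ the coordinates of index $>k$ vanish identically, pushing the curve into $M^{\C}$ where $M$ is maximal totally real) is sound. However, there is a genuine circularity in the middle. You invoke "finitely many irreducible components of $\bar E$ at $0$" and then apply Lemma~\ref{lem:analytic-curve-parameterization} (or a half-disk analog) to parameterize each one. Both steps presuppose that $\bar E$ already has the local structure of an analytic curve through $p$ — but that is the conclusion of the theorem, not a hypothesis. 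Since $E \subset U \setminus M$, the point $p$ does not lie on $E$, so the lemma (which parameterizes an irreducible analytic curve at a point \emph{of} the curve) does not apply, and "irreducible components at $0$" is not a defined notion for the mere closed set $\bar E$. Theorem~\ref{thm:analytic-curve-tangent} constrains the tangent cone of $E$ at $p$ to be a finite union of lines and half-lines, but it does not by itself yield a parameterization or a finite branch decomposition. The "boundary-regularity" input that you place last and call the crux must logically come \emph{first}: via area/mass estimates on the integration current $[E]$, the continuity principle, and the tangent-cone control, one establishes that near $p$ the curve consists of finitely many arcs landing continuously on $M$, each admitting a Puiseux-type parameterization on a sector. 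Only after that structural result is in hand does the reflection step you describe produce the analytic continuation $E'_p$.
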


Further discussion of this result can be found in \cite[Sec.~20.5]{chirka:complex-analytic-sets}.

\subsection{Hopf circles}
  \label{sec:hopf-circles}

If $E \subset \C^n$ is an algebraic curve, then $\partial_\C E$ is a
finite set and $\partial_\R C$ is the union of Hopf circles lying over
$\partial_\C E$.  The next theorem establishes a similar property of
$\partial_\R E$ when the algebraic assumption is replaced by the
condition that $\partial_\C E$ locally lie in a totally real manifold.

\begin{thm}
\label{thm:hopf-arc}
Let $E$ be an analytic curve in $\C^n$ and suppose that for some $p
\in \cbdy E$ there is a neighborhood $U$ of $p$ in $\cbdy \C^n$ and a
totally real, real-analytic submanifold $N$ of $U$ such that
$(\partial_\C E \cap U) \subset N$.  Then $\partial_\R E$ contains an
open arc of a Hopf circle.
\end{thm}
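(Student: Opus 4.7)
The plan is to work in an affine chart of $\CP^n$ that straightens the relevant geometry at $p$, apply Chirka's extension theorem to continue $E$ analytically through $p$, and then read off directions on the Hopf circle from the phase of a single coordinate along an irreducible branch of the extension.

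First I would choose projective coordinates on $\CP^n$ so that $p = [0:1:0:\cdots:0]$ and work in the corresponding affine chart $U$ with coordinates $\tilde{z}_0,\tilde{z}_2,\ldots,\tilde{z}_n$. In this chart $p$ is the origin, the hyperplane $\cbdy\C^n$ is cut out by $\tilde{z}_0 = 0$, and the submanifold $N$ is contained in $\{\tilde{z}_0 = 0\}$. The curve $E \cap U$ lies in the complement $U \setminus \{\tilde{z}_0 = 0\}$, has $p$ in its closure, and all of its limit points in $\{\tilde{z}_0 = 0\}$ lie in $N$; after shrinking $U$ we may assume $E \cap U \subset U \setminus N$.

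Next I would invoke Theorem \ref{thm:analytic-curve-extension} to extend $E$ across the totally real submanifold $N$, obtaining a smaller neighborhood $U'$ of $p$ and an analytic curve $\tilde{E} \subset U'$ with $E \cap U' \subset \tilde{E}$. The key structural point is that no irreducible branch of $\tilde{E}$ at $p$ can lie in $\{\tilde{z}_0 = 0\}$, since such a branch would be a positive-dimensional complex analytic subset of the totally real manifold $N$. After shrinking $U'$, each branch $B$ therefore meets $\{\tilde{z}_0 = 0\}$ only at $p$, and the punctured branch $B \setminus \{p\}$ is contained in $E$. Fixing such a branch and choosing a holomorphic parametrization $f : (\Delta,0) \to (B,p)$, and writing $f = (f_0, f_2, \ldots, f_n)$ in the affine chart, the component $f_0$ is holomorphic, vanishes only at $\zeta = 0$, and is not identically zero, so
$$
f_0(\zeta) = c\,\zeta^M\,(1 + o(1)) \quad \text{as } \zeta \to 0,
$$
with $c \neq 0$ and $M \geq 1$, while each $f_i$ ($i \geq 2$) tends to $0$ with $\zeta$.

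Finally I would translate back to the original coordinates on $\C^n$: a point with affine coordinates $(\tilde{z}_0, \tilde{z}_2, \ldots, \tilde{z}_n)$ has standard coordinates $z = (1/\tilde{z}_0, \tilde{z}_2/\tilde{z}_0, \ldots, \tilde{z}_n/\tilde{z}_0)$, and a short calculation shows that if $\tilde{z}^{(k)} \to 0$ with $\arg \tilde{z}_0^{(k)} \to \phi$ then
$$
z_k / |z_k| \longrightarrow (e^{-i\phi}, 0, \ldots, 0),
$$
so as $\phi$ varies the limit direction traces out the full Hopf circle over $p$. Along the parametrization we have $\arg f_0(\rho e^{i\alpha}) = \arg c + M\alpha + o(1)$, so varying $\alpha \in [0, 2\pi)$ while sending $\rho \to 0$ realizes every target value, and the resulting points $z_k = f(\zeta_k) \in E$ accumulate at every point of the Hopf circle over $p$. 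This gives $\Pi^{-1}(p) \subset \rbdy E$, which is stronger than the claimed open arc.

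The main obstacle, and the place where the totally real hypothesis is essential, is the step ensuring that no branch of the analytic extension $\tilde{E}$ is absorbed into the hyperplane at infinity, so that $\tilde{E}$ genuinely records $E$'s approach to $p$; this uses crucially that $N$ contains no positive-dimensional complex analytic subsets. Once that is in hand the remainder is a direct computation with the holomorphic parametrization and the change of coordinates between the chart and the standard coordinates on $\C^n$.
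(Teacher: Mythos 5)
Your route diverges from the paper's after the shared first step of invoking Chirka's continuation theorem. The paper then applies Chirka's tangent-cone theorem (Theorem~\ref{thm:analytic-curve-tangent}) to obtain a complex half-line in $C(\hat{E},p)$ and splits into two cases according to whether the line containing it is transverse to or lies inside $\cbdy\C^n$; you instead parametrize a branch $B$ of the extension in Puiseux form and read off Hopf-circle directions from the argument of the coordinate $f_0$ transverse to the hyperplane at infinity. Your route avoids both the tangent-cone theorem and the case split, and in fact yields the stronger conclusion that the \emph{entire} Hopf circle lies in $\rbdy E$, not merely an arc.

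The load-bearing step---that the punctured branch $B\setminus\{p\}$ lies in $E$---is, however, simply asserted, and as you state it (``each branch'') it is false: the extension $\tilde{E}$ may carry spurious branches that never meet $E$, and the reasoning you give for ruling out branches inside $\{\tilde{z}_0=0\}$ is also flawed, since such a branch is a complex curve in the hyperplane at infinity but has no reason to lie in the manifold $N$. What you need, and what is true, is that \emph{some} branch $B$ satisfies $B\setminus\{p\}\subset E$ near $p$. To see this, note that $E\cap U'$ is a pure one-dimensional closed analytic subset of $U'\setminus\{\tilde{z}_0=0\}$ contained in the pure one-dimensional set $\tilde{E}\cap(U'\setminus\{\tilde{z}_0=0\})$, hence is a union of irreducible components of the latter; for $U'$ small these components are exactly the punctured branches of $\tilde{E}$ at $p$, and at least one must belong to $E\cap U'$ since $p\in\cbdy E$. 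The paper relies on the same fact implicitly when it declares $\hat{E}'\cap E$ to be an analytic curve before applying the tangent-cone theorem, so the gap is not peculiar to your route, but it is the step that most deserves a proof. Once it is supplied your argument is complete.
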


In the proof we will consider $\C^n$ as an affine chart of its
compactification $\CP^n = \C^n \cup \cbdy \C^n$, but other affine
charts of $\CP^n$ will also be used.  In order to distinguish among
them, we use the notation $\C^n_z$ for the original affine chart (in
which $E$ is an analytic curve) with coordinates $z_1, \ldots, z_n$,
and $\C^n_w$ will denote another affine chart with coordinates $w_1,
\ldots, w_n$.

\begin{proof}
Let $V$ be an open neighborhood of $p$ in $\CP^n$ such that $V \cap
\cbdy \C^n_z = U$.  After possibly shrinking $V$ and $U$ we can assume
that $V$ lies in an affine chart $\C^n_w$ of $\CP^n$.

By Theorem \ref{thm:analytic-curve-extension} we can assume, after
further shrinking $V$, that $E \cap V$ is a subset of an analytic
curve $E' \subset V$.  
One of the irreducible components of $E'$ at $p$ must intersect $E$,
so let $\hat{E}'$ denote such a component.  Then $\hat{E} = \hat{E}'
\cap E$ is an analytic curve in $U \cap \C^n_z$ such that $\partial_\R
\hat{E} \subset N$, and $p$ is an accumulation point of $\hat{E}$.  By
Theorem \ref{thm:analytic-curve-tangent} the set $C(\hat{E},p)$
contains a complex half-line $H$.  Since $\hat{E} \subset \hat{E}'$,
the complex line $L$ containing $H$ is one of the finite set of lines
comprising $C(\hat{E}',p)$.

There are now two cases to consider, based on the relative position of
$L$ and the hyperplane $\cbdy \C^n_z$:
\begin{enumerate}
\item $L$ is transverse to $\cbdy \C^n_z$ (equivalently, it intersects $\C^n_z$)
\item $L$ is contained in $\cbdy \C^n_z$ 
\end{enumerate}
Intuitively, these two cases correspond to whether the analytic curve
$E$ meets the hyperplane at infinity $\cbdy \C^n_z$ transversely or
tangentially at $p$.

\boldpoint{Case 1.}  By changing the affine chart $\C^n_w$ and making
an affine change of coordinates in $\C^n_z$ and $\C^n_w$ we put
$p$, $L$, and $H$ in a standard position; specifically, we 
suppose that in a homogeneous coordinate system for $\CP^n$ the
inclusions of $\C^n_z$ and $\C^n_w$ are given by
\begin{equation*}
\begin{split}
(z_1, \ldots, z_n) &\mapsto [z_1:\cdots:z_n:1],\\
(w_1, \ldots, w_n) &\mapsto [w_1:\cdots:w_{n-1}:1:w_n],
\end{split}
\end{equation*}
that $L$ is the $w_n$-axis in $\C^n_w$, and that $H \subset L$ is
defined by $\Im(w_n) \geq 0$.

Let $f : \Delta \to \C^n_w$ be a local parameterization of $\hat{E}'$
at $0$ as in Lemma \ref{lem:analytic-curve-parameterization}; note
that here the tangent cone of $\hat{E}'$ is the $w_n$-axis.  Then we
have for any $\zeta \neq 0$ that
\begin{equation*}
\begin{split}
f(\zeta) & = \left ( \zeta^{m_1} h_1(\zeta) , \ldots , \zeta^{m_{n-1}}
h_{n-1}(\zeta) , \zeta^{m} \right ) \in \C^n_w\\
& = \left ( \zeta^{m_1 - m} h_1(\zeta) , \ldots , \zeta^{m_{n-1} - m}
h_{n-1}(\zeta) , \zeta^{-m} \right ) \in \C^n_z.
\end{split}
\end{equation*}
Since $H$ is the tangent cone of $\hat{E} \subset \hat{E}'$ as a
subset of $\C^n_w$, for each
$\theta \in [0,\pi]$ we have a sequence $\zeta_k \to 0$ such that
$\lim_{k \to \infty} \arg(\zeta_k^m) = \theta$ and $f(\zeta_k) \in
\hat{E}$.
As a point in $\C^n_z$, $f(\zeta_k)$ lies on the same ray as
$|\zeta_k|^m f(\zeta_k)$ which has coordinates 
$$\left (|\zeta_k|^m
\zeta_k^{m_1 - m} h_1(\zeta_k) , \ldots , |\zeta_k|^m \zeta_k^{m_{n-1}
  - m} h_{n-1}(\zeta) , |\zeta_k|^m \zeta^{-m} \right ) \in \C^n_z.$$
Since $m_i > 0$ and $h_i(\zeta_k)$ is bounded as $\zeta_k \to 0$ for
each $1 \leq i \leq m-1$, the sequence $|\zeta_k|^m f(\zeta_k) \in
\C^n_z$ converges to $(0,\ldots,0,e^{-i \theta}) \in \partial_\R E$.
Since $\theta \in [0,\pi]$ was arbitrary, we find that $\partial_\R E$
contains half of the Hopf circle containing $(0,\ldots,0,1)$,
completing this case.

\boldpoint{Case 2.}  We begin as before, altering the argument as
necessary.

Choose coordinates so that $z_i$ and $w_i$ are related
to one another as above but now we take $L$ to be the $w_1$-axis and
$H$ the subset with $\Im(w_1) \geq 0$.  Parameterizing $\hat{E}'$ and
calculating as above we find that
\begin{equation*}
\begin{split}
f(\zeta) &= \left ( \zeta^m, \zeta^{m_2} h_2(\zeta), \ldots, \zeta^{m_n}
h_n(\zeta) \right ) \in C^n_w\\
& = \left ( \zeta^{m-m_n} h_n(\zeta)^{-1}, \zeta^{m_2 - m_n} h_2(\zeta)
h_n(\zeta)^{-1}, \ldots \right .\\
& \left . \;\;\;\;\;\;\;\; \ldots, \zeta^{m_{n-1} - m_n}
h_{n-1}(\zeta) h_n(\zeta)^{-1}, \zeta^{-m_n} h_n(\zeta)^{-1} \right ) \in C^n_z.
\end{split}
\end{equation*}
The coordinate expression in $\C^n_z$ is well-defined since
$h_n(\zeta) \neq 0$ for $\zeta$ in a small punctured neighborhood of
zero: Indeed, while Lemma \ref{lem:analytic-curve-parameterization}
includes the possibility that $h_n(\zeta) \equiv 0$, this would mean
that $\hat{E}' \cap \C^n_z = \emptyset$, contradicting the assumption
that $\hat{E}'$ intersects $E$.  By a further linear change of
coordinates we can also assume without loss of generality that $h_n(0)
= 1$.
As before the condition that $H$ is the tangent cone of $\hat{E}$
gives for each $\theta \in [0,\pi]$ a sequence $\zeta_k \to 0$ such
that $\lim_{k \to \infty} \arg(\zeta_k^m) = \theta$ and $f(\zeta_k)
\in \hat{E}$.
As a point in $\C^n_z$, $f(\zeta_k)$ lies on the same ray as
$|\zeta_k|^{m_n} f(\zeta_k)$ which has coordinates
\begin{multline*}
 \bigl ( |\zeta_k|^{m_n} \zeta_k^{m-m_n} h_n(\zeta_k)^{-1},
|\zeta_k|^{m_n} \zeta^{m_2 - m_n} h_2(\zeta_k)
h_n(\zeta_k)^{-1}, \\
 \ldots, |\zeta_k|^{m_n} \zeta_k^{m_{n-1} - m_n}
h_{n-1}(\zeta_k) h_n(\zeta_k)^{-1}, |\zeta_k|^{m_n} \zeta_k^{-m_n} h_n(\zeta_k)^{-1}
\bigr ) \in \C^n_z.
\end{multline*}
As $k \to \infty$ we find $|\zeta_k|^{m_n} f(\zeta_k) \to
(0,\ldots,0,e^{-(m_n/m) i \theta'}) \in \partial_\R E$ for some
$\theta' \equiv \theta$ mod $2 \pi$.  Allowing $\theta$ to vary over
$[0,\pi]$ we find that $\partial_\R E$ contains an arc of a Hopf
circle.
\end{proof}

\subsection{Discreteness}
  \label{sec:discreteness}

Using the above results on analytic curves near a totally real
manifold, the discreteness of $\sH_X \cap \E_M$ now follows easily:

\begin{proof}[Proof of Theorem \ref{thm:main-intersection} (connected boundary)]
Suppose on the contrary that the intersection $\sH_X \cap \E_M$ is not
discrete.  Since this set is a finite union of analytic subvarieties
$\{ \sH_{X,\spin} \cap \E_M \: | \: \spin \in \mathrm{Spin}(X)
\}$, at least one of these subvarieties is not discrete.  Thus there
exists some $\spin \in \mathrm{Spin}(X)$ so that $\sH_{X,\spin}
\cap \E_M$ contains an analytic curve, as does its preimage
$\V_{M,\spin} = \hol^{-1}(\E_M)$.

By the maximum principle $\V_{M,\spin}$ is non-compact and
$\cbdy \V_{M,\spin} \neq \emptyset$, so part (i) of Theorem
\ref{thm:real-and-complex} implies that $\cbdy \V_{M,\spin}$ is
locally contained in a real-analytic, totally real manifold.  But then
Theorem \ref{thm:hopf-arc} gives an open arc of a Hopf circle
contained in $\rbdy \V_{M,\spin}$, contradicting part (ii) of
Theorem \ref{thm:real-and-complex}.
\end{proof}

\section{Skinning maps: The connected boundary case}

\subsection{Hyperbolic structures}

Let $M$ be a compact, irreducible, atoroidal $3$-manifold with
connected incompressible boundary $S = \partial M$ of genus
$g \geq 2$.  Then the interior $M^\circ$ admits a complete hyperbolic
structure by Thurston's Geometrization Theorem for Haken manifolds.
Let $\AH(M) \subset \X(M,\PSL_2\C)$ denote the set of isometry classes
of marked hyperbolic structures on $M^\circ$.  The closed set $\AH(M)$
lies in the smooth locus of $\X(M,\PSL_2\C)$ (see
\cite[Sec.~8.8]{kapovich}), and its interior $\GF(M)$ consists of
the convex cocompact hyperbolic structures.  The quasiconformal
deformation theory of Kleinian groups gives a holomorphic
parameterization of $\GF(M)$ by the Teichm\"uller space $\T(S)$ (see
e.g.~\cite{bers:spaces-of-kleinian} \cite{kra:deformation-spaces}).
We denote this \emph{Ahlfors-Bers parameterization} by
\begin{equation*}
\begin{split}
\T(S) &\longrightarrow \GF(M) \subset \X(M,\PSL_2\C)\\
X &\longmapsto \rho^M_X
\end{split}
\end{equation*}

\subsection{Quasi-Fuchsian groups}

The set $\QF(S) \subset \X(S, \PSL_2\C)$ of characters of
quasi-Fuchsian representations is an open subset of the smooth locus
in $\X(S,\PSL_2\C)$ that has a natural parameterization by the product
of Teichm\"uller spaces $\T(S) \times \T(\bar{S})$.  As a set $\QF(S)$
does not depend on the orientation of $S$, but the orientation
\emph{is} used to distinguish the factors in this parameterization.
This coordinate system for $\QF(S)$ is a particular case of the
Ahlfors-Bers coordinates, since $\QF(S) = \GF(S \times I) \subset \X(S
\times I,\PSL_2\C) = \X(S,\PSL_2\C)$.  We write $Q(X,Y)$ for the point
in $\QF(S)$ corresponding to the pair $(X,Y) \in \T(S) \times
\T(\bar{S})$.

Given $Y \in \T(\bar{S})$, the \emph{Bers slice} is the subset
$$ B_Y = \{ Q(X,Y) \: | \: X \in \T(S) \} \subset \QF(S), $$
that is, $B_Y$ is a ``horizontal slice'' of the product structure of
the quasi-Fuchsian space.  Similarly we can define the vertical Bers
slices $B_X = \{ Q(X,\param) \}$ for $X \in \T(S)$.

Note $B_Y$ is naturally in one-to-one correspondence with $\T(S)$.  An
inequality of Bers shows that the lengths of geodesics in the
hyperbolic $3$-manifold corresponding to a quasi-Fuchsian group
$Q(X,Y)$ have an upper bound in terms of lengths in the uniformization
of either $X$ or $Y$ \cite[Thm.~3]{bers:boundaries}).  Since one of
these is fixed in a Bers slice, there are uniform bounds on the traces
of any finite set of elements in $\pi_1S$ over $B_Y$, and thus:

\begin{lem}[Bers]
\label{lem:bers-precompact}
For each $Y \in \T(\bar{S})$, the Bers slice $B_Y$ is precompact. \noproof
\end{lem}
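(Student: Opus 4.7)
The plan is to convert Bers' length inequality into uniform bounds on finitely many generators of the coordinate ring of $\X(S,\PSL_2\C)$, thereby exhibiting $B_Y$ as a bounded subset of affine space.

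First I would select finitely many elements $\gamma_1, \ldots, \gamma_N \in \pi_1 S$ such that the squared trace functions $t_{\gamma_1}^2, \ldots, t_{\gamma_N}^2$ generate $\QQ[\X(S,\PSL_2\C)]$; this is possible since the character variety is an affine algebraic variety of finite type, with coordinate ring generated by squares of traces as noted in Section \ref{sec:character-varieties}. These functions assemble into a finite affine embedding
$$ T : \X(S,\PSL_2\C) \into \C^N, \quad T([\rho]) = (t_{\gamma_1}^2(\rho),\ldots,t_{\gamma_N}^2(\rho)),$$
which is proper when $\C^N$ is given the Hausdorff topology on $T(\X(S,\PSL_2\C))$.

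Next I would invoke Bers' inequality: for any $\gamma \in \pi_1 S$ and any $Q(X,Y) \in B_Y$, the translation length in $\H^3$ satisfies
$$ \ell_{Q(X,Y)}(\gamma) \leq 2 \, \ell_Y(\gamma),$$
where $\ell_Y(\gamma)$ is the length of the geodesic representative of $\gamma$ in the hyperbolic surface $Y$. Since $Y$ is fixed along the slice, the right-hand side is a constant depending only on $\gamma$. Combining this with the standard bound $|\tr g|^2 \leq 4 \cosh^2(\ell(g)/2)$ for loxodromic $\SL_2\C$-elements (with the trivial bound $|\tr g|^2 \leq 4$ for elliptic or parabolic elements), and noting that $t_\gamma^2$ descends to a well-defined function on $\X(S,\PSL_2\C)$, I obtain a uniform bound $|t_{\gamma_i}^2(Q(X,Y))| \leq C_i$ over all $Q(X,Y) \in B_Y$, independent of $X$.

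These uniform bounds place $T(B_Y)$ inside a compact polydisk in $\C^N$, and since $T$ is proper on its image, pulling back gives that $B_Y$ is precompact in $\X(S,\PSL_2\C)$. The argument is essentially routine once Bers' inequality is in hand; the only technical point requiring attention is the passage from the $\SL_2\C$ trace-length relation to functions on the $\PSL_2\C$-character variety, which is handled by working systematically with squared traces as coordinates.
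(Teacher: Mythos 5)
Your proof is correct and fills in the details of exactly the argument the paper gives in the sentence preceding the lemma: Bers' inequality yields uniform bounds on the (squared) traces of a finite generating set of $\QQ[\X(S,\PSL_2\C)]$, and since these trace functions give a proper affine embedding of the character variety, $B_Y$ lies in a bounded, hence precompact, set. The paper states the lemma without proof and leaves these details implicit, so your write-up matches the intended route.
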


Each point in the Bers slice $B_Y$ induces a $\CP^1$-structure on $Y$
by taking the quotient of one of the domains of discontinuity of the
associated quasi-Fuchsian group.  This construction is a local inverse
of the holonomy map, i.e.~it gives an open subset of $Q(Y)$ (the
\emph{Bers embedding}) that maps biholomorphically onto $B_Y$ by
$\hol$.  In particular we have $B_Y \subset r(\sH_Y)$, where $r :
\X(S,\SL_2\C) \to \X(S,\PSL_2\C)$ is the map induced by the covering
$\SL_2\C \to \PSL_2\C$.  This applies equally to the vertical slices,
i.e.~$B_X \subset r(\sH_X)$ for $X \in \T(S)$.

\subsection{The skinning map}

The restriction map $i^* : \X(M,\PSL_2\C) \to \X(S,\PSL_2\C)$ sends
$\GF(M)$ into $\QF(S)$, and in terms of the Ahlfors-Bers
parameterization it is the identity on one Teichm\"uller space factor,
i.e.
$$ i^*(\rho^M_X) = Q(X,\sigma_M(X))$$
which defines a map
$$ \sigma_M : \T(S) \to \T(\bar{S}),$$
the \emph{skinning map of $M$}.

Fibers of the skinning map are related to sets $r(\sH_Y \cap \E_M)$ as
follows:

\begin{lem}
\label{lem:analytic-reduction}
The preimage $\sigma_M^{-1}(Y)$ is in bijection with a precompact
set $F_Y \subset r(\sH_Y \cap \E_M)$.
\end{lem}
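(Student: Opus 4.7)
The plan is to take
$$F_Y = \{i^*(\rho^M_X) : X \in \sigma_M^{-1}(Y)\} = \{Q(X,Y) : X \in \sigma_M^{-1}(Y)\} \subset B_Y \subset \X(S,\PSL_2\C)$$
and verify three things: (i) $X \mapsto Q(X,Y)$ is a bijection from $\sigma_M^{-1}(Y)$ onto $F_Y$, (ii) $F_Y$ is precompact, and (iii) $F_Y \subset r(\sH_Y \cap \E_M)$.

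Point (i) is immediate from the injectivity of the Ahlfors--Bers product coordinates on $\QF(S)$: distinct $X, X' \in \sigma_M^{-1}(Y)$ give distinct quasi-Fuchsian characters $Q(X,Y) \neq Q(X',Y)$. Point (ii) is immediate from $F_Y \subset B_Y$ together with Lemma \ref{lem:bers-precompact}.

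The content is in (iii). Fix $X \in \sigma_M^{-1}(Y)$. Choose a spin structure $\spin_M$ on $M^\circ$, which determines an $\SL_2\C$-lift $\tilde{\rho}^M_X \in \X(M,\SL_2\C)$ of the geometrically finite representation $\rho^M_X$. By construction $i^*(\tilde{\rho}^M_X) \in i^*(\X(M,\SL_2\C)) \subset \E_M$, and the commutative diagram \eqref{eqn:functorial} gives $r(i^*(\tilde{\rho}^M_X)) = i^*(\rho^M_X) = Q(X,Y)$. On the other hand, the Kleinian manifold $\mathbb{H}^3 / \rho^M_X(\pi_1M)$ equips the boundary surface $Y$ with a natural $\CP^1$-structure $\phi$ coming from the quotient of a component of the domain of discontinuity, and $\phi$ has $\PSL_2\C$-holonomy equal to $Q(X,Y)$. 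Restricting $\spin_M$ to $\bdy M$ gives a spin structure $\spin$ on $Y$, and with respect to $\spin$ the $\SL_2\C$-lift $\hol_\spin(\phi) \in \sH_{Y,\spin} \subset \sH_Y$ of $\phi$'s holonomy coincides with $i^*(\tilde{\rho}^M_X)$, since both are obtained by parallel transport of the same spin structure along loops representing elements of $\pi_1S$. Therefore $i^*(\tilde{\rho}^M_X) \in \sH_Y \cap \E_M$ lifts $Q(X,Y)$, showing $Q(X,Y) \in r(\sH_Y \cap \E_M)$.

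The only step requiring care is the spin-structure compatibility used to place a single $\SL_2\C$-lift of $Q(X,Y)$ simultaneously in $\sH_Y$ and $\E_M$; once one observes that a spin structure on $M^\circ$ restricts to one on $Y$ that governs the $\SL_2\C$-lift of the $\CP^1$-structure's holonomy, the argument reduces to functoriality of restriction through \eqref{eqn:functorial} together with the Bers--embedding description $B_Y \subset r(\sH_Y)$.
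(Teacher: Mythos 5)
Your proposal is correct and takes essentially the same route as the paper: realize $\sigma_M^{-1}(Y)$ as the intersection $i^*(\GF(M)) \cap B_Y$, get precompactness from Lemma~\ref{lem:bers-precompact}, use liftability of hyperbolic $3$-manifold holonomy to $\SL_2\C$ together with the functoriality diagram~\eqref{eqn:functorial}. One small remark on what you flag as ``the only step requiring care'': the spin-structure compatibility you invoke to place $i^*(\tilde\rho^M_X)$ in $\sH_Y$ is unnecessary, because by definition $\sH_Y = r^{-1}(\hol(Q(Y)))$ is saturated with respect to $r$ --- so \emph{any} $\SL_2\C$-lift of $Q(X,Y)$ lies in $\sH_Y$ automatically, and you need only know that $Q(X,Y)$ is the $\PSL_2\C$-holonomy of a projective structure on $Y$ (which is the Bers embedding fact $B_Y \subset r(\sH_Y)$). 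In fact your instinct to produce a single explicit lift lying in $\sH_Y \cap \E_M$, rather than arguing separately that $F_Y \subset r(\sH_Y)$ and $F_Y \subset r(\E_M)$ as the paper does, is slightly cleaner: the paper's two-containment argument tacitly uses $r(\sH_Y) \cap r(\E_M) = r(\sH_Y \cap \E_M)$, which again holds because $\sH_Y$ is saturated, but this is left unstated there.
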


\begin{proof}
Given $Y \in \T(\bar{S})$ we consider the set of quasi-Fuchsian groups
$F_Y = i^*(\GF(M)) \cap B_Y$, i.e.
$$F_Y = \{ Q(X,Y) \: | \: \text{There exists} \: X \in \T(S) \: \text{such
  that} \:
i^*(\rho^M_X) = Q(X,Y) \}.$$ 
From the definition of the skinning map it is immediate that
$F_Y$ is in bijection with the preimage
$\sigma_M^{-1}(Y)$ by
$$ Q(X,Y) \in F_Y  \; \iff \; X \in \sigma_M^{-1}(Y),$$
Furthermore 
$F_Y \subset B_Y \subset r(\sH_Y)$, and precompactness of $F_Y$
follows from that of $B_Y$, so it remains only to show that $F_Y \subset r(\E_M)$.

By definition $\E_M$ contains as a Zariski dense subset
$i^*(\X(M,\SL_2\C))$, using the commutative diagram
\eqref{eqn:functorial} for $r,i^*$ we have
$$i^*(r(\X(M,\SL_2\C))) = r(i^*(\X(M,\SL_2\C)))  \subset r(\E_M).$$
Since $F_Y \subset i^*(\GF(M))$, it is enough to know that $\GF(M)
\subset r(\X(M,\SL_2\C))$, i.e.~that the $\PSL_2\C$-representations
arising from hyperbolic structures on $M$ can be lifted to $\SL_2\C$.
This is a well-known consequence of the parallelizability of
$3$-manifolds (see \cite{culler:lifting} or
\cite[Thm~3.1.1]{culler-shalen} for details).
\end{proof}

Finally, using this lemma we have the

\begin{proof}[Proof of Theorem \ref{thm:main} (connected boundary)]
Suppose on the contrary that $\sigma_M^{-1}(Y)$ is infinite.  Then by
the previous lemma the set $r(\sH_Y \cap \E_M)$ has an infinite and
precompact subset, and therefore an accumulation point.  Since $\sH_Y
\cap \E_M$ is discrete by Theorem \ref{thm:main-intersection} and the
map $r : \X(S,\SL_2\C) \to \X(S,\PSL_2\C)$ is proper, this is a
contradiction.
\end{proof}

\section{Disconnected boundary and tori}
\label{sec:disconnected}

The previous sections established the main theorems for a $3$-manifold
with connected boundary.  We now adapt the statements and proofs to
the more general case of a compact oriented $3$-manifold $M$ whose
boundary has at least one connected component that is not a torus.

As in the introduction, we denote by $\bdy_0M$ the union of the
non-torus boundary components of $M$.  Let $S_1, \ldots, S_m$ denote
the connected components of $\bdy_0M$, each equipped with the boundary
orientation.  Let $\bdy_1M = \bdy M \setminus \bdy_0 M$ denote the
union of the torus boundary components of $M$.

\subsection{Measured foliations and Teichm\"uller spaces}
In several cases we first need to adapt definitions of spaces
associated to a surface to the disconnected case.  We define the
measured foliation space $\MF(\bdy_0M)$ and Teichm\"uller space
$\T(\bdy_0M)$
to be the cartesian
products of the spaces corresponding to the connected components
$S_i$, e.g.
$$ \MF(\bdy_0M) = \prod_{i=1}^m \MF(S_i).$$
The sum of the symplectic forms of the factors (using the boundary
orientation) gives the Thurston symplectic form on $\MF(\bdy_0M)$.

For a point $X = (X_1, \ldots X_m) \in \T(\bdy_0M)$, we denote by
$Q(X)$ the direct sum of quadratic differential spaces,
$$ Q(X) = \bigoplus_{i=1}^m Q(X_i).$$
The product of foliation maps of the factors gives the homeomorphism
$\F : Q(X) \to \MF(\bdy_0M)$.

For the $3$-manifold $M$ and its fundamental group, in some cases we
must treat torus boundary components differently.  Instead of
considering arbitrary isometric actions of $\pi_1M$ on $\R$-trees, we
restrict attention to actions in which each subgroup of $\pi_1M$
represented by a component of $\bdy_1M$---that is, each
\emph{boundary torus subgroup}---has a fixed point.

\subsection{Isotropic cones}
The isotropic cone construction (Theorem \ref{thm:length-isotropic},
the main result of sections \ref{sec:isotropic1}--\ref{sec:isotropic2})
generalizes to the disconnected case as follows:

\begin{thm}
\label{thm:length-isotropic-general}
For each $X = (X_1, \ldots, X_M) \in \T(\bdy_0M)$ there exists an
isotropic piecewise linear cone $\L_{M,X} \subset \MF(\bdy_0M)$ with the following
property:

Let $T$ be a $\Lambda$-tree on which $\pi_1M$ acts so that each
boundary torus subgroup has a fixed point.  Let $\ordinc : \R \to
\Lambda$ be an order-preserving embedding.  For each $i$ with $1 \leq i \leq m$
suppose that we have:
\begin{itemize}
\item A pair $T_{\ell_i}, T_{\ell_i}'$ of $\R$-trees on which $\pi_1S_i$ acts
minimally with length function $\ell_i$,
\item A holomorphic quadratic differential $\phi_i \in Q(X_i)$,
\item A $\pi_1S_i$-equivariant straight map $T_{\phi_i} \to
T_{\ell_i}$ with respect to $\ordinc$, and
\item A $\pi_1S_i$-equivariant isometric embedding $k : T_{\ell_i}'
\to T$.
\end{itemize}
Then $[\F(\phi)] \in \L_{M,X}$, where $\phi = (\phi_1, \ldots,
\phi_m)$.
\end{thm}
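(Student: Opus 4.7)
The strategy is to repeat the construction of the isotropic cone from Section \ref{sec:isotropic2}, now using a triangulation $\Delta_M$ of the disconnected manifold $M$ that extends a dual triangulation of a train track on each non-torus boundary component together with an arbitrary triangulation on each torus component. For each $i$, Lemma \ref{lem:adapted-train-track} applied to $X_i$ yields a finite family of pairs $(\Delta_{\tau_i}, \tau_i)$; products of these data over $i$ give finitely many boundary triangulations, each extensible to a $\Delta_M$. I will define
$$ \L_{M,X} = \bigcup_\tau \left( i^*(W_4(\Delta_M^\tau)) \cap \prod_{i=1}^m \ML(\tau_i) \right), $$
where $\tau = (\tau_1, \ldots, \tau_m)$ ranges over the finite collection, $\Delta_M^\tau$ denotes the corresponding extension, and $i^*$ restricts weights to $\bdy_0 M$. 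Since $M$ is oriented, the cancellation of interior triangle forms in $\Omega_{\Delta_M}$ (Lemma \ref{lem:symplectic-pullback}) is unaffected by the disconnectedness of $\bdy M$: the Thurston form on $\prod_i \MF(S_i)$ is the sum over boundary triangles, so Lemma \ref{lem:four-point-isotropic} shows $\L_{M,X}$ is piecewise linear and isotropic.

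The new content is the construction of a single equivariant weight $w \in W_4(\Delta_M, \Lambda)$ whose restriction to each $\Delta_{\tau_i}$ represents $\ordinc_*[\F(\phi_i)]$. On each component $S_i$ I will apply the argument of Theorem \ref{thm:length-isotropic}: if $\ell_i$ is non-abelian, Theorem \ref{thm:culler-morgan} provides a $\pi_1 S_i$-equivariant isometry $T_{\ell_i} \to T_{\ell_i}'$, whose composition with the given straight map and isometric embedding yields a straight map $T_{\phi_i} \to T$, so Proposition \ref{prop:tree-straight-map} produces a $\pi_1 S_i$-equivariant $f_i : \verts{\tilde{\Delta}_{\tau_i}} \to T$ inducing $\ordinc_*[\F(\phi_i)]$ on $\Delta_{\tau_i}$; if $\ell_i$ is abelian, Theorem \ref{thm:local-straightness} produces the same data directly via pushing toward the fixed end.

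To assemble the $f_i$ into a $\pi_1 M$-equivariant map $F : \verts{\tilde{\Delta}_M} \to T$, the hypothesis that each boundary torus subgroup $\pi_1 T_j \subset \pi_1 M$ has a fixed point $x_j \in T$ is essential: on a fundamental domain meeting each torus component I send all vertices to $x_j$, which is $\pi_1 T_j$-equivariant precisely because $x_j$ is fixed. Combining these torus maps with the $f_i$ and extending $\pi_1 M$-equivariantly to the remaining interior vertices of $\Delta_M$, as in Section \ref{sec:tree-map}, produces $F$. By Lemma \ref{lem:four-point-properties}.(iii) the associated weight $w_F$ lies in $W_4(\Delta_M, \Lambda)$, and pushing forward by a left-inverse $\varphi : \Lambda \to \R$ of $\ordinc$ (Lemma \ref{lem:left-inverse}) and restricting to $\bdy_0 M$ yields a representative of $[\F(\phi)]$ in $\L_{M,X}$.

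The main obstacle I anticipate is not logical but structural: one must verify that the ad hoc combination of per-component maps $f_i$ with constant maps on torus vertices is consistently $\pi_1 M$-equivariant on the full set $\verts{\tilde{\Delta}_M}$. The fixed-point hypothesis on each boundary torus subgroup is precisely what is needed for this, and moreover illustrates why an assumption of this kind is indispensable in the torus case; the remainder of the argument, being an application of tetrahedron-form cancellation and the four-point property, is formal and parallels the connected case verbatim.
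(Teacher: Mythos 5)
Your construction of the equivariant weight in the second half of the argument matches the paper's proof closely and is sound: using Theorem~\ref{thm:culler-morgan} or Theorem~\ref{thm:local-straightness} component-by-component, mapping all vertices of each boundary torus to the fixed point $x_j$, extending equivariantly over interior vertices, and pushing forward by the left inverse of $\ordinc$ are all exactly the steps the paper takes.

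However, there is a genuine gap in your isotropy argument. You define
$$ \L_{M,X} = \bigcup_\tau \Bigl( i^*(W_4(\Delta_M^\tau)) \cap \prod_{i=1}^m \ML(\tau_i) \Bigr), $$
with $W_4$ carrying only the four-point condition, and then claim that Lemma~\ref{lem:four-point-isotropic} together with Lemma~\ref{lem:symplectic-pullback} yields isotropy. But these lemmas give that each subspace of $W_4(\Delta_M^\tau)$ is isotropic for $\Omega_{\Delta_M}$, hence that its restriction to \emph{all} boundary edges is isotropic for the sum $\omega_{\Delta_{\bdy_0 M}} \oplus \omega_{\Delta_{\bdy_1 M}}$, which includes the alternating $2$-form on the torus weight spaces. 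The Thurston form on $\prod_i \MF(S_i)$ is only the $\bdy_0 M$-summand. The projection of an isotropic subspace of a direct sum of symplectic vector spaces onto one factor need not be isotropic for that factor's form, so your $\L_{M,X}$ could fail to be isotropic; the invocation of Lemma~\ref{lem:four-point-isotropic} does not close this gap.

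The fix, which is what the paper does, is to build the vanishing into the definition of the weight space: require elements of $W_4(\Delta_M^\tau,\Lambda)$ to be identically zero on the edges of each boundary torus. Then the boundary restriction of each isotropic subspace lies in $W(\Delta_{\bdy_0 M}) \times \{0\}$, where $\omega_{\Delta_{\bdy M}}$ agrees with $\omega_{\Delta_{\bdy_0 M}}$, and the projection is a linear isomorphism onto its image, preserving isotropy. Your construction of $w_F$ already produces weights that vanish on torus edges (all torus vertices map to a single point $x_j$), so this added condition costs nothing in the second half of the argument; it is the \emph{definition} of the cone that must be tightened, not the verification that $[\F(\phi)]$ lands in it. This observation is precisely why the fixed-point hypothesis on boundary torus subgroups is doing double duty: it gives both equivariance of the extension and the vanishing needed for isotropy.
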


\begin{proof}
First we adapt the definition of the cone $\L_{M,X}$ from the
connected case.  We choose a finite set of triangulations of $M$ that
extend the triangulations of $S_i$ given by Lemma
\ref{lem:delaunay-track}.  For each such triangulation $\Delta_M$ we
have a space $W_4(\Delta_M,\Lambda)$ of weights satisfying the
$4$-point condition in each $3$-simplex, but we now also require these
weights to be identically zero on the edges of each boundary torus.

As in Lemmas \ref{lem:symplectic-pullback} and
\ref{lem:four-point-isotropic} we find that the restriction of
$W_4(\Delta_M,\R)$ to the edges of the boundary triangulation gives a
finite union of isotropic subspaces for a symplectic form that is the
sum of the Thurston forms for the triangulated surfaces $S_i$ and a
similar alternating $2$-form for the weight space of each boundary
torus.  Since the weights in $W_4(\Delta_M)$ are identically zero
in the torus components, these subspaces are still isotropic
when projected to the product of non-torus factors.  Thus $W_4(\Delta_M)$
gives an isotropic cone in a product of train-track charts
$\MF(\tau_i)$ for the surfaces $S_i$, and taking the union of these
over the finite set of triangulations of $M$ gives the cone $\L_{M,X}
\subset \MF(\bdy_0M)$.

Now we show that $[\F(\phi)] \in \L_{M,X}$, or equivalently that the
train track coordinates of $[\F(\phi)]$ are obtained by restricting an
element of $W_4(\Delta_M)$ to the boundary.  Here $\Delta_M$ is the
triangulation from our finite set in which the edges on $S_i$ can be
realized $\phi_i$-geodesically.

Using the straight maps $T_{\phi_i} \to T_{\ell_i}$ and either the
isometry $T_{\ell_i} \to T_{\ell_i}'$ of Theorem
\ref{thm:culler-morgan} (in the case of a non-abelian length function)
or the partially-defined map $T_{\ell_i} \dashrightarrow T_{\ell_i}'$
of Theorem \ref{thm:local-straightness} (in the abelian case), we
obtain a map from the non-torus boundary vertices, $\verts{\Tilde{\Delta}_M}
\cap \bdy_0M$, to the tree $T$.  We extend this over the vertices
on the torus boundary components by mapping all vertices in a
given boundary torus to a fixed point of the associated subgroup of
$\pi_1M$.

Extending over the remaining (interior) vertices of $\Delta_M$ as in
Propositions \ref{prop:tree-map} and \ref{prop:tree-straight-map}, we
obtain a map $\verts{\Tilde{\Delta}_M} \to T$ whose associated weight function
$w$ lies in $W_4(\Delta_M,\Lambda)$; note that since all vertices of a
boundary torus are mapped to a single point of $T$, the associated
weight vanishes on edges of the torus boundary components as
required.  We push forward by a left inverse of $\ordinc$ to obtain an
element of $W_4(\Delta_M)$ whose values on the non-torus boundary
edges give the train track coordinates of $[\F(\phi)]$.  Thus
$[\F(\phi)] \in \L_{M,X}$.
\end{proof}

\subsection{K\"ahler structure and symplectomorphism}

The results of Section \ref{sec:kahler} generalize easily to
disconnected surfaces by taking products of the spaces, maps, and
stratifications considered there.

Specifically, for any $X \in \T(\bdy_0M)$, Lemma \ref{lem:smooth-stratification}
provides a stratification of $Q(X_i)$.  There is an induced
\emph{product stratification} of the product space $Q(X) = \bigoplus_i
Q(X_i)$ consisting of products of strata in the factors.  Note that the origin
$\{0\} \in Q(X_i)$ is the minimal stratum in each factor, so $Q(X)$
now has nontrivial (positive-dimensional) strata consisting of
quadratic differentials that are zero on one or more of the boundary
components.

Similarly we take the product of the stratified K\"ahler structures on
the factors $Q(X_i)$ to obtain a K\"ahler structure on $Q(X)$, smooth
relative to the product stratification.  Applying Theorem
\ref{thm:symplectomorphism} to each factor of the map $\F: Q(X) \to
\MF(\bdy_0M)$ we obtain:

\begin{thm}
\label{thm:symplectomorphism-general}
For any $X \in \T(\bdy_0M)$, the map $\F : Q(X) \to \MF(\bdy_0M)$ is a
real-analytic stratified symplectomorphism.  That is, if $Q_k(X)$ is a
stratum of $Q(X)$ then:
\begin{rmenumerate}
\item For any $\phi = (\phi_1, \ldots, \phi_m) \in Q_k(X)$ there
exists an open neighborhood $U \subset Q_k(X)$ of $\phi$ and a product
of train track coordinate charts $\prod_i \MF(\tau_i) \subset
\MF(\bdy_0M)$ covering $\F(U)$ so that the restriction
$$ \F : U \to \prod_i \MF(\tau_i) $$ 
is a real-analytic diffeomorphism onto its image, and
\item The derivative $d\F_\phi$ defines a symplectic linear map from
$T_\phi Q_k(X)$ into $\bigoplus_i W(\tau_i)$, where
$T_\phi Q_k(X)$ is equipped with the symplectic form $\sum_i
\omega_{\phi_i}$ and $\bigoplus_i W(\tau_i)$ is given the Thurston symplectic form.
\end{rmenumerate}
\noproof
\end{thm}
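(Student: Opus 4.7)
The plan is to reduce Theorem~\ref{thm:symplectomorphism-general} to its connected-boundary analogue, Theorem~\ref{thm:symplectomorphism}, by observing that every structure involved splits as a direct sum or product over the components of $\bdy_0M$. Fix $X = (X_1,\ldots,X_m) \in \T(\bdy_0M)$ and a stratum $Q_k(X)$ of the product stratification; by construction $Q_k(X) = \prod_{i=1}^m Q_{k_i}(X_i)$ for strata $Q_{k_i}(X_i) \subset Q(X_i)$ supplied by Lemma~\ref{lem:smooth-stratification}. A point $\phi \in Q_k(X)$ is then a tuple $(\phi_1,\ldots,\phi_m)$ and the tangent space factors as $T_\phi Q_k(X) = \bigoplus_i T_{\phi_i}Q_{k_i}(X_i)$. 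Crucially, the foliation map is literally the product of the factor foliation maps, $\F(\phi_1,\ldots,\phi_m) = ([\F(\phi_1)],\ldots,[\F(\phi_m)])$.

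For part (i), I would apply Theorem~\ref{thm:symplectomorphism}(i) to each component separately to obtain, for each $i$, an open neighborhood $U_i \subset Q_{k_i}(X_i)$ of $\phi_i$ and a train track chart $\MF(\tau_i) \subset \MF(S_i)$ on which the factor foliation map $\F : U_i \to \MF(\tau_i)$ is a real-analytic diffeomorphism onto its image. Taking $U = \prod_i U_i$ and forming the product chart $\prod_i \MF(\tau_i)$, which by definition of the piecewise linear structure on $\MF(\bdy_0M)$ is a chart for this space, the restriction $\F : U \to \prod_i \MF(\tau_i)$ is the cartesian product of these real-analytic diffeomorphisms and therefore is itself a real-analytic diffeomorphism onto its image.

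For part (ii), the splitting of the tangent spaces and the product form of $\F$ imply $d\F_\phi = \bigoplus_i d\F_{\phi_i}$. By construction the stratified K\"ahler metric on $Q(X)$ is the product of those on the factors, so its symplectic form is a direct sum $\omega_\phi = \sum_i \omega_{\phi_i}$; likewise the Thurston symplectic form on $\MF(\bdy_0M) = \prod_i \MF(S_i)$ is by definition the sum of the Thurston forms of the factors, taken with the boundary orientations. A direct sum of symplectic linear maps is symplectic for the corresponding sum of forms, so the symplectic property of each $d\F_{\phi_i}$ given by Theorem~\ref{thm:symplectomorphism}(ii) immediately implies that of $d\F_\phi$.

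There is no substantive obstacle; the content is essentially the formal observation that the constructions of Section~\ref{sec:kahler} commute with taking direct sums of inputs. The only minor point to watch is factors for which $\phi_i = 0$ lies in the zero-dimensional minimal stratum of $Q(X_i)$: there the $i$-th tangent contribution is trivial, the $i$-th foliation component is empty, and both sides of the symplectic identity automatically vanish on this slot, so the factor-by-factor invocation of Theorem~\ref{thm:symplectomorphism} still applies.
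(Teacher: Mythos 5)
Your proposal is correct and matches the paper's own treatment, which states this result with a $\noproof$ precisely because the surrounding discussion already sets up the product stratification, the product K\"ahler form, and the convention for zero factors, reducing the claim to a factorwise application of Theorem~\ref{thm:symplectomorphism}. Your explicit handling of the $\phi_i = 0$ slots via the trivial tangent space and the empty-train-track convention is exactly the point the paper addresses in the remark following the theorem statement.
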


To make sense of this statement in case the differentials in the
stratum $Q_k(X)$ are identically zero in some factor, say $Q(X_i)$, we
adopt the convention that $\tau_i$ is the empty train track and that
$\MF(\tau_i) = W(\tau_i) = \{0\}$ is a point representing the
representing the empty foliation on $X_i$, which is the image of $0$
under the map $\F : Q(X_i) \to \MF(S_i)$.

\subsection{Character varieties and extension varieties}

We generalize the character variety of a connected surface to
$\bdy_0M$ by taking the product of character varieties of
components
$$\X(\bdy_0M,G) := \prod_i \X(S_i,G),$$
and similarly for the representation variety $\sR(\bdy_0M,G)$.
Note that while $\sR(\bdy_0M,G)$ can also be described as the
representation variety of the free product $\pi_1S_1 \ast \cdots \ast \pi_1S_m$, the
character variety of this free product does \emph{not} agree with our
definition of $\X(\bdy_0M,G)$.  To obtain $\X(\bdy_0M,G)$ from
$\sR(\bdy_0M,G)$ one must take the quotient of by the action of
$G^m$.

The $3$-manifold character variety also requires modification 
to account for the presence of boundary
tori.  As is standard when considering complete hyperbolic structures,
rather than working with the full character variety of $\pi_1M$, we 
consider the subvariety
$$ \X(M,\bdy_1M,G) \subset \X(M,G) $$
consisting of characters of representations that map each boundary
torus subgroup (that is, the fundamental group of each connected
component of $\bdy_1M$) to parabolic elements of $G$.

The inclusion of each boundary component $S_i \into M$ induces a
restriction map $\X(M,\bdy_1M,\SL_2\C) \to \X(S_i,\SL_2\C)$, and
taking the product of these we obtain a regular map
$$ i^* : \X(M,\bdy_1M,\SL_2\C) \to \X(\bdy_0M,\SL_2\C), $$
We define the extension variety $\E_M \subset \X(\bdy_0M,\SL_2\C)$
as the Zariski closure of the image $i^*(\X(M,\bdy_1M,\SL_2\C))$.

The Morgan-Shalen compactification of $\X(\Gamma,\SL_2\C)$ was defined
in Section \ref{sec:morgan-shalen} using trace functions of elements
of $\Gamma$.  On the product $\X(\bdy_0M,\SL_2\C)$ we have trace
functions for the elements of each component $\pi_1S_i$, so the family
of all such functions is indexed by the disjoint union 
$$H := \pi_1 S_1 \sqcup \cdots \sqcup \pi_1S_m.$$
To adapt the Morgan-Shalen compactification to this case we map the
character variety of $\bdy_0M$ to the projective space $\P(\R^H)$ using formula
\eqref{eqn:morgan-shalen} and take its closure.  Indeed, a
compactification in this generality was already discussed in
\cite{morgan-shalen:valuations-trees}, where the map to projective
space arising from an arbitrary collection of regular functions on an
algebraic variety is considered.

A boundary point of the resulting compactification of
$\X(\bdy_0M,\SL_2\C)$ is therefore an $\R^+$-equivalence class
$[\ell]$ where $\ell = (\ell_1, \ldots, \ell_m)$ is a tuple of
functions, $\ell_i : \pi_1S_i \to \R$.  The factor $\ell_i$ is either
a nontrivial length function of an action of $\pi_1S_i$ on an
$\R$-tree or is identically zero (which is the length
function of the action of $\pi_1S_i$ on a point).

Having adapted the definitions of its objects suitably, the Theorem
\ref{thm:length-function-extension} on extensions of length functions
arising from the boundary of $\E_M$ generalizes to:

\begin{thm}
\label{thm:length-function-extension-general}
Let $[\ell]$ be a boundary point of $\E_M$ in the Morgan-Shalen
compactification of $\X(\bdy_0M, \SL_2\C)$, where $\ell = (\ell_1,
\ldots, \ell_m)$.  Then there exists a function $\hat{\ell} : \pi_1M
\to \R^n$ such that
\begin{rmenumerate}
\item The group $\pi_1M$ acts isometrically on a $\R^n$-tree with
length function $\hat{\ell}$ such that each boundary torus
subgroup of $\pi_1M$ has a fixed point, and 
\item The function $\hat{\ell}$ is a simultaneous extension of the
functions $\ell_i$, i.e.~for each $\gamma \in \pi_1S_i$ we
have $\hat{\ell}(i_*(\gamma)) = i_n(\ell(\gamma))$ where $i_* :
\pi_1S_i \to \pi_1M$ is induced by the inclusion of $S_i$ as a
boundary component of $M$ and $i_n(x) = (0, \ldots, 0, x)$.
\end{rmenumerate}
\end{thm}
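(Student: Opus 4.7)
The plan is to adapt the proof of Theorem \ref{thm:length-function-extension}, replacing the connected surface with the product $\bdy_0M = S_1 \sqcup \cdots \sqcup S_m$ and adding the parabolicity constraint at the torus components.  First I would identify the appropriate irreducible components: let $\E_M^0$ be an irreducible component of $\E_M$ containing $[\ell]$, and let $\X_M^0$ be an irreducible component of $\X(M,\bdy_1M,\SL_2\C)$ whose image under $i^*$ is Zariski dense in $\E_M^0$.  Then $i^*$ induces an inclusion of function fields $k(\E_M^0) \into k(\X_M^0)$ in which, exactly as in the connected case, the trace function $t_\gamma$ for $\gamma \in \pi_1S_i$ is identified with $t_{i_*(\gamma)}$.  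The family of trace functions indexing the Morgan--Shalen compactification is now indexed by $H = \pi_1S_1 \sqcup \cdots \sqcup \pi_1S_m$, but this does not affect the algebraic construction.

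The valuation-theoretic machinery would then carry over verbatim: Theorem \ref{thm:valuation-from-boundary-point} produces a valuation $v : k(\E_M^0)^* \to \Lambda$ and an embedding $p : \Lambda_1 \to \R$ encoding $[\ell]$ (componentwise on each $\ell_i$); the standard extension theorem for valuations yields $v' : k(\X_M^0)^* \to \Lambda'$ restricting to $v$; Theorem \ref{thm:tree-from-valuation} converts $v'$ into a $\Lambda'$-tree $T'$ on which $\pi_1M$ acts isometrically with length function $\ell'(g) = \max(-v'(t_g), 0)$; and base change along the embedding $F : \Lambda' \to \R^n$ provided by Lemma \ref{lem:super-hahn} gives the desired $\R^n$-tree $\hat{T}$ and length function $\hat{\ell}$.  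Property (ii) then follows by applying the computation \eqref{eqn:length-extension-argument} separately to each element $\gamma \in \pi_1S_i$, using the identification of $t_{i_*(\gamma)}$ with the pullback of $t_\gamma$.

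The genuinely new content is the fixed-point condition on boundary torus subgroups in (i).  Since every character in $\X(M,\bdy_1M,\SL_2\C)$ takes parabolic values on each boundary torus subgroup, for any $\gamma$ in such a subgroup the trace function $t_\gamma$ is the constant $\pm 2$ on $\X_M^0$.  The valuation $v'$ is trivial on the base field $k$, so $v'(t_\gamma) = 0$ and hence $\hat{\ell}(\gamma) = 0$: every element of every boundary torus subgroup acts elliptically on $\hat{T}$.  To upgrade ellipticity of each individual element to a common fixed point for the whole subgroup---which is a finitely generated abelian group, isomorphic to $\Z^2$ or $\Z$---I would invoke the standard fact that in a $\Lambda$-tree a finite collection of commuting elliptic isometries has nonempty common fixed-point set, obtained inductively by observing that the fixed-point subtree of one commuting isometry is invariant under any other.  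This is the step where I expect the main technical obstacle, since in the $\Lambda$-tree setting the intersection argument for fixed-point subtrees requires more care than over $\R$; the cleanest route is probably to either cite the corresponding result from Chiswell's book or to carry out the intersection directly in $\hat{T}$ after base change, where the argument reduces to the classical one.
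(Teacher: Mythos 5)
Your proposal follows the paper's own proof almost line for line: the same passage to irreducible components $\E_M^0$ and $\X_M^0$ (now inside $\X(M,\bdy_1M,\SL_2\C)$), the same function-field extension $k(\E_M^0)\into k(\X_M^0)$, the same use of Theorems \ref{thm:valuation-from-boundary-point} and \ref{thm:tree-from-valuation} together with Lemma \ref{lem:super-hahn} and base change, and the same trace computation $v'(t_\gamma)=0$ for $\gamma$ in a boundary torus subgroup. The computation \eqref{eqn:length-extension-argument} applied component-by-component gives (ii) exactly as you say.

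The one place you diverge is how you close the gap from ``every element of a boundary torus subgroup is elliptic'' to ``the subgroup has a global fixed point.'' Your inductive intersection of fixed-point subtrees has a subtle issue that you partly flagged: after restricting to $\mathrm{Fix}(g_1)$, which is a nonempty $g_2$-invariant subtree, one still must show that $g_2$ fixes a point \emph{inside} $\mathrm{Fix}(g_1)$, not merely somewhere in $T$. Over $\R$ this follows by projecting a fixed point of $g_2$ onto the closed subtree $\mathrm{Fix}(g_1)$, but that projection uses completeness and is not immediate over a general ordered abelian group; and your fallback of doing the argument after base change does not simplify matters, since $\R^n$-trees with $n>1$ are still non-archimedean. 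The paper avoids this entirely: since $v'(t_\gamma)=0$ for \emph{every} $\gamma$ in the torus subgroup (not just the generators), the whole subgroup acts on $T'$ with length function identically zero, and Morgan--Shalen's \cite[Prop.~II.2.15]{morgan-shalen:valuations-trees} (routed through Lemma \ref{lem:subtree}) directly gives a global fixed point. That proposition is precisely the packaged form of the fact you are reaching for. If you want to keep an argument closer in spirit to yours, the $\Lambda$-tree version of Serre's lemma --- if $g,h$ are elliptic with $\mathrm{Fix}(g)\cap\mathrm{Fix}(h)=\emptyset$ then $gh$ is hyperbolic --- closes the gap cleanly, because $gh$ is also forced to have zero translation length here; but citing Prop.~II.2.15 is the shorter route and is what the paper does.
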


\begin{proof}
As in the proof of Theorem \ref{thm:length-function-extension} we have
an extension of fields $k(\E_M^0) \to k(\X_M^0)$ where now $\E_M^0
\subset \X(\bdy_0M, \SL_2\C)$ is an irreducible component of
$\E_M$ which has $[\ell]$ in its boundary and $\X_M^0 \subset
\X(M,\bdy_1M,\SL_2\C)$.

The boundary point $[\ell]$ determines a valuation $v : k(\E_M^0) \to
\Lambda$ by the analogue of Theorem
\ref{thm:valuation-from-boundary-point} for subvarieties of the
product of character varieties $\X(\bdy_0M,\SL_2\C)$; like Theorem
\ref{thm:valuation-from-boundary-point}, this case is covered by the
more general comparison of valuation- and projectivization-based
compactifications of
\cite[Thm.~I.3.6]{morgan-shalen:valuations-trees}.

Extending this valuation to the superfield $k(\X_M^0)$ and proceeding
as in the proof of Theorem \ref{thm:length-function-extension} then
gives a function $\hat{\ell} : \pi_1M \to \R^n$ satisfying condition
(ii) above; note that the argument of
\eqref{eqn:length-extension-argument} applies to each function
$\ell_i : \pi_1S_i \to \R$, $1 \leq i \leq m$, giving that
$\hat{\ell}$ is a simultaneous extension.

The resulting length function $\hat{\ell}$ arises from an action of
$\pi_1M$ on a $\R^n$-tree $T$, but we must show that each boundary
torus subgroup has a fixed point.  Since $\X_M^0$ is an irreducible
subvariety of $\X(M,\bdy_1M,\SL_2\C)$, the trace function
$t_\gamma$ of an element $\gamma$ of a boundary torus subgroup
restricts to a constant $\pm 2$ on $\X_M^0$ (the possible traces of
parabolics).  Therefore we have $v'(t_\gamma) = 0$, where $v'$ is the
valuation of $k(\X_M^0)$ from which $T$ is constructed.  By Lemma
\ref{lem:subtree} each boundary torus subgroup leaves invariant a
subtree of $T$ on which it acts with zero length function, and by
\cite[Prop.~II.2.15]{morgan-shalen:valuations-trees} there is a fixed
point.
\end{proof}

\subsection{Holonomy, the isotropic cone, and discreteness}

The construction of the holonomy variety in Section
\ref{sec:holonomy-variety} extends to $\bdy_0M$ by taking
products; that is, for $X \in \T(\bdy_0M)$ or $X \in
\T(\bar{\bdy_0M})$, where $X = (X_1, \ldots, X_m)$, we define
$$ \sH_X := \bigcup_{\spin \in \mathrm{Spin}(X)} \sH_{X,\spin} \;\;
\text{where} \;\; \sH_{X,\spin} := \prod_{i=1}^m
\sH_{X_i,\spin_i}.$$ Here $\spin = (\spin_1, \ldots,
\spin_m)$ is a tuple of spin structures on the components. 

The subvariety of the quadratic differential space corresponding to
the intersection $\sH_{X,\spin} \cap \E_M$ is
$$\V_{M,\spin} := \hol_\spin^{-1}(\E_M) \subset
Q(\bdy_0M).$$ 
Generalizing Theorem \ref{thm:extensible-holonomy} we have:

\begin{thm}
\label{thm:extensible-holonomy-general}
Let $\phi = (\phi_1, \ldots, \phi_m)$ be the projective limit of a
divergent sequence in $\V_{M,\spin} \subset Q(\bdy_0M)$.  Then
$[\F(\phi)] \in \L_{M,X}$ where $\L_{M,X}$ is the isotropic cone of
Theorem \ref{thm:length-isotropic-general}.
\end{thm}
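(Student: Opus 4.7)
The plan is to repeat the argument of Theorem \ref{thm:extensible-holonomy} from the connected case, replacing each ingredient with its disconnected-boundary generalization from Section \ref{sec:disconnected}. After passing to a subsequence, let $[\ell]$ be an accumulation point of the holonomy characters $\hol_\spin(\phi^{(n)})$ in the Morgan-Shalen compactification of $\X(\bdy_0M,\SL_2\C)$, where $\phi^{(n)} \in \V_{M,\spin}$ is the given divergent sequence with projective limit $\phi = (\phi_1, \ldots, \phi_m)$. Because $\X(\bdy_0M,\SL_2\C)$ is a product and $\hol_\spin$ factors as the product of the $\hol_{\spin_i}$, the boundary point factors as $\ell = (\ell_1, \ldots, \ell_m)$, each $\ell_i$ either a nontrivial length function of an action of $\pi_1S_i$ on an $\R$-tree or identically zero. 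Applying Theorem \ref{thm:holonomy-limits} to each factor independently yields, for every $i$, an $\R$-tree $T_i$ on which $\pi_1S_i$ acts with length function $\ell_i$, together with a $\pi_1S_i$-equivariant surjective straight map $T_{\phi_i} \to T_i$.

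Next, I would apply Theorem \ref{thm:length-function-extension-general} to the boundary point $[\ell]$ of $\E_M$ to simultaneously extend the tuple $\ell$ to a length function $\hat{\ell} : \pi_1M \to \R^n$ of an isometric action of $\pi_1M$ on an $\R^n$-tree $\hat{T}$ for which each boundary torus subgroup of $\pi_1M$ has a fixed point, and such that $\hat{\ell}(i_*(\gamma)) = i_n(\ell_i(\gamma))$ for every $\gamma \in \pi_1S_i$ and every $i$, where $i_n : \R \into \R^n$ identifies $\R$ with the minimal nontrivial convex subgroup of $\R^n$. For each $i$, the restriction of $\hat{\ell}$ to $i_*(\pi_1S_i)$ then takes values in $i_n(\R)$, so Lemma \ref{lem:subtree} furnishes a $\pi_1S_i$-invariant $\R$-subtree $T_i' \subset \hat{T}$ on which $\pi_1S_i$ acts isometrically with length function $\ell_i$. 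The pairs $(T_i, T_i')$ are therefore isospectral $\R$-trees for $\pi_1S_i$ in the sense of Section \ref{sec:length-isotropic}, and each inclusion $T_i' \into \hat{T}$ is a $\pi_1S_i$-equivariant isometric embedding relative to $i_n$.

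All the hypotheses of Theorem \ref{thm:length-isotropic-general} are now met, with ambient $\Lambda$-tree $\hat{T}$ (having the required boundary-torus fixed-point property), order-preserving embedding $\ordinc := i_n$, and, for each boundary component $S_i$, the isospectral pair $(T_{\ell_i}, T_{\ell_i}') := (T_i, T_i')$, the differential $\phi_i$, the straight map $T_{\phi_i} \to T_i$ provided by Theorem \ref{thm:holonomy-limits}, and the isometric embedding $T_i' \into \hat{T}$ obtained above. Invoking that theorem gives $[\F(\phi)] \in \L_{M,X}$, which is the desired conclusion.

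The only new subtlety relative to the connected-boundary proof lies upstream, in Theorem \ref{thm:length-function-extension-general}: one must ensure that the valuation-theoretic extension from $k(\E_M^0)$ to $k(\X_M^0)$ produces a $\pi_1M$-action on $\hat{T}$ in which each boundary torus subgroup has a fixed point. This is precisely why the relevant character variety of $M$ is $\X(M,\bdy_1M,\SL_2\C)$ rather than $\X(M,\SL_2\C)$: the parabolic-trace constraint forces the extended valuation to vanish on the trace functions of elements of any boundary torus subgroup, at which point Lemma \ref{lem:subtree} combined with the global-fixed-point criterion of Morgan--Shalen produces the required fixed point. Once this is in hand, the present theorem reduces to a componentwise repetition of the connected-case argument assembled through the product-form isotropic cone theorem.
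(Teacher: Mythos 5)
There is a genuine gap at the step where you write ``Applying Theorem \ref{thm:holonomy-limits} to each factor independently yields, for every $i$, \ldots a $\pi_1S_i$-equivariant surjective straight map $T_{\phi_i} \to T_i$.'' This is precisely the step that the paper singles out as the delicate point in the disconnected case, and the claim does not follow directly from a factorwise invocation of Theorem \ref{thm:holonomy-limits}.

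The issue is one of normalization. Both the projective limit $\phi = (\phi_1, \ldots, \phi_m)$ of the sequence in $Q(\bdy_0M)$ and the Morgan--Shalen boundary point $[\ell]$ of $\E_M \subset \X(\bdy_0M,\SL_2\C)$ are each defined only up to a \emph{single} positive scalar: $\phi = \lim_n c_n (\phi_{1,n},\ldots,\phi_{m,n})$ and $\ell = \lim_n c_n'(\log(|t_\gamma|+2))_{\gamma\in H}$, with one $c_n$ and one $c_n'$ shared by all components. Applying Theorem \ref{thm:holonomy-limits} separately to each factor $i$ instead produces a straight map between trees normalized \emph{per factor}, i.e.\ between $T_{\phi_i^{(1)}}$ (where $\phi_i^{(1)}$ is the projective limit of $\phi_{i,n}$ obtained by scaling by $\|\phi_{i,n}\|^{-1}$) and $T_{\ell_i^{(1)}}$ (where $\ell_i^{(1)}$ comes from the per-factor Morgan--Shalen limit). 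For different $i$ the ratio between the per-factor scalar and the global scalar $c_n$ can have different limits $r_i = \lim_n c_n \|\phi_{i,n}\| \in [0,1]$. So $\phi_i$ and $\ell_i$ are related to $\phi_i^{(1)}$ and $\ell_i^{(1)}$ by rescalings that a priori need not be compatible, and the homogeneity $T_{c\psi} = c^{1/2}T_\psi$ has to match up with the rescaling of the length function before you can deduce a straight map $T_{\phi_i} \to T_{\ell_i}$. Establishing this compatibility is the main content of the paper's proof: it invokes the more precise normalization statement (Theorem 4.6 of \cite{dumas:holonomy}), in which $\ell$ is obtained by scaling trace-logs by $\|\phi_n\|^{-1/2}$ and the corresponding projective limit satisfies $\|\phi\| = C$, and then verifies that the global choices $c_n = (\sum_i \|\phi_{i,n}\|)^{-1}$ give $\phi_i = C\, r_i\,\phi_i^{(1)}$ and $\ell_i = r_i^{1/2}\ell_i^{(1)}$, so both trees in each factor are rescaled by the same $r_i^{1/2}$ (with the degenerate case $r_i=0$ handled separately). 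Your proposal also misidentifies the ``only new subtlety'' as the boundary-torus fixed-point condition in Theorem \ref{thm:length-function-extension-general}; that part of your argument is fine, but the normalization of the factorwise straight maps is the more central new issue and it is not addressed. The remainder of your outline (extending $\ell$ via Theorem \ref{thm:length-function-extension-general}, extracting the $\R$-subtrees $T_i'$ via Lemma \ref{lem:subtree}, feeding the isospectral pairs into Theorem \ref{thm:length-isotropic-general}) matches the paper and is correct once the straight maps are in hand.
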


\begin{proof}
Theorems \ref{thm:length-isotropic-general} and
\ref{thm:length-function-extension-general} provide the necessary
generalizations to adapt the proof of Theorem \ref{thm:extensible-holonomy} to
this situation, except for the existence of the straight maps
$T_{\phi_i} \to T_{\ell_i}$.

In the connected boundary case, a straight map $T_\phi \to T_\ell$ is
given by Theorem \ref{thm:holonomy-limits}
(i.e.~\cite[Thm.~A]{dumas:holonomy}).  Both the projective limit
$\phi$ and the representative $\ell$ of the Morgan-Shalen boundary
point are only well-defined up to multiplication by a positive
constant in this case, but it suffices to have such a straight map for
some pair of representatives $\ell$ and $\phi$.

In the disconnected case, both tuples $\phi = (\phi_1, \ldots,
\phi_m)$ and $\ell=(\ell_1, \ldots, \ell_m)$ represent equivalence
classes up to a single multiplicative constant, whereas a factor-wise
application of Theorem \ref{thm:holonomy-limits} would seem to require
a separate multiplicative factor for each connected component of the
boundary.

To remedy this we use choose representatives $\ell, \phi$ as in
\cite{dumas:holonomy}, where it is shown that for a connected
surface $S$, $X \in \T(S)$, and a divergent sequence $\{\phi_n\}
\subset Q(X)$, we can extract a representative $\ell$ of the
Morgan-Shalen limit of $\hol(\phi_n)$ by scaling the functions $\gamma
\mapsto \log (|t_\gamma(\hol(\phi_n))| + 2)$ by the factors
$\|\phi_n\|^{-1/2}$ and taking the limit $\ell \in \R^{\pi_1S}$.
Furthermore, the straight map $T_\phi \to T_\ell$ of Theorem
\ref{thm:holonomy-limits} is defined for this function $\ell$ and for
the projective limit $\phi \in Q(X)$ satisfying $\|\phi\| = C$, for a
universal constant $C$.%  Here $\|\cdot\|$ is the $L^1$ norm on $Q(X)$.

Correspondingly, for the disconnected case we consider $X \in
\T(\bdy_0M)$ and a sequence of
tuples $(\phi_{1,n}, \ldots, \phi_{m,n} )$
which converges projectively.  Equivalently, the sequence
$$ c_n (\phi_{1,n}, \ldots, \phi_{m,n})$$
converges in $Q(X)$ as $n \to \infty$ where
$$ c_n = \left ( \sum_{i=1}^m \|\phi_{i,n}\| \right )^{-1}.$$
Applying Theorem \ref{thm:holonomy-limits}
to each factor and using the representative length functions and
projective limits discussed above, after passing to a subsequence we obtain straight maps
\begin{equation}
\label{eqn:factorwise-straight}
T_{\phi_i^{(1)}} \to T_{\ell_i^{(1)}}
\end{equation}
 where $\phi_i^{(1)} = C \lim_{n \to \infty} \frac{\phi_{i,n}}{\|\phi_{i,n}\|}$ and $\ell_i^{(1)} : \pi_1
S_i \to \R$ is the limit as $n \to \infty$ of the functions
$$ \gamma \mapsto \frac{1}{\|\phi_{i,n}\|^{\half}} \log
(|t_\gamma(\hol(\phi_{i,n}))| + 2).$$
Since $0 \leq \|\phi_{i,n}\| \leq c_n^{-1}$ we can take a further
subsequence so that for each $i$ the limit
$$ r_i = \lim_{n \to \infty} c_n \|\phi_{i,n}\| \in [0,1],$$
exists.  Then
\begin{equation*}
\begin{split}
\phi :=& \:(r_1 \phi_1^{(1)}, \ldots, r_n \phi_n^{(1)})\\
=& \:C \lim_{n \to
  \infty} c_n ( \phi_{1,n}, \ldots, \phi_{m,n} ) \in Q(X)
\end{split}
\end{equation*}
is a projective limit of the sequence of quadratic differentials and
\begin{equation*}
\begin{split}
\ell :=& \:(r_1^\half \ell_1^{(1)}, \ldots, r_n^\half \ell_n^{(1)})\\ =&\:
\lim_{n \to \infty} \left ( \gamma \mapsto c_n^{1/2} \log
(|t_\gamma(\hol(\phi_{i,n}))| + 2) \right )_{\gamma \in \pi_1{S_i}, \;
  1 \leq i \leq n} \in \R^H
\end{split}
\end{equation*}
represents the limit of the holonomy representations in
the Morgan-Shalen compactification of $\X(\bdy_0M,\SL_2\C)$.  The desired straight
maps
$$ T_{\phi_i} \to T_{\ell_i} $$
are therefore given by \eqref{eqn:factorwise-straight} for each $i$ with $r_i
\neq 0$, since the trees $T_{\phi_i},T_{\ell_i}$ are
obtained from $T_{\phi_i^{(n)}}, T_{\ell_i^{(n)}}$ by multiplying
  the metrics by $r_i^{1/2}$.  In each factor where $r_i = 0$ we have
  $\phi_i = 0$ and $\ell_i = 0$, so each of $T_{\phi_i}$ and
  $T_{\ell_i}$ is a point, and there is nothing to prove.
\end{proof}

Using Theorems \ref{thm:extensible-holonomy-general} and
\ref{thm:symplectomorphism-general} in place of their counterparts for
the connected boundary case (Theorems \ref{thm:extensible-holonomy}
and \ref{thm:symplectomorphism}, respectively), the properties of the
real and complex boundaries of $\V_{M,\spin}$ from Theorem
\ref{thm:real-and-complex} follow for the general case as well.  The
argument of Section \ref{sec:discreteness} then gives the discreteness
of these varieties, completing the proof of Theorem
\ref{thm:main-intersection} in the general case.

\subsection{Skinning maps}

Now suppose that in addition to the hypotheses of Theorem
\ref{thm:main-intersection} that the $3$-manifold $M$ has
incompressible boundary and that its interior admits a complete
hyperbolic structure with no accidental parabolics.  The space $\GF(M)$
of geometrically finite hyperbolic structures on $M$ is naturally a
subset of $\X(M,\bdy_1M,\PSL_2\C)$ which is parameterized by
the Teichm\"uller space $\T(\bdy_0M)$; as before we denote this
parameterization by $X \mapsto \rho_M^X$ where $X = (X_1, \ldots, X_m)$.

The restriction map $i^* : \X(M,\bdy_1M,\PSL_2\C) \to
\X(\bdy_0M,\PSL_2\C)$ sends $\rho_M^X$ to a tuple of
quasi-Fuchsian groups
$$ i^*(\rho_M^X) = ( Q(X_1, Y_1), \ldots, Q(X_m, Y_m) ) \in \prod_{i=1}^m \QF(S_i) $$
and this defines the \emph{skinning map}
\begin{equation*}
\begin{split}
\sigma_M : \T(\bdy_0M) &\to \T(\bar{\bdy_0 M})\\
 (X_1, \ldots, X_m) &\mapsto (Y_1, \ldots, Y_m).
 \end{split}
\end{equation*}

With this definition in hand, the generalization of the proof of the
main theorem from the connected case is straightforward:

\begin{proof}[Proof of Theorem \ref{thm:main} (general case)]
Suppose on the contrary that $\sigma_M^{-1}(Y)$ is infinite.  As in
Lemma \ref{lem:analytic-reduction}, it follows from the definition of
$\sigma_M$ that the preimage $\sigma_M^{-1}(Y)$ is in bijection with a
subset of the $B_Y \cap r(\E_M)$, where $B_Y = \prod_i B_{Y_i}$ is the
product of Bers slices.  Here $r: \X(\bdy_0M, \SL_2\C) \to
\X(\bdy_0M,\PSL_2\C)$ is the finite-sheeted, proper map induced by
$\SL_2\C \to \PSL_2\C$.  Using Theorem \ref{thm:main-intersection} we find
that $r(\sH_Y \cap \E_M)$ is a discrete set.  But since $B_Y$ is a
precompact subset of $r(\sH_Y)$, the infinite subset we have
identified with $\sigma_M^{-1}(Y)$ has an accumulation point, which is
a contradiction.
\end{proof}

\section{Extended skinning maps of acylindrical manifolds}
\label{sec:acylindrical}

As mentioned in the introduction, the proof of Theorem \ref{thm:main}
can be adapted to show finiteness of fibers of Thurston's continuous
extension of the skinning map
$ \hat{\sigma}_M : \AH(M) \to \T(\bar{\bdy_0M})$ for an acylindrical
manifold $M$.  A proof of the existence of this continuous extension
by Brock, Kent, and Minsky can be found \cite[Sec.~9]{kent:skinning}.

The only property of the extension we will use is the following, which
can be taken as the definition of $\hat{\sigma}_M$: If the image of
$\rho \in \AH(M)$ under the extended skinning map is
$$\Hat{\sigma}_M(\rho) = (Y_1, \ldots, Y_m) \in \T(\bar{\bdy_0M}),$$ 
and if the image of $\rho$ under the restriction map is
$$i^*(\rho) = ( \eta_1, \ldots, \eta_m ) \in \X(\bdy_0M,\PSL_2\C)$$
then $\eta_i$ is the character of a discrete, faithful $\pi_1
S_i$-representation which has an invariant disk in its domain of
discontinuity for which the quotient Riemann surface is $Y_i$.

The analogue of Theorem \ref{thm:main} for $\hat{\sigma}_M$ is:

\begin{thm}
\label{thm:main-extended}
Let $M$ be a compact, oriented, irreducible, acylindrical $3$-manifold
with incompressible boundary that is not empty and not a union of tori.
Let $\hat{\sigma}_M : \AH(M) \to \T(\bar{\bdy_0M})$ denote the
extension of the skinning map of $M$.  Then for each $Y \in
\T(\bar{\bdy_0M})$, the set $\hat{\sigma}_M^{-1}(Y)$ is finite.
\end{thm}

\begin{proof}
First we show that $i^* : \X(M,\bdy_1M,\PSL_2\C) \to \X(\bdy_0M,\PSL_2\C)$ is
injective when restricted to $\AH(M)$.  Suppose
$i^*(\rho) = i^*(\rho')$ with $\rho,\rho' \in \AH(M)$.  We claim the
hyperbolic structures on $M$ associated to $\rho$ and $\rho'$ are
bilipschitz.  Since $i^*(\rho) = i^*(\rho')$, these hyperbolic
structures are actually \emph{isometric} in a neighborhood of each
non-torus end.  Smooth bilipschitz maps of the torus ends can be
obtained by choosing affine maps between the cusp tori and extending
normally.  We have therefore defined a smooth bilipschitz equivalence
on all of the ends, and any diffeomorphic extension to the remaining
compact part is globally bilipschitz.  By Sullivan's
rigidity theorem \cite{sullivan:rigidity}, we conclude $\rho = \rho'$.

Now suppose for contradiction that $\rho^{(n)} \in \AH(M)$ is an infinite
sequence of pairwise distinct points satisfying
$\hat{\sigma}_M(\rho^{(n)}) = Y = (Y_1, \ldots, Y_m)$ for all $n$.  As in the proof of Theorem
\ref{thm:main}, it is enough to show that this leads to an
accumulation point in $r(\sH_Y \cap \E_M)$.

 Consider the sequence
$$i^*(\rho^{(n)}) = ( \eta_1^{(n)}, \ldots, \eta_m^{(n)} ) \in \prod_i
\X(S_i,\PSL_2\C),$$
which is also pairwise distinct since $i^*$ is injective on $\AH(M)$.  By the
definition of $\hat{\sigma}_M$, the character $\eta_i^{(n)}$
corresponds to a discrete $\pi_1S_i$-representation with an invariant
disk in its domain of discontinuity having quotient Riemann surface $Y_i$.  Since
this quotient describes a projective structure on $Y_i$ with holonomy
$\eta_i^{(n)}$, we find that
$i^*(\rho^{(n)}) \in r(\sH_Y)$.  Since $r(\E_M)$ contains the range of
$i^*$, we in fact have $i^*(\rho^{(n)}) \in r(\sH_Y \cap \E_M)$.

While the representations $\eta_i^{(n)}$ are not necessarily
quasi-Fuchsian, a generalization of Bers' inequality
(e.g.~\cite[Prop.~2.1]{ohshika:tame}) shows that the fixed quotient Riemann surface
$Y_i$ constrains $\eta_i^{(n)}$ to lie in a compact subset of
$\X(S_i,\PSL_2\C)$.  Thus $i^*(\rho^{(n)})$ lies in the product of
these compact sets for all $n$, giving an accumulation point of
$r(\sH_Y \cap \E_M)$ and the desired contradiction.
\end{proof}

\nocite{}
\newcommand{\removethis}[1]{}


\begin{thebibliography}{ABEM}

\bibitem[Ale]{alexander:continuing}
H.~Alexander.
\newblock {Continuing {$1$}-dimensional analytic sets}.
\newblock {\em Math. Ann.} {\bf 191}(1971), 143--144.

\bibitem[AB]{alperin-bass}
R.~Alperin and H.~Bass.
\newblock {Length functions of group actions on {$\Lambda$}-trees}.
\newblock In {\em Combinatorial group theory and topology ({A}lta, {U}tah,
  1984)}, volume 111 of {\em Ann. of Math. Stud.}, pages 265--378. Princeton
  Univ. Press, Princeton, NJ, 1987.

\bibitem[ABEM]{ABEM}
J.~Athreya, A.~Bufetov, A.~Eskin, and M.~Mirzakhani.
\newblock {Lattice point asymptotics and volume growth on {T}eichm\"uller
  space}.
\newblock {\em Duke Math. J.} {\bf 161}(2012), 1055--1111.

\bibitem[Ber1]{bers:boundaries}
L.~Bers.
\newblock {On boundaries of {T}eichm\"uller spaces and on {K}leinian groups.
  {I}}.
\newblock {\em Ann. of Math. (2)} {\bf 91}(1970), 570--600.

\bibitem[Ber2]{bers:spaces-of-kleinian}
L.~Bers.
\newblock {Spaces of {K}leinian groups}.
\newblock In {\em Several {C}omplex {V}ariables, {I} ({P}roc. {C}onf., {U}niv.
  of {M}aryland, {C}ollege {P}ark, {M}d., 1970)}, pages 9--34. Springer,
  Berlin, 1970.

\bibitem[Bon]{bonahon:shearing}
F.~Bonahon.
\newblock {Shearing hyperbolic surfaces, bending pleated surfaces and
  {T}hurston's symplectic form}.
\newblock {\em Ann. Fac. Sci. Toulouse Math. (6)} {\bf 5}(1996), 233--297.

\bibitem[Bow]{bowditch:trees-dendrons}
B.~Bowditch.
\newblock {Group actions on trees and dendrons}.
\newblock {\em Topology} {\bf 37}(1998), 1275--1298.

\bibitem[BH]{bridson-haefliger}
M.~Bridson and A.~Haefliger.
\newblock {\em Metric spaces of non-positive curvature}, volume 319 of {\em
  Grundlehren der Mathematischen Wissenschaften}.
\newblock Springer-Verlag, Berlin, 1999.

\bibitem[Br]{brown:bns}
K.~Brown.
\newblock {Trees, valuations, and the {B}ieri-{N}eumann-{S}trebel invariant}.
\newblock {\em Invent. Math.} {\bf 90}(1987), 479--504.

\bibitem[Chi1]{chirka:boundaries}
E.~Chirka.
\newblock {Regularity of the boundaries of analytic sets}.
\newblock {\em Mat. Sb. (N.S.)} {\bf 117(159)}(1982), 291--336, 431.

\bibitem[Chi2]{chirka:complex-analytic-sets}
E.~Chirka.
\newblock {\em Complex analytic sets}, volume~46 of {\em Mathematics and its
  Applications (Soviet Series)}.
\newblock Kluwer Academic Publishers Group, Dordrecht, 1989.
\newblock Translated from the Russian by R. A. M. Hoksbergen.

\bibitem[Chi3]{chiswell}
I.~Chiswell.
\newblock {\em Introduction to {$\Lambda$}-trees}.
\newblock World Scientific Publishing Co. Inc., River Edge, NJ, 2001.

\bibitem[Cul]{culler:lifting}
M.~Culler.
\newblock {Lifting representations to covering groups}.
\newblock {\em Adv. in Math.} {\bf 59}(1986), 64--70.

\bibitem[CM]{culler-morgan}
M.~Culler and J.~Morgan.
\newblock {Group actions on {${\bf R}$}-trees}.
\newblock {\em Proc. London Math. Soc. (3)} {\bf 55}(1987), 571--604.

\bibitem[CS]{culler-shalen}
M.~Culler and P.~Shalen.
\newblock {Varieties of group representations and splittings of
  {$3$}-manifolds}.
\newblock {\em Ann. of Math. (2)} {\bf 117}(1983), 109--146.

\bibitem[DH]{douady-hubbard}
A.~Douady and J.~Hubbard.
\newblock {On the density of {S}trebel differentials}.
\newblock {\em Invent. Math.} {\bf 30}(1975), 175--179.

\bibitem[D]{dumas:holonomy}
D.~Dumas.
\newblock {Holonomy limits of complex projective structures}.
\newblock Preprint, 2011.
\newblock  {{\tt arXiv:1105.5102}}

\bibitem[DK1]{dumas-kent:experiments}
D.~Dumas and R.~Kent.
\newblock {Experiments with skinning maps}.
\newblock In preparation.

\bibitem[DK2]{dumas-kent:dense}
D.~Dumas and R.~P. Kent.
\newblock {Bers slices are {Z}ariski dense}.
\newblock {\em J. Topol.} {\bf 2}(2009), 373--379.

\bibitem[DK3]{dumas-kent:slicing}
D.~Dumas and R.~P. Kent.
\newblock {Slicing, skinning, and grafting}.
\newblock {\em Amer. J. Math.} {\bf 131}(2009), 1419--1429.

\bibitem[FLP]{FLP}
A.~Fathi, F.~Laudenbach, and V.~Poenaru.
\newblock {\em Travaux de {T}hurston sur les surfaces}, volume~66 of {\em
  Ast\'erisque}.
\newblock Soci\'et\'e Math\'ematique de France, Paris, 1979.
\newblock S\'eminaire Orsay, With an English summary.

\bibitem[Flo]{floyd:boundary-curves}
W.~Floyd.
\newblock {Incompressible surfaces in {$3$}-manifolds: the space of boundary
  curves}.
\newblock In {\em Low-dimensional topology and {K}leinian groups
  ({C}oventry/{D}urham, 1984)}, volume 112 of {\em London Math. Soc. Lecture
  Note Ser.}, pages 131--143. Cambridge Univ. Press, Cambridge, 1986.

\bibitem[FO]{floyd-oertel}
W.~Floyd and U.~Oertel.
\newblock {Incompressible surfaces via branched surfaces}.
\newblock {\em Topology} {\bf 23}(1984), 117--125.

\bibitem[GKM]{gkm}
D.~Gallo, M.~Kapovich, and A.~Marden.
\newblock {The monodromy groups of {S}chwarzian equations on closed {R}iemann
  surfaces}.
\newblock {\em Ann. of Math. (2)} {\bf 151}(2000), 625--704.

\bibitem[Gar]{gardiner:minimal-norm}
F.~Gardiner.
\newblock {Measured foliations and the minimal norm property for quadratic
  differentials}.
\newblock {\em Acta Math.} {\bf 152}(1984), 57--76.

\bibitem[Gas]{gaster}
J.~Gaster.
\newblock {A family of non-injective skinning maps with critical points}.
\newblock Preprint, 2012. To appear in {\em Trans. Amer. Math. Soc.}

\bibitem[GM]{goresky-macpherson}
M.~Goresky and R.~MacPherson.
\newblock {\em Stratified {M}orse theory}, volume~14 of {\em Ergebnisse der
  Mathematik und ihrer Grenzgebiete (3) [Results in Mathematics and Related
  Areas (3)]}.
\newblock Springer-Verlag, Berlin, 1988.

\bibitem[Gra]{gravett}
K.~Gravett.
\newblock {Ordered abelian groups}.
\newblock {\em Quart. J. Math. Oxford Ser. (2)} {\bf 7}(1956), 57--63.

\bibitem[Har]{hardt:stratification}
R.~Hardt.
\newblock {Stratification of real analytic mappings and images}.
\newblock {\em Invent. Math.} {\bf 28}(1975), 193--208.

\bibitem[Hat]{hatcher:boundary-curves}
A.~Hatcher.
\newblock {On the boundary curves of incompressible surfaces}.
\newblock {\em Pacific J. Math.} {\bf 99}(1982), 373--377.

\bibitem[HP]{heusener-porti}
M.~Heusener and J.~Porti.
\newblock {The variety of characters in {${\rm PSL}\sb 2(\Bbb C)$}}.
\newblock {\em Bol. Soc. Mat. Mexicana (3)} {\bf 10}(2004), 221--237.

\bibitem[Hir]{hironaka:subanalytic}
H.~Hironaka.
\newblock {Subanalytic sets}.
\newblock In {\em Number theory, algebraic geometry and commutative algebra, in
  honor of {Y}asuo {A}kizuki}, pages 453--493. Kinokuniya, Tokyo, 1973.

\bibitem[HM]{hubbard-masur}
J.~Hubbard and H.~Masur.
\newblock {Quadratic differentials and foliations}.
\newblock {\em Acta Math.} {\bf 142}(1979), 221--274.

\bibitem[Kap]{kapovich}
M.~Kapovich.
\newblock {\em Hyperbolic manifolds and discrete groups}, volume 183 of {\em
  Progress in Mathematics}.
\newblock Birkh\"auser Boston Inc., Boston, MA, 2001.

\bibitem[Kaw]{kawai}
S.~Kawai.
\newblock {The symplectic nature of the space of projective connections on
  {R}iemann surfaces}.
\newblock {\em Math. Ann.} {\bf 305}(1996), 161--182.

\bibitem[Ken]{kent:skinning}
R.~Kent.
\newblock {Skinning maps}.
\newblock {\em Duke Math. J.} {\bf 151}(2010), 279--336.

\bibitem[KW]{kleinbock-weiss}
D.~Kleinbock and B.~Weiss.
\newblock {Bounded geodesics in moduli space}.
\newblock {\em Int. Math. Res. Not.} {\bf 30}(2004), 1551--1560.

\bibitem[KK]{kokorin-kopytov}
A.~Kokorin and V.~Kopytov.
\newblock {\em Fully ordered groups}.
\newblock Halsted Press [John Wiley\thinspace \&\thinspace Sons], New
  York-Toronto, Ont., 1974.
\newblock Translated from the Russian by D. Louvish.

\bibitem[Kra]{kra:deformation-spaces}
I.~Kra.
\newblock {Deformation spaces}.
\newblock In {\em A crash course on {K}leinian groups ({L}ectures at a
  {S}pecial {S}ession, {A}nnual {W}inter {M}eeting, {A}mer. {M}ath. {S}oc.,
  {S}an {F}rancisco, {C}alif., 1974)}, pages 48--70. Lecture Notes in Math.,
  Vol. 400. Springer, Berlin, 1974.

\bibitem[KS]{krasnov-schlenker}
K.~Krasnov and J.-M. Schlenker.
\newblock {A symplectic map between hyperbolic and complex {T}eichm\"uller
  theory}.
\newblock {\em Duke Math. J.} {\bf 150}(2009), 331--356.

\bibitem[Lab]{labourie:notes}
F.~Labourie.
\newblock {\em Lectures on representations of surface groups}.
\newblock Zurich Lectures in Advanced Mathematics. European Mathematical
  Society (EMS), Z\"urich, 2013.

\bibitem[Lev1]{Levitt}
G.~Levitt.
\newblock {Foliations and laminations on hyperbolic surfaces}.
\newblock {\em Topology} {\bf 22}(1983), 119--135.

\bibitem[Lev2]{levitt:bns}
G.~Levitt.
\newblock {{${\bf R}$}-trees and the {B}ieri-{N}eumann-{S}trebel invariant}.
\newblock {\em Publ. Mat.} {\bf 38}(1994), 195--202.

\bibitem[MaSm]{masur-smillie}
H.~Masur and J.~Smillie.
\newblock {Hausdorff dimension of sets of nonergodic measured foliations}.
\newblock {\em Ann. of Math. (2)} {\bf 134}(1991), 455--543.

\bibitem[McM1]{mcmullen:theta}
C.~McMullen.
\newblock {Amenability, {P}oincar\'e series and quasiconformal maps}.
\newblock {\em Invent. Math.} {\bf 97}(1989), 95--127.

\bibitem[McM2]{mcmullen:iteration}
C.~McMullen.
\newblock {Iteration on {T}eichm\"uller space}.
\newblock {\em Invent. Math.} {\bf 99}(1990), 425--454.

\bibitem[Mor1]{morgan:group-actions-on-trees}
J.~Morgan.
\newblock {Group actions on trees and the compactification of the space of
  classes of {${\rm SO}(n,1)$}-representations}.
\newblock {\em Topology} {\bf 25}(1986), 1--33.

\bibitem[Mor2]{morgan:bulletin}
J.~Morgan.
\newblock {{$\Lambda$}-trees and their applications}.
\newblock {\em Bull. Amer. Math. Soc. (N.S.)} {\bf 26}(1992), 87--112.

\bibitem[MoSh1]{morgan-shalen:valuations-trees}
J.~Morgan and P.~Shalen.
\newblock {Valuations, trees, and degenerations of hyperbolic structures. {I}}.
\newblock {\em Ann. of Math. (2)} {\bf 120}(1984), 401--476.

\bibitem[MoSh2]{shalen:introduction}
J.~Morgan and P.~Shalen.
\newblock {An introduction to compactifying spaces of hyperbolic structures by
  actions on trees}.
\newblock In {\em Geometry and topology ({C}ollege {P}ark, {M}d., 1983/84)},
  volume 1167 of {\em Lecture Notes in Math.}, pages 228--240. Springer,
  Berlin, 1985.

\bibitem[MS1]{morgan-shalen:degenerations-iii}
J.~Morgan and P.~Shalen.
\newblock {Degenerations of hyperbolic structures. {III}. {A}ctions of
  {$3$}-manifold groups on trees and {T}hurston's compactness theorem}.
\newblock {\em Ann. of Math. (2)} {\bf 127}(1988), 457--519.

\bibitem[MS2]{morgan-shalen:free-actions}
J.~Morgan and P.~Shalen.
\newblock {Free actions of surface groups on {${\bf R}$}-trees}.
\newblock {\em Topology} {\bf 30}(1991), 143--154.

\bibitem[Mos]{mosher:train-track-expansions}
L.~Mosher.
\newblock {Train track expansions of measured foliations}.
\newblock Preprint, 2003.
\newblock  {{\tt http://andromeda.rutgers.edu/\textasciitilde mosher}}

\bibitem[Ohs]{ohshika:tame}
K.~Ohshika.
\newblock {Limits of geometrically tame {K}leinian groups}.
\newblock {\em Invent. Math.} {\bf 99}(1990), 185--203.

\bibitem[Ota]{otal:hyperbolization-haken}
J.-P. Otal.
\newblock {Thurston's hyperbolization of {H}aken manifolds}.
\newblock In {\em Surveys in differential geometry, {V}ol. {III} ({C}ambridge,
  {MA}, 1996)}, pages 77--194. Int. Press, Boston, MA, 1998.

\bibitem[Pap]{papadopoulos:reseaux}
A.~Papadopoulos.
\newblock {R\'eseaux ferroviaires et courbes simples sur les surfaces}.
\newblock {\em C. R. Acad. Sci. Paris S\'er. I Math.} {\bf 297}(1983),
  565--568.

\bibitem[PP]{penner-papadopoulos:symplectic}
A.~Papadopoulos and R.~Penner.
\newblock {The {W}eil-{P}etersson symplectic structure at {T}hurston's
  boundary}.
\newblock {\em Trans. Amer. Math. Soc.} {\bf 335}(1993), 891--904.

\bibitem[PH]{penner-harer}
R.~Penner and J.~Harer.
\newblock {\em Combinatorics of train tracks}, volume 125 of {\em Annals of
  Mathematics Studies}.
\newblock Princeton University Press, Princeton, NJ, 1992.

\bibitem[Roy]{royden}
H.~Royden.
\newblock {Automorphisms and isometries of {T}eichm\"uller space}.
\newblock In {\em Advances in the {T}heory of {R}iemann {S}urfaces ({P}roc.
  {C}onf., {S}tony {B}rook, {N}.{Y}., 1969)}, pages 369--383. Ann. of Math.
  Studies, No. 66. Princeton Univ. Press, Princeton, N.J., 1971.

\bibitem[Sul]{sullivan:rigidity}
D.~Sullivan.
\newblock {On the ergodic theory at infinity of an arbitrary discrete group of
  hyperbolic motions}.
\newblock In {\em Riemann surfaces and related topics: {P}roceedings of the
  1978 {S}tony {B}rook {C}onference ({S}tate {U}niv. {N}ew {Y}ork, {S}tony
  {B}rook, {N}.{Y}., 1978)}, volume~97 of {\em Ann. of Math. Stud.}, pages
  465--496. Princeton Univ. Press, Princeton, N.J., 1981.

\bibitem[Vee1]{veech:teichmuller-geodesic-flow}
W.~Veech.
\newblock {The {T}eichm\"uller geodesic flow}.
\newblock {\em Ann. of Math. (2)} {\bf 124}(1986), 441--530.

\bibitem[Vee2]{veech:quadratic-differentials}
W.~Veech.
\newblock {Moduli spaces of quadratic differentials}.
\newblock {\em J. Analyse Math.} {\bf 55}(1990), 117--171.

\bibitem[Whi]{whitney}
H.~Whitney.
\newblock {Tangents to an analytic variety}.
\newblock {\em Ann. of Math. (2)} {\bf 81}(1965), 496--549.

\bibitem[Yue]{yue:flip}
C.~Yue.
\newblock {Conditional measure and flip invariance of {B}owen-{M}argulis and
  harmonic measures on manifolds of negative curvature}.
\newblock {\em Ergodic Theory Dynam. Systems} {\bf 15}(1995), 807--811.

\bibitem[ZS]{zariski-samuel-ii}
O.~Zariski and P.~Samuel.
\newblock {\em Commutative algebra. {V}ol. {II}}.
\newblock Springer-Verlag, New York, 1975.
\newblock Reprint of the 1960 edition, Graduate Texts in Mathematics, Vol. 29.

\end{thebibliography}
\end{document}